\newcommand{\la}{\langle}
\newcommand{\ra}{\rangle}
\newcommand{\EE}{\mathbb{E}}
\newcommand{\PP}{\mathbb{P}}
\newcommand{\R}{\mathbb{R}}
\newcommand{\C}{\mathbb{C}}
\newcommand{\Q}{\mathbb{Q}}
\newcommand{\HH}{\mathbb{H}}
\newcommand{\N}{\mathbb{N}}
\newcommand{\D}{\mathbb{D}}
\newcommand{\Z}{\mathbb{Z}}
\newcommand{\pa}{\partial}
\newcommand{\K}{{\cal K}}
\newcommand{\F}{{\cal F}}
\newcommand{\Lo}{{\cal L}}
\newcommand{\lotimes}{{\overleftarrow{\otimes}}}
\newcommand{\rotimes}{{\overrightarrow{\otimes}}}
\newcommand{\ulin}{\underline}
\newcommand{\lr}{\leftrightarrows}
\newcommand{\LR}{\Leftrightarrow}
\newcommand{\rl}{\rightleftarrows}
\newcommand{\ccw}{\circlearrowleft}
\newcommand{\cw}{\circlearrowright}
\newcommand{\ind}{\mathbbm{1}}
\def\eps{\varepsilon}
\def\til{\widetilde}
\def\ha{\widehat}
\def\sem{\setminus}
\def\lin{\overline}
\def\up{\upsilon}
\def\Up{\Upsilon}
\def\wto{\stackrel{\rm w}{\to}}
\def\tl{\vartriangleleft}
 \DeclareMathOperator{\Cont}{Cont}
\DeclareMathOperator{\rad}{rad} \DeclareMathOperator{\doub}{doub}
 \DeclareMathOperator{\diam}{diam}
\DeclareMathOperator{\dist}{dist} 
\DeclareMathOperator{\hcap}{hcap} 
\DeclareMathOperator{\Imm}{Im }
\DeclareMathOperator{\mA}{m}
\theoremstyle{plain}
\newtheorem{Theorem}{Theorem}[section]
\newtheorem{Lemma}[Theorem]{Lemma}
\newtheorem{Corollary}[Theorem]{Corollary}
\newtheorem{Proposition}[Theorem]{Proposition}
\theoremstyle{definition}
\newtheorem{Definition}[Theorem]{Definition}
\newtheorem{Remark}[Theorem]{Remark}
\newtheorem{Example}[Theorem]{Example}
\numberwithin{equation}{section}
\numberwithin{equation}{section}
\newcommand{\BGE}{\begin{equation}}
\newcommand{\BGEN}{\begin{equation*}}
\newcommand{\EDE}{\end{equation}}
\newcommand{\EDEN}{\end{equation*}}
\begin{document}
\title{SLE$_\kappa(\rho)$ bubble measures
}
\author{Dapeng Zhan\thanks{zhan@msu.edu}
}
\affil{Michigan State University}
\date{\today}
\maketitle

\begin{abstract}
For $\kappa>0$ and $\rho>-2$, we construct a $\sigma$-finite measure,  called a rooted SLE$_\kappa(\rho)$ bubble measure, on the space of curves in the upper half plane $\mathbb H$ started and ended at the same boundary point, which satisfies some SLE$_\kappa(\rho)$-related domain Markov property, and is the weak limit of SLE$_\kappa(\rho)$ curves in $\mathbb H$ with the two endpoints both tending to the root.  For $\kappa\in(0,8)$ and $\rho\in ((-2)\vee(\frac\kappa 2-4),\frac\kappa 2-2)$, we derive decomposition theorems for the rooted  SLE$_\kappa(\rho)$ bubble with respect to the Minkowski content measure of the intersection of the rooted SLE$_\kappa(\rho)$ bubble with $\mathbb R$, and   construct unrooted SLE$_\kappa(\rho)$ bubble measures.
\end{abstract}
\tableofcontents
\section{Introduction}
The Schramm-Loewner evolution (SLE)  introduced by Oded Schramm in 1999 (\cite{S-SLE})   is a
one-parameter ($\kappa\in(0,\infty)$) family of random fractal curves, which has close relation with  two-dimensional statistical lattice models, Brownian motion, Gaussian free field and Liouville quantum gravity. The reader is referred to  \cite{Law1} and \cite{RS} for basic properties of SLE.  

SLE are characterized by conformal invariance and domain Markov property (DMP). There are several versions of SLE, one of which is chordal SLE.  A chordal SLE$_\kappa$ curve $\eta$ grows in a simply connected domain $D$ from one prime end $a$ towards another prime end $b$, and satisfies the following DMP: if $\tau$ is a stopping time for $\eta$, then conditionally on the part of $\eta$ before $\tau$ and the event that $\eta$ is not complete at the time $\tau$, the part of $\eta$ after $\tau$ is a chordal SLE$_\kappa$ curve from $\gamma(\tau)$ (as a prime end) to $b$ in a connected component of $D\sem \eta[0,\tau]$.

Inspired by the Brownian loop measure constructed in \cite{LW-loop}, people have been working on the
construction of SLE loops, which locally look like SLE curves. There are rooted and unrooted SLE loops. A rooted SLE loop starts and ends at a marked point called the root, while an unrooted SLE loop does not pass through a marked point.  Unrooted SLE$_{8/3}$ and SLE$_2$ loops were respectively constructed in \cite{Wer-loop} and \cite{SLE2loop}. The SLE$_\kappa$ bubble measure (around some $z\in\HH$) defined in \cite{CLE} for $\kappa\in (\frac 83,4]$  and the whole-plane space-filling SLE$_\kappa$ from $\infty$ to $\infty$ constructed in \cite{mating} for $\kappa\ge 8$ are rooted SLE$_\kappa$ loops. The whole-plane space-filling SLE$_\kappa$ curves were also constructed for $\kappa\in (4,8)$, in which case they do not locally look like an ordinary SLE$_\kappa$.

For $\kappa\in (0,8)$, several types of SLE$_\kappa$ loops were constructed in \cite{loop}, which include rooted and unrooted SLE$_\kappa$ loops in the Riemann sphere $\ha\C$, rooted and unrooted SLE$_\kappa$ bubbles in simply connected domains, and unrooted SLE$_\kappa$ loops in general Riemann surfaces.
A rooted  SLE$_\kappa$ loop $\eta$ in $\ha\C$ satisfies the following DMP.  If $\tau$ is a positive stopping time for $\eta$, then conditionally on the part of $\eta$ before $\tau$ and the event that $\eta$ is not complete at the time $\tau$, the part of $\eta$ after $\tau$ is a chordal SLE$_\kappa$ curve from $\eta(\tau)$ to the root of $\eta$ in a connected component of $\ha\C\sem \eta[0,\tau]$. A rooted SLE$_\kappa$ bubble grows in the prime-end closure of a simply connected domain $D\subsetneqq\C$ with the root being a prime end, and satisfies almost the same DMP as above except that $\ha\C\sem \eta[0,\tau]$ is replaced by $D\sem \eta[0,\tau]$.

Except for SLE bubble in \cite{CLE} and the whole-plane space-filling SLE$_\kappa$ in \cite{mating}, the SLE loops described as above are not probability measures, but $\sigma$-finite infinite measures. If one  restricts an SLE$_\kappa$ bubble measure in $\HH$ rooted at $0$ in \cite{loop} to the event that the bubble disconnects a marked point $z\in\HH$ from $\infty$ and then normalizes the restricted measure to get a probability measure, then the resulted measure is the SLE$_\kappa$ bubble around $z$ in \cite{CLE}.

The construction of rooted SLE$_\kappa$ loops and bubbles in \cite{loop} was based on the decomposition of chordal SLE$_\kappa$ in \cite{decomposition}  with respect to its Minkowski content (cf.\ \cite{LR}), which asserts that the following two sampling procedures result in the same measure on curve-point pairs: (i) first sampling a chordal SLE$_\kappa$ curve in $D$ from $a$ to $b$ and then sampling a point on the curve according to a measure closely related to the Minkowski content of the curve of dimension $1+\frac\kappa 8$, and (ii) first sampling a point in $D$ according to the SLE$_\kappa$ Green's function and then sampling a two-sided radial SLE$_\kappa$ in $D$ from $a$ to $b$ passing through that point. 

The unrooted SLE$_\kappa$ loops in $\ha\C$ were constructed by integrating the laws of the SLE$_\kappa$ loops rooted at different $z\in\C$ against the Lebesgue measure and then unweighting the resulted measure by the Minkowski content of the loop. The unrooted SLE$_\kappa$ bubbles in the upper half plane $\HH$ were constructed for $\kappa\in(4,8)$ by integrating the laws of the SLE$_\kappa$ loops in $\HH$ rooted at different $x\in\R$ against the Lebesgue measure and then unweighting the resulted measure by the Minkowski content of the intersection of the loop with $\R$ (cf.\ \cite{Mink-real}). 

As the title suggests, we are going to construct a loop version of SLE, which extends the SLE bubbles in \cite{loop}, and is closely related to the SLE$_\kappa(\rho)$ curves (cf.\ \cite{LSW-8/3}). We will construct rooted SLE$_\kappa(\rho)$ bubbles in $\HH$ for all $\kappa>0$ and $\rho>-2$ in Section \ref{Section-rooted-loop}.   In the case $\kappa>4$ and $\rho\in (-2,\frac\kappa 2-4]$, the law of the bubble is a probability measure and satisfies conformal invariance (Theorem \ref{Thm-loop-probability}). In the case $\rho>(-2)\vee (\frac\kappa 2-4)$, the law of the bubble is a $\sigma$-finite infinite measure and satisfies conformal covariance (Theorem \ref{Thm-loop-sigma-finite}). In both cases, an SLE$_\kappa(\rho)$ bubble $\eta$ is characterized by the following DMP: if $\tau$ is a positive stopping time for $\eta$, then conditionally on the part of $\eta$ before $\tau$ and the event that $\eta$ is not complete at the time $\tau$, the part of $\eta$ after $\tau$ is an SLE$_\kappa(\rho)$ curve from $\gamma(\tau)$ to the root of $\eta$ in a connected component of $\HH\sem \eta[0,\tau]$.
In addition, we prove that the SLE$_\kappa(\rho)$ measure in $\HH$ started from $r>0$, aimed at $-r$, and with force point $r^+$, suitably  rescaled, tends to the counterclockwise SLE$_\kappa(\rho)$ bubble measure in $\HH$ rooted at $0$ ``weakly'' as $r\to 0^+$ (Theorem  \ref{weak-1}).


It has been demonstrated in \cite{ARS21} that some SLE-type bubble measure can be coupled with Liouville CFT naturally via  quantum zipper.  The particular bubble zipper result proved in \cite{ARS21} is used to prove the FZZ formula in Liouville CFT. Although the law of the bubble measure is not identified in \cite{ARS21}, it was shown that it is the limit of a chordal SLE$_\kappa(\gamma^2/2-2)$ whose two endpoints are  quantum length $\eps$ away from each other. Therefore, it  corresponds to SLE$_\kappa(\rho)$ bubble measure with $\rho=\gamma^2/2-2$. In a forthcoming work \cite{DW}, Da Wu will extend the bubble zipper result to general $\rho$.

The construction  in this paper is more straightforward than that in \cite{loop} as it does not rely on the decomposition of SLE$_\kappa(\rho)$. We use only the transition density of radial Bessel processes developed in \cite{tip}.
On the other hand, for $\kappa\in (0,8)$ and $\rho\in ((-2)\vee (\frac\kappa 2-4),\frac\kappa 2-2)$, we  derive a decomposition theorem for SLE$_\kappa(\rho)$ according to the Minkowski content of its intersection with $\R$ in Section \ref{section-decomp} (Theorem \ref{Thm-decomposition}), and then use it to  derive a decomposition theorems for the  rooted SLE$_\kappa(\rho)$ bubbles in Section \ref{section-proof} (Theorems \ref{Thm-decomposition-loop} and \ref{Thm-decomposition-loop-infty}).
Finally, in Section \ref{section-unrooted} we define the unrooted SLE$_\kappa(\rho)$ bubbles in $\HH$ by integrating the rooted SLE$_\kappa(\rho)$ bubble measures in $\HH$ with different roots $w\in\R$ against the Lebesgue measure and then unweighting the resulted measure by the Minkowski content of the intersection.


In the next section we recall/introduce some important notion, symbols and results including kernels, prime ends, Minkowski contents, Minkowski content measures, operations on continuous processes, locally absolutely continuity, modulo time-change (MTC) functions/curves, time-change invariant (TCI) times, rooted and unrooted MTC loops, Loewner curves and related subdomains and prime ends, SLE$_\kappa(\ulin\rho)$ curves, and radial Bessel processes. 

The new notion and symbols  in Section \ref{section-preli}  play crucial  roles in making the statements of the paper completely rigorous. For example, to rigorously describe the DMP for rooted SLE$_\kappa(\rho)$ bubbles, we encounter the following subtlety. First, the non-boundary-filling SLE$_\kappa(\rho)$ bubble measures are infinite measures. Second, an SLE$_\kappa(\rho)$ curve in a general simply connected domain is, strictly speaking, a continuous random curve {\it modulo time-change} in the {\it prime-end closure} of the domain instead of a parametrized random curve in the usual closure of the domain in $\C$ or $\ha\C$. Third,  stopping times for the DMP are defined for parametrized processes instead of processes modulo time-change. Forth, the DMP for SLE$_\kappa(\rho)$ involves a force point in addition to the tip and target of the curve, whose rigorous description requires prime ends. Moreover, we may not be able to parametrize the curve using the Minkowski content of the curve itself because the intersection of the curve with the domain boundary may blow up the Minkowski content. Because of these facts, we have to introduce some non-standard notion and symbols. 
For the reader's convenience, we make a list of global symbols/notion used in the paper.

\begin{itemize}
  \item Section \ref{Section-MK}: $f(\mu)$, $w\cdot \mu$, $\wto$, $\mu\rotimes \nu$, $\nu\lotimes \mu$.
  \item Section \ref{Section-PE}: $\ha D$, $\ha\pa D$, $w^+$, $w^-$, $\lin D^\#$,  $\pa^\# D$, $(a,b)_{\ha\pa D}$, $[a^+,b]_{\ha\pa D}$, $[a^+,b^-]_{\ha\pa D}$, $\ha\pa_w D$.
  \item Section \ref{Section-MC}: $\mA$, $S^r$, $\Cont_d$, $\lin\Cont_d$, $\ulin \Cont_d$, ${\cal M}^d_S$, ${\cal M}_S$.
  \item Section \ref{section-lifetime}: $\Sigma$, $\ha T_f$, $\K_t$, $\K_\tau$, $\Sigma_{\ha T}$, $\Sigma_{\ha T}^{<\infty}$, $\Sigma_{\oplus}$, $\Sigma_{\oplus}^{<\infty}$, $\oplus$, $\ha\oplus$, $\Sigma_{(0)}$, $\oplus_+$, $\ha\oplus_+$, $\Sigma_{\oplus,\tau}$, $\oplus_\tau$, $\Sigma_t$, $\pi_t$, $\F^*=(\F^*_t)_{t\ge 0}$, $\F=(\F_t)_{t\ge 0}$, $\F_\infty$, $\F^\mu$,  $\tl$, RN process, time-change, $\til\Sigma$, $\til\Sigma^2$, $\pi$, $\pi^2$,   MTC curve $[f]$, $\til\Sigma_{\ha T}$, $\til\Sigma_{(0)}$, $\til\Sigma_{\oplus}$, time-reversal map $\cal R$, TCI time/set, $\til\F_\tau$, $\til\F_{\ha T}$, $\Sigma^E$, $\til\Sigma^E$, $\dist_{\til\Sigma^E}$, $\tau_{g;t}$ ((\ref{tau-g-formula'})), $\tau_{[g];\sigma}$ ((\ref{tau-g-formula''})), rooted loop, rooted/unrooted MTC loop,   $[[f]]$, $\til\pi$.
  \item Section \ref{section-Loewner}: $\HH$-hull, $g_K$, $\hcap$, $\rad$, $a_K$, $b_K$, $\ha a_K$, $\ha b_K$, $C_K$, $D_K$, $C_K^a$, $D_K^b$, CP/AP/MTC Loewner curve, $W^*_t$, $g^*_t$, $\tau^*_z$, $K^*_t$, $H^*_t$, $a^*_t$, $b^*_t$, $\ha a^*_t$, $\ha b^*_t$, $C^*_t$, $D^*_t$, $I^*_t$, $\eta^*(t)$ (``$*$'' could be empty, a driving function $W$, or a CP/AP Loewner curve $\eta$ in $\HH$), $\Sigma^{\cal L}$, $\cal L$, $\ha{\cal L}$, $D(S;T)$, $D(S;w)$, $D(\eta|t_0^+)$,  $\eta(t_0^+)$, $D([\eta]|\tau^+)$, $I^\eta_{t^+;D}$, $a^\eta_{t^+;D}$, $b^\eta_{t^+;D}$, $\ha a^\eta_{t^+;D}$, $\ha b^\eta_{t^+;D}$, $I^{[\eta]}_{\tau^+;D}$, $a^{[\eta]}_{\tau^+;D}$, $b^{[\eta]}_{\tau^+;D}$, $\ha a^{[\eta]}_{\tau^+;D}$, $\ha b^{[\eta]}_{\tau^+;D}$, ${\cal P}$, $[v]^\eta_{t^+;D}$, $[v]^\eta_t$, $[v]^{[\eta]}_{\tau^+;D}$ ($\eta$/$[\eta]$ is an AP/MTC Loewner curve in $D$).
  \item Section \ref{section-SLE}: CP SLE$_\kappa(\ulin\rho)$ at $(w;\ulin v)$, $\PP^{w;\ulin v}_{\kappa;\ulin\rho}$, $\la v\ra^W_t$, (MTC) SLE$_\kappa(\ulin\rho)$ at $D:(w_0;\ulin v)\to w_\infty$.
  \item Section \ref{Section-Bessel}:  $\nu_x^{\delta_+,\delta_-}$, $\alpha_+$, $\alpha_-$, $w_{\alpha_+,\alpha_-}$,  $p_t(x,y)$, $p_\infty(x)$, $\mu_{\R}^{\delta_+,\delta_-}$, $\Sigma_{\R}$, $\F^{\R}$, $\mu_x^{\delta_+,\delta_-}$,  $\alpha_0$, $M^Z_t$, $\Sigma_\tau$, $\til\Sigma_\tau$, $\pi_{t',t}$, $\Sigma^{\R}_t$, $\Sigma^{\R}_\tau$, $\pi^{\R}_t$, $\pi^{\R}_{t',t}$.
  \item Section \ref{Section-rooted-loop}: $\til\mu^D_{(w;v)\to x}$, $\til\mu^D_{\gamma|s_0^+}$ ((\ref{til-mu-gamma})), $\til\mu^D_{[\gamma]|\sigma^+}$ ((\ref{til-mu-gamma-[]})), $\til\mu^{\HH}_{0\ccw}$, $\til\mu^{\HH}_{0\cw}$, $\til\mu^{\HH}_{w\ccw}$, $\til\mu^{\HH}_{w\cw}$, $Q^W_t$ ((\ref{QWt})), $\nu^*$, $Q^\eta_t$ ((\ref{Q-eta-t})), $\alpha$ ((\ref{alpha})).
  \item Section \ref{section-decomposition}: $M^W_t$ ((\ref{MWt})),  $\rho_\Sigma$, $I_\infty$, $ \PP_{\kappa;\ulin\rho,\oplus}^{w;\ulin v,u}$ ((\ref{dP-kappa-rho'-U})), conditional CP/MTC  SLE$_\kappa(\ulin\rho)$, $G_C(w,\ulin v;u)$, $C_{\kappa;\rho}$, $G(w,\ulin v;u)$ ((\ref{C-kappa-rho},\ref{Gwvu})), $M^{W;u}_t$ ((\ref{Mu-express})),   $M^{W;S}_t$ ((\ref{MWS1},\ref{MWS2})),  $\til\mu^D_{(w_0;v)\to w_\infty}$, $\til\nu^D_{(w_0;v)\to u\to w_\infty}$,   $\til\nu^D_{(w_0;v_1,v_2)\to w_\infty}$,   $\til\mu^D_{w_0\to w_\infty}$, $\til\nu^D_{w_0\to u\to w_\infty}$,   $\til\nu^D_{w_0\to w_\infty}$, $\til\nu^{D}_{w_0\lr w_\infty}$, $\til\nu^{D}_{w_0\rl w_\infty}$, $\til\nu^{D;u}_{\gamma|s_0^+}$, $\til\nu^{D;u}_{[\gamma]|\sigma^+}$ ((\ref{til-mu-gamma-u})),   $\Gamma$, $\Gamma_x$, $\ha\nu^{\HH}_{w\lr u}$, $\ha\mu^{\HH}_{\ccw}$, $\ha\mu^{\HH}_{\cw}$, $\ha\mu^D_{\ccw}$, $\ha\mu^D_{\cw}$. 
\end{itemize}

\section*{Acknowledgement}
The author thanks Xin Sun for valuable suggestions on the paper.

\section{Preliminary} \label{section-preli}
\subsection{Measures and kernels} \label{Section-MK}
For a measure $\mu$, we use $f(\mu)$ (rather than $f_*(\mu)$) to denote the pushforward of   $\mu$ under $f$, and use $w\cdot \mu$ or $w(x)\cdot \mu(dx)$ to denote  the measure obtained by weighting $\mu$ by $w$.

We use    ``$\wto$'' to denote weak convergence.   Recall that for bounded measures $\mu_n$, $n\in\N$, and $\mu$ defined on some metric space $E$, $\mu_n\wto \mu$ iff for any $f\in C_b(E,\R)$, $\mu_n(f)\to \mu(f)$.
We will use the following facts.
 \begin{itemize}
   \item [(F1)]  When $\mu_n$ and $\mu$ are all probability measures, $\mu_n\wto \mu$ iff  $\forall\eps>0,\exists N\in\N$, such that $\forall n>N,\exists$ a coupling $X_n$ and $X$ of $\mu_n$ and $\mu$   such that $\PP[\dist_E(X_n,X)>\eps]<\eps$.
   \item [(F2)] When all $\mu_n$ and $\mu$ are nonzero measures, $\mu_n\wto \mu$ iff $|\mu_n|\to |\mu|$ and $\mu_n/|\mu_n|\wto \mu/|\mu|$. 
\item [(F3)] If $\mu_n\wto\mu$, and $G\in  {\cal B}(E)$ satisfies $\mu(\pa_E G)=0$, then $\ind_G \mu_n\wto \ind_G \mu$.
 \end{itemize}

We now review the concept of kernels in \cite{Foundation}.
Suppose $(U,{\cal U})$ and $(V,{\cal V})$ are two measurable spaces. A kernel from $(U,{\cal U})$ to $(V,\cal V)$ is a map $\nu:U\times {\cal V}\to[0,\infty]$ such that (i) for every $u\in U$, $\nu(u,\cdot)$ is a measure on $(V,\cal V)$, and (ii) for every $F\in\cal V$, $\nu(\cdot,F)$ is $\cal U$-measurable.
Let $\mu$ be a $\sigma$-finite measure on $(U,\cal U)$. Let ${\cal U}^\mu$ be the $\mu$-completion of $\cal U$. A $\mu$-kernel from $(U,{\cal U})$ to $(V,\cal V)$ is a kernel from $(U^\mu,{\cal U}^\mu\cap U^\mu)$ to $(V,\cal V)$, where $ U^\mu\subset U$ is such that $U\sem U^\mu$ is a $\mu$-null set. The $\mu$-kernel is said to be   $\sigma$-finite if there is a sequence $F_n\in\cal V$, $n\in\N$, with $V=\bigcup F_n$ such that for any $n\in\N$, and $\mu$-a.s.\  every $u\in U$, $\nu(u,F_n)<\infty$.

If $\mu$ is a $\sigma$-finite measure on $(U,\cal U)$, and $\nu$ is a $\sigma$-finite $\mu$-kernel from $(U,{\cal U})$ to $(V,\cal V)$, then $\mu\rotimes \nu$ is the unique measure on ${\cal U}\times{\cal V}$ such that $\mu\rotimes \nu(E\times F)=\int_E \nu(u,F)d\mu(u)$ for all $E\in\cal U$ and $F\in \cal V$.
We sometimes write $\mu\rotimes \nu$ as $\mu(du) \rotimes \nu(u,\cdot)$ to emphasize how the kernel $\nu$ depends on $u$. We use $\nu\lotimes\mu$ or $\nu(u,\cdot)\lotimes \mu(du)$ to denote the pushforward measure on ${\cal U}\times{\cal V}$ of $\mu\rotimes \nu$ under the map $(v,u)\mapsto (u,v)$.

\subsection{Prime ends} \label{Section-PE}

The concept of prime ends describes the boundary correspondence between simply connected subdomains of  the Riemann sphere $\ha\C=\C\cup\{\infty\}$ under conformal maps.
In this paper, a simply connected domain is always a connected open subset of $\ha\C$ that is conformally equivalent to the unit disc $\D=\{z\in\C:|z|<1\}$. Under this definition, $\ha\C$ and $\C$ are not simply connected domains. The prime ends in \cite{Ahl} is defined using crosscuts and extremal length. For the purpose of this paper, we define prime ends in the following equivalent way.

Let $D$ be a simply connected domain. The prime-end closure of $D$, denoted by $\ha D$, is the family of couples $(f,z)$, where $f$ is a conformal map from $\D$ onto $D$ and $z$ is a point in $\lin\D$, modulo the equivalence relation: $(f_1,z_1)\sim (f_2,z_2)$ when the continuation of $f_1^{-1}\circ f_2$ from $\D$ to $\lin\D$ maps $z_2$ to $z_1$. The $\ha D$ is equipped with the topology such that for any fixed conformal map $f_0$, the map $z\mapsto [(f_0,z)]$ is a homeomorphism between $\lin\D$ and $\ha D$. Such topology does not depend on the choice of $f_0$. We understand $D$ as a subset of $\ha D$ by identifying every $z\in D$ with the point $[(f,f^{-1}(z))]\in\ha D$.
The set $\ha D\sem D=: \ha\pa D$ is called the prime-end boundary of $D$, and every element in $\ha\pa D$ is called a prime end of $D$. If $h$ maps one simply connected domain $D_1$ conformally onto another $D_2$, then $h$ induces a homeomorphism $h$ between $\ha D_1$ and $\ha D_2$  by $h([(f,z)])=[(h\circ f,z)]$, which sends every prime end of $D_1$  to a prime end of $D_2$.

For every prime end $w$ of $D$, we define two pseudo prime ends $w^+$ and $w^-$ of $D$, which  lie immediately next to the genuine prime end $w$ in the counterclockwise (ccw) and clockwise (cw) direction, respectively. If $w$ is a prime end of $D_1$, and $f$ maps $D_1$ conformally onto $D_2$, then $f(w^\pm)$ is understood as the pseudo prime end $f(w)^\pm$ of $D_2$.


We write $\lin D^\#$ and $\pa^\# D$ for the closure and boundary of $D$, respectively, in $\ha\C$. 
Let $w\in \ha\pa D$ and $w'\in\pa^\# D$. If  $D\ni z_n\to w$ in $\ha D$ implies that $D\ni z_n\to w'$ in $\ha \C$, then we say that $w$ determines $w'$. If the converse is also true, then we identify the prime end $w$ with the point $w'$.

\begin{Example}
  If $\pa^\# D$ is locally connected, then every   $w\in\ha\pa D$ determines a point on $\pa^\# D$. If $D$ is a Jordan domain in $\ha\C$, then every $z\in \pa^\# D$ is identified with a prime end of $D$. In the case $D=\HH:=\{z\in\C:\Imm z>0\}$, $\pa^\# \HH=\ha\pa \HH=\ha\R:=\R\cup\{\infty\}$.
\end{Example}

Let $D_1\subset D_2$ be two simply connected domains in $\ha\C$. Let $w_1\in \ha\pa D_1$ and $w_2\in \ha D_2$. If $D_1\ni z_n\to w_1$ in $\ha D_1$ implies that $D_1\ni z_n\to w_2$ in $\ha D_2$, then we say that $w_1$ determines $w_2$. If the converse is also true, then we identify $w_1$ with $w_2$.
Let $D_1, D_2$ be two simply connected domains in $\ha\C$, and $w_j\in\pa\ha D_j$, $j=1,2$. If for $j=1,2$, the intersection of any neighborhood of $w_j$ in $D_j$ with $D_{3-j}$ is a neighborhood of $w_{3-j}$ in $D_{3-j}$, then we identify $w_1$ with $w_2$ and say that $D_1$ and $D_2$ share the prime end $w_1=w_2$.

If $D$ is a simply connected domain with two distinct prime ends $a,b$, the  prime end interval $(a,b)_{\ha \pa D}$ is the open interval on $\ha\pa D$ from $a$ to $b$ in the ccw direction. Specifically, if $f$ maps $\HH$ conformally onto $D$ such that $f(\infty)=b$, then $(a,b)_{\ha \pa D}\subset \ha \pa D$ is the $f$-image of the real interval $(f^{-1}(a),\infty)$.  We call $a$ and $b$ respectively the left and right end points of $(a,b)_{\ha \pa D}$. We also define  $[a^+,b]_{\ha\pa D}=\{a^+\}\cup (a,b)_{\pa D}\cup \{b\}$ and  $[a,b^-]_{\ha\pa D}=\{a\}\cup (a,b)_{\pa D}\cup \{b^-\}$. For $w\in\ha\pa D$, we define $\ha\pa_w D=(\ha \pa D\sem\{w\})\cup \{w^+,w^-\}$.

\subsection{Minkowski content} \label{Section-MC}
Let $\mA$  denote the Lebesgue measures on $\R$. Let $n\in\N$. For $S\subset\R^n$ and $r>0$, we use $S^r$ to denote the $r$-neighborhood of $S$, i.e., $\{x\in \R^n:\dist(x,S)<r\}$. Let $d\in (0,n)$. The $d$-dimensional Minkowski content of $S$ is defined by
\BGE \Cont_d(S)=\lim_{r\to 0^+} r^{d-n} \mA^n(S^r),\label{Cont=}\EDE
provided that the limit exists. For simplicity we omit the extra multiplicative constant in the literature. The Minkowski content of a set may not exist in general. But  we may always define the upper and lower Minkowski contents $\lin \Cont_d(S)$ and $\ulin\Cont_d(S)$   using (\ref{Cont=}) with $\limsup$ and $\liminf$, respectively, in place of $\lim$.

It is straightforward to show that
\BGE \lin\Cont_d(S_1\cup S_2)\le \lin\Cont_d(S_1)+\lin\Cont_d(S_2).\label{Cont+}\EDE
By \cite[Lemma 6.2]{Green-kappa-rho},
\BGE \ulin\Cont_d(S_1\cup S_2)\ge \ulin\Cont_d(S_1)+\ulin\Cont_d(S_2)-\lim_{\eps\to 0^+} \lin\Cont_d(S_1\cap (S_1\cap S_2)^\eps).\label{Cont-}\EDE

The following is the definition of Minkowski content measure in \cite[Definition 6.3]{Green-kappa-rho}.

\begin{Definition}
  Let $S\subset\R^n$ be the intersection of an open set and a closed set. A $d$-dimensional Minkowski content measure on $S$ is a Borel measure $\mu$ on $S$, which satisfies that, for any compact set $K\subset S$, (i) $\mu(K)<\infty$; and (ii) whenever $\mu(\pa_S K)=0$,
 $\Cont_d(K) $ exists and equals $\mu(K)$,
where $\pa_S K$ is the relative boundary of $K$ in $S$.
\end{Definition}

The following proposition is a summary of some results in  \cite[Section 6.1]{Green-kappa-rho}.

\begin{Proposition}
   Suppose $S$ has a $d$-dimensional  Minkowski content measure $\mu$. Then
  \begin{itemize}
    \item[(i)] $\mu$ is unique and $\sigma$-finite.
    \item [(ii)] For any compact set $K\subset S$, $\lin \Cont_d(K)\le \mu(K)$.
    \item [(iii)] For any closed set $F\subset\R$ and open set $G\subset\R$, if $\mu(\pa_S(S\cap F))=0$, then $\mu|_{F\cap G}$ is the $d$-dimensional Minkowski content measure on $S\cap F\cap G$.
    \item [(iv)] If $G$ is an open set containing $S$ such that $G\sem S$ is open, and $\phi:G\to \R^n$ is a conformal map, then $\nu:=\phi(|\phi'|^d \cdot \mu)$ is the $d$-dimensional Minkowski content measure on $\phi(S)$.
  \end{itemize}
  \label{Prop-Minkowski}
\end{Proposition}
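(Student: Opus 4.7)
\medskip
\noindent\textbf{Proof proposal.}
The plan is to treat the four items in sequence, deducing each from the definition of the Minkowski content measure plus elementary properties of $r$-neighborhoods, with (iv) carrying the real content.

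For (i), the key point is that the collection of compact sets $K\subset S$ with $\mu(\partial_S K)=0$ is rich. Given any compact $K\subset S$, the map $\eps\mapsto \mu(K^\eps\cap S)$ is monotone, hence continuous outside a countable set, so there is a decreasing sequence $\eps_n\downarrow 0$ with $\mu(\partial_S(K^{\eps_n}\cap S))=0$. Two $d$-dimensional Minkowski content measures must then agree on $K^{\eps_n}\cap S$ (both equal $\Cont_d(K^{\eps_n}\cap S)$), and by downward continuity (justified below via local finiteness) they agree on $K$. Since compact sets are a $\pi$-system generating the Borel $\sigma$-algebra on $S$, uniqueness follows. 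For $\sigma$-finiteness, write $S$ as a countable union of compact sets (possible because $S$ is the intersection of an open and a closed set in $\R^n$) and use that $\mu(K)<\infty$ for each compact $K$.

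For (ii), pick any compact $K\subset S$ and construct a decreasing sequence of compact sets $K_n\supset K$ with $K_n\subset S$ and $\mu(\partial_S K_n)=0$, using the same continuity trick. Then $\Cont_d(K_n)$ exists and equals $\mu(K_n)$, while monotonicity of $r$-neighborhoods gives $\lin\Cont_d(K)\le \Cont_d(K_n)$. Letting $n\to\infty$ and applying downward continuity of $\mu$ (valid since all the $K_n$ sit inside a single compact set of finite measure) yields $\lin\Cont_d(K)\le \mu(K)$. For (iii), suppose $K\subset S\cap F\cap G$ is compact with $(\mu|_{F\cap G})(\partial_{S\cap F\cap G}K)=0$. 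Because $G$ is open and $F$ is closed, the relative boundary $\partial_S K$ is contained in the disjoint union of the relative boundary $\partial_{S\cap F\cap G} K$ (which is $\mu$-null by assumption) and a subset of $\partial_S(S\cap F)$ (which is $\mu$-null by the hypothesis of (iii)). Therefore $\mu(\partial_S K)=0$, so $\Cont_d(K)=\mu(K)=(\mu|_{F\cap G})(K)$, giving the claim.

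The heart of the proposition is (iv). The plan is to cover a given compact $K\subset \phi(S)$ (with $\nu(\partial_{\phi(S)}K)=0$) by finitely many small closed boxes $B_i$ on which $|\phi'|$ and its inverse vary by at most a factor $1+\delta$, and whose relative boundaries are $\nu$-null; the last is arranged by the same continuity argument as before, perturbing the box sizes. On each piece $K_i=K\cap B_i$ with preimage $K_i'=\phi^{-1}(K_i)$, the conformal map $\phi$ is $(1\pm O(\delta))$-bi-Lipschitz after scaling by the representative value $c_i$ of $|\phi'|$, which turns an $r$-neighborhood of $K_i'$ into an $(c_i r)(1\pm O(\delta))$-neighborhood of $K_i$ up to lower-order error. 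Applying the definition of Minkowski content to each $K_i'$ (whose boundary in $S$ is $\mu$-null, again arranged via the continuity trick) gives $\Cont_d(K_i)=c_i^d\,\Cont_d(K_i')(1+O(\delta))=c_i^d\,\mu(K_i')(1+O(\delta))$, and summing and comparing with $\int_{K_i'}|\phi'|^d d\mu$ shows that $\nu(K)=\Cont_d(K)$ after letting $\delta\to 0$. The only obstacle worth naming is the bookkeeping for this cover-and-pass-to-the-limit step: boundary effects between adjacent $B_i$ contribute to both $\lin\Cont_d$ and $\ulin\Cont_d$ through (\ref{Cont+}) and (\ref{Cont-}), and one must verify that these cross-terms vanish in the limit, which is exactly where the hypothesis $\nu(\partial_{\phi(S)}K)=0$ together with local finiteness of $\mu$ is used.
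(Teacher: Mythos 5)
There is nothing in this paper to compare your argument against: Proposition \ref{Prop-Minkowski} is not proved here, it is explicitly quoted as ``a summary of some results in \cite[Section 6.1]{Green-kappa-rho}'', so the only internal precedent for this kind of argument is the paper's own proof of Lemma \ref{Mink-exist-S12}, which glues Minkowski content measures using (\ref{Cont+}), (\ref{Cont-}) and a generic choice of cutting hyperplanes. Judged on its own terms, your plan is sound and is very much in the same spirit as that lemma. Parts (i)--(iii) are essentially complete: the monotonicity/continuity trick does produce compact neighborhoods $\overline{K^{\eps}}\cap S$ of a compact $K\subset S$ with $\mu$-null relative boundary (note you should take $\eps$ small enough that $\overline{K^{\eps}}\cap S$ is compact, and choose $\eps$ good for both measures simultaneously in the uniqueness argument), and your boundary inclusion $\pa_S K\subset \pa_{S\cap F\cap G}K\cup\pa_S(S\cap F)$ in (iii) is correct because $G$ is open.

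For (iv), your outline is the right one (local comparison of $r$-neighborhoods under a map that is an approximate similarity on small boxes, generic box faces to get null boundaries, then (\ref{Cont+}) for the upper bound and an iterated (\ref{Cont-}) for the lower bound, exactly as in Lemma \ref{Mink-exist-S12}), but two points deserve to be stated more carefully rather than filed under ``bookkeeping''. First, to invoke $\Cont_d(K_i')=\mu(K_i')$ you need $\mu(\pa_S K_i')=0$, and this follows from the inclusion $\pa_S K_i'\subset \pa_S K'\cup\phi^{-1}(\pa B_i)$: the first piece is $\mu$-null precisely because $\nu(\pa_{\phi(S)}K)=0$ and $|\phi'|>0$, the second by the generic choice of box faces. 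Second, the vanishing of the cross-terms in (\ref{Cont-}) is \emph{not} where the hypothesis $\nu(\pa_{\phi(S)}K)=0$ enters, contrary to your closing sentence; the overlaps $K_i\cap K_j$ sit on the box boundaries, and their contribution is killed by bounding $\lin\Cont_d(K_i\cap(K_i\cap K_j)^{\eps})$ via the distortion estimate and part (ii) applied upstairs to $\mu$, then letting $\eps\to0$ and using that the box faces were chosen $\nu$-null. With those two uses of the hypotheses separated out, and the routine check that $\nu(K)=\int_{\phi^{-1}(K)}|\phi'|^d\,d\mu<\infty$ for compact $K$, your argument goes through.
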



We use ${\cal M}^d_S$ to denote the $d$-dimensional Minkowski content measure on $S$ when it exists and omit the superscript $d$ when it is fixed in the context.

\begin{Lemma}
 Let $S\subset\R^n$ be the intersection of an open set and a closed set.  Let $(S_\lambda)_{\lambda\in\Lambda}$ be a   family of relatively open subset of $S$ such that $S=\bigcup_{\lambda\in\Lambda} S_\lambda$. Suppose ${\cal M}^d_{S_\lambda}$ exists for each $\lambda$.   Then ${\cal M}^d_S$ also exists. \label{Mink-exist-S12}
\end{Lemma}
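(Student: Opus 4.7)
The plan is to paste the local measures $\mu_\lambda := {\cal M}^d_{S_\lambda}$ together along a countable subcover using the consistency provided by Proposition \ref{Prop-Minkowski}, then reduce the existence of $\Cont_d(K)$ for compact $K \subset S$ with $\mu(\pa_S K) = 0$ to finitely many pieces, each lying in a single $S_\lambda$ and carved out by a suitably shifted cubic grid, and finally combine them via (\ref{Cont+}) and (\ref{Cont-}).

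First, by second countability of $\R^n$, I would extract a countable subcover $\{S_n\}_{n \ge 1}$ with $S_n = G_n \cap S$ for some open $G_n \subset \R^n$, and set $\mu_n := {\cal M}^d_{S_n}$. Applying Proposition \ref{Prop-Minkowski}(iii) with ``$S$''$=S_n$, $F = \R^n$, $G = G_m$ (so that $\pa_{S_n}(S_n \cap F) = \emptyset$) shows that $\mu_n|_{S_n \cap S_m}$ is a $d$-dimensional Minkowski content measure on $S_n \cap S_m$; by symmetry and Proposition \ref{Prop-Minkowski}(i) it coincides with $\mu_m|_{S_n \cap S_m}$. Setting $A_n := S_n \setminus \bigcup_{k<n} S_k$ and $\mu(B) := \sum_n \mu_n(B \cap A_n)$ yields a Borel measure on $S$, and the same consistency applied to an arbitrary $S_\lambda$ against each $S_n$ gives $\mu|_{S_\lambda} = \mu_\lambda$ for every $\lambda \in \Lambda$.

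Next, for compact $K \subset S$, a finite cover of $K$ by open balls $B_j$ with $\lin{B_j} \cap S \subset S_{\lambda_j}$ yields $\mu(K) \le \sum_j \mu_{\lambda_j}(K \cap \lin{B_j}) < \infty$. Assuming $\mu(\pa_S K) = 0$, the Lebesgue number lemma produces $\delta_0 > 0$ such that every subset of $K$ of diameter $< \delta_0$ lies in some $S_\lambda$. I would impose a cubic grid on $\R^n$ of side $\delta < \delta_0/\sqrt{n}$; since $\mu(K) < \infty$ and the hyperplane slices $K \cap H_t$ are pairwise disjoint, only countably many $t$ can give $\mu(K \cap H_t) > 0$, so an arbitrarily small translation of the grid ensures every grid face meets $K$ in a $\mu$-null set. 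For each cube $Q$ meeting $K$, $L_Q := K \cap \lin{Q}$ is compact in some $S_{\lambda(Q)}$ and $\pa_S L_Q \subset \pa_S K \cup (K \cap \text{grid faces})$ is $\mu$-null, so the defining property of ${\cal M}^d_{S_{\lambda(Q)}}$ delivers $\Cont_d(L_Q) = \mu(L_Q)$; the same reasoning gives $\Cont_d(L_Q \cap L_{Q'}) = 0$ for $Q \ne Q'$, since $L_Q \cap L_{Q'}$ sits on a $\mu$-null grid face.

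The upper bound $\lin\Cont_d(K) \le \sum_Q \mu(L_Q) = \mu(K)$ is immediate from (\ref{Cont+}) and the vanishing of pairwise $\mu$-overlaps. The matching lower bound comes by induction on the finitely many cubes using (\ref{Cont-}), with $A$ the processed union and $B$ the next $L_Q$: the correction term $\lim_{\eps \to 0^+} \lin\Cont_d(A \cap (A \cap B)^\eps)$ will vanish because, splitting $A$ by its grid pieces $L_{Q_i}$ and using $A \cap (A \cap B)^\eps \subset \bigcup_i L_{Q_i} \cap \lin{(A \cap B)^{\eps'}}$ for a generic $\eps' > \eps$, Proposition \ref{Prop-Minkowski}(ii) applied within each $S_{\lambda(Q_i)}$ bounds the quantity by $\sum_i \mu(L_{Q_i} \cap \lin{(A \cap B)^{\eps'}})$, which decreases to $\mu(A \cap B) = 0$ as $\eps' \to 0$. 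The main obstacle is precisely this overlap control: because $A$ spans several $S_\lambda$'s, the Minkowski-content bounds of Proposition \ref{Prop-Minkowski} cannot be applied to it directly and must first be localized via the grid, so the bookkeeping of the grid translation together with the choice of the auxiliary radius $\eps'$ is the technical heart of the argument.
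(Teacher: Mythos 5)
Your proposal is correct and follows essentially the same route as the paper's proof: glue the $\mu_\lambda$ along a countable subfamily via Proposition \ref{Prop-Minkowski} (i,iii), cut $K$ by a translated cubic grid (with side length below the Lebesgue number and $\mu$-null faces) so each piece is a compact subset of a single $S_\lambda$, and combine the pieces by a finite induction using (\ref{Cont+}), (\ref{Cont-}) and Proposition \ref{Prop-Minkowski} (ii). The only cosmetic differences are that the paper groups the cubes by their assigned index and evaluates the correction term in (\ref{Cont-}) on the single newly added piece, so that Proposition \ref{Prop-Minkowski} (ii) applies directly, whereas you keep individual cube pieces and localize the correction term cube by cube with an auxiliary radius; both versions work.
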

\begin{proof}
Since $S$  could be written as a countable union of compact sets, we may assume that $\Lambda$ is countable. Let $\mu_j ={\cal M}^d_{S_j}$. By Proposition \ref{Prop-Minkowski} (i,iii), $\mu_j|_{S_j\cap S_k}=\mu_k|_{S_j\cap S_k}$ for any $j\ne k\in\Lambda $. So there is a $\sigma$-finite measure $\mu$ on $S=\bigcup_{\lambda\in\Lambda} S_\lambda$ such that $\mu|_{S_\lambda}=\mu_\lambda$ for all $\lambda\in\Lambda$. We will show that $\mu={\cal M}^d_S$. It suffices to show that, for any compact set $K\subset S$, (i) $\mu(K)<\infty$ and (ii) when $\mu(\pa_S K)=0$, $\Cont_d(K)=\mu(K)$.

Let $K$  be a compact subset of $S$.
Since  $(S_j)$ is a relatively open covering of $K$ in $S$, there is $r_0>0$ such that any subset of $K$ with diameter less than $r_0$ is contained in at least one $S_j$. Since $\mu$ is $\sigma$-finite, after some translation we may assume that, for any $1\le j\le n$ and $q\in\Q$, the hyperplane $\{\ulin x\in\R^n: x_j=q\}$ has no $\mu$-mass. Pick  $r\in\Q\cap (0,r_0/\sqrt n)$. Let ${\cal Q}$ denote the space of $n$-dimensional cubes $\prod_{j=1}^n [ m_j r,(m_j+1)r]$, where $m_1,\dots,m_n\in\Z$. Then any $Q\in\cal Q$ has diameter $\sqrt n r<r_0$, and so $Q\cap K$ is contained in at least one $S_j$.

Let ${\cal Q}^K$ denote the set of $Q\in\cal Q$ such that $Q\cap K\ne \emptyset$. Then ${\cal Q}^K$ is finite since $K$ is bounded.
For each $j\in \Lambda$, let ${\cal Q}^K_j$ denote the set of $Q\in{\cal Q}^K$ such that $ Q\cap K\subset S_j$. Then ${\cal Q}^K=\bigcup_{j\in\Lambda} {\cal Q}^K_j$. Since ${\cal Q}^K$ is finite, we may find a finite subset $\Lambda'\subset \Lambda$ and a partition $\til{\cal Q}^K_j$, $j\in\Lambda'$, of ${\cal Q}^K$, such that $\til {\cal Q}^K_j\subset {\cal Q}^K_j$ for each $j\in\Lambda'$.  We may assume $\Lambda'=\N_m:=\{j\in\N:j\le m\}$, where $m \in\N$. Let $K_j=K\cap \bigcup_{Q\in \til {\cal Q}^K_j} Q$, $1\le j\le m$. Then $K=\bigcup_{j=1}^m K_j$, and each $K_j$ is a compact subset of $S_j$. For any $j\ne k$, we have $K_j\cap K_k\subset \bigcup_{s=1}^n \bigcup_{q\in\Q} \{\ulin x\in\R^n:x_s=q\}$, and so  $\mu(K_j\cap K_k)=0$.   Since $\mu|_{S_j}=\mu_j$ and $\mu_j={\cal M}^d_{S_j}$, we get $\mu(K_j)=\mu_j(K_j)<\infty$. Thus, $\mu(K)\le\sum_{j=1}^n \mu(K_j)<\infty$.

Now assume $\mu(\pa_S K)=0$. For each $j\in\N_m$, from $\pa_{S_j} K_j\subset \pa_S K\cup (K_j\cap \bigcup_{k\ne j} K_k)$ we get $\mu_j(\pa_{S_j} K_j)=0$. Since  $\mu_j={\cal M}^d_{S_j}$ and $\mu|_{S_j}=\mu_j$, we get $\Cont_d(K_j)=\mu_j(K_j)=\mu(K_j)$.  If $m=1$, then   $\Cont_d(K)=\mu(K)$ since $K=K_1$. Suppose $m\ge 2$. For  $1\le k\le m$, let $K^\cup_k=\bigcup_{j=1}^k K_j$. Since $\mu(K_j\cap K_k)=0$ for $k\ne j$, we have $\mu(K^\cup_k)=\sum_{j=1}^k \mu(K_j)$.
  We will prove by induction that $\Cont_d(K^\cup_k)=\mu(K^\cup_k)$ for each $k\in\N_m$. Then  we get the desired equality $\Cont_d(K)=\mu(K)$ since $K=K^\cup_m$.

  We have known that $\Cont_d(K^\cup_k)=\mu(K^\cup_k)$ for $k=1$ since  $K^\cup_1=K_1$. Suppose we have proved the equality for some $k<m$. Since $K^\cup_{k+1}=K^\cup_k\cup K_{k+1}$,  from (\ref{Cont+}) we get
$$\lin\Cont_d(K^\cup_{k+1})\le \Cont_d(K^\cup_k)+\Cont_d(K_{k+1})=\mu(K^\cup _k)+\mu(K_{k+1})=\mu(K^\cup_{k+1}).$$
  From (\ref{Cont-}) and Proposition \ref{Prop-Minkowski} (ii), we get
  $$\ulin\Cont_d(K^\cup_{k+1})\ge \Cont_d(K^\cup_k)+\Cont_d(K_{k+1})-\lim_{\eps\to 0^+} \mu_{k+1}(K_{k+1}\cap(K_{k+1}\cap K^\cup_k)^\eps)$$
  $$=\mu(K^\cup_k)+\mu(K_{k+1})-\mu(K_{k+1}\cap K^\cup _k)=\mu(K^\cup_{k+1}).$$
  Combining the above two displayed formulas  we get $\Cont_d(K^\cup_{k+1})=\mu(K^\cup_{k+1})$. The induction step is finished, and the proof is complete.
\end{proof}

\subsection{Continuous processes}\label{section-lifetime}
In this section we recall the setup used in \cite[Section 2]{decomposition} with some slight change of symbols.
Let
$$\Sigma=\bigcup_{0<\ha T\le \infty} C([0,\ha T),\R).$$
For each $f\in\Sigma$, the lifetime $\ha T_f$ of $f$ is the extended number in $(0,\infty]$ such that $[0,\ha T_f)$ is the domain of $f$.

For $f\in\Sigma$ and $t\in(0,\infty]$,  the killing map $\K_t:\Sigma\to\Sigma$ is defined by $\K_t(f)=f|_{[0,\ha T_f\wedge t)}$.
For a function $\tau:\Sigma\to (0,\infty]$, we define   $\K_\tau:\Sigma\to\Sigma$ by $\K_\tau(f)=\K_{\tau(f)}(f)$.
Define $$\Sigma_{\ha T}=\{f\in\Sigma: f(\ha T_f^-):=\lim_{t\to \ha T_f^-} f(t)\in\R\},\quad \Sigma_{\ha T}^{<\infty}=\{f\in\Sigma_{\ha T}:\ha T_f<\infty\} ,$$ $$\Sigma_{\oplus}=\{(f,g)\in\Sigma_{\ha T}\times \Sigma:   g(0)=f(\ha T_f^-)\},\quad \Sigma^{<\infty}_{\oplus}=\{(f,g)\in\Sigma_\oplus: \ha T_f<\infty\}.$$
For $(f,g)\in\Sigma^{<\infty}_{\oplus}$, we define the continuation $f\oplus g\in\Sigma$ by
$$f\oplus g(t)=\left\{\begin{array}{ll} f(t), & 0\le t<\ha T_f;\\
g(t-\ha T_f), & \ha T_f\le t<\ha T_f+\ha T_g=:\ha T_{f\oplus g}.
\end{array}
\right.$$
We define $f\ha\oplus g=(f\oplus g,\ha T_f)$ to record the time that $f$ and $g$ are joined together. We may  recover $f$ and $g$ from $f\ha\oplus g$. Let
$\Sigma_{(0)}=\{g\in\Sigma:g(0)=0\}$. For $f\in\Sigma_{\ha T}^{<\infty}$   and $g\in \Sigma_{(0)}$, we define $f\oplus_{+} g=f\oplus(f(\ha T_f^-)+g)$ and $f\ha\oplus_{+} g=f\ha\oplus (f(\ha T_f^-)+g)$.

For a function $\tau:\Sigma\to [0,\infty]$, let $\Sigma_{\oplus,\tau}=\{(f,g)\in \Sigma^2: 0\le \tau(f)<\ha T_f,f(\tau(f))=g(0)\}$. For $(f,g)\in\Sigma_{\oplus,\tau}$, we define
$$f\oplus_\tau g(t)=\left\{\begin{array}{ll} f(t), & 0\le t<\tau(f);\\
g(t-\tau(f)), & \tau(f)\le t<\tau(f)+\ha T_g=:\ha T_{f\oplus_\tau g}.
\end{array}
\right.$$
Note that if $\tau(f)>0$, then  $f\oplus_\tau g=\K_\tau(f)\oplus g$.

Let $\Sigma_t=\{f\in\Sigma:\ha T_f>t\}$, $0\le t<\infty$. Then $(\Sigma_t)$ is a decreasing family of subspaces of $\Sigma$ with $\Sigma_0=\Sigma$ and $\bigcap_{t=0}^\infty \Sigma_t=C([0,\infty),\R)$. Let $\pi_t$ be the map $\Sigma_t\ni f\mapsto f(t)$.
Define a filtration $\F^*=(\F^*_t)_{t\ge 0}$  on $\Sigma$ such that for $0\le t< \infty$, $\F^*_t$ is the $\sigma$-algebra generated by $(\Sigma_s,\pi_s)$ for $0\le s\le t$.  Let $\F$ be the right-continuation of $\F^*$, and $\F_\infty=\sigma(\bigcup_{0\le t<\infty}\F_t)$. Every probability measure on $(\Sigma,\F_\infty)$ is the law of a continuous stochastic process with random lifetime.  For any measure $\mu$ on $(\Sigma,\F_\infty)$, we use $\F^\mu$   to denote the $\mu$-completion of $\F$.

Let $\mu$ and $\nu$ be two  measures on $(\Sigma,\F_\infty)$, which are $\sigma$-finite on $\F_0$ (and so are $\sigma$-finite on each $\F_t$). We say that $\nu$ is locally absolutely continuous w.r.t.\ $\mu$, and write $\nu\tl \mu$, if for any $0\le t<\infty$, $\nu|_{ \F_t\cap\Sigma_t}$ is absolutely continuous w.r.t.\ $\mu|_{\F_t\cap\Sigma_t}$.
The process $M_t:=\frac{d\nu|_{\F_t\cap\Sigma_t}}{d\mu|_{\F_t\cap\Sigma_t}}$, $0\le t<\infty$, is called the  Radon-Nikodym (RN for short) process of $\nu$ w.r.t.\ $\mu$. A simple and useful fact is that, given $\mu$ and $(M_t)$, there is at most one $\nu$ such that $(M_t)$ is the RN process of $\nu$ w.r.t.\ $\mu$.

\begin{Example}
  Let $f$ be a random continuous process with random lifetime. Let $\tau$ be a positive $\F$-stopping time and $g=\K_\tau(f)$. Then the law of $g$ is locally absolutely continuous w.r.t.\ the law of $f$, and the RN process is $({\ind}_{[0,\tau)}(t))$. \label{example-K}
\end{Example}

The following  proposition is \cite[Proposition 2.1]{decomposition}.

\begin{Proposition}
Let $\mu$ be a measure on $(\Sigma,\F_\infty)$, which is $\sigma$-finite on $\F_0$.   Let $(\Upsilon,{\cal G})$ be a  measurable space. Let $\nu:\Up\times \F_\infty\to[0,\infty]$ be such that for every $\up\in\Up$, $\nu(\up,\cdot)$ is a finite measure on $\F_\infty$ that is locally absolutely continuous w.r.t.\ $\mu$. Suppose  there are measurable maps $M_t:(\Up,{\cal G})\times (\Sigma,\F_t)\to[0,\infty)$ for each $t\ge 0$ such that, for every $\up\in\Up$, the RN process of $\nu(\up,\cdot)$ w.r.t.\ $\mu$ is $(M_t(\up,\cdot))_{t\ge 0}$.  Then $\nu$ is a kernel from $(\Upsilon,{\cal G})$ to $(\Sigma,\F_\infty)$. Moreover, if $\xi$ is a $\sigma$-finite measure on $(\Upsilon,{\cal G})$ such that $\mu$-a.s., $\int_\Up M_t(\up,\cdot)d\xi(\up)<\infty$ for all $t\ge 0$, then $\int_\Up \nu(\up,\cdot) \xi(d\up)\tl\mu$, and the corresponding RN process
is $(\int_\Up M_t(\up,\cdot)d\xi(\up))_{t\ge 0}$. \label{Prop-fubini}
\end{Proposition}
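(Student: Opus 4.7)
The plan is to handle the two assertions in turn, with Fubini--Tonelli doing essentially all the work and Dynkin's $\pi$-$\lambda$ theorem transferring measurability from a natural $\pi$-system to all of $\F_\infty$.

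For the kernel property, I need $\up\mapsto\nu(\up,F)$ to be $\cal G$-measurable for every $F\in\F_\infty$. The natural $\pi$-system to work with is $\cal P:=\bigcup_{t\ge 0}(\F_t\cap\Sigma_t)$: given $A_i\in\F_{s_i}\cap\Sigma_{s_i}$ for $i=1,2$, one has $A_1\cap A_2\subset \Sigma_{s_1\vee s_2}$ and $A_1\cap A_2\in\F_{s_1\vee s_2}$, so $\cal P$ is closed under finite intersections. The family $\cal P$ generates $\F_\infty$ because each generator of $\F^*_t$ (the sets $\Sigma_s$ and $\{\pi_s\in A\}$ for $s\le t$) already sits inside $\F_s\cap\Sigma_s\subset\cal P$, and the right-continuation passage to $\F_t$ does not enlarge $\F_\infty=\sigma(\bigcup_t\F^*_t)$. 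For $F\in\F_t\cap\Sigma_t$ the hypothesis gives $\nu(\up,F)=\int_F M_t(\up,\omega)\,d\mu(\omega)$; since $\mu|_{\F_0}$ is $\sigma$-finite, a countable partition of $\Sigma$ by finite-$\mu$-mass $\F_0$-sets induces $\sigma$-finiteness of $\mu|_{\F_t\cap\Sigma_t}$, and Fubini--Tonelli delivers $\cal G$-measurability of $\up\mapsto\nu(\up,F)$ on $\cal P$. The family $\cal D$ of $F\in\F_\infty$ on which this measurability holds is a $\lambda$-system (closure under complements uses $\Sigma\in\cal P\subset\cal D$ together with the finiteness of $\nu(\up,\cdot)$; closure under increasing unions is monotone convergence), so Dynkin's theorem yields $\cal D=\F_\infty$.

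For the integrated statement, set $\nu^\xi:=\int_\Up\nu(\up,\cdot)\,d\xi(\up)$ (well-defined thanks to the kernel property). For $F\in\F_t\cap\Sigma_t$, two applications of Fubini--Tonelli give
\BGEN
\nu^\xi(F)=\int_\Up\int_F M_t(\up,\omega)\,d\mu(\omega)\,d\xi(\up)=\int_F\Big(\int_\Up M_t(\up,\omega)\,d\xi(\up)\Big)\,d\mu(\omega),
\EDEN
where the inner integral on the right is $\F_t$-measurable and, by hypothesis, $\mu$-a.s.\ finite. This exhibits $\int_\Up M_t(\up,\cdot)\,d\xi(\up)$ as the density of $\nu^\xi|_{\F_t\cap\Sigma_t}$ with respect to $\mu|_{\F_t\cap\Sigma_t}$ for every $t$, which is precisely $\nu^\xi\tl\mu$ with the claimed RN process; specializing to $t=0$ also supplies the $\sigma$-finiteness of $\nu^\xi$ on $\F_0$ needed for $\tl$ to make sense.

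The step I expect to require the most care is the generation claim for $\cal P$: the bookkeeping is subtle because $\Sigma_s\in\F_s$ does not imply $\Sigma_s\in\F_t\cap\Sigma_t$ for $s<t$ (indeed $\Sigma_s\supsetneq\Sigma_t$), yet the observables $\{\pi_s\in A\}$ automatically sit inside $\Sigma_s$ because $\pi_s$ is only defined there. Once this is clean, the Dynkin argument and both Fubini applications are routine.
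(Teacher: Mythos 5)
Your argument is correct. There is no in-paper proof to compare it with: the proposition is quoted verbatim from \cite[Proposition 2.1]{decomposition}, so your route --- measurability of $\up\mapsto\nu(\up,F)$ on the $\pi$-system $\bigcup_{t\ge 0}(\F_t\cap\Sigma_t)$ via the joint measurability of $M_t$ and Tonelli (using $\sigma$-finiteness of $\mu$ on $\F_0\subset\F_t$), Dynkin's theorem to pass to $\F_\infty=\sigma\big(\bigcup_t(\F_t\cap\Sigma_t)\big)$, and then two applications of Fubini--Tonelli for the integrated statement, with the $t=0$ case giving $\sigma$-finiteness of $\int_\Up\nu(\up,\cdot)\,\xi(d\up)$ on $\F_0$ --- is the standard argument and correctly fills in the citation. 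One small point of hygiene: in the Dynkin step you should check closure of $\cal D$ under proper differences $B\setminus A$ for $A\subset B$ (not merely under complements and increasing unions, which by themselves do not characterize a $\lambda$-system); the very same subtraction argument you invoke, $\nu(\up,B\setminus A)=\nu(\up,B)-\nu(\up,A)$ with finiteness of $\nu(\up,\cdot)$, handles this without change.
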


Let $f\in\Sigma$. For a continuous and strictly increasing function $\theta$ on $[0,\ha T_f)$ with $\theta(0)=0$, $g:=f\circ \theta^{-1}\in \Sigma$ is called the time-change of $f$ via $\theta$, and we write $f\sim g$. If $f_1\sim f_2$ and $g_1\sim g_2$, we write $(f_1,g_1)\sim (f_2,g_2)$.
We will be interested in the quotient spaces $\til\Sigma:=\Sigma/\sim$ and $\til\Sigma^2=\Sigma^2/\sim$, and use $\pi$ and $\pi^2$ to denote the corresponding natural projections.
An element of $\til\Sigma$, often denoted by $[f]$, where $f\in\Sigma$, is called an MTC (modulo time-changes) function or curve. Sometimes we treat $[f]$ as a the set of all $g\in\Sigma$ such that $f\sim g$.

A set $S\subset\Sigma$ (resp. $S\subset \Sigma^2$) is called time-change invariant (TCI for short) if  $S=\pi^{-1}(\pi(S))$ (resp.\ $S=(\pi^2)^{-1}(\pi^2(S))$). It is clear that $\Sigma_{\ha T}$, $\Sigma_{(0)}$  and  $ \Sigma_{\oplus} $ are TCI, but $\Sigma_{\ha T}^{<\infty}$  and  $ \Sigma_{\oplus}^{<\infty} $ are not.  Let $\til\Sigma_{\ha T},\til\Sigma_{(0)},\til\Sigma_{\oplus}$ be the images of $\Sigma_{\ha T}$, $\Sigma_{(0)}$  and  $ \Sigma_{\oplus} $  under $\pi$ or $\pi^2$, respectively.
The operation $\oplus$ induces the operation $\til \Sigma_{\oplus}\ni ([f],[g])\mapsto [f]\oplus [g]\in \til\Sigma$ defined by $[f]\oplus [g]=[f_0\oplus g_0]$, where  $f_0\in[f]$ and $g_0\in [g]$ satisfy $(f_0,g_0)\in \Sigma_{\oplus}^{<\infty}$.   Such definition does not depend on the choice of $f_0$ and $g_0$.

Let $f\in\Sigma_{\ha T}$ and $\phi$ be continuously and strictly decreasing on $(0,\ha T_f)$ such that $\phi(\ha T_f^-)=0$. Let $f_\phi\in\Sigma_{\ha T}$ be defined by $f_\phi(0)=f(\ha T_f^-)$ and $f_\phi(t)=f\circ \phi^{-1}(t)$ for $0<t<\phi(0^+)$. Then $[f_\phi]\in \til\Sigma_{\ha T}$ is called the time-reversal of $[f]$, which does not depend on either $f$ or $\phi$, and the map  ${\cal R}:[f]\mapsto [f_\phi]$ is called the time-reversal map.

A function $\tau$ defined on $\Sigma$ is called a TCI time on $\Sigma$ if $\tau(f)\in [0,\ha T_f]$ for any $f\in\Sigma$, and if $f_2\in\Sigma$ is a time-change of $f_1\in\Sigma$ via some $\theta$, then $\tau(f_2)=\theta(\tau(f_1))$, where $\theta(\ha T_{f_1})$ is understood as $\ha T_{f_2}$. For example, $0$ and $\ha T$ are TCI times.
For $[f]\in \til\Sigma$, a function $\tau$ defined on $[f]$ as a subset of $\Sigma$ is called a TCI time on $[f]$ if the above properties hold for any $f_1,f_2\in [f]$. Let $[f]\in\til\Sigma$ and $\tau$ be a TCI time on $\Sigma$ or $[f]$, which is less than $\ha T$. We define $[f](\tau)=f(\tau(f))$, $[f]([0,\tau])=f([0,\tau(f)] )$, and $\K_\tau([f])=[\K_\tau(f)]$ when $\tau(f)>0$.
We call $[f]([0,\ha T))=f([0,\ha T_f))$ the image of $[f]$ and with a slight abuse of notation denote it by $[f]$.
If   $[g]\in\til\Sigma$ satisfies $[g](0)=[f](\tau)$ for some TCI time $\tau$ on $\Sigma$ or $[f]$, we define $[f]\oplus_\tau [g]=[f\oplus_\tau g]$. These definitions do not depend on the choices of the representatives of $[f]$ and $[g]$.

\begin{Remark}
  We emphasize that a TCI time is not defined on $\til\Sigma$ or any subset of $\til\Sigma$, but on $\Sigma$ or some $[f]$ as a subset of $\Sigma$.
\end{Remark}

\begin{Example}
   Given a set $S$, let $\sigma_S(f)=\inf(\{t\in[0,\ha T_f):f(t)\in S\}\cup\{\ha T_f\})$. Then $\sigma_S$ is a TCI $\F$-stopping time. 
\end{Example}

 For a TCI $\F$-stopping time $\tau$, let $\til\F_\tau$ be the family of the $\pi$-images of TCI sets in $\F_\tau$. We understand $\til\Sigma$ as a measurable space equipped with the $\sigma$-algebra $\til\F_{\ha T}$, and a probability measure on $(\til\Sigma,\til\F_{\ha T})$ as the law of some random process modulo time-changes.

The binary operations could be naturally extended to measures.
Let $\mu$ be a $\sigma$-finite measure  on $\Sigma$. If $\tau$ is a $\sigma$-finite $\mu$-kernel  from $\Sigma$ to $(0,\infty]$, we use $\K_\tau(\mu)$ to denote the pushforward  of $\mu\rotimes \tau$ under the map $(f,t)\mapsto \K_t(f)$.  If  $\nu$ is a $\sigma$-finite $\mu$-kernel from $\Sigma$ to $\Sigma$ such that $\mu\rotimes \nu$ is supported by $\Sigma_{\oplus}^{<\infty}$ (resp.\ $\Sigma_{\ha T}^{<\infty}\times \Sigma_{(0)}$), then we use $\mu\oplus\nu$ and $\mu\ha\oplus \nu$ (resp.\ $\mu\oplus_{+}\nu$ and $\mu\ha\oplus_{+}\nu$) to denote  the pushforward  of $\mu\rotimes\nu$ under the maps $(f,g)\mapsto f\oplus g$ and $(f,g)\mapsto f\ha\oplus g$ (resp.\ $(f,g)\mapsto f\oplus_{+} g$ and $(f,g)\mapsto f\ha\oplus_{+} g$). Let $\tau:\Sigma\to [0,\infty]$ be  $\F^\mu_\infty$-measurable.
If $\nu$ is a $\sigma$-finite $\mu$-kernel from $\Sigma$ to   $\Sigma$  such that $\mu\rotimes\nu$ is supported by $\Sigma_{\oplus,\tau}$, then we define $\mu\oplus_\tau \nu$ to be the  pushforward measure of $\mu\rotimes \nu$ under the map $(f,g)\mapsto f\oplus_\tau g$.

Let $\til\mu$ be a $\sigma$-finite measure on $\til\Sigma$. If $\tau$ is an measurable TCI   time, then $\K_\tau(\til\mu)$ is defined as the pushforward of $\til\mu $ under the map  $[f]\mapsto\K_\tau([f])$. If $\til\nu$ is a $\sigma$-finite $\til\mu$-kernel from $\til\Sigma$ to $\til\Sigma$ such that $\til\mu\rotimes \til\nu$ is supported by $\til \Sigma_{\oplus}$, then we use $\til\mu\oplus\til \nu$  to denote the pushforward of $\til\mu\rotimes \til\nu$ under the map $([f],[g])\mapsto [f]\oplus [g]$.
If  $\til\mu\rotimes \til\nu$ is supported by $\til \Sigma_{\oplus,\tau}$ for some measurable  TCI   time $\tau$, then we define $\til\mu\oplus_\tau\til\nu$ to be the pushforward of $\til\mu\rotimes \til\nu$ under the map $([f],[g])\mapsto [f]\oplus_\tau [g]$.

We also define $f\oplus[g]=[f]\oplus [g]$, $f\oplus_\tau [g]=[f]\oplus_\tau[g]$, $\mu\oplus \til \nu=\pi(\mu)\oplus \til \nu$ and $\mu\oplus_\tau \til \nu=\pi(\mu)\oplus_\tau \til \nu$ as long as the righthand sides are well defined.

The following proposition is   the remark right after \cite[Proposition 2.2]{decomposition}.

\begin{Proposition}
Let $\mu$ be a probability measure on $\Sigma$, and $(\theta_t)_{t\ge 0}$ be an $\F^\mu$-adapted right-continuous increasing process with $\theta_0=\theta_{0^-}=0$ and  $\EE_\mu[\theta_\infty]<\infty$. Then  $\K_{d\theta}(\mu)\tl \mu$ with the RN process being $(\EE_\mu[\theta_\infty| \F_t^\mu]-\theta_t)_{t\ge 0}$, where $d\theta$ is the measure determined by $\theta$.
\label{trancation-remark}
\end{Proposition}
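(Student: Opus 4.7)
The plan is to verify the statement by direct computation of $\K_{d\theta}(\mu)$ on sets of the form $A \in \F_t \cap \Sigma_t$. First I would unfold the pushforward definition: for such $A$,
$$\K_{d\theta}(\mu)(A) \;=\; \int_\Sigma \int_{[0,\infty)} \ind_A(\K_s(f)) \, d\theta_s(f) \, d\mu(f).$$
The key observation is that the inner integrand simplifies drastically. Since $A \subset \Sigma_t$, membership $\K_s(f) \in A$ forces $\ha T_{\K_s(f)} = s \wedge \ha T_f > t$, which requires both $s > t$ and $\ha T_f > t$. Once $s > t$ and $\ha T_f > t$, the path $\K_s(f)$ agrees with $f$ on $[0,t]$, and because $A$ is $\F_t$-measurable its indicator depends only on this restriction, so $\ind_A(\K_s(f)) = \ind_A(f)$; moreover $\ha T_f > t$ is automatic on $A$. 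Thus the integrand reduces to $\ind_A(f)\ind_{(t,\infty)}(s)$.

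Next I would perform the $d\theta_s(f)$ integral: using right-continuity of $\theta$ at $t$ together with $\theta_0 = 0$, we have $\int_{(t,\infty)} d\theta_s(f) = \theta_\infty(f) - \theta_t(f)$ (well-defined $\mu$-a.s.\ since $\EE_\mu[\theta_\infty] < \infty$). Hence
$$\K_{d\theta}(\mu)(A) \;=\; \int_A (\theta_\infty - \theta_t)\, d\mu.$$
Finally, since $\theta$ is $\F^\mu$-adapted, $\theta_t$ is $\F_t^\mu$-measurable, and because $A \in \F_t \subset \F_t^\mu$, the tower property gives
$$\int_A (\theta_\infty - \theta_t)\, d\mu \;=\; \int_A \bigl(\EE_\mu[\theta_\infty \mid \F_t^\mu] - \theta_t\bigr)\, d\mu.$$
This identifies $\EE_\mu[\theta_\infty \mid \F_t^\mu] - \theta_t$ as the density of $\K_{d\theta}(\mu)|_{\F_t \cap \Sigma_t}$ w.r.t.\ $\mu|_{\F_t \cap \Sigma_t}$; by definition this is precisely the RN process of $\K_{d\theta}(\mu)$ w.r.t.\ $\mu$. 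Along the way, finiteness of $\K_{d\theta}(\mu)$ (hence $\sigma$-finiteness on $\F_0$) follows from $|\K_{d\theta}(\mu)| = \EE_\mu[\theta_\infty] < \infty$, so local absolute continuity makes sense.

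There is no real obstacle here — the argument is essentially bookkeeping. The only delicate point worth flagging is the identification $\{\K_s(f) \in A\} = \{s > t\} \cap \{f \in A\}$ for $A \in \F_t \cap \Sigma_t$, which hinges on the careful definition of $\Sigma_t$ and of the lifetime $\ha T_{\K_s(f)} = s \wedge \ha T_f$; this replaces the strict inequality $s > t$ (rather than $s \ge t$) and makes right-continuity of $\theta$ the exact regularity needed to equate $\int_{(t,\infty)} d\theta_s$ with $\theta_\infty - \theta_t$. Everything else is a direct application of Fubini and the tower property.
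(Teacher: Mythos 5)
The paper itself gives no argument for this proposition: it is imported verbatim as ``the remark right after Proposition 2.2'' of \cite{decomposition}, so there is no in-text proof to compare against. Your direct computation is a correct, self-contained substitute: unfolding the pushforward $\K_{d\theta}(\mu)=$ image of $\mu\rotimes d\theta$ under $(f,s)\mapsto \K_s(f)$, identifying $\{(f,s):\K_s(f)\in A\}=A\times(t,\infty)$ for $A\in\F_t\cap\Sigma_t$, evaluating $\int_{(t,\infty)}d\theta_s=\theta_\infty-\theta_t$, and finishing with the tower property is exactly the bookkeeping needed, and the finiteness $|\K_{d\theta}(\mu)|=\EE_\mu[\theta_\infty]<\infty$ disposes of the $\sigma$-finiteness issue.

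One justification should be tightened. You assert that for $A\in\F_t$ the indicator ``depends only on the restriction to $[0,t]$''. That is not literally true: $\F_t$ is the \emph{right-continuation} of $\F^*$, so an $\F_t$-set may depend on the germ of the path just after $t$ (it lies in $\F^*_{t+\eps}$ for every $\eps>0$, not in $\F^*_t$). The identity $\ind_A(\K_s(f))=\ind_A(f)$ nevertheless holds on $\{s>t,\ \ha T_f>t\}$, because then $s\wedge\ha T_f>t$, so one can pick $\eps>0$ (depending on $(f,s)$, which is harmless for a pointwise identity) with $t+\eps<s\wedge\ha T_f$; since $A\in\F^*_{t+\eps}$ and $\K_s(f)$, $f$ have the same values and the same membership in every $\Sigma_u$ for $u\le t+\eps$, their $A$-indicators agree. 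With that repair, the rest of your argument — right-continuity of $\theta$ and $\theta_0=\theta_{0^-}=0$ giving $d\theta((t,\infty))=\theta_\infty-\theta_t$, then $\int_A(\theta_\infty-\theta_t)\,d\mu=\int_A(\EE_\mu[\theta_\infty|\F^\mu_t]-\theta_t)\,d\mu$ since $\theta_t$ is $\F^\mu_t$-measurable — is sound, and it identifies the RN process exactly as claimed.
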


If instead of $C([0,\ha T),\R)$ we use $C([0,\ha T),E)$  for some metric space $E$ to define $\Sigma$, the spaces $\Sigma$ and $\til\Sigma$ will be respectively denoted by $\Sigma^E$  and $\til\Sigma^E$. Most results in this section  still hold for $\Sigma^E$ and  $\til\Sigma^E$ except that we may not have the operations $\oplus_{+}$ and $\ha\oplus_{+}$. The  $\til\Sigma^E$ is a metric space with the distance  defined by
\BGE \dist_{\til\Sigma^E}([f],[g])=\inf\{\sup\{\dist_E(f'(t),g'(t)): {0\le t<\ha T_{f'}}\}:  f'\in [f],g'\in [g], \ha T_{f'}=\ha T_{g'}\}.\label{dist-E}\EDE

\begin{Lemma} 
  Let $g\in\Sigma^E$, $t_0\in [0,\ha T_g)$, and $\tau$ be an $\F$-stopping time. Define $\tau_{g;t_0}$ on $\Sigma^E $ by
  \BGE \tau_{g;t_0}(f)=\left\{\begin{array}{ll} (\tau(g\oplus_{t_0} f)-t_0)\vee 0, &\mbox{if }f(0)=g(t_0)
  ;\\
  \ha T_f, & \mbox{otherwise}.
  \end{array}
  \right.\label{tau-g-formula'}\EDE
  Then $\tau_{g;t_0}$ is also an $\F$-stopping time. \label{tau-g'}
\end{Lemma}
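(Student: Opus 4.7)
The plan is to check $\{\tau_{g;t_0}\le t\}\in\F_t$ for every $t\ge 0$. Using the identity $(s-t_0)\vee 0\le t\iff s\le t_0+t$ (valid for $s\ge 0$), the definition (\ref{tau-g-formula'}) gives the decomposition
$$\{\tau_{g;t_0}\le t\}=\bigl(\{f(0)\ne g(t_0)\}\cap\{\ha T_f\le t\}\bigr)\cup\bigl(\{f(0)=g(t_0)\}\cap\{\tau(g\oplus_{t_0}f)\le t_0+t\}\bigr).$$
The first piece lies in $\F_t$ immediately, since $\{f(0)=g(t_0)\}=\pi_0^{-1}(\{g(t_0)\})\in\F^*_0$ and $\{\ha T_f\le t\}=\Sigma\sem\Sigma_t\in\F^*_t$. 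The substance of the proof is to show the second piece is also in $\F_t$.

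For the second piece, I introduce the splicing map $\Phi:\{f\in\Sigma^E:f(0)=g(t_0)\}\to\Sigma^E$ defined by $\Phi(f)=g\oplus_{t_0}f$, and establish the following filtration-shift property: for every $s\ge 0$ and every $A\in\F^*_{t_0+s}$,
$$\Phi^{-1}(A)\cap\{f(0)=g(t_0)\}\in\F^*_s.$$
By a standard monotone-class argument, it suffices to verify this on the generators $\Sigma_u$ and $\pi_u^{-1}(B)$ of $\F^*_{t_0+s}$, where $u\le t_0+s$ and $B$ is a Borel subset of $E$. For $u\le t_0$, the pullbacks depend only on $g|_{[0,t_0]}$ and the initial condition $f(0)$, since $\ha T_{g\oplus_{t_0}f}=t_0+\ha T_f>t_0\ge u$ and $g\oplus_{t_0}f(u)=g(u)$; hence each is either $\{f(0)=g(t_0)\}$ or $\emptyset$, both in $\F^*_0$. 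For $u>t_0$, using $\ha T_{g\oplus_{t_0}f}>u\iff \ha T_f>u-t_0$ and $g\oplus_{t_0}f(u)=f(u-t_0)$, the pullbacks intersected with $\{f(0)=g(t_0)\}$ equal $\{f(0)=g(t_0)\}\cap\Sigma_{u-t_0}$ or $\{f(0)=g(t_0)\}\cap\Sigma_{u-t_0}\cap\pi_{u-t_0}^{-1}(B)$, both in $\F^*_{u-t_0}\subset\F^*_s$.

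With the shift property in hand, I invoke that $\tau$ is an $\F$-stopping time and that $\F$ is the right-continuation of $\F^*$: the event $\{\tau\le t_0+t\}$ lies in $\F_{t_0+t}=\bigcap_{\eps>0}\F^*_{t_0+t+\eps}$. Applying the shift property with $s=t+\eps$ for each $\eps>0$ and intersecting, the second piece lies in $\bigcap_{\eps>0}\F^*_{t+\eps}=\F_t$. Combined with the first piece, this yields $\{\tau_{g;t_0}\le t\}\in\F_t$, completing the proof. The main obstacle will be the bookkeeping at the splicing instant $u=t_0$ (where $\Phi(f)$ transitions from $g$ to $f$, and where $g\oplus_{t_0}f(t_0)=f(0)=g(t_0)$) and the systematic use of right-continuity to move from $\F^*$ to $\F$; once the shift property is correctly formulated, the verification on generators and the monotone-class wrap-up are routine.
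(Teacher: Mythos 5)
Your proof is correct and follows essentially the same route as the paper's: split off the trivial event $\{f(0)\ne g(t_0)\}$, reduce the main event to a pullback under the splicing map, verify the filtration-shift property on the generators $\Sigma_u$ and $\pi_u^{-1}(B)$ (splitting at $u\le t_0$ versus $u>t_0$), and invoke right-continuity of $\F$. The only cosmetic difference is that you check $\{\tau_{g;t_0}\le t\}\in\F_t$ via an $\eps$-intersection, whereas the paper checks $\{\tau_{g;t_0}<t\}\in\F^*_t$ directly; both are valid criteria.
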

\begin{proof} Let  $x_0=g(t_0)\in E$.
  Since $\F$ is the right-continuation of $\F^*$, it suffices to show that for any $t\in(0,\infty)$, $\{f\in\Sigma:\tau_{g;t_0}(f)<t\}\in \F^*_{t}$, which is equivalent to $$\{f\in\Sigma:f(0)=x_0,\tau(g\oplus_{t_0} f)< t_0+t
 \} \cup \{f\in\Sigma:f(0)\ne x_0,\ha T_f<t\} \in \F^*_{t}.$$
 The fact that $ \{f\in\Sigma:f(0)\ne x_0,\ha T_f<t\} \in \F^*_{t}$ is obvious since $\ha T$ is an $\F$-stopping time.
  Since $\tau$ is an $\F$-stopping time, we have
  $\{\tau<t_0+t\}\in \F^*_{t_0+t}$.  Thus, it suffices to show that, for any $A\in\F^*_{t_0+t}$, $\{f\in\Sigma:f(0)=x_0,g\oplus_{t_0} f\in A\}\in \F^*_{t}$. By the definition of $\F^*_{t_0+t}$, it suffices to prove that for any $s\in[0,t_0+t]$ and  $U\in{\cal B}(E)$,  $\{f\in\Sigma: f(0)=x_0,(g\oplus_{t_0} f)(s)\in U\}\in \F^*_{t}$. If $0\le s< t_0$, the set is either $\{f\in\Sigma:f(0)=x_0\}$ or empty depending on whether $g(s)\in U$, and so belongs to $\F^*_0\subset \F^*_t$. If $s\ge t_0$, then the set becomes $\{f\in\Sigma:f(0)=x_0,f(s-t_0)\in U\}$, which belongs to $\F^*_{s-t_0}\subset \F^*_t$ since $s\le t_0+t$.
\end{proof}

\begin{Lemma}
Let $[g]\in\til\Sigma^E $ and $\sigma<\ha T$ be a TCI time on $[g]$. Let $[f]\in\til\Sigma^E$ be such that $[f](0)=[g](\sigma)$. Let $\tau$ be a TCI time on $[g]\oplus_\sigma [f]$. 
Define  $\tau_{[g];\sigma} $ on $[f]$ by
\BGE \tau_{[g];\sigma}(f')= \tau_{g';\sigma(g')}(f'),\quad f'\in[f],\quad g'\in[g]. \label{tau-g-formula''}\EDE
Then $\tau_{[g];\sigma}$  is a well-defined  TCI time on $[f]$. \label{tau-[g]'}
\end{Lemma}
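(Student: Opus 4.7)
The plan is to verify that (\ref{tau-g-formula''}) does not depend on the choice of representative $g'\in[g]$ and that the resulting function on $[f]$ satisfies the TCI property; the range condition $\tau_{[g];\sigma}(f')\in[0,\ha T_{f'}]$ will come for free. The whole argument rests on the hypothesis that $\tau$ is TCI on the class $[g]\oplus_\sigma[f]$ as a subset of $\Sigma^E$, combined with two explicit piecewise time-change constructions between concatenated curves.

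\emph{Independence of the representative $g'$.} I would take $g_1,g_2\in[g]$ and a continuous, strictly increasing $\theta_g$ with $\theta_g(0)=0$ and $g_2=g_1\circ\theta_g^{-1}$. Since $\sigma$ is TCI on $[g]$, $\sigma(g_2)=\theta_g(\sigma(g_1))$. Writing $h_i=g_i\oplus_{\sigma(g_i)}f'$, both $h_1,h_2$ lie in $[g]\oplus_\sigma[f]$. I would then define $\theta:[0,\ha T_{h_1})\to[0,\ha T_{h_2})$ piecewise by $\theta(t)=\theta_g(t)$ on $[0,\sigma(g_1)]$ and $\theta(t)=\sigma(g_2)+(t-\sigma(g_1))$ on $[\sigma(g_1),\ha T_{h_1})$. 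A direct check on each piece shows $h_1=h_2\circ\theta$, so $h_2$ is the time-change of $h_1$ via $\theta$; TCI of $\tau$ on $[g]\oplus_\sigma[f]$ then gives $\tau(h_2)=\theta(\tau(h_1))$. Splitting into the cases $\tau(h_1)<\sigma(g_1)$ and $\tau(h_1)\ge\sigma(g_1)$ yields $(\tau(h_2)-\sigma(g_2))\vee 0=(\tau(h_1)-\sigma(g_1))\vee 0$, which by (\ref{tau-g-formula'}) is exactly $\tau_{g_2;\sigma(g_2)}(f')=\tau_{g_1;\sigma(g_1)}(f')$.

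\emph{The TCI property.} For fixed $g'\in[g]$ and $f_1,f_2\in[f]$ related by $f_2=f_1\circ\theta_f^{-1}$, I would form $h_i=g'\oplus_{\sigma(g')}f_i$ and define a time-change $\theta$ between $h_1$ and $h_2$ that is the identity on $[0,\sigma(g')]$ and equals $\sigma(g')+\theta_f(\,\cdot\,-\sigma(g'))$ on the tail. Again $h_1=h_2\circ\theta$, hence $\tau(h_2)=\theta(\tau(h_1))$ by TCI. In the case $\tau(h_1)\ge\sigma(g')$ this gives $\tau_{[g];\sigma}(f_2)=\theta_f(\tau_{[g];\sigma}(f_1))$ directly, while in the case $\tau(h_1)<\sigma(g')$ both sides vanish (using $\theta_f(0)=0$). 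The range bound $\tau_{[g];\sigma}(f')\le\ha T_{f'}$ is immediate from $\tau(h)\le\ha T_h=\sigma(g')+\ha T_{f'}$.

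The only real obstacle is careful bookkeeping of the two pieces of each time-change, particularly keeping straight which argument is being rescaled and ensuring continuity at the interface value $\sigma$; once the piecewise formulas above are in hand, everything reduces to invoking the TCI hypothesis for $\tau$ and splitting on the sign of $\tau(h_i)-\sigma$.
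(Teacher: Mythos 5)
Your proposal is correct and follows essentially the same route as the paper: build an explicit piecewise time-change between the concatenations $g'\oplus_{\sigma(g')}f'$, invoke the TCI hypothesis on $\tau$ over $[g]\oplus_\sigma[f]$, and split on whether $\tau$ falls before or after $\sigma$. The only structural difference is that you verify representative-independence in $g'$ and the TCI property in $f'$ as two separate steps, whereas the paper varies both representatives at once via a single piecewise map (after disposing of the case $\tau<\sigma$ upfront); the content is the same.
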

\begin{proof}
If there are no $f'\in[f]$ and $g'\in[g]$ such that  $\tau(g'\oplus_{\sigma(g')} f')\ge \sigma(g')$, then $\tau(g'\oplus_{\sigma(g')} f')< \sigma(g')$, and so $\tau_{g';\sigma(g')}(f')=0$. The statement is trivial.

Suppose there exist $f_0\in[f]$ and $g_0\in[g]$ such that  $\tau(g_0\oplus_{\sigma(g_0)} f_0)\ge \sigma(g_0)$.
 Since $\tau$ is a TCI time on $[g]\oplus_\sigma[f]$, we have $\tau(g_0\oplus_{\sigma(g_0)} f_0)\in [\sigma(g_0),\ha T_{g_0\oplus_{\sigma(g_0)} f_0}]=[\sigma(g_0),\sigma(g_0)+\ha T_{f_0}]$. Thus, $\tau(g_0\oplus_{\sigma(g_0)} f_0)-\sigma(g_0)\in [0,\ha T_{f_0}]$.
Let $g'$ be a time-change of $g_0$ via $\psi$, and $f'$ be a time-change of $f_0$ via $\theta$. 
Define
$$\phi(t)=\left\{\begin{array}{ll} \psi(t), & \mbox{if } 0\le t\le \sigma(g_0);\\
\sigma(g')+\theta(t-\sigma(g_0)),& \mbox{if }\sigma(g_0)\le t< \sigma(g_0)+\ha T_{f_0}.
\end{array}
\right.$$
 Then $g'\oplus_{\sigma(g')} f' $ is a time-change of $g_0\oplus_{\sigma(g_0)} f_0 $ via $\phi$.  Since $\tau$ is a TCI time on $[g]\oplus_\sigma [f]$, and $\tau(g_0\oplus_{\sigma(g_0)} f_0)\ge \sigma(g_0)$, we have
$$\tau(g'\oplus_{\sigma(g')} f')-\sigma(g')=\phi(\tau(g_0\oplus_{\sigma(g_0)} f_0))-\sigma(g')=\theta(\tau(g_0\oplus_{\sigma(g_0)} f_0)-\sigma(g_0))\in [0,\ha T_{f'}],$$which shows that the definition of $\tau_{[g];\sigma}$ does not depend on the choice of $g'\in [g]$, and $\tau_{[g];\sigma}$ is a TCI time on $[f]$.
\end{proof}


We will use the following equality. Suppose $[g],[f],[h]\in\til\Sigma^E$.  Let $\sigma$ be a TCI time on $[g]$ such that $\sigma(g)<\ha T_g$ and $[g](\sigma)=[f](0)$. Let $\tau$ be a TCI time on $[g]\oplus_\sigma[f]$ such that for any $g'\in[g]$, $f'\in[f]$, $\sigma(g')\le \tau(g'\oplus_{\sigma(g')} f')<\ha T_{g'\oplus_{\sigma(g')} f'}$, and $([g]\oplus_\sigma[f])(\tau)=[h](0)$. Then
\BGE ([g]\oplus_\sigma[f])\oplus_\tau[h]=[g]\oplus_\sigma([f]\oplus_{\tau_{[g];\sigma}}[h]).\label{fgh}\EDE

Another remark to make here is that if $\tau$ is the hitting time of a set $S$ and $\sigma(g)<\tau(g)$, then  $\tau_{[g];\sigma}$ agrees with $\tau$ on the set of $[f]$ such that $[f(0)]=[g](\sigma)$.

An element  $f\in\Sigma^E$ is called a rooted loop if $\lim_{t\uparrow \ha T_f} f(t)=f(0)$, and $f(0)$ is called its root. Two rooted loops $f$ and $g$ are called translation equivalent if $\ha T_f=\ha T_g<\infty$, and after being extended to functions on $\R$ with period $\ha T_f$, $f$ and $g$ satisfy that $f=g(a+\cdot)$ for some $a\in \R$. If $f\in\Sigma^E$ is a rooted loop, then $[f]\in\til\Sigma^E$ is called a rooted MTC loop. Two rooted MTC loops $[f]$ and $[g]$ are called translation equivalent if there are $f_0\in [f]$ and $g_0\in[g]$ such that $f_0$ and $g_0$ are translation equivalent. We call the translation equivalent class, denoted by $[[f]]$,   an unrooted MTC loop, and use $\til\pi$ to denote the projection map $[f]\mapsto [[f]]$.

\subsection{Loewner curves} \label{section-Loewner}
Let $\HH=\{z\in\C:\Imm z>0\}$ be the open upper half plane. A set $K\subset\HH$ is called an $\HH$-hull if $K$ is bounded and $\HH\sem K$ is a simply connected domain. For each $\HH$-hull $K$, there is a unique conformal map $g_K$ from $\HH\sem K$ onto $\HH$ such that $g_K(z)-z=O(1/z)$ as $\HH\sem K\ni z\to \infty$.
The number $\hcap(K):=\lim_{z\to\infty} z(g_K(z)-z)$ is called the $\HH$-capacity of $K$, which satisfies $\hcap(\emptyset)=0$ and $\hcap(K)>0$ if $K\ne \emptyset$.

We use the symbol $\rad_w(S):=\sup\{|z-w|:z\in K\cup\{w\}\}$ for $w\in\C$ and $S\subset\C$.
By \cite[Formula (3.12)]{Law1}, for any $\HH$-hull $K$ and any $x\in\R$,
\BGE |g_K(z)-z|\le 3 \rad_x(K),\quad \forall z\in \HH\sem K.\label{g-z}\EDE

For a nonempty $\HH$-hull $K$, let $a_K=\min(\lin K\cap \R)$ and $b_K=\max(\lin K\cap \R)$. By Schwarz reflection principle, $g_K$ extends to a conformal map from $\ha\C\sem K^{\doub}$ onto $\ha\C\sem [C_K,D_K]$, where $K^{\doub}:=K\cup[a_K,b_K]\cup \{\lin z:z\in K\}$ and $[C_K,D_K]$ is a compact real interval, such that $g_K(\infty)=\infty$ and $g_K'(\infty)=1$. We have $C_K=\lim_{x\uparrow a_K} g_K(x)$ and $D_K=\lim_{x\downarrow b_K} g_K(x)$. We define prime ends $\ha a_K=g_K^{-1}(C_K)$ and $\ha b_K=g_K^{-1}(D_K)$ of $\HH\sem K$, which respectively determine $a_K$ and $b_K$. Since $g_K'(\infty)=1$, by Koebe's $1/4$ theorem (cf.\ \cite{Ahl}), for any $x\in [a_K,b_K]$,
\BGE ( D_K-C _K)/4\le \rad_x(K^{\doub})=\rad_x(K)\le D_K-C_K. \label{Koebe0}\EDE
More generally, for any $a\le x\le b$,  $g_K$ maps $\ha\C\sem (K^{\doub}\cup [a,b])$ conformally onto $\ha\C\sem [C_K^a,D_K^a]$, where $C_K^a:=\lim_{x\uparrow a\wedge a_K} g_K(x)$ and $D_K^b:=\lim_{x\downarrow b\vee b_K} g_K(b)$. So we have
\BGE ( D_K^b-C_K^a )/4\le  \rad_x(K\cup [a,b])\le D_K^b-C_K^a. \label{Koebe0'}\EDE

For $W\in\Sigma$, the chordal Loewner equation driven by $W$ is the following differential equation in the complex plane:
$$ \pa g_t(z)=\frac{2}{g_t(z)-W_t},\quad 0\le t<\ha T_W;\quad g_0(z)=z. $$ 
For each $z\in \C$, let $\tau_z$ denote the biggest extended number in $[0,\ha T_W]$ such that the solution $t\mapsto g_t(z)$ exists on $[0,\tau_z)$. Here $\tau_{W_0}=0$ and $\tau_z>0$ for $z\ne W_0$. For $0\le t<\ha T_W$, let $K_t=\{z\in\HH:\tau_z\le t\}$ and $H_t=\HH\sem K_t$.  It turns out that each $K_t$ is an $\HH$-hull with $\hcap(K_t)=2t$, and $g_t=g_{K_t}$.
We call $g_t$ and $K_t$ the chordal Loewner maps and hulls, respectively, driven by $W$. 
The Loewner equations in this paper are all chordal Loewner equations, and we will omit the word ``chordal''.

For $0<t<\ha T_W$, we have $K_t\ne \emptyset$ and then define $a_t=a_{K_t}$, $b_t=b_{K_t}$, $C_t=C_{K_t}$, $D_t=D_{K_t}$, $\ha a_t=\ha a_{K_t}$ and $\ha b_t=\ha b_{K_t}$. We also define $a_0,b_0,C_0,D_0,\ha a_0,\ha b_0$ to be all $W_0$. 
Every prime end of $\HH$ on the interval $(b_t,a_t)_{\ha\pa\HH}=(b_t,\infty)\cup\{\infty\}\cup(-\infty,a_t)$ is identified with a prime end of $H_t$ on the interval $(\ha b_t,\ha a_t)_{\ha\pa H_t}$, and vice versa. We use $I_t$ to denote the common prime end interval, which is the biggest prime end interval shared by $\HH$ and $H_t$. 


Suppose for every $t\in[0,\ha T_W)$, $g_t^{-1}$ extends continuously from $\HH$ to $\lin\HH$, and $\eta(t):=g_t^{-1}(W_t)$, $0\le t<\ha T_W$, is a continuous curve in $\lin\HH$. Then we say that $\eta$ is the capacity parametrized (CP for short) Loewner curve driven by $W$. In this case, $H_t$ and $K_t$ are determined by $\eta$ such that $H_t $ is the unbounded connected component of $\HH\sem \eta[0,t]$, and $K_t=\HH\sem H_t$. 

When we want to emphasize the dependence on $W$, we will use the symbols $g^W_t$, $\tau^W_z$, $K^W_t$, $H^W_t$, $a^W_t$, $b^W_t$, $C^W_t$, $D^W_t$, $\ha a^W_t$, $\ha b^W_t$, $I^W_t$, and $\eta^W(t)$. If $W$ generates a CP Loewner curve $\eta$, then $W$ is also determined by $\eta$, and we will use the symbols $W^\eta_t$, $g^\eta_t$, $\tau^\eta_z$, $K^\eta_t$, $H^\eta_t$, $a^\eta_t$, $b^\eta_t$, $C^\eta_t$, $D^\eta_t$, $\ha a^\eta_t$, $\ha b^\eta_t$ and $I^\eta_t$ to emphasize the dependence on $\eta$.

Not every $W\in\Sigma$ generates a CP Loewner curve. Let $\Sigma^\Lo$ denote the set of  $W\in\Sigma$, which generates a CP Loewner curve $\eta^W$, and let $\Lo$ denote the map from $W\in\Sigma^\Lo$ to $\eta^W\in\Sigma^{\lin\HH}$, and let $\ha\Lo$ denote the map from $(W,t)$, where $W\in\Sigma^\Lo$ and $0\le t<\ha T_W$, to $(\Lo(W),\Lo(W)(t))$.

If $\eta'$ is a time-change of a CP Loewner curve $\eta$, then we call $\eta'$ an arbitrarily parametrized (AP for short) Loewner curve in $\HH$ aimed at $\infty$. If $f$ maps $\HH$ conformally onto $D$, then $f\circ \eta'\in\Sigma^{\ha D}$ is called an AP Loewner curve in $D$ aimed at the prime end $f(\infty)$,  and the MTC curve $[f\circ \eta]\in\til \Sigma^{\ha D}$ is called an MTC Loewner curve in $D$ aimed at $f(\infty)$.

We will use the following symbols.
For $D,S,T\subset\ha\C$, we use  $D(S;T)$ to denote the connected component of $D\sem S$ whose closure contains $T$, whenever such component exists and is unique.
If $D$ is a simply connected domain, $w$ is a prime end of $D$, and $D\sem S$ has a connected component  which is a neighborhood of $w$ in $D$, then we use $D(S;w)$ to denote this component. For example, if $\eta$ is a CP Loewner curve, then for any $t\in [0,\ha T_\eta)$, $H^\eta_t=\HH(\eta[0,t];\infty)$.

Let $D$ be a simply connected domain and  $\eta\in \Sigma^{\ha D}$. For $t_0\in(0,\ha T_\eta)$, if there is $\eps_0>0$ such that $D(\eta[0,t_0];\gamma[t_0,t_0+\eps)\cap (D\sem \eta[0,t_0]))$ is a well-defined domain for any $\eps\in(0,\eps_0]$, then such domain does not depend on $\eps$ or $\eps_0$, and we denote it by $D(\eta|t_0^+)$. 
If $D(\eta|t_0^+)$ is a well-defined simply connected domain and has a prime end $w$  such that, whenever $t_n\to t_0$ along $S_{ t_0}:=\{t\in(t_0,\ha T_\eta):\eta(t)\in D(\eta|t_0^+)\}$, we have $\eta(t_n)\to w$, then we use $\eta(t_0^+)$ to denote this prime end $w$.
If $[\eta]\in \til\Sigma^{\ha D}$ and $\tau$ is a TCI time on $[\eta]$, which is less than $\ha T$, we define $D ([\eta]|{\tau}^+)=D (\eta|{\tau(\eta)}^+)$ when the latter domain is well-defined,
 and define $[\eta](\tau^+)=\eta(\tau(\eta)^+)$ when the prime end $\eta(\tau(\eta)^+)$ is well defined. The definitions clearly do not depend on the choice of the representative of $[\eta]$.

\begin{Proposition}
  \begin{enumerate}
    \item [(i)] Let $\eta$ is a CP Loewner curve. Then for any $t\in [0, \ha T_\eta)$, the domain $\HH(\eta|t^+) $ is well defined and equals   $H^\eta_t=\HH(\eta[0,t];\infty)$; and the prime end $\eta(t^+)$ of $\HH(\eta|t^+)$ is well defined, which equals $(g^\eta_t)^{-1}(W^\eta_t)$ and determines $\eta(t)$.
    \item [(ii)]  Let $\eta$ be an AP Loewner curve in a simply connected domain $D$ aimed at   $w_\infty$. Then for any $t\in[0,\ha T_\eta)$, the domain $D(\eta|t^+)$ is well defined and equals $D(\eta[0,t];w_\infty)$; and the prime end $\eta(t^+)$ of $D(\eta|t^+)$ is well defined and determines $\eta(t)$. If $\tau$ is a TCI time on the MTC Loewner curve $[\eta]$, which is less than $\ha T$, then  $D([\eta]|\tau^+)$ is well defined and equals $D([\eta]([0,\tau]);w_\infty)$, and  $[\eta](\tau^+)$ is well defined and determines $[\eta](\tau)$.
  \end{enumerate}
 \label{indep-target-1}
\end{Proposition}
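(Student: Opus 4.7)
For part (i), I would exploit the Loewner chain decomposition at time $t$: setting $\til W_r:=W^\eta_{t+r}$, letting $\til g_r$ be the Loewner maps driven by $\til W$, and putting $\til\eta(r):=\til g_r^{-1}(\til W_r)$, the identity $g^\eta_{t+r}=\til g_r\circ g^\eta_t$ gives $\eta(t+r)=(g^\eta_t)^{-1}(\til\eta(r))$, where $(g^\eta_t)^{-1}$ is the continuous extension to $\lin\HH$. Since $(g^\eta_t)^{-1}$ bijects $\HH$ onto $H^\eta_t$ and sends $\R$ into $\pa H^\eta_t\subset \R\cup(\eta[0,t]\cap\HH)$, one gets $\eta(t+r)\in H^\eta_t$ iff $\til\eta(r)\in\HH$, while $\til\eta(r)\in\R$ forces $\eta(t+r)\in\R\cup\eta[0,t]$. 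In particular, $\eta[t,t+\eps)\cap(\HH\sem\eta[0,t])\subset H^\eta_t$ for every $\eps>0$, and the set $S_t=\{s\in(t,\ha T_\eta):\eta(s)\in H^\eta_t\}$ coincides with $\{s:\til\eta(s-t)\in\HH\}$.

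To see this intersection is nonempty for every $\eps>0$, so that $\HH(\eta|t^+)$ is well defined and equals $H^\eta_t$, I would argue by contradiction: if $\til\eta[0,r]\subset\R$ for some $r>0$, then $\HH\sem\til\eta[0,r]=\HH$ is connected, forcing $\til K_r=\emptyset$, which contradicts $\hcap(\til K_r)=2r>0$. Hence $\{r>0:\til\eta(r)\in\HH\}$ accumulates at $0$, so $S_t$ accumulates at $t^+$. For the prime end $\eta(t^+)$, continuity of $\til\eta$ together with $\til\eta(0)=W^\eta_t$ gives $\til\eta(s-t)\to W^\eta_t$ in $\HH$ along $S_t$ as $s\to t^+$; since $(g^\eta_t)^{-1}$ induces a homeomorphism $\ha\HH\to\ha H^\eta_t$ between prime-end closures, $\eta(s)\to(g^\eta_t)^{-1}(W^\eta_t)$ as prime ends of $H^\eta_t$. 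Thus $\eta(t^+)=(g^\eta_t)^{-1}(W^\eta_t)$ is well defined, and it determines $\eta(t)$ because any $H^\eta_t\ni z_n\to (g^\eta_t)^{-1}(W^\eta_t)$ as prime ends has $g^\eta_t(z_n)\to W^\eta_t$ in $\lin\HH$, so by continuity of the extension $z_n\to(g^\eta_t)^{-1}(W^\eta_t)=\eta(t)$ in $\lin\HH$.

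For part (ii), my plan is to conformally transport (i). Write $\eta=(f\circ\eta_0)\circ\phi$ where $f:\HH\to D$ is conformal with $f(\infty)=w_\infty$, $\eta_0$ is a CP Loewner curve in $\HH$, and $\phi:[0,\ha T_\eta)\to[0,\ha T_{\eta_0})$ is an increasing homeomorphism. The homeomorphism $f:\ha\HH\to\ha D$ of prime-end closures bijects connected components of $\HH\sem\eta_0[0,\phi(t)]$ onto those of $D\sem\eta[0,t]$, sending the one containing $\infty$ to the one whose closure contains $w_\infty$; thus $D(\eta[0,t];w_\infty)=f(H^{\eta_0}_{\phi(t)})$. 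Applying (i) at CP-time $\phi(t)$ and choosing $\eps>0$ small enough that $\phi(t+\eps)-\phi(t)$ lies below the threshold provided by (i), the set $\eta[t,t+\eps)\cap(D\sem\eta[0,t])$ is the $f$-image of a nonempty subset of $H^{\eta_0}_{\phi(t)}$, so $D(\eta|t^+)$ is well defined and equals $D(\eta[0,t];w_\infty)$. The limit $\eta(s)=f(\eta_0(\phi(s)))\to f(\eta_0(\phi(t)^+))$ as $s\to t^+$ along the corresponding set gives $\eta(t^+)=f(\eta_0(\phi(t)^+))$, and since the ``determines'' relation is preserved under the homeomorphism $f$ of prime-end closures, $\eta(t^+)$ determines $\eta(t)=f(\eta_0(\phi(t)))$. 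The TCI-time version is immediate from the definitions $D([\eta]|\tau^+):=D(\eta|\tau(\eta)^+)$ and $[\eta](\tau^+):=\eta(\tau(\eta)^+)$, applied with $t=\tau(\eta)$. The main subtlety to handle carefully is ensuring that times $r$ with $\til\eta(r)\in\R$ do not contribute stray points to $\eta[t,t+\eps)\cap(\HH\sem\eta[0,t])$, which is resolved by the inclusion $(g^\eta_t)^{-1}(\R)\cap\HH\subset\eta[0,t]$ noted above.
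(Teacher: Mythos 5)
Your proposal is correct and follows essentially the same route as the paper: the paper simply cites the standard facts (1)--(4) about CP Loewner curves from \cite{Law1} (that $S^\eta_{t}$ accumulates at $t$, that $\eta(S^\eta_t)\subset H^\eta_t$, the convergence $g^\eta_t(\eta(t_n))\to W^\eta_t$, and continuity of the extension of $(g^\eta_t)^{-1}$) and then deduces (ii) from (i) by conformal mapping and time-change, exactly as you do. Your only extra ingredient is deriving these facts from the shifted driving function $\til W_r=W^\eta_{t+r}$, which tacitly uses the equally standard fact that a time-shift of a curve-generating driving function again generates a CP Loewner curve; granting that, your capacity argument for nonemptiness and the prime-end transfer (using that $\ha\HH$ coincides with $\lin\HH^{\#}$, so ordinary and prime-end convergence agree for $\HH$) are sound.
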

\begin{proof}
  Part (i) follows from the well-known facts (cf.\ \cite{Law1}) that  for each $t_0\in [0,\ha T)$, the set $S^\eta_{t_0}:=\{t\in (t_0,\ha T):\eta(t)\in \HH\sem \eta[0,t_0]\}$ satisfies (1) $t_0=\inf {S^\eta_{t_0}}$; (2) $\eta(S^\eta_{t_0})\subset H^\eta_{t_0}= \HH(\eta[0,t_0];\infty)$;   (3) if $t_n\to t_0$ along $S^\eta_{t_0}$, then $g^\eta_{t_0}(\eta(t_n))\to W^\eta_{t_0}$;
  and (4) if $z\to W^\eta_{t_0}$ in $\HH$, then $(g^\eta_{t_0})^{-1}(z_n)\to \eta(t_0)$. Part (ii) follows immediately from (i).
\end{proof}

Thanks to the above proposition, we will use the symbol $D(\eta|t^+)$ instead of $D(\eta[0,t];w_\infty)$ because it reflects the fact that the domain depends only on $\eta(s)$, $0\le s<t'$, for any $t'\in(t,\ha T_\eta)$, but not on $w_\infty$. Similarly, we will use $D([\eta]|\tau^+)$ instead of $D([\eta]([0,\tau]);w_\infty)$.

Let $\eta$ be an AP Loewner curve in $\HH$ aimed at $\infty$. Then there is a unique CP Loewner curve $\eta_0$ such that $\eta$ is a time-change of $\eta_0$ via some $\theta:[0,\ha T_{\eta_0})\to [0,\ha T_\eta)$. For $t\in [0,\ha T_\eta)$, we define  $I^\eta_t=I^{\eta_0}_{\theta^{-1}(t)}$.
We similarly define $a^\eta_t,b^\eta_t\in\R$ and $\ha a^\eta_t,\ha b^\eta_t\in \ha\pa \HH(\eta|t^+)$. 

Let $\eta$ be an AP Loewner curve in $D$ aimed at $w_\infty$. For $t\in[0,\ha T_\eta)$, we define $I^{\eta}_{t^+;D}= f(I^{f^{-1}\circ \eta}_t)$, where $f$ is  a conformal map from $\HH$ onto $D$ such that $f(\infty)=w_\infty$. The definition does not depend on the choice of $f$. We omit $w_\infty$ because $I^{\eta }_{t^+;D}$ is the biggest prime end interval shared by $D$ and $D(\eta|t^+)$, and so is determined by $\eta(s)$, $0\le s<t'$, for any $t'\in(t,\ha T_\eta)$.  We similarly define prime ends $a^{\eta }_{t^+;D}$ and $b^{\eta}_{t^+;D}$  (resp.\ $\ha a^{\eta }_{t^+;D}$ and $\ha b^{\eta}_{t^+;D}$) of $D$ (resp.\ $D(\eta|t^+)$). 

If $[\eta]$ is an MTC Loewner curve in $D$ aimed at $w_\infty$, and $\tau$ is a TCI time on $[\eta]$ with $\tau(\eta)<\ha T_\eta$, then we define $I^{[\eta]}_{\tau^+;D}=I^{\eta}_{\tau(\eta)^+;D}$,
and similarly
 $a^{[\eta]}_{\tau^+;D}$, $b^{[\eta]}_{\tau^+;D}$, $\ha a^{[\eta]}_{\tau^+;D}$ and $\ha b^{[\eta]}_{\tau^+;D}$. These definitions do not depend on the choice of the representative of $[\eta]$.

\begin{Proposition}
  \begin{enumerate}
    \item [(i)] Let $\eta$ be a CP Loewner curve, and $t_0\in [0,\ha T_\eta)$. Let $u\in I^\eta_{t_0}$. Then  $\eta(t)$, $0\le t< \tau^\eta_u$, is an AP Loewner curve in $\HH$ aimed at $u$.
    \item [(ii)] Let $\eta$ be an AP Loewner curve in $D$ aimed at $w_\infty$. Let $t\in [0,\ha T_\eta)$. Then for any $u\in I^{\eta}_{t^+;D}$, there is  $t'\in (t,\ha T_\eta)$ such that $\K_{t'}(\eta)$ is an AP Loewner curve in $D$ aimed at  $u$
  \end{enumerate}
  \label{indep-target-2}
\end{Proposition}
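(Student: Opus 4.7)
The plan is to reduce (i) to the capacity-parametrized Loewner setting by a Möbius change of coordinates, and then deduce (ii) from (i) together with the definition of AP Loewner curves.

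For (i), the case $u=\infty$ is trivial because $\eta$ itself is CP, hence AP in $\HH$ aimed at $\infty$, and we interpret $\tau^\eta_u=\ha T_\eta$. Assume $u\in\R$, fix an automorphism $\phi$ of $\HH$ with $\phi(\infty)=u$ (e.g.\ $\phi(z)=u-1/z$), and set $\til\eta(t):=\phi^{-1}(\eta(t))$ and $\til K_t:=\phi^{-1}(K^\eta_t)$ for $0\le t<\tau^\eta_u$. I would then carry out three steps: (a) Verify that $\til K_t$ is an $\HH$-hull. Since $u\in I^\eta_t$ for every $t<\tau^\eta_u$, we have $u\notin\lin{K^\eta_t}$ and $\lin{K^\eta_t}\cap\R$ lies in the complement of $I^\eta_t$ in $\ha\pa\HH$, so $\phi^{-1}$ is conformal on an $\HH$-neighborhood of $\lin{K^\eta_t}\cap\HH$ and sends it into $\HH$. (b) Let $\sigma(t):=\tfrac12\hcap(\til K_t)$. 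Show that $\sigma$ is continuous, strictly increasing, and $\sigma(0)=0$. Continuity follows from Carathéodory continuity of $t\mapsto H^\eta_t$ and continuity of $\hcap$ under Carathéodory convergence, while strict increase follows from strict increase of $K^\eta_t$ (Proposition~\ref{indep-target-1} makes $\eta(t)$ a boundary point of $\HH(\eta|t^+)$, yielding a fresh hull, cf.~(\ref{Koebe0})). (c) Invoke the chordal Loewner coordinate-change identity: write $g^\eta_t\circ\phi=\psi_t\circ \til g_t$ on $\HH\sem\til K_t$, where $\til g_t=g_{\til K_t}$ and $\psi_t$ is an automorphism of $\HH$; differentiating the Loewner ODE and applying the chain rule shows that $\til g_{\sigma^{-1}(s)}$ satisfies the chordal Loewner equation with a continuous driving function $\til W_s$, and $(\til g_{\sigma^{-1}(s)})^{-1}(\til W_s)=\til\eta(\sigma^{-1}(s))$. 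Hence $\til\eta\circ\sigma^{-1}$ is a CP Loewner curve, so $\til\eta$ is AP in $\HH$ aimed at $\infty$, and $\eta|_{[0,\tau^\eta_u)}=\phi\circ\til\eta$ is AP in $\HH$ aimed at $u$ by definition.

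For (ii), write $\eta=f\circ\eta''$ where $f\colon\HH\to D$ is conformal with $f(\infty)=w_\infty$, and $\eta''=f^{-1}\circ\eta$ is AP in $\HH$ aimed at $\infty$, so $\eta''=\eta_0\circ\theta^{-1}$ for a CP Loewner curve $\eta_0$ and a continuous strictly increasing $\theta\colon[0,\ha T_{\eta_0})\to[0,\ha T_{\eta''})$. Set $u':=f^{-1}(u)$. The definition $I^\eta_{t^+;D}=f(I^{\eta''}_t)=f(I^{\eta_0}_{\theta^{-1}(t)})$ gives $u'\in I^{\eta_0}_{\theta^{-1}(t)}$, hence $\tau^{\eta_0}_{u'}>\theta^{-1}(t)$. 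Pick $s_0\in(\theta^{-1}(t),\tau^{\eta_0}_{u'})$ and put $t':=\theta(s_0)\in(t,\ha T_\eta)$. By part (i), $\eta_0|_{[0,\tau^{\eta_0}_{u'})}$ is AP in $\HH$ aimed at $u'$, and its truncation to $[0,s_0)$ is again AP aimed at $u'$ because truncating a CP Loewner curve yields a CP Loewner curve. Since $\K_{t'}(\eta'')=(\eta_0|_{[0,s_0)})\circ(\theta^{-1}|_{[0,t')})$ is a time-change of this truncation, it is also AP in $\HH$ aimed at $u'$. Composing with $f$, which is a conformal map $\HH\to D$ sending $\infty$ to $f(u')=u$, we conclude that $\K_{t'}(\eta)=f\circ\K_{t'}(\eta'')$ is AP in $D$ aimed at $u$, as required.

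The main obstacle is the coordinate-change computation in step (c) of (i): the underlying identity is classical (cf.\ \cite[Chapter 4]{Law1}), but the statement needs careful bookkeeping of the capacity time-change $\sigma$ and the verification that the new driving function $\til W_s$ is actually continuous on all of $[0,\sigma(\tau^\eta_u))$, which in turn rests on the fact that $u$ is not swallowed before time $\tau^\eta_u$ so that $\psi_t$ depends continuously on $t$.
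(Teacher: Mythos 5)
Your proof is correct and takes essentially the same route as the paper: part (i) is precisely the chordal SLE coordinate-change statement, which the paper simply cites from \cite{SW} (your steps (a)--(c) reconstruct that argument in the style of \cite[Chapter 4]{Law1}), and part (ii) is then deduced from (i) exactly as the paper does. One harmless slip in the last sentence of (ii): the conformal map $\HH\to D$ sending $\infty$ to $u$ is the composition $f\circ\phi$, where $\phi$ is the automorphism of $\HH$ with $\phi(\infty)=u'$ implicit in ``AP in $\HH$ aimed at $u'$'', not $f$ itself.
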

\begin{proof}
Part (i) follows from the work of \cite{SW}, and Part (ii)  follows from (i).
\end{proof}

\begin{Definition}
  We call $\eta\in  \Sigma^{\ha D}$ an AP Loewner curve in $D$ (without a specified target) if for any $t\in [0,\ha T_\eta)$, $\K_t(\eta)$ is an AP Loewner curve in $D$ aimed at some  prime end of $D$. For such $\eta$, we   call the MTC curve $[\eta]\in \til\Sigma^{\ha D}$ an MTC Loewner curve in $D$. If $[\eta]$ is both an MTC loop rooted at $w$ defined at the end of Section \ref{section-lifetime} and an MTC Loewner curve in $D$, then we call $[\eta]$ an (MTC) Loewner bubble in $D$ rooted at $w$.
\end{Definition}


An AP Loewner curve aimed at a prime end is clear an AP Loewner curve without a specified target. The converse is not true. Below is an example.

\begin{Example}
  Let $\theta$ be an increasing homeomorphism from $(0,\infty)$ onto $(-\infty,\infty)$. Then the curve $\gamma:[0,\infty)\to \lin\HH^{\#}$  defined by $\gamma(0)=\infty$, $\gamma(t)=(\theta(t),\sin^2(\theta(t)))$, $0<t<\infty$, is  an AP Loewner curve, but no prime end can serve as its target. \label{Example-no-target}
\end{Example}

Suppose $\eta$ is an AP Loewner curve in $D$ without a specified target. Let $t_0\in [0,\ha T_\eta)$. Take $t_0'\in (t_0,\ha T_\eta)$. Then $\K_{t_0'}(\eta)$ is an AP Loewner curve in $D$ aimed at some prime end. By the earlier discussion, we have the existence of the domain $D(\K_{t_0'}(\eta)|t_0^+)$  and its prime end $\K_{t_0'}(\eta)(t_0^+)$  and prime end interval $I^{\K_{t_0'}(\eta)}_{t_0^+;D}$. This immediately implies the existence of  $D(\eta|t_0^+)$ and $\eta(t_0^+)$ and that they respectively equal  $D(\K_{t_0'}(\eta)|t_0^+)$  and  $\K_{t_0'}(\eta)(t_0^+)$.
Sine $I^{\K_{t_0'}(\eta)}_{t_0^+;D}$ is the greatest prime end interval shared by $D$ and $D(\K_{t_0'}(\eta)|t_0^+)=D(\eta|t_0^+)$, it depends only on $\eta(t)$, $0\le t<t'$ for any $t'\in (t_0,\ha T_\eta)$. Thus, $I^{\K_{t_0'}(\eta)}_{t_0^+;D}$ does not depend on $t_0'$, and so we denote it by $I^{\eta}_{t_0^+;D}$. We  similarly define $a^{\eta}_{t_0^+;D}$, $b^{\eta}_{t_0^+;D}$, $\ha a^{\eta}_{t_0^+;D}$ and  $\ha b^{\eta}_{t_0^+;D}$.
If  $\tau$ is a TCI time on $[\eta]$, which is less than $\ha T$, we then have the existence of the domain $D([\eta]|\tau^+)=D(\eta|\tau(\eta)^+)$ and it prime end $[\eta](\tau^+)=\eta(\tau(\eta)^+)$. We define $I^{[\eta]}_{\tau^+;D}=I^\eta_{\tau(\eta)^+;D}$, and similarly $a^{[\eta]}_{\tau^+;D}$, $b^{[\eta]}_{\tau^+;D}$, $\ha a^{[\eta]}_{\tau^+;D}$ and  $\ha b^{[\eta]}_{\tau^+;D}$.

 For an MTC Loewner curve $[\eta]$ in $\HH$ started from a real number, we define $\ha\sigma(\eta)$ to be the biggest $t_0\in [0,\ha T_\eta]$ such that $\infty\in I^\eta_t$ for $0\le t<t_0$. Then $\ha\sigma$ is a positive TCI stopping time, and  there is a unique CP Loewner curve $\eta^*$ such that $[\eta^*]=\K_{\ha\sigma}([\eta])$. The map $[\eta]\mapsto \eta^*$ is called the capacity parametrization map, and is denoted by $\cal P$.
 If  $\ha\sigma(\eta)< \ha T_\eta$, the part of $[\eta]$ after $\ha\sigma$ grows in a connected component of $\HH\sem [\eta]([0,\ha\sigma))$. Thus, we have \BGE \rad_x([\eta])=\rad_x({\cal P}([\eta])),\quad\mbox{for any }x\in\R.\label{radxPeta}\EDE

Recall that if $\gamma$ is a CP Loewner curve and $s_0\in [0, \ha T_\gamma)$, then the conformal map $(g^\gamma_{s_0})^{-1}$ from $\HH$ onto $ \HH(\gamma|s_0^+)$  extends continuously to $\lin\HH$, which implies that the embedding from $\HH(\gamma|s_0^+)$ into $\HH$  extends to a continuous map from $\ha{\HH(\gamma|s_0^+)}$  into $\ha \HH$. Thus, if $\gamma$ is an AP Loewner curve in $D$, and $s_0\in [0,\ha T_\gamma)$, then the embedding from $D(\eta|s_0^+)$ into $D$ extends to a continuous map  from $\ha{D(\eta|s_0^+)}$ into $\ha D$. So, every curve $\eta$ in $\ha{D(\eta|s_0^+)}$ can be viewed as a curve in $\ha D$.

\begin{Lemma}
  Let $\gamma$ be an AP Loewner curve in $D$. Let $s_0\in[0,\ha T_\gamma)$. Let $\eta$ be an AP Loewner curve in $D(\gamma|s_0^+)$ started from $\gamma(s_0^+)$ and aimed at $u\in I^\gamma_{s_0^+;D}\cup \{\ha a^\gamma_{s_0^+;D}, \ha b^\gamma_{s_0^+;D}\}$. Then $\gamma\oplus_{s_0} \eta$ is an AP Loewner curve in $D$, where the $\eta$ is understood as a curve in $\ha D$.
   \label{continue-Loewner}
\end{Lemma}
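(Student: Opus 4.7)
The plan is to reduce to the case $D=\HH$ via the conformal map in the definition of an AP Loewner curve in $D$, and then to reduce to the case where $\gamma$ is a CP Loewner curve driven by some $W\in\Sigma$ via the associated time-change. In this setup $g^\gamma_{s_0}\colon H^\gamma_{s_0}\to\HH$ is available and sends $\gamma(s_0^+)$ to $W^\gamma_{s_0}$. Since an AP Loewner curve in $\HH$ without a specified target is characterized by requiring each $\K_t$-truncation to be an AP Loewner curve aimed at some prime end, and the case $t\le s_0$ is immediate from the hypothesis on $\gamma$, the real task is to show that $\gamma|_{[0,s_0]}\oplus \K_r(\eta)$ is an AP Loewner curve in $\HH$ aimed at some prime end for every $r\in(0,\ha T_\eta)$, where $\K_r(\eta)$ is again AP Loewner in $H^\gamma_{s_0}$ aimed at $u$.

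The core case is when $u\in I^\gamma_{s_0^+;\HH}$ is identified with the prime end $\infty$ of $\HH$. Here I would set $\eta^*:=g^\gamma_{s_0}\circ\eta$, which by the conformal naturality of the AP Loewner structure is an AP Loewner curve in $\HH$ aimed at $\infty$ starting at $W^\gamma_{s_0}$. Let $V\in\Sigma$ be the driving function of the CP version of $\eta^*$, so $V_0=W^\gamma_{s_0}$, and define the concatenated driving function $\til W_t:=W_t$ for $t\in[0,s_0]$, $\til W_t:=V_{t-s_0}$ for $t\in[s_0,s_0+\ha T_V)$. Continuity of $\til W$ at $s_0$ is automatic from the matching endpoint. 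The standard composition property for chordal Loewner chains then identifies the hulls of the CP Loewner curve $\til\gamma$ driven by $\til W$ as $K^\gamma_t$ for $t\le s_0$ and $K^\gamma_{s_0}\cup(g^\gamma_{s_0})^{-1}(K^V_{t-s_0})$ for $t>s_0$; these match the hulls of $\gamma\oplus_{s_0}\eta$ up to a strictly increasing time-change, so $\gamma\oplus_{s_0}\eta$ is a time-change of $\til\gamma$ and hence AP Loewner in $\HH$ aimed at $\infty$.

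The remaining cases I would reduce to the core one by conformal conjugation. When $u$ corresponds to a finite $u^*\in\R\setminus[a^\gamma_{s_0},b^\gamma_{s_0}]$, I take a M\"obius $\psi\colon\HH\to\HH$ with $\psi(u^*)=\infty$; since $u^*\notin\lin K^\gamma_{s_0}$, $\psi(K^\gamma_{s_0})$ is an $\HH$-hull, Proposition \ref{indep-target-2}(i) applied to the CP curve $\gamma$ at $u^*\in I^\gamma_{s_0}$ (so $\tau^\gamma_{u^*}>s_0$) shows that $\psi\circ\gamma|_{[0,s_0]}$ is a time-change of a CP Loewner curve in $\HH$, and $\psi\circ\eta$ is an AP Loewner curve in $\HH\sem\psi(K^\gamma_{s_0})$ aimed at $\infty$. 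The core argument then applies to $\psi\circ(\gamma\oplus_{s_0}\eta)=\psi\circ\gamma|_{[0,s_0]}\oplus\psi\circ\eta$; applying $\psi^{-1}$ (which sends AP Loewner curves aimed at $\infty$ to AP Loewner curves aimed at $u^*$) yields the conclusion for $\gamma\oplus_{s_0}\eta$. When $u\in\{\ha a^\gamma_{s_0^+;\HH},\ha b^\gamma_{s_0^+;\HH}\}$, I would first apply $g^\gamma_{s_0}$ to carry $\eta$ into an AP Loewner curve in $\HH$ aimed at the finite real point $C^\gamma_{s_0}$ or $D^\gamma_{s_0}$, then conjugate by a further M\"obius of $\HH$ to reduce to the core case.

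The main obstacle is the careful bookkeeping of the composition rule for Loewner hulls, together with the tracking of prime-end identifications under $g^\gamma_{s_0}$ and its continuous boundary extensions through the hull tips. The pseudo-prime-end subcase requires particular care, since a single M\"obius of $\HH$ cannot by itself produce an $\HH$-hull from $K^\gamma_{s_0}$---the point $a^\gamma_{s_0}$ or $b^\gamma_{s_0}$ lies in $\lin K^\gamma_{s_0}$---and the hull must first be uniformized away via $g^\gamma_{s_0}$ before the M\"obius reduction step can be carried out.
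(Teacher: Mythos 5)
Your first two cases are essentially sound: the driving-function concatenation when the target is the shared prime end $\infty$ is the same computation the paper uses, and your M\"obius conjugation for a finite $u^*\in I^\gamma_{s_0^+;D}$ is a workable (if heavier) substitute for the paper's cheaper trick of re-aiming a truncation $\K_{s_0'}(\gamma)$ at $u$ via Proposition \ref{indep-target-2} and writing $\gamma\oplus_{s_0}\eta=\K_{s_0'}(\gamma)\oplus_{s_0}\eta$. (That same truncation also repairs your silent assumption that a targetless $\gamma$ has a global CP representation: only $\gamma|_{[0,s_0]}$ matters.)

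The genuine gap is the pseudo-prime-end case $u\in\{\ha a^\gamma_{s_0^+;D},\ha b^\gamma_{s_0^+;D}\}$. Here $u$ is \emph{not} a prime end shared by $D$ and $D(\gamma|s_0^+)$: its determining point $a^\gamma_{s_0}$ or $b^\gamma_{s_0}$ lies in $\lin{K^\gamma_{s_0}}$, so the concatenated curve cannot be aimed at the corresponding prime end of $D$ (any M\"obius of $\HH$ sending that point to $\infty$ makes the image of $K^\gamma_{s_0}$ unbounded --- the obstruction you flag yourself), and the lemma's conclusion in this case is only the \emph{targetless} property. Your remedy, ``uniformize by $g^\gamma_{s_0}$ first, then apply a M\"obius,'' does not produce a reduction to the core case: once you apply $g^\gamma_{s_0}$ the initial segment $\gamma|_{[0,s_0]}$ has been mapped away, so there is no concatenation left to feed into the driving-function argument, and composing $g^\gamma_{s_0}$ with a M\"obius destroys the hydrodynamic normalization that made $K^{\gamma\oplus_{s_0}\eta}_{s_0+t}=K^\gamma_{s_0}\cup(g^\gamma_{s_0})^{-1}(K^{\eta'}_{t})$ and $W^{\gamma\oplus_{s_0}\eta}=W^\gamma\oplus_{s_0}W^{\eta'}$ available. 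The missing idea is a truncation-and-re-aiming step, which is how the paper closes this case: for each $t_0\in(0,\ha T_\eta)$, the open prime-end interval $I^{\eta}_{t_0^+;D(\gamma|s_0^+)}$ contains $u$, and since $u$ is an endpoint of $I^\gamma_{s_0^+;D}$ the two intervals intersect; choosing $u'$ in the intersection, $\K_{t_0}(\eta)$ is aimed at $u'$, so the already-proved shared-prime-end case shows $\K_{s_0+t_0}(\gamma\oplus_{s_0}\eta)=\gamma\oplus_{s_0}\K_{t_0}(\eta)$ is an AP Loewner curve in $D$ aimed at $u'$; letting $t_0$ run over $(0,\ha T_\eta)$ yields the targetless conclusion by definition. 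Without this step your argument does not establish the lemma in the endpoint case.
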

\begin{proof}
First suppose $\gamma$ has a target $w_\infty$. Assume $u=w_\infty$. By  conformal mapping and time-change we may assume that $D=\HH$ and $\gamma$ is a CP Loewner curve. Since $g^\gamma_{s_0}$ maps $\HH(\gamma|s_0^+)$ conformally onto $\HH$, fixes $\infty$ and sends $\gamma(s_0+)$ to $W^\gamma_{s_0}$, $\eta':=g^\gamma_{s_0}\circ \eta$ is an AP Loewner curve in $\HH$ started from $W^\gamma_{s_0}$ and aimed at $\infty$. By another time-change we may assume that $\eta'$ is also a CP Loewner curve. The conclusion follows since $\gamma\oplus_{s_0} \eta=\gamma\oplus_{s_0} (g^\eta_{s_0})^{-1}\circ \eta'$ is the CP Loewner curve driven by $W^\gamma\oplus_{s_0} W^{\eta'}$.



Now we do not assume $w_\infty=u$ but assume $u\in I^\gamma_{s_0^+;D}$. Then $\gamma\oplus_{s_0} \eta$ is an AP Loewner curve in $D$ aimed at $u$ since from $u\in I^\gamma_{s_0^+;D}$ we know there is some $s_0'\in (s_0,\ha T_\gamma)$ such that $\K_{s_0'}(\gamma)$ is an AP Loewner curve in $D$ aimed at $u$, and $\gamma\oplus_{s_0} \eta=\K_{s_0'}(\gamma)\oplus_{s_0} \eta$.

Assume now $u$ equals $ \ha a^\gamma_{s_0^+;D}$ or $\ha b^\gamma_{s_0^+;D}$. For any $t_0\in (0,\ha T_\eta)$, $I^{\eta}_{t_0^+; D(\gamma|s_0^+)}$ is an open prime end interval of $D(\gamma|s_0^+)$ containing $u$, and so has nonempty intersection with $I^\gamma_{s_0^+;D}$ since $u$ is an endpoint of $I^\gamma_{s_0^+;D}$. Choosing $u'$ from the intersection. Then $\K_{t_0}(\eta)$ is aimed at such $u'$. From the above paragraph, $\K_{s_0+t_0}(\gamma\oplus_{s_0} \eta)=\gamma\oplus_{s_0} \K_{t_0}(\eta)$ is an AP Loewner curve in $D$ aimed at $u'$. Since this holds for any $t_0\in (0,\ha T_{\eta})$, we see that $\gamma\oplus_{s_0}\eta$ is an AP Loewner curve in $D$.

Finally, we do not assume that $\gamma$ has a target. Let $s_0'\in(s_0,\ha T_\gamma)$ and $\gamma'=\K_{s_0'}(\gamma)$. Then $\gamma'$ is an AP Loewner curve in $D$ with a target, and $I^{\gamma'}_{s_0}=I^\gamma_{s_0}$, $\ha a^{\gamma'}_{s_0}=\ha a^\gamma_{s_0}$ and $\ha b^{\gamma'}_{s_0}=\ha b^\gamma_{s_0}$.
So, by the above result, $\gamma\oplus_{s_0} \eta=\gamma'\oplus_{s_0}\eta$ is an AP Loewner curve in $D$.
\end{proof}



Let $D$ be a simply connected domain, $w\in\ha\pa D$ and $v\in \ha\pa_w D$. Let $\eta$ be an AP Loewner curve in $D$ started from $w$, and $t\in [0,\ha T_\eta)$. We define $[v]^{\eta}_{t^+;D}\in \ha\pa_{\eta(t^+)} D(\eta|t^+)$ as follows. If $v\in I^{\eta}_{t^+;D}$, $[v]^{\eta}_{t^+;D}=v$; if $v\in [w^+, b^{\eta}_{t^+;D}]_{\ha\pa D}$, $[v]^{\eta}_{t^+;D}= \ha b^{\eta}_{t^+;D}$ or $(\ha b^{\eta}_{t^+;D})^+$ depending on whether $\eta(t^+)\ne \ha b^{\eta}_{t^+;D}$; and if $v\in [a^{\eta}_{t^+;D},w^-]_{\ha\pa D}$, $[v]^{\eta}_{t^+;D}= \ha a^{\eta}_{t^+;D}$ or $(\ha a^{\eta}_{t^+;D})^-$ depending on whether $\eta( t^+)\ne \ha a^{\eta}_{ t^+;D}$. In some situation,
when $[v]^{\eta}_{t^+;D}=( \ha b^{\eta}_{t^+;D})^+$ (resp.\ $(\ha a^{\eta}_{t^+;D})^-$), we treat it as the  genuine prime end $\ha b^{\eta}_{t^+;D}$ (resp. $\ha a^{\eta}_{t^+;D}$). When $\eta$ is a CP Loewner curve, we write $[v]^\eta_{t}$ for $[v]^\eta_{t^+;\HH}$. If $\tau<\ha T$ is a TCI time on an MTC Loewner curve $[\eta]$,   we define $[v]^{[\eta]}_{\tau^+;D}=[v]^\eta_{\tau(\eta)^+;D}$. 

\begin{Lemma}
Let $D$ be a simply connected domain with two distinct prime ends $w_0,w_\infty$.
Let $\gamma$ be an AP Loewner curve in $D$ started from $w_0$ and aimed at $w_\infty$,  $s_0\in [0,\ha T_\gamma)$, and $\eta$ be an AP Loewner curve in $D(\gamma|s_0^+)$ started from $\gamma(s_0^+)$ and aimed at $w_\infty$. Let $t_0\in [0,\ha T_\eta)$. Then
  \BGE [[v]^{\gamma}_{s_0^+;D}]^{\eta}_{t_0^+; D(\gamma|s_0^+)}=[v]^{\gamma\oplus_{s_0} \eta}_{(s_0+t_0)^+;D},\quad \forall v\in \ha\pa_{w_0} D.\label{[[v]]}\EDE
\label{[[v]]-lem}
\end{Lemma}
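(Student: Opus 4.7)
My plan is to prove the identity by reducing to a canonical setting and then unfolding the definition of $[v]$ in cases based on where $v$ lies relative to $\gamma$.

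First I would reduce to the standard half-plane situation. By applying a conformal map from $\HH$ onto $D$ sending $\infty$ to $w_\infty$, we may assume $D=\HH$ and $w_\infty=\infty$. By time-change we may further assume $\gamma$ is a CP Loewner curve. Setting $\eta':=g^\gamma_{s_0}\circ \eta$, we obtain an AP Loewner curve in $\HH$ started from $W^\gamma_{s_0}$ aimed at $\infty$, which we also reparametrize to be CP. Then by the same argument as in the proof of Lemma \ref{continue-Loewner}, $\gamma\oplus_{s_0}\eta$ is the CP Loewner curve driven by $W^\gamma\oplus_{s_0}W^{\eta'}$, whose hull at time $s_0+t_0$ is $K^\gamma_{s_0}\cup (g^\gamma_{s_0})^{-1}(K^{\eta'}_{t_0})$. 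Consequently, setting $D'=\HH(\gamma|s_0^+)$ and $D''=D'(\eta|t_0^+)$, we have the identification $D''=\HH(\gamma\oplus_{s_0}\eta|(s_0+t_0)^+)$, together with $(\gamma\oplus_{s_0}\eta)((s_0+t_0)^+)=\eta(t_0^+)$.

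Next, I would split into three cases based on the definition of $[v]^\gamma_{s_0^+;\HH}$. In Case 1, when $v\in I^\gamma_{s_0^+;\HH}$, we have $[v]^\gamma_{s_0^+;\HH}=v$, and we apply $[\cdot]^\eta_{t_0^+;D'}$ by looking at where $g^\gamma_{s_0}(v)$ sits relative to $K^{\eta'}_{t_0}$; this splits into the subcases $g^\gamma_{s_0}(v)\in I^{\eta'}_{t_0}$, $g^\gamma_{s_0}(v)\in(W^\gamma_{s_0},b^{\eta'}_{t_0}]$, and $g^\gamma_{s_0}(v)\in[a^{\eta'}_{t_0},W^\gamma_{s_0})$. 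In each subcase, observing that $I^{\gamma\oplus_{s_0}\eta}_{(s_0+t_0)^+;\HH}$ is the largest prime end interval of $\HH$ shared with $D''$, and that $\ha b^{\gamma\oplus_{s_0}\eta}_{(s_0+t_0)^+;\HH}$, $\ha a^{\gamma\oplus_{s_0}\eta}_{(s_0+t_0)^+;\HH}$ are precisely the prime ends of $D''$ at the endpoints of this interval, one verifies that both sides of (\ref{[[v]]}) produce the same prime end. Cases 2 and 3, where $v\in[w_0^+,b^\gamma_{s_0}]_{\ha\pa\HH}$ or $v\in[a^\gamma_{s_0},w_0^-]_{\ha\pa\HH}$, are handled symmetrically: $[v]^\gamma_{s_0^+;\HH}$ is already $\ha b^\gamma_{s_0}$ or $\ha a^\gamma_{s_0}$ (or their $\pm$ versions), and applying $[\cdot]^\eta_{t_0^+;D'}$ pushes this prime end of $D'$ into $D''$; the analogous computation with $\gamma\oplus_{s_0}\eta$ then yields the same prime end since $v$ lies on the same side of the composite hull.

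The main obstacle I anticipate is the bookkeeping of the pseudo prime ends $(\ha b)^+$ versus $\ha b$ (and the analogous ones on the $a$-side). By definition, $[v]^\eta_{t_0^+;D'}$ equals the genuine prime end $\ha b^\eta_{t_0^+;D'}$ exactly when $\eta(t_0^+)\ne \ha b^\eta_{t_0^+;D'}$, and equals the pseudo prime end $(\ha b^\eta_{t_0^+;D'})^+$ otherwise; and similarly for the composite. To match these up, I would use that under the identification $D''=\HH(\gamma\oplus_{s_0}\eta|(s_0+t_0)^+)$, the prime end $\ha b^\eta_{t_0^+;D'}$ coincides with $\ha b^{\gamma\oplus_{s_0}\eta}_{(s_0+t_0)^+;\HH}$ precisely when $v$ (in Case 1, after passing through $g^\gamma_{s_0}$) lies on the $b^{\eta'}$-side but not beyond $D^\gamma_{s_0}$, and similarly for the other placements; combined with the identity $(\gamma\oplus_{s_0}\eta)((s_0+t_0)^+)=\eta(t_0^+)$, this guarantees that the $+/-$ distinction is preserved. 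Since the above analysis does not depend on the choice of representatives of $[\eta]$ (by the remark on TCI stopping times and Proposition \ref{indep-target-1}), the identity (\ref{[[v]]}) follows for the MTC versions as well.
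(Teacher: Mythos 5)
Your reduction step is the same as the paper's: map to $\HH$ with $w_\infty=\infty$, take $\gamma$ and $\eta':=g^\gamma_{s_0}\circ\eta$ to be CP Loewner curves, and use $g^{\gamma\oplus_{s_0}\eta}_{s_0+t_0}=g^{\eta'}_{t_0}\circ g^\gamma_{s_0}$ together with $K^{\gamma\oplus_{s_0}\eta}_{s_0+t_0}=K^\gamma_{s_0}\cup(g^\gamma_{s_0})^{-1}(K^{\eta'}_{t_0})$. After that, however, your argument stops short of a proof: the content of the lemma is exactly the case-by-case identification of prime ends of $D''=\HH(\gamma\oplus_{s_0}\eta|(s_0+t_0)^+)$, including the $+/-$ refinements, and in each of your subcases you only assert that ``one verifies that both sides produce the same prime end.'' The paper supplies a concrete mechanism for this verification: it applies the common map $g^{\eta'}_{t_0}\circ g^\gamma_{s_0}$ to both sides, reducing the claim to the identity (\ref{[[v]]'}) between boundary values, and then proves that identity using the formula (\ref{vetat}), $g^\xi_t([u]^\xi_t)=(W^\xi_t)^+\vee\lim_{x\downarrow u\vee b^\xi_t}g^\xi_t(x)$, which encodes the genuine/pseudo prime end distinction uniformly, together with the monotone-limit equivalence (\ref{down-vee}). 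Nothing in your proposal plays the role of (\ref{vetat}); without it (or some substitute), the ``bookkeeping'' you flag as the main obstacle is precisely what remains unproved.

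Moreover, the one concrete criterion you offer for that bookkeeping is incorrect as stated. Whether the prime end $\ha b^{\eta}_{t_0^+;D'}$ of $D''$ is identified with $\ha b^{\gamma\oplus_{s_0}\eta}_{(s_0+t_0)^+;\HH}$ is not governed by where $v$ (or $g^\gamma_{s_0}(v)$) lies; it is determined by the relative position of the hulls, namely whether $b^{\eta'}_{t_0}\ge D^\gamma_{s_0}$. If $b^{\eta'}_{t_0}<D^\gamma_{s_0}$, then $b^{\gamma\oplus_{s_0}\eta}_{s_0+t_0}=b^\gamma_{s_0}$ and the right endpoint of the interval shared by $\HH$ and $D''$ corresponds under $g^{\eta'}_{t_0}\circ g^\gamma_{s_0}$ to $g^{\eta'}_{t_0}(D^\gamma_{s_0})>D^{\eta'}_{t_0}$, so the two $\ha b$'s are distinct prime ends of $D''$ regardless of $v$; the equality (\ref{[[v]]}) still holds, but for a different reason (in this regime a $v\in[w_0^+,b^\gamma_{s_0}]$ gives $[v]^\gamma_{s_0^+;\HH}=\ha b^\gamma_{s_0}$, whose image $D^\gamma_{s_0}$ lies in $I^{\eta'}_{t_0}$, so the outer bracket leaves it alone, matching $\ha b^{\gamma\oplus_{s_0}\eta}_{(s_0+t_0)^+;\HH}$ on the other side). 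Sorting out these configurations correctly in every subcase, with the correct $+$ labels tied to whether the tip sits at the relevant endpoint, is the substance of the lemma; your proposal asserts the conclusion of that analysis rather than carrying it out, and the criterion it does state would lead you astray in the regime just described.
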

\begin{proof}
  From the first paragraph of the previous proof, we may assume that $D=\HH$, $w_0\in\R$,   $w_\infty=\infty$, and both $\gamma$   and $\eta':=g^\gamma_{s_0}(\eta)$ are CP Loewner curves.  Then $g^{\gamma\oplus_{s_0} \eta}_{s_0+t_0}= g^{\eta'}_{t_0}\circ g^\gamma_{s_0}$.
   Applying the common map  to both sides of (\ref{[[v]]}) and using the fact that $g^{\gamma}_{s_0}$ maps $\HH(\gamma|s_0^+)$ conformally onto $\HH$, we see that (\ref{[[v]]}) is equivalent to
 \BGE g^{\eta'}_{t_0}( [g^{\gamma}_{s_0}([v]^{\gamma}_{s_0 })]^{\eta'}_{t_0}) =g^{\gamma\oplus_{s_0} \eta'}_{s_0+t_0}([v]^{\gamma\oplus_{s_0} \eta}_{s_0+t_0}),\quad \forall v\in \ha\pa_{w_0} \HH. \label{[[v]]'}\EDE
Let $v\in\ha\pa_{w_0}\HH$. If $v=\infty$,  (\ref{[[v]]'}) is trivial since both sides equal $\infty$. Suppose $v\ne \infty$. Assume by symmetry that $v\in [w_0^+,\infty)$. We will use the following equality.
For a CP Loewner curve $\xi$,
\BGE   g^\xi_t( [u]^\xi_{t})=  (W^\xi_t)^+\vee \lim_{x\downarrow u \vee b^\xi_t} g^\xi_t(x)\ge D^\xi_t,\quad \mbox{if }t\in[0,\ha T_\xi) \mbox{ and } u\in[\xi(0)^+,\infty).\label{vetat}\EDE
To see that it holds, note that if $u\in (b^\xi_t,\infty)$, then $[u]^\xi_t=u$ and $\lim_{x\downarrow u \vee b^\xi_t} g^\xi_t(x)=g^\xi_t(u)>D^\xi_t$; and if $u\in [\xi(0)^+, b^\xi_t]$, then $g^\xi_t([u]^\xi_t)=(W^\xi_t)^+\vee D^\xi_t$ and  $\lim_{x\downarrow u \vee b^\xi_t} g^\xi_t(x)=D^\xi_t $.

Since $\lim_{x\downarrow  b^\gamma_{s_0}} g^\gamma_{s_0}(x)= D^\gamma_{s_0}\ge W^\gamma_{s_0}$, by (\ref{vetat})  $ x\downarrow v\vee b^\gamma_{s_0}\LR g^\gamma_{s_0}(x)\downarrow g^{\gamma}_{s_0}([v]^{\gamma}_{s_0 })$.
Since $K^{\gamma\oplus\eta}_{s_0+t_0}=K^{\gamma}_{s_0}\cup (g^{\gamma}_{s_0})^{-1}(K^{\eta'}_{t_0})$, we get
$ x\downarrow b^{\gamma\oplus_{s_0} \eta}_{s_0+t_0}\LR g^\gamma_{s_0}(x)\downarrow D^\gamma_{s_0}\vee b^{\eta'}_{t_0}$.  These two equivalence relations together with the inequalities $ b^{\gamma\oplus_{s_0} \eta}_{s_0+t_0}\ge b^\gamma_{s_0}$ and $g^{\gamma}_{s_0}([v]^{\gamma}_{s_0 })\ge D^\gamma_{s_0}$ imply that
\BGE x\downarrow v\vee b^{\gamma\oplus_{s_0} \eta}_{s_0+t_0}\LR g^\gamma_{s_0}(x)\downarrow g^{\gamma}_{s_0}([v]^{\gamma}_{s_0 })\vee b^{\eta'}_{t_0}.\label{down-vee}\EDE
Since $W^{\gamma\oplus_{s_0}\eta}=W^\gamma\oplus_{s_0} W^{\eta'}$   and $g^{\gamma\oplus_{s_0}\eta}_{s_0+t_0}=g^{\eta'}_{t_0}\circ g^\gamma_{s_0}$, we have
$$g^{\gamma\oplus_{s_0} \eta'}_{s_0+t_0}([v]^{\gamma\oplus_{s_0} \eta}_{s_0+t_0}){\stackrel{(\ref{vetat})}{=}}(W^{\gamma\oplus_{s_0}\eta}_{s_0+t_0})^+\vee \lim_{x\downarrow v\vee b^{\gamma\oplus_{s_0} \eta}_{s_0+t_0}} g^{\gamma\oplus_{s_0}\eta}_{s_0+t_0}(x)$$ $${\stackrel{(\ref{down-vee})}{=}}(W^{ \eta'}_{ t_0})^+\vee \lim_{y \downarrow   g^{\gamma}_{s_0}([v]^{\gamma}_{s_0 })\vee b^{\eta'}_{t_0}} g^{ \eta'}_{ t_0}(y){\stackrel{(\ref{vetat})}{=}}  g^{\eta'}_{t_0}( [g^{\gamma}_{s_0}([v]^{\gamma}_{s_0 })]^{\eta'}_{t_0}).$$ So we get the desired (\ref{[[v]]'}), and the proof is complete.
 \end{proof}


\begin{Corollary}
  Let $[\gamma]$ be an MTC Loewner curve in $D$ started from $w_0$ and aimed at $w_\infty$, and $\sigma$ be a TCI time on $[\gamma]$, which is less than $\ha T$. Let $[\eta]$ be an MTC Loewner curve in $D([\gamma]|\sigma^+)$ started from $[\gamma](\sigma^+)$ and aimed at $w_\infty$. Let $\tau$ be a TCI time on $[\gamma]\oplus_\sigma[\eta]$, which is less than $\ha T$. Suppose  $\tau(\gamma'\oplus_{\sigma(\gamma')}\eta')\ge \sigma(\gamma')$ for all $\gamma'\in[\gamma]$ and $\eta'\in[\eta]$.  Then
$$ [[v]^{[\gamma]}_{\sigma^+;D}]^{[\eta]}_{(\tau_{[\gamma];\sigma})^+; D([\gamma]|\sigma^+)}=[v]^{[\gamma]\oplus_{\sigma} [\eta]}_{\tau^+;D},\quad \forall v\in \ha\pa_{w_0} D.$$
\label{[[v]]-cor}
\end{Corollary}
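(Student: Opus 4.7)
The plan is to reduce the MTC statement to the parametrized statement already proved in Lemma \ref{[[v]]-lem} by choosing arbitrary representatives, and then verify that the two sides of the displayed equality are, by definition, the representative-based quantities appearing in (\ref{[[v]]}).

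Concretely, pick $\gamma'\in[\gamma]$ and $\eta'\in[\eta]$, and set $s_0=\sigma(\gamma')$ and $t_0=\tau_{\gamma';s_0}(\eta')$. Since $\sigma<\ha T$ on $[\gamma]$ we have $s_0<\ha T_{\gamma'}$, and the hypothesis $\eta'(0)=[\gamma](\sigma^+)$ gives exactly the configuration of Lemma \ref{[[v]]-lem}. By the definitions stated after Lemma \ref{tau-[g]'} and in Section \ref{section-Loewner}, each of the MTC quantities unpacks to its parametrized version: $[v]^{[\gamma]}_{\sigma^+;D}=[v]^{\gamma'}_{s_0^+;D}$, $D([\gamma]|\sigma^+)=D(\gamma'|s_0^+)$, $\tau_{[\gamma];\sigma}(\eta')=\tau_{\gamma';s_0}(\eta')=t_0$ from (\ref{tau-g-formula''}), and $[\gamma]\oplus_\sigma[\eta]=[\gamma'\oplus_{s_0}\eta']$. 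Therefore the left-hand side of the corollary equals
$$[[v]^{\gamma'}_{s_0^+;D}]^{\eta'}_{t_0^+;D(\gamma'|s_0^+)},$$
which by Lemma \ref{[[v]]-lem} equals $[v]^{\gamma'\oplus_{s_0}\eta'}_{(s_0+t_0)^+;D}$.

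It remains to identify $(s_0+t_0)^+$ with $\tau^+$ on the curve $\gamma'\oplus_{s_0}\eta'$. Here the hypothesis $\tau(\gamma'\oplus_{\sigma(\gamma')}\eta')\ge\sigma(\gamma')$ is crucial: by formula (\ref{tau-g-formula'}) defining $\tau_{\gamma';s_0}$, we have
$$s_0+t_0=s_0+(\tau(\gamma'\oplus_{s_0}\eta')-s_0)\vee 0=\tau(\gamma'\oplus_{s_0}\eta'),$$
so $[v]^{\gamma'\oplus_{s_0}\eta'}_{(s_0+t_0)^+;D}=[v]^{\gamma'\oplus_{s_0}\eta'}_{\tau(\gamma'\oplus_{s_0}\eta')^+;D}=[v]^{[\gamma]\oplus_\sigma[\eta]}_{\tau^+;D}$, which is the right-hand side of the corollary.

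The routine but necessary check is that the parametrized quantities $[v]^{\gamma'}_{s_0^+;D}$, $D(\gamma'|s_0^+)$, $t_0$, and $\tau(\gamma'\oplus_{s_0}\eta')$ are representative-independent when viewed through $\pi$; this is exactly the content of Lemmas \ref{tau-g'} and \ref{tau-[g]'} together with the TCI definitions in Section \ref{section-Loewner}, so no genuine obstacle arises. The only substantive ingredient is the equality $s_0+t_0=\tau(\gamma'\oplus_{s_0}\eta')$ guaranteed by the hypothesis, together with one invocation of Lemma \ref{[[v]]-lem}.
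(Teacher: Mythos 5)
Your proposal is correct and is exactly the argument the paper intends: the corollary is stated with no written proof precisely because it follows from Lemma \ref{[[v]]-lem} by choosing representatives $\gamma'\in[\gamma]$, $\eta'\in[\eta]$ and unpacking the MTC definitions, with the hypothesis $\tau(\gamma'\oplus_{\sigma(\gamma')}\eta')\ge\sigma(\gamma')$ giving $s_0+t_0=\tau(\gamma'\oplus_{s_0}\eta')$ as you note. The only step you leave implicit is that $\tau<\ha T$ on $[\gamma]\oplus_\sigma[\eta]$ forces $t_0=\tau_{\gamma';s_0}(\eta')<\ha T_{\eta'}$, so that Lemma \ref{[[v]]-lem} indeed applies; this is immediate from (\ref{tau-g-formula'}) and does not constitute a gap.
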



\subsection{SLE$_\kappa(\ulin\rho)$ processes} \label{section-SLE}
We now review   multi-force-point SLE$_\kappa(\ulin\rho)$ curves from \cite{MS1}. 
Let $\kappa>0$ and $\ulin\rho=(\rho_1,\dots,\rho_m)\in\R^m$.
Let $w\in\R$ and $v_1,\dots, v_m\in \ha\pa_w \HH$ be such that $\sum_{j:v_j=w^+}\rho_j>-2$ and $\sum_{j:v_j=w^-}\rho_j>-2$. Let $\ulin v=(v_1,\dots,v_m)$. Consider the following system of SDE/ODE with $B_t$ being a standard Brownian motion:
\begin{align*}
dW_t& =\sum_{j=1}^m  {\ind}_{\{W_t\ne V^j_t\}} \frac{\rho_j}{W_t-V^j_t}\,dt+\sqrt\kappa dB_t,\quad W_0=w; \\
dV^j_t& ={\ind}_{\{W_t\ne V^j_t\}} \frac 2{V^j_t-W_t}\,dt,\quad V^j_0=v_j,\quad 1\le j\le m.
\end{align*}
Here if some $v_j$ equals $\infty$, then $V^j_t$ is constant $\infty$, and $\frac{1}{V^j_t-W^j_t}$ is constant $0$.
It is known that a weak solution in the integral sense with the maximal interval, say $[0,\ha T)$, of the system exists and is unique in law, and the $W$ in the solution a.s.\ generates a CP Loewner curve $\eta$, which we call a  CP SLE$_\kappa(\ulin\rho)$ curve started from $w$ with force points $\ulin v$, or simply a CP SLE$_\kappa(\ulin\rho)$ at $(w;\ulin v)$. We let $\PP_{\kappa;\ulin\rho}^{w;\ulin v}$ denote the law of  $W_t$, $0\le t<\ha T$, For each $j$, the $V^j$ is called the force point process started from $v_j$.
Below are some basic properties that will be used later.
\begin{itemize}
\item  If $\ha T=\infty$,  then a.s.\ $\lim_{t\to\infty} \eta(t)=\infty$. If $\ha T<\infty$, then   $\eta(\ha T):=\lim_{t\to \ha T^-}\eta(t)$,   $W_{\ha T}:=\lim_{t\to \ha T^-}W_t$ and $V^j_{\ha T}:=\lim_{t\to \ha T^-}V^j_t$ a.s.\ all exist and are finite.
\item Every $V^j$ is   determined by the formula  $V^j_t=\la v_j\ra^W_t$, where $\la v_j\ra^W_t$ is defined to be the $g^W_t([v_j]^{\eta^W}_t)$  without any possible superscript ``$+$'' or ``$-$''. Equivalently,  if $t<\tau^W_{v_j}$, then $V^j_t=g^W_t(v_j)$; otherwise   $V^j_t=D^W_t$ (resp.\ $C^W_t$) if $v_j>w$ (resp.\ $v_j<w$).
\item $W$ and $\eta$ satisfy the following translation and scaling property. If $a>0$ and $b\in\R$, then $ a W_{\cdot /a^2}+b$ has the law $\PP^{a w+b;(av_1+b,\dots,av_m+b)}_{\kappa;\ulin\rho}$, and generates the curve $a \eta(\cdot /a^2)+b$.
\item Suppose all $v_j$ lie on the same side of $w$, and $|v_j-w|$ increases in $j$. If $\sum_{j=1}^k \rho_j>-2$ for all $1\le k\le m$, then a.s.\ $\ha T=\infty$; if the inequalities hold for $1\le k\le m-1$, $v_m\ne \infty$, and $\sum_{j=1}^m \rho_j\le \frac\kappa 2-4$, then a.s.\ $\tau_{v_m}<\infty$ and $\lim_{t\uparrow \tau_{v_m}} \eta(t)= v_m$.
\item
    $\eta$ satisfies the following Domain Markov Property (DMP): if $\tau$ is an $\F$-stopping time, then conditionally on $\F_\tau$ and the event that $\tau<\ha T$, there is a CP SLE$_\kappa(\ulin\rho)$  $\eta'$ at $(W_\tau;\ulin {\til V}_\tau)$ such that $\tau(\eta+\cdot)=(g^\eta_\tau)^{-1} (\eta')$, where $\ulin {\til V}_\tau=(\til V^1_\tau,\dots, \til V^m_\tau)$ and  $\til V^j_\tau$ is defined such that $ \til V^j_\tau=V^j_\tau$ if $V^j_\tau\ne W_\tau$, and $\til V^j_\tau=W_\tau^+$ (resp.\ $W_\tau^-$) if $V^j_\tau=W_\tau$ and $v_j>w$ (resp.\ $v_j<w$).
\item A single-force-point CP SLE$_\kappa(\rho)$ curve $\eta$ at $(w;v)$ with  $v\in[w^+,\infty)$ has the following boundary behavior. Let $I=(w,\infty)$ if $v=w^+$ and $I=[v,\infty)$ if $v\in(w,\infty)$. If $\rho\ge \frac\kappa 2-2$, then a.s.\ $\eta\cap I=\emptyset$; if $\rho\le \frac\kappa 2-4$, then a.s.\ $\eta\supset I$; and if $\rho\in(\frac\kappa 2-4,\frac\kappa 2-2)$, then a.s.\ $\eta\cap I$ is unbounded, but for every fixed $u\in I$, a.s.\ $ \eta\not\ni u$.
\end{itemize}

Let $D$ be a simply connected domain in $\ha\C$, $w_0\ne w_\infty\in\ha\pa D$ and $v_1,\dots,v_m\in \ha\pa_{w_0} D$. Let $\kappa>0$ and $\ulin\rho=(\rho_1,\dots,\rho_m)\in\R^m$. Let $f$ be a conformal map from $\HH$ onto $D$ such that   $f(\infty)=w_\infty$.
Let $\eta$ be a CP SLE$_\kappa(\ulin\rho)$ at $(f^{-1}(w_0);f^{-1}(v_1),\dots,f^{-1}(v_m))$. Then the random  MTC  curve $[f\circ \eta]$  is called an (MTC) SLE$_\kappa(\ulin\rho)$  in $D$ started from $w_0$, with force points $v_1,\dots,v_m$, and aimed at $w_\infty$, or an SLE$_\kappa(\ulin\rho)$  at $D:(w_0;\ulin v)\to w_\infty$ for simplicity.
By the scaling and translation property of CP SLE$_\kappa(\ulin\rho)$, the law of   SLE$_\kappa(\ulin\rho)$  in $D$ does not depend on the choice of $f$.


An SLE$_\kappa(\ulin\rho)$ curve $[\eta]$ at $D:(w_0;v_1,\dots,v_m)\to w_\infty$  satisfies the following DMP. For any TCI $\F$-stopping time $\tau$, conditionally on $\til\F_\tau$ and the event $\{\tau<\ha T\}$, the part of $[\eta]$ after $\tau$ is an SLE$_\kappa(\ulin\rho)$ at $D([\eta]|\tau^+):([\eta](\tau)^+;[v_1]^{[\eta]}_{\tau^+;D},\dots,[v_m]^{[\eta]}_{\tau^+;D})\to w_\infty$.
This fact follows from the DMP of CP SLE$_\kappa(\ulin\rho)$.  

\subsection{Radial Bessel processes} \label{Section-Bessel}

Let $\delta_+,\delta_-\in\R$. Let $B_t$  be a standard Brownian motion. By \cite[Section 3]{RY}, for any $x\in[-1,1]$, the SDE
 \BGE dZ_t=\sqrt{0\vee (1-Z_2^2)}dB_t-\frac{\delta_+}4(Z_t+1)dt -\frac{\delta_-}4(Z_t-1),\quad Z_0=x,\label{sigma-b}\EDE
 has a unique strong solution with lifetime $\infty$. If $\delta_+>0$ and $Z_0\le 1$, then $Z$ does not take values in $(1,\infty)$, and if $\delta_->0$ and $Z_0\ge -1$, then $Z$ does not take values in $(-\infty,-1)$. Thus, if $\delta_+,\delta_->0$, and $Z_0\in[-1,1]$, then $Z$ stays in $[-1,1]$, and the SDE (\ref{sigma-b}) becomes
  \BGE dZ_t=\sqrt{1-Z_t^2}dB_t-\frac{\delta_+}4(Z_t+1)dt-\frac{\delta_-}4(Z_t-1)dt.\label{Bessel-SDE}\EDE
  We call the solution $(Z_t)_{t\ge 0}$ a radial Bessel process of dimension $(\delta_+,\delta_-)$ started from $x$, and use $\nu_x^{\delta_+,\delta_-}$ to denote its law, which is a probability measure on $C([0,\infty),\R)$. It satisfies a Markov property, which  could be described by the formula:
\BGE {\ind}_{Z\in\{\tau<\ha T\}} \nu^{\delta_+,\delta_-}_x(dZ)\oplus_\tau \nu^{\delta_+,\delta_-}_{Z_{\tau}}={\ind}_{\{\tau<\ha T\}} \nu^{\delta_+,\delta_-}_x,\quad \forall \F\mbox{-stopping time }\tau.\label{Markov-nu}\EDE

\begin{Proposition}
  If $\delta_+,\delta_->0$ and $x\in [-1,1]$, then the solution of SDE (\ref{Bessel-SDE}) has transition density
   $$ p_t(x,y)= w_{\alpha_+,\alpha_-}(y) \sum_{n=0}^\infty \frac{P_n^{(\alpha_+,\alpha_-)}(x) P_n ^{(\alpha_+,\alpha_-)}(y)}{ \int_{-1}^1 w_{\alpha_+,\alpha_-}(s) P_n^{(\alpha_+,\alpha_-)}(s)^2 \mA(ds)}\exp({-\frac 1 2 n(n+1+\alpha_++\alpha_-) t}) .$$ 
   where  $\alpha_\pm =\frac{\delta_\pm}2-1$, $ w_{\alpha_+,\alpha_-}(s)= (1-s)^{\alpha_+}(1+s)^{\alpha_-}$,  and $P_n^{(\alpha_+,\alpha_-)}$ are Jacobi polynomials with indices $(\alpha_+,\alpha_-)$. 
   \label{Bessel-transition}
\end{Proposition}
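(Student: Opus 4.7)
The approach I would take is the classical Sturm--Liouville / spectral expansion for a one-dimensional diffusion on a bounded interval, together with the orthogonality of Jacobi polynomials. The generator of the SDE (\ref{Bessel-SDE}) is
$$L = \tfrac{1}{2}(1-x^2)\tfrac{d^2}{dx^2} - \Bigl[\tfrac{\delta_++\delta_-}{4}x + \tfrac{\delta_+-\delta_-}{4}\Bigr]\tfrac{d}{dx}.$$
My first step is to recognize that, with $\alpha_\pm=\delta_\pm/2-1$, the Jacobi polynomials $P_n^{(\alpha_+,\alpha_-)}$ are eigenfunctions of $L$. A direct substitution into the Jacobi ODE $(1-x^2)y''+[\alpha_- -\alpha_+ -(\alpha_++\alpha_-+2)x]y'+n(n+\alpha_++\alpha_-+1)y=0$ gives
$$L\,P_n^{(\alpha_+,\alpha_-)} = -\tfrac{1}{2}n(n+1+\alpha_++\alpha_-)\,P_n^{(\alpha_+,\alpha_-)},$$
so the eigenvalues $\lambda_n=\tfrac{1}{2}n(n+1+\alpha_++\alpha_-)$ match the exponents in the claim.

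Next I would verify that the weight $w_{\alpha_+,\alpha_-}(y)=(1-y)^{\alpha_+}(1+y)^{\alpha_-}$ is (up to a constant) the speed density of the diffusion. Writing $L$ in divergence form, one checks that $w_{\alpha_+,\alpha_-}$ satisfies the adjoint relation $L^* w = 0$ with respect to Lebesgue measure, so $w$ is invariant, and $L$ is symmetric on $L^2([-1,1],w_{\alpha_+,\alpha_-})$. Consequently, if $p_t(x,y)$ denotes the transition density with respect to Lebesgue measure, then $k_t(x,y):=p_t(x,y)/w_{\alpha_+,\alpha_-}(y)$ is symmetric in $(x,y)$, so the spectral expansion takes the form
$$k_t(x,y)=\sum_{n=0}^\infty \frac{P_n^{(\alpha_+,\alpha_-)}(x)\,P_n^{(\alpha_+,\alpha_-)}(y)}{\int_{-1}^1 w_{\alpha_+,\alpha_-}(s)\,P_n^{(\alpha_+,\alpha_-)}(s)^2\,\mA(ds)}\,e^{-\lambda_n t},$$
which is exactly the formula in the statement. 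To obtain this expansion rigorously, I would use the fact that $\{P_n^{(\alpha_+,\alpha_-)}\}_{n\ge 0}$ is a complete orthonormal basis (after normalization) of $L^2([-1,1],w_{\alpha_+,\alpha_-})$ under the hypothesis $\alpha_\pm>-1$ (equivalently $\delta_\pm>0$), and then expand the semigroup $e^{tL}$ spectrally.

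The main obstacle, and the step that deserves the most care, is the boundary analysis at $x=\pm 1$. For the spectral expansion to produce the transition density of the strong solution of (\ref{Bessel-SDE}), one must check that the diffusion on $[-1,1]$ is the Feller process selected by the Jacobi operator with its natural (i.e., entrance/regular-reflecting) boundary conditions consistent with Jacobi polynomials being in the domain. When $\delta_\pm\ge 2$ the endpoint $\pm 1$ is an entrance boundary and never reached, while when $0<\delta_\pm<2$ the endpoint is regular and the SDE itself selects the instantaneously reflecting boundary, which is precisely the boundary condition for which $\{P_n^{(\alpha_+,\alpha_-)}\}$ is complete. Once this identification is made, the series converges uniformly on compact subsets of $(0,\infty)\times[-1,1]^2$ (because $\lambda_n$ grows quadratically in $n$ while $P_n^{(\alpha_+,\alpha_-)}$ grows only polynomially), and standard semigroup theory shows that the sum equals $p_t(x,y)$. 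I would cite the relevant facts on Jacobi polynomials and Sturm--Liouville theory (e.g.\ Szegő's monograph) rather than reprove them, and devote the actual write-up to the eigenfunction computation and the boundary/completeness identification.
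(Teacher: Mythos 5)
Your argument is correct: the eigenvalue computation $L P_n^{(\alpha_+,\alpha_-)}=-\tfrac12 n(n+1+\alpha_++\alpha_-)P_n^{(\alpha_+,\alpha_-)}$, the identification of $w_{\alpha_+,\alpha_-}$ as the speed/invariant density making $L$ symmetric on $L^2(w_{\alpha_+,\alpha_-})$, the completeness of the Jacobi system for $\delta_\pm>0$, and your treatment of the boundary (entrance for $\delta_\pm\ge 2$, regular and instantaneously reflecting for $\delta_\pm\in(0,2)$, which is the condition under which the polynomial eigenfunctions span the generator's domain) cover precisely the points that matter. The paper itself gives no argument --- it simply cites the last remark of \cite[Appendix B]{tip} --- and that reference obtains the formula by the same Jacobi-polynomial spectral expansion, so your proposal is essentially the proof being invoked there.
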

\begin{proof}
  See the last remark in \cite[Appendix B]{tip}.
\end{proof}


\begin{Proposition}
Using the symbols in Proposition \ref{Bessel-transition}, let $p_\infty(x)=\frac{ w_{\alpha_+,\alpha_-}(x)} {\int_{-1}^1w_{\alpha_+,\alpha_-}(s)\mA(ds) }$. Then for any $y\in[-1,1]$ and $t>0$,
  \BGE \int_{-1}^1 p_\infty(x) p_t(x,y)\mA( dx)=p_\infty(y) ;\label{invariant=}\EDE
  and there exist constants $C,L\in(0,\infty)$ such that for any $x,y\in[-1,1]$,
  \BGE |p_t(x,y)- p_\infty(y)|< C p_\infty(y) e^{-\frac{\delta_++\delta_-}4 t},\quad \mbox{if } t>L.\label{transition-rate-i}\EDE
\label{Prop-invariant}
\end{Proposition}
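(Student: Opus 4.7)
The plan is to deduce both (\ref{invariant=}) and (\ref{transition-rate-i}) directly from the spectral expansion in Proposition \ref{Bessel-transition}, by isolating the $n=0$ term and then exploiting orthogonality of the Jacobi polynomials together with the quadratic growth of the eigenvalues. Throughout I would write $w:=w_{\alpha_+,\alpha_-}$, $P_n:=P_n^{(\alpha_+,\alpha_-)}$, $h_n:=\int_{-1}^1 w(s) P_n(s)^2\,\mA(ds)$, and $\lambda_n:=\frac12 n(n+1+\alpha_++\alpha_-)$. The key preliminary observation is that $P_0\equiv 1$ and $\lambda_0=0$, so the $n=0$ summand in $p_t(x,y)$ already equals $w(y)/h_0=w(y)/\int_{-1}^1 w(s)\,\mA(ds)=p_\infty(y)$, and the expansion in Proposition \ref{Bessel-transition} rearranges to
\BGE
p_t(x,y)-p_\infty(y)=w(y)\sum_{n\ge 1}\frac{P_n(x)P_n(y)}{h_n}\,e^{-\lambda_n t}.\label{tail-expansion-plan}
\EDE

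For (\ref{invariant=}), I would integrate (\ref{tail-expansion-plan}) against $p_\infty(x)\,\mA(dx)$. Since $p_\infty$ is a scalar multiple of $w\cdot P_0$, the orthogonality relation $\int_{-1}^1 w(x) P_0(x) P_n(x)\,\mA(dx)=0$ for $n\ge 1$ forces $\int_{-1}^1 p_\infty(x) P_n(x)\,\mA(dx)=0$, which kills every tail term. Combined with $\int p_\infty=1$, only the constant contribution $p_\infty(y)$ survives, giving (\ref{invariant=}).

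For (\ref{transition-rate-i}) the crucial identity is $\lambda_1=\frac12(2+\alpha_++\alpha_-)=(\delta_++\delta_-)/4$, which exactly matches the claimed exponent. Factoring $e^{-\lambda_1 t}$ out of (\ref{tail-expansion-plan}) and using $p_\infty(y)=w(y)/\int w$, the problem reduces to finding constants $C_0,L<\infty$ such that
\BGEN
\sum_{n\ge 1}\frac{|P_n(x) P_n(y)|}{h_n}\,e^{-(\lambda_n-\lambda_1)t}\le C_0\quad\text{for all } x,y\in[-1,1]\text{ and }t\ge L.
\EDEN
The $n=1$ summand is a uniform constant since $P_1$ is a fixed linear polynomial on $[-1,1]$. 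For $n\ge 2$, $\lambda_n-\lambda_1=\frac12(n-1)(n+2+\alpha_++\alpha_-)$ grows like $n^2/2$, while classical Szegő-type uniform pointwise bounds for Jacobi polynomials on $[-1,1]$ together with the standard $\Gamma$-function asymptotics of $h_n$ supply a polynomial bound $|P_n(x) P_n(y)|/h_n=O(n^q)$ uniformly in $x,y\in[-1,1]$, for some $q=q(\alpha_+,\alpha_-)$. Choosing $L$ sufficiently large (any $L>0$ suffices by the quadratic growth) makes $\sum_{n\ge 2} n^q e^{-(\lambda_n-\lambda_1)L}$ finite, which yields (\ref{transition-rate-i}) with $C$ equal to $\int w$ times the total.

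The only slightly delicate ingredient is invoking a uniform polynomial pointwise bound for Jacobi polynomials on the closed interval $[-1,1]$; this is classical. No stochastic analysis is needed beyond Proposition \ref{Bessel-transition}, since the Markov semigroup is already diagonalized there, and the proof reduces entirely to the spectral identity, $L^2(w)$-orthogonality of $P_0$ against $P_n$ for $n\ge 1$, and the spectral gap $\lambda_2-\lambda_1>0$.
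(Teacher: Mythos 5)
Your proposal is correct and is essentially the same argument as the one behind the paper's proof, which simply cites the corresponding result in \cite{tip}: isolate the $n=0$ term of the spectral expansion (using $P_0^{(\alpha_+,\alpha_-)}\equiv 1$ and orthogonality to get the invariance identity), and control the tail via the spectral gap $\lambda_1=\frac{\delta_++\delta_-}{4}$ together with uniform polynomial-in-$n$ bounds for Jacobi polynomials and the asymptotics of their $L^2(w)$-norms. The only minor points to spell out are the justification of interchanging sum and integral for fixed $t>0$ (immediate from the uniform bounds and exponential decay) and that the constants $C,L$ depend only on $\alpha_+,\alpha_-$.
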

\begin{proof}
  This is similar to \cite[Corollary 8.8 and Formula (8.9)]{tip}.
\end{proof}

By (\ref{invariant=}), $p_\infty$ is the  invariant probability density of a radial Bessel process of dimension $(\delta_+,\delta_-)$. By a standard argument,  there exists a random process $Z_t$, $t\in\R$, such that for any fixed $t_0\in\R$, $Z_{t_0}\sim \ind_{(-1,1)} p_\infty\cdot \mA$, and conditionally on $Z_t$, $t\le t_0$,  $(Z_{t_0+t})_{t\ge 0}$ has the law $\nu^{\delta_+,\delta_-}_{Z_{t_0}}$. Such $Z$ is called a stationary radial Bessel process of dimension $(\delta_+,\delta_-)$, and its law is denoted by $\nu^{\delta_+,\delta_-}_{\R}$, which is a probability measure on $C(\R,\R)$. Using the symbols in Section \ref{section-lifetime} extended to \BGE \Sigma_{\R}:=\bigcup_{\ha T\in \R\cup \{+\infty\}} C((-\infty,\ha T),\R), \label{sigma-R}\EDE we may express the above Markov property formally by
\BGE \nu^{\delta_+,\delta_-}_{\R}(dZ)\oplus_{t_0} \nu^{\delta_+,\delta_-}_{Z_{t_0}}= \nu^{\delta_+,\delta_-}_{\R},\quad \forall t_0\in\R.\label{Markov-mu-R}\EDE
Let $\F^{\R}=(\F^{\R}_t)_{t\in\R}$ be the right-continuation of the natural filtration on $\Sigma_{\R}$.


For the rest of this subsection, let $\delta_+>0$, $\delta_-<2$, and $\delta_-^*=4-\delta_->2$.  Let $p_t^*(x,y)$ and $p_\infty^*(y)$ be respectively the transition density and stationary density for $\nu^{\delta_+,\delta_-^*}_\cdot$.
  Since $\delta_-<2$, a process $Z$ following the SDE (\ref{Bessel-SDE}) hits $-1$ at a finite time. We kill $Z$ once it hits $-1$. Then the resulted process stays in $(-1,1]$, and so satisfies (\ref{Bessel-SDE}) throughout its lifespan. We use $\mu_x^{\delta_+,\delta_-}$ to denote the law of such truncated $Z$, which is a probability measure on $\Sigma$. Then $\mu_x^{\delta_+,\delta_-}$ also satisfies (\ref{Markov-nu}).

\begin{Lemma} Let $\alpha_-=\frac{\delta_-}2-1<0$ be as in Proposition \ref{Bessel-transition} and $\alpha_0=\frac 18 \delta_+(2-\delta_-)>0$.  For  $Z\in\Sigma$ with $Z_0>-1$,  define
 \BGE M^Z_t=e^{-\alpha_0 t} \Big( \frac{1+Z_t}2 \Big)^{\alpha_-}\label{MZt}\EDE
 up to and not including the first time that $Z_t=-1$. Then for any $x\in(-1,1)$ and any   $\F$-stopping time $\tau$,
 \BGE \frac{ d \mu_x^{\delta_+,\delta_-} |\F_\tau\cap \{\ha T>\tau\}}{  d\nu_x^{\delta_+,\delta_-^*} |\F_\tau\cap \{\ha T>\tau\}}=\frac{M^\cdot_\tau}{M^\cdot_0},\label{Girsanov}\EDE
 i.e., $Z\mapsto {M^Z_\tau}/{M^Z_0} $ is the RN derivative of $\mu_x^{\delta_+,\delta_-} $ w.r.t.\ $ \nu_x^{\delta_+,\delta_-^*}$ on $\F_\tau\cap \{\ha T>\tau\}$.
  \label{Girsanov-lem}
\end{Lemma}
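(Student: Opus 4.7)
The plan is to identify $M^Z$ as the exponential local martingale of a Girsanov transformation that converts the $(\delta_+,\delta_-^*)$-dimensional SDE (\ref{Bessel-SDE}) into the $(\delta_+,\delta_-)$-dimensional one, and then invoke uniqueness in law.

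First I apply It\^o's formula to $f(z):=((1+z)/2)^{\alpha_-}$ along the solution of (\ref{Bessel-SDE}) with $\delta_-^*=4-\delta_-$ in place of $\delta_-$, whose law is $\nu_x^{\delta_+,\delta_-^*}$ and which stays in $(-1,1]$ because $\delta_-^*>2$. Writing $u=(1+Z_t)/2$, a direct calculation yields
\[
df(Z_t)=f'(Z_t)\sqrt{1-Z_t^2}\,dB_t+\tfrac{\alpha_-}{4}\,u^{\alpha_--1}\bigl[-\delta_+u+(\delta_-^*+2\alpha_--2)(1-u)\bigr]dt.
\]
Using $\delta_-^*=4-\delta_-$ and $\alpha_-=\delta_-/2-1$, the coefficient of $(1-u)$ vanishes, so the drift collapses to $-\tfrac{\alpha_-\delta_+}{4}f(Z_t)=\alpha_0 f(Z_t)$. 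Hence $M^Z_t=e^{-\alpha_0 t}f(Z_t)$ is a nonnegative local martingale under $\nu_x^{\delta_+,\delta_-^*}$ throughout the lifetime $[0,\infty)$ of $Z$.

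Next I localize via $\sigma_n:=\inf\{t\ge 0:Z_t\le -1+1/n\}$. Under $\nu_x^{\delta_+,\delta_-^*}$ we have $\sigma_n\uparrow\infty$ a.s.\ and $M^Z_{\cdot\wedge\sigma_n}$ is a bounded martingale. For each bounded $\F$-stopping time $\tau$, define a probability measure $\QQ_n$ on $(\Sigma,\F_{\tau\wedge\sigma_n})$ by $d\QQ_n/d\nu_x^{\delta_+,\delta_-^*}=M^Z_{\tau\wedge\sigma_n}/M^Z_0$. The It\^o computation above also gives $dM^Z_t/M^Z_t=\alpha_-\sqrt{(1-Z_t)/(1+Z_t)}\,dB_t$, so by Girsanov's theorem the drift of $Z$ under $\QQ_n$ picks up the term $\alpha_-(1-Z_t)\,dt$; since $\alpha_-+\delta_-^*/4=\delta_-/4$, the net drift equals $-\tfrac{\delta_+}{4}(Z_t+1)-\tfrac{\delta_-}{4}(Z_t-1)$, i.e.\ $Z$ under $\QQ_n$ solves (\ref{Bessel-SDE}) up to $\tau\wedge\sigma_n$. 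By uniqueness in law of (\ref{Bessel-SDE}) before $Z$ hits $-1$, $\QQ_n$ agrees with $\mu_x^{\delta_+,\delta_-}$ on $\F_{\tau\wedge\sigma_n}$.

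Finally, for any $A\in\F_\tau$, standard stopping-time arguments give $A\cap\{\sigma_n>\tau\}\in\F_{\tau\wedge\sigma_n}$, and on this set $M^Z_{\tau\wedge\sigma_n}=M^Z_\tau$, so the previous step yields
\[
\mu_x^{\delta_+,\delta_-}\bigl(A\cap\{\sigma_n>\tau\}\bigr)=\int_{A\cap\{\sigma_n>\tau\}}(M^Z_\tau/M^Z_0)\,d\nu_x^{\delta_+,\delta_-^*}.
\]
Since $\mu_x^{\delta_+,\delta_-}$ kills $Z$ at $-1$, $\{\sigma_n>\tau\}\uparrow\{\ha T>\tau\}$ $\mu_x^{\delta_+,\delta_-}$-a.s., while $\nu_x^{\delta_+,\delta_-^*}[\sigma_n>\tau]\to 1$. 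Monotone convergence then delivers (\ref{Girsanov}) on $\F_\tau\cap\{\ha T>\tau\}$ for bounded $\tau$; an outer truncation $\tau\wedge N\uparrow\tau$ handles general $\tau$. The main delicate point is this localization, because $M^Z$ blows up as $Z_t\to -1$ (recall $\alpha_-<0$), and it is precisely that blow-up which encodes the killing of $Z$ at $-1$ under $\mu_x^{\delta_+,\delta_-}$; one cannot interpret the RN derivative directly at $\ha T$, but must pass through the $\sigma_n$.
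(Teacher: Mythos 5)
Your proposal is correct and follows essentially the same route as the paper: verify via It\^o's formula that $M^Z$ is a local martingale under $\nu_x^{\delta_+,\delta_-^*}$, localize so that the stopped RN process is a bounded martingale, apply Girsanov to identify the reweighted law with $\mu_x^{\delta_+,\delta_-}$, and pass to the limit over the localizing sequence to obtain (\ref{Girsanov}) on $\F_\tau\cap\{\ha T>\tau\}$. The only differences are cosmetic (you localize by the hitting times of $-1+1/n$ rather than by the level sets of the RN process, and you spell out the drift computation), so no further comment is needed.
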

\begin{proof}
Let $N^Z_t=M^Z_t/M^Z_0$.
  Suppose $(Z_t)_{t\ge 0}\sim \nu_x^{\delta_+,\delta_-^*}$. It is straightforward to check that $N^Z_t$ is a local martingale using It\^o's formula. Let $\tau_n$ be the first time such that $N^Z_t $. Then $N^Z_{t\wedge \tau_n}$ is a bounded martingale. Using Girsanov Theorem, we find that $d\mu_x^{\delta_+,\delta_-} /d\nu_x^{\delta_+,\delta_-^*} =N^Z_{\tau_n}$ on $\F_{\tau_n}$, which   implies that $d\mu_x^{\delta_+,\delta_-} /d\nu_x^{\delta_+,\delta_-^*} =\EE[N^Z_{\tau_n}|\F_{\tau\wedge\tau_n}]= N^Z_{\tau\wedge \tau_n}$ on $\F_{\tau\wedge\tau_n}$. Since $\F_\tau\cap \{\tau<\tau_n\}=\F_{\tau\wedge \tau_n}\cap \{\tau<\tau_n\}$ and $\{\tau<\ha T\}=\bigcup_n \{\tau<\tau_n\}$, we get (\ref{Girsanov}).
\end{proof}


Recall that $\Sigma_t=\{f\in\Sigma:\ha T_f>t\}$ and $\pi_t(f)=f(t)$ for $f\in\Sigma_t$. For a random time $\tau$, we define $\Sigma_\tau=\{f\in\Sigma:\ha T_f>\tau(f)\}$, and $\til \Sigma_\tau=\pi(\Sigma_\tau)$ if $\tau$ is TCI. For $0\le t'<t$, we use $\pi_{t',t}$ to denote the map $\Sigma_{t}\ni f\mapsto (f(t'),f(t))$. Recall the space $\Sigma_{\R}$ defined in (\ref{sigma-R}) and the filtration $\F^\R$ defined after (\ref{Markov-mu-R}).
We similarly define the spaces $\Sigma^{\R}_t=\{f\in\Sigma_{\R}:\ha T_f>t\}$ and $\Sigma^{\R}_\tau=\{f\in\Sigma_{\R}:\ha T_f>\tau(f)\}$ and the  maps $\pi_t^{\R} $ and $\pi_{t',t}^{\R}$ on $\Sigma^{\R}_t$ for $t'<t\in\R$.

\begin{Lemma}
There is a unique $\sigma$-finite measure $\mu^{\delta_+,\delta_-}_{\R}$ on $\Sigma_{\R}$  such that for each fixed $\F^{\R}$-stopping time $\tau>-\infty$, the following holds.
 \begin{itemize}
           \item [(i)] $\mu^{\delta_+,\delta_-}_{\R}$ satisfies the Markov property:
         $${\ind}_{\Sigma^\R_\tau} \mu^{\delta_+,\delta_-}_{\R}(dZ) \oplus_\tau \mu_{Z_\tau}^{\delta_+,\delta_-}={\ind}_{\Sigma^\R_\tau} \mu^{\delta_+,\delta_-}_{\R}.$$
         \item [(ii)] $\mu^{\delta_+,\delta_-}_{\R}\ll \nu_{\R}^{\delta_+,\delta_-^*}$ on $\F_\tau\cap\Sigma^\R_\tau$ with the RN derivative being the $M^\cdot_\tau$ in (\ref{MZt}).
         \item [(iii)] For any  deterministic  time $t_0\in\R$, $\mu^{\delta_+,\delta_-}_{\R}|_{\Sigma^{\R}_{t_0}}$ is a finite measure.
         \end{itemize}
  Moreover, if another measure  satisfies (iii) and (i) for deterministic times, then it equals some constant times $\mu^{\delta_+,\delta_-}_{\R}$.
  \label{stationary-quasi}
\end{Lemma}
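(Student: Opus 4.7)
The plan is to build $\mu^{\delta_+,\delta_-}_{\R}$ as the consistent gluing of a compatible family of finite measures $\{\mu^{(t_0)}\}_{t_0\in\R}$, one for each deterministic reference time. For each $t_0\in\R$, I declare the restriction of $\mu^{(t_0)}$ to $\F_{t_0}\cap\Sigma^{\R}_{t_0}$ to be the finite weighted measure $M^Z_{t_0}\cdot\nu^{\delta_+,\delta_-^*}_{\R}|_{\F_{t_0}\cap\Sigma^{\R}_{t_0}}$, and I continue the process forward from $t_0$ by the killed radial Bessel kernel $\mu^{\delta_+,\delta_-}_{Z_{t_0}}$; symbolically,
$$\mu^{(t_0)}:=\big(M^Z_{t_0}\cdot\nu^{\delta_+,\delta_-^*}_{\R}|_{\F_{t_0}}\big)(df)\oplus_{t_0}\mu^{\delta_+,\delta_-}_{f(t_0)}.$$
This is a finite measure: since $Z_{t_0}\sim p^*_\infty\cdot\mA$ under $\nu^{\delta_+,\delta_-^*}_{\R}$ by (\ref{invariant=}), its total mass equals $e^{-\alpha_0 t_0}\int_{-1}^{1}\big((1+y)/2\big)^{\alpha_-}p^*_\infty(y)\,\mA(dy)$, and the integrand is proportional to $(1-y)^{\alpha_+}$, which is integrable near $y=-1$ because the $(1+y)^{\alpha_-}$ and the $(1+y)^{-\alpha_-}$ in $p^*_\infty$ cancel, and integrable near $y=1$ because $\alpha_+=\delta_+/2-1>-1$. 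This at once yields property (iii).

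Next I check the consistency $\mu^{(t_0)}|_{\Sigma^{\R}_{t_1}}=\mu^{(t_1)}$ for $t_0<t_1$. The stationary Markov property (\ref{Markov-mu-R}) applied at $t_0$ (to $\nu^{\delta_+,\delta_-^*}_{\R}$) together with Lemma~\ref{Girsanov-lem} applied to the increment $[0,t_1-t_0]$ of $\mu^{\delta_+,\delta_-}_{Z_{t_0}}$ rewrites the past part of $\mu^{(t_0)}$ on $\F_{t_1}\cap\Sigma^{\R}_{t_1}$ as a product of three densities against $\nu^{\delta_+,\delta_-^*}_{\R}$, and the elementary multiplicative identity $M^Z_{t_0}\cdot(M^Z_{t_1}/M^Z_{t_0})=M^Z_{t_1}$ coming straight from (\ref{MZt}) collapses this product to $M^Z_{t_1}\cdot\nu^{\delta_+,\delta_-^*}_{\R}|_{\F_{t_1}\cap\Sigma^{\R}_{t_1}}$, matching the past part of $\mu^{(t_1)}$; the future parts agree by the strong Markov property of $\mu^{\delta_+,\delta_-}_{x}$. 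Since $\Sigma_{\R}=\bigcup_{n\in\N}\Sigma^{\R}_{-n}$, the compatible family glues to a unique $\sigma$-finite measure $\mu^{\delta_+,\delta_-}_{\R}$ with $\mu^{\delta_+,\delta_-}_{\R}|_{\Sigma^{\R}_{t_0}}=\mu^{(t_0)}$ for every $t_0$. Property (ii) at a deterministic time is built in, and for a general $\F^{\R}$-stopping time $\tau$ I would fix $T$, restrict to $\Sigma^{\R}_{-T}\cap\Sigma^{\R}_\tau$, apply Lemma~\ref{Girsanov-lem} at the shifted stopping time $\tau+T$ of the continuation $\mu^{\delta_+,\delta_-}_{Z_{-T}}$, and invoke the same multiplicative identity to identify the RN derivative as $M^Z_\tau$; letting $T\to\infty$ covers all of $\Sigma^{\R}_\tau$. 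Property (i) at stopping times then follows from the strong Markov property of $\mu^{\delta_+,\delta_-}_{x}$ by the same truncation.

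For uniqueness, let $\mu'$ satisfy (iii) and (i) at deterministic times and let $\phi_{t}(y)\,\mA(dy)$ be the marginal law of $Z_t$ under $\mu'|_{\Sigma^{\R}_t}$ (which has a density for every $t\in\R$ by the smoothing from any earlier positive-time transition). Property (i) forces $\phi_{t+s}(y)=\int\phi_t(x)p^\mu_s(x,y)\,\mA(dx)$ for $s>0$, where Lemma~\ref{Girsanov-lem} gives $p^\mu_s(x,y)=p^*_s(x,y)e^{-\alpha_0 s}((1+y)/(1+x))^{\alpha_-}$. Setting $\psi_t(y):=e^{\alpha_0 t}\phi_t(y)/((1+y)/2)^{\alpha_-}$ converts this to the entrance-law identity $\psi_{t+s}=\psi_t P^*_s$ for the non-killed semigroup $P^*_s$, with total mass $c:=\int\psi_t\,d\mA$ conserved in $t$ because $P^*_s$ preserves total mass. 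Writing $\psi_{t_0}=\psi_{t_0-s}P^*_s$ and sending $s\to\infty$, the exponential mixing bound (\ref{transition-rate-i}) forces $\psi_{t_0}=c\cdot p^*_\infty$ pointwise for every $t_0$, so $\phi_{t_0}$ matches the $\mu^{\delta_+,\delta_-}_{\R}$-marginal up to the $t_0$-independent constant $c$; feeding this back into property (i) yields $\mu'|_{\Sigma^{\R}_{t_0}}=c\cdot\mu^{\delta_+,\delta_-}_{\R}|_{\Sigma^{\R}_{t_0}}$ for every $t_0$, hence $\mu'=c\cdot\mu^{\delta_+,\delta_-}_{\R}$.

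The hard part I expect is this last uniqueness argument: one must verify that $\phi_t$ is a genuine density for every $t\in\R$, that $|\psi_t|$ is truly independent of $t$ (using mass conservation under the non-killed semigroup together with finiteness from (iii)), and that the pointwise ergodicity from (\ref{transition-rate-i}) is strong enough to pin down the limiting entrance law, producing the \emph{same} constant $c$ at every time. Everything else amounts to a careful bookkeeping of the Girsanov identity and the Markov property, which are already packaged cleanly in Lemma~\ref{Girsanov-lem} and (\ref{Markov-mu-R}).
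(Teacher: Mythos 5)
Your proposal is correct and follows essentially the same route as the paper: the same construction of $\mu_{t_0}=\nu^{\delta_+,\delta_-^*}_{\R}(dZ)\oplus_{t_0}M^Z_{t_0}\cdot\mu^{\delta_+,\delta_-}_{Z_{t_0}}$, the same consistency check via the cocycle identity for $M^Z$ together with (\ref{Markov-nu}), (\ref{Markov-mu-R}) and (\ref{Girsanov}), the same extension to stopping times by shifting and exhausting, and the same uniqueness argument converting the marginals to an entrance law for the unkilled semigroup and invoking (\ref{transition-rate-i}). Only two bookkeeping points deserve care when writing it up: the exhaustion for a general stopping time should be over the events $\{\tau>-n\}$ rather than $\Sigma^{\R}_{-n}\cap\Sigma^{\R}_\tau$, and in the uniqueness step matching the time-$t_0$ marginal plus (i) at $t_0$ only identifies the law of $Z|_{[t_0,\infty)}$, so one concludes $\mu'=c\,\mu^{\delta_+,\delta_-}_{\R}$ by doing this for every $t_0\in\R$ and a generating-class argument, exactly as the paper does.
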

\begin{proof}
  For each $t_0\in\R$, we define a measure $\mu_{t_0}$ on $\Sigma^{\R}_{t_0}$ by
  \BGE \mu_{t_0}=\nu_{\R}^{\delta_+,\delta_-^*}(dZ)\oplus_{t_0}  M^Z_{t_0}\cdot \mu^{\delta_+,\delta_-}_{Z_{t_0}}. \label{mu_t}\EDE
  The $\mu_{t_0}$ is a finite measure since its total mass is
  $$e^{-\alpha_0 t_0}\int_{-1}^1 \Big (\frac{1+x}2\Big)^{\alpha_-} p_\infty^*(x) \mA(dx)\lesssim e^{-\alpha_0 t_0}\int_{-1}^1 (1-x)^{\frac{\delta_+}2-1}\mA(dx)<\infty. $$

Suppose $t_1<t_2$. Using (\ref{Markov-nu},\ref{Markov-mu-R},\ref{Girsanov},\ref{mu_t})  we get
\begin{align*}
   \mu_{t_2}&  \stackrel{ {(\ref{mu_t})}}{=}\nu_{\R}^{\delta_+,\delta_-^*}(dY)\oplus_{t_2}  M^Y_{t_2}\cdot \mu^{\delta_+,\delta_-} _{Y_{t_2}}\\
   &\stackrel{(\ref{Markov-mu-R})}{=}(\nu_{\R}^{\delta_+,\delta_-^*}(dZ)\oplus_{t_1} \nu^{\delta_+,\delta_-^*}_{Z_{t_1}}(dX) )\oplus_{t_2} M^{Z\oplus_{t_1} X}_{t_2}\cdot \mu^{\delta_+,\delta_-} _{(Z\oplus_{t_1} X)_{t_2}} \\
   &   \stackrel{\phantom{(\ref{Markov-mu-R})}}{=}\nu_{\R}^{\delta_+,\delta_-^*}(dZ)\oplus_{t_1}M^Z_{t_1}\cdot (( M^X_{t_2-t_1}/M^X_0)\cdot \nu^{\delta_+,\delta_-^*}_{Z_{t_1}}(dX) \oplus_{t_2-t_1}  \mu^{\delta_+,\delta_-}_{X_{t_2-t_1}})\\
  &   \stackrel{(\ref{Girsanov})}{=}\nu_{\R}^{\delta_+,\delta_-^*}(dZ)\oplus_{t_1}M^Z_{t_1}\cdot ({\ind}_{\Sigma_{t_2-t_1}} \mu^{\delta_+,\delta_-}_{Z_{t_1}}(dX) \oplus_{t_2-t_1}  \mu^{\delta_+,\delta_-}_{X_{t_2-t_1}})\\
  &  \stackrel{(\ref{Markov-nu})}{=}\nu_{\R}^{\delta_+,\delta_-^*}(dZ)\oplus_{t_1}M^Z_{t_1}\cdot {\ind}_{\Sigma_{t_2-t_1}} \mu^{\delta_+,\delta_-}_{Z_{t_1}}\\
   &\stackrel{\phantom{(\ref{Markov-mu-R})}}{=}\ind_{Z\oplus Y\in\Sigma^{\R}_{t_2}} (\nu_{\R}^{\delta_+,\delta_-^*}(dZ)\oplus_{t_1} M^Z_{t_1} \cdot \mu^{\delta_+,\delta_-}_{Z_{t_1}}(dY))\stackrel{ {(\ref{mu_t})}}{=} \ind_{\Sigma^{\R}_{t_2}} \mu_{t_1}.
\end{align*}
Here in the third ``$=$'' we used the facts that when $X$ follows the conditional law $\nu^{\delta_+,\delta_-^*}_{Z_{t_1}}$, then $M^X_0=M^Z_{t_1}$, $M^X_{t_2-t_1}=M^{Z\oplus_{t_1}X}_{t_2}$ and $(Z\oplus_{t_1} X)_{t_2}=X_{t_2-t_1}$.
Since $\mu_{t_1}|_{\Sigma^{\R}_{t_2}}=\mu_{t_2}$ for any $t_1<t_2\in\R$, and $\Sigma_{\R}=\bigcup_{n\in\N}\Sigma^{\R}_{-n}$, there is a unique measure $\mu^{\delta_+,\delta_-}_{\R}$ on $\Sigma_{\R}$ such that $\mu^{\delta_+,\delta_-}_{\R}|_{\Sigma^{\R}_t}=\mu_t$ for each $t\in\R$.

We now check that $\mu^{\delta_+,\delta_-}_{\R}$ satisfies (i)-(iii). Since $\mu^{\delta_+,\delta_-}_{\R}|_{\Sigma^{\R}_{t_0}}=\mu_{t_0}$, and $\mu_{t_0}$ is a finite measure, we see that $\mu^{\delta_+,\delta_-}_{\R}$ satisfies (iii). Since ${\ind}_{\Sigma^{\R}_{t_0}}\mu^{\delta_+,\delta_-}_{\R}= \mu_{t_0}$, by the definition of $\mu_{t_0}$, $\mu^{\delta_+,\delta_-}_{\R}$ satisfies (i) and (ii) for deterministic times. Let $\tau$ be any $\F^{\R}$-stopping time. First, suppose there is a deterministic time $t_0\in\R$ such that $\tau> t_0$. Let $\tau_{Z;t_0}$ be  as defined in (\ref{tau-g-formula'}). Since $\mu^{\delta_+,\delta_-}_{\R}$ satisfies (i) for deterministic times, using (\ref{Markov-nu}) for $\mu_{x}^{\delta_+,\delta_-}$ and (\ref{fgh}) we get
\begin{align*}
& \ind_{\Sigma^\R_\tau} \mu^{\delta_+,\delta_-}_{\R}=\ind_{\Sigma^\R_{t_0}}\mu^{\delta_+,\delta_-}_{\R}(dZ) \oplus_{t_0} \ind_{\Sigma_{\tau_{Z;t_0}}} \mu_{Z_{t_0}}^{\delta_+,\delta_-}\\
&\stackrel{(\ref{Markov-nu})}{=}\ind_{\Sigma^\R_{t_0}}\mu^{\delta_+,\delta_-}_{\R}(dZ) \oplus_{t_0} \Big( \ind_{\Sigma_{\tau_{Z;t_0}}}   \mu_{Z_{t_0}}^{\delta_+,\delta_-}(dW)\oplus_{\tau_{Z;t_0}}\mu_{W_{\tau_{Z;t_0}}}^{\delta_+,\delta_-} \Big ) \\ & \stackrel{ {(\ref{fgh})}}{=}\ind_{ \Sigma^\R_{\tau}} \mu^{\delta_+,\delta_-}_{\R}(dX)\oplus_{\tau} \mu_{X_\tau}^{\delta_+,\delta_-},
\end{align*}
where in the last step we use $X$ to replace $Z\oplus_{t_0}W$. So we get (i) for such $\tau$.

 Since  $\mu^{\delta_+,\delta_-}_{\R}$ satisfies (i,ii) for deterministic times, using (\ref{Girsanov},\ref{Markov-mu-R}) and $\tau> t_0$, we get
\begin{align*}
  &   \K_\tau(\mu^{\delta_+,\delta_-}_{\R})=\ind_{\Sigma^{\R}_{t_0}}  \mu^{\delta_+,\delta_-}_{\R}(dZ) \oplus_{t_0} \K_{\tau_{Z;t_0}}( \mu_{Z_{t_0}}^{\delta_+,\delta_-})\\
  & \stackrel{ {(\ref{Girsanov})}}{=} M^Z_{t_0}\cdot \nu_{\R}^{\delta_+,\delta_-^*} (dZ)  \oplus_{t_0} (M^{Z\oplus_{t_0} W}_\tau/M^{Z\oplus_{t_0} W}_{t_0}) \cdot \K_{\tau_{Z;t_0}}(\nu^{\delta_+,\delta_-^*}_{Z_{t_0}}) (dW)\\
  &\stackrel{\phantom{(\ref{Markov-mu-R})}}{=} M^{Z\oplus_{t_0} W}_\tau\cdot \K_\tau( \nu_{\R}^{\delta_+,\delta_-^*} (dZ)  \oplus_{t_0} \nu^{\delta_+,\delta_-^*}_{Z_{t_0}}(dW))
   \stackrel{ {(\ref{Markov-mu-R})}}{=} M^X_\tau\cdot \K_\tau( \nu_{\R}^{\delta_+,\delta_-^*} (dX)),
\end{align*}
where in the last step we use $X$ again to replace $Z\oplus_{t_0}W$. So we get (ii) for such $\tau$

Now we do not assume that $\tau$ is uniformly bounded below. For $t_0\in\R$, let $E_{t_0}=\{\tau>t_0\}$. Since  (ii) holds for $\tau\vee t_0$, $d\mu^{\delta_+,\delta_-}_{\R}/d\nu_{\R}^{\delta_+,\delta_-^*}=M^\cdot_{\tau\vee t_0} $ on $\F_{\tau\vee t_0}\cap \Sigma^{\R}_{\tau\vee t_0}$. Since $\F_{\tau\vee t_0}\cap E_{t_0}= \F_\tau\cap E_{t_0}$ and $E_{t_0}\subset\{\tau\vee t_0=\tau\}$, we get $d\mu^{\delta_+,\delta_-}_{\R}/d \nu_{\R}^{\delta_+,\delta_-^*}=M^\cdot_\tau$ on $\F_{\tau}\cap \Sigma^{\R}_\tau\cap E_{t_0}$. Since $\Sigma_{\R}=\bigcup_{n\in\N} E_{-n}$, we see that  (ii) holds for any $\R^{\R}$-stopping time $\tau$.

Since (i) holds for $\tau\vee t_0$, we get
$${\ind}_{\Sigma^{\R}_{\tau\vee t_0}}  \mu^{\delta_+,\delta_-}_{\R}(dZ) \oplus_{\tau\vee t_0} \mu_{Z_{\tau\vee t_0}}^{\delta_+,\delta_-}={\ind}_{\Sigma^{\R}_{\tau\vee t_0}} \mu^{\delta_+,\delta_-}_{\R}.$$
Since $E_{t_0}\cap \Sigma^{\R}_{\tau}\subset\{\tau=\tau\vee t_0<\ha T\}\subset \Sigma^{\R}_{\tau\vee t_0}$, the above formula implies
$$\ind_{E_{t_0}\cap \Sigma^{\R}_{\tau}}\cdot  \mu^{\delta_+,\delta_-}_{\R}(dZ) \oplus_{\tau } \mu_{Z_{\tau}}^{\delta_+,\delta_-}= \ind_{E_{t_0}\cap\Sigma^{\R}_{\tau}}\cdot  \mu^{\delta_+,\delta_-}_{\R}.$$
Since $\Sigma_{\R}=\bigcup_{n\in\N} E_{-n}$, we see that  (i) holds for any $\R^{\R}$-stopping time $\tau$.

Suppose $\bar \mu$ satisfies (i) and (iii) for deterministic times. Let $0\le t_1<t_2$.  By (\ref{Girsanov}),
\BGE \ind_{\{\ha T>t_1\}}\bar\mu(dZ) \oplus_{t_1} \frac{M^X_{t_2-t_1}}{M^X_0} \ind_{ \{\ha T>t_2-t_1\}} \nu_{Z_{t_1}}^{\delta_+,\delta_-^*}(dX)=\ind_{ \{\ha T>t_2\}}\bar \mu. \label{til-mu-DMP}\EDE
For each $t\in\R$, let $\bar \mu_t=\pi_t^{\R}(\ind_{\Sigma^{\R}_t} \bar\mu)$. Then $\bar\mu_t$ is a finite measure on $[-1,1]$. 
By (\ref{til-mu-DMP},\ref{MZt})  $\bar \mu_t\ll \mA$ for any $t\in\R$, and  $\bar q_t(y):=\bar\mu_t(dy)/\mA(dy)$  satisfy
$$ \bar q_{t_2}(y)=\int_{-1}^1 \bar q_{t_1}(x) \frac{e^{-\alpha_0 t_2} (1+y)^{\alpha_-}} {e^{-\alpha_0 t_1} (1+x)^{\alpha_-}}  p_{t_2-t_1}^*(x,y) \mA(dx),\quad y\in(-1,1),\quad t_1<t_2.$$
Let $ \bar p_t(x)=e^{\alpha_0 t} (1+x)^{-\alpha_-} \bar q_t(x)$. By the above formula we have
\BGE \bar p_{t_2}(y)=\int_{-1}^1  \bar p_{t_1}(x) p_{t_2-t_1}^*(x,y) \mA(dx),\quad y\in(-1,1),\quad t_1<t_2.\label{hap=}\EDE
Since $\int_{-1}^1 p_t^*(x,y)dy=1$, by Fubini Theorem and that $\alpha_-<0$, there is a constant $\bar C\in[0,\infty)$ such that $\int_{-1}^1 \bar p_t(x)\mA(dx)=\bar C$ for all $t\ge 0$.
From (\ref{hap=},\ref{transition-rate-i}), there are constants $C,L\in(0,\infty)$ such that if $t_2-t_1>L$, then
$$|\bar p_{t_2}(y)- \bar C p_\infty^*(y)|\le \int_{-1}^1  \bar p_{t_1}(x)| p_{t_2-t_1}^*(x,y)-p_\infty^*(y)|\mA(dx)\le   \bar C C p_\infty^*(y) e^{-\frac{\delta_++\delta_-^*}4 (t_2-t_1)}.$$
Fixing $t_2$ and letting $t_1\to -\infty$, we find that $\bar p_{t_2}=  \bar C p_\infty^*$. Thus, $\bar q_{t_2}(x)= \bar C e^{-\alpha_0 t_2} (1+x)^{\alpha_-} p_\infty^*(x)$. The same argument applied to $\mu^{\delta_+,\delta_-}_{\R}$ shows that for each $t\in\R$, $\mu_t:= \pi_t^{\R}(\ind_{\Sigma^{\R}_t} \mu^{\delta_+,\delta_-}_{\R})$ is absolutely continuous w.r.t.\ $\mA$, and its density function, denoted by $q_t$, equals $C_0e^{-\alpha_0 t} (1+x)^{\alpha_-} p_\infty^*(x)$ for some $C_0\in (0,\infty)$.  Thus, $\bar \mu_t=\ha C \mu_t$ for $t\in\R$, where $\ha C:=   \bar C/C_0$. Since  $\bar \mu$ and $\mu^{\delta_+,\delta_-}_{\R}$ both satisfy (i) for $\tau=t$, we get $\pi_{[t,\infty)}^{\R} ( \bar \mu_t)=\ha C\pi_{[t,\infty)}^{\R} (\mu_t)$, where  $\pi_{[t,\infty)}^{\R}$ is the map $Z\mapsto Z|_{[t,\infty)}$. Since this holds for all $t\in\R$,   we get $ \bar \mu=\ha C\mu^{\delta_+,\delta_-}_{\R}$.
\end{proof}

We are interested in the conditional probability measures:  $\mu^{\delta_+,\delta_-}_x[\cdot|\Sigma_t] $ and $\mu^{\delta_+,\delta_-}_{\R}[\cdot|\Sigma^{\R}_t]$.

\begin{Lemma}
Let $x\in[-1,1]$, $t_1>0$, $t_2\in\R$, and $s>0$. Then $\pi_{t_1}(\mu^{\delta_+,\delta_-}_x[\cdot|\Sigma_{t_1+s}])$ and $\pi_{t_2}^{\R}(\mu^{\delta_+,\delta_-}_{\R}[\cdot|\Sigma^{\R}_{t_2+s}])$ are both absolutely continuous w.r.t.\ $\mA$, and the densities are respectively $\ind_{[-1,1]}p_{t_1}^s(x,\cdot)$ and $\ind_{[-1,1]}p_{\infty}^s$, which are given by
\BGE p_t^s(x,y)=\frac{p_t^*(x,y) w_s(y)}{\int_{-1}^1 p_t^*(x,z) w_s(z)\mA(dz)},\quad p_\infty^s(y) =\frac{p_\infty^*(y) w_s(y)}{\int_{-1}^1 p_\infty^*(z) w_s(z)\mA(dz)},\label{pinfsxy}\EDE
\BGE w_s(y)=\int_{-1}^1\Big(\frac{1+z}2\Big)^{\alpha_-} p_s^*(y,z) \mA(dz),\label{wtx}\EDE
\end{Lemma}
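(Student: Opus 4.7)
The plan is to reduce both assertions to one-time-slice computations under the reflected measure $\nu^{\delta_+,\delta_-^*}$, whose transition density $p_t^*$ and stationary density $p_\infty^*$ are given by Propositions~\ref{Bessel-transition} and~\ref{Prop-invariant}. The killed measure $\mu^{\delta_+,\delta_-}_x$ (resp.\ $\mu^{\delta_+,\delta_-}_{\R}$) is already controlled by $\nu_x^{\delta_+,\delta_-^*}$ (resp.\ $\nu^{\delta_+,\delta_-^*}_{\R}$) through Lemma~\ref{Girsanov-lem} (resp.\ Lemma~\ref{stationary-quasi}(ii)); under the starred measures the process has infinite lifetime almost surely, so the survival indicators $\ind_{\Sigma_{t_1+s}}$ and $\ind_{\Sigma^{\R}_{t_2+s}}$ become redundant on the right-hand side of the Radon--Nikodym identities.

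For the first marginal, take a bounded Borel test function $\phi$ on $[-1,1]$ and assume $x\in(-1,1]$. Apply (\ref{Girsanov}) at $\tau=t_1+s$ to obtain
$$\mu^{\delta_+,\delta_-}_x\bigl[\phi(\pi_{t_1})\ind_{\Sigma_{t_1+s}}\bigr]=\EE_{\nu_x^{\delta_+,\delta_-^*}}\!\Bigl[\phi(Z_{t_1})\,M^Z_{t_1+s}/M^Z_0\Bigr].$$
Next insert the explicit form (\ref{MZt}) of $M^Z$ and condition on $Z_{t_1}$ via the Markov property of the $(\delta_+,\delta_-^*)$ radial Bessel process; the inner conditional expectation of $\bigl((1+Z_{t_1+s})/2\bigr)^{\alpha_-}$ given $Z_{t_1}=y$ is exactly $w_s(y)$, by (\ref{wtx}) and the definition of $p_s^*$. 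Since $Z_{t_1}$ under $\nu_x^{\delta_+,\delta_-^*}$ has density $p_{t_1}^*(x,\cdot)$, the whole quantity becomes a prefactor (depending only on $x,t_1,s$) times $\int_{-1}^1\phi(y)\,p_{t_1}^*(x,y)\,w_s(y)\,\mA(dy)$. Taking $\phi\equiv 1$ identifies $\mu^{\delta_+,\delta_-}_x[\Sigma_{t_1+s}]$ as the same prefactor times $\int_{-1}^1 p_{t_1}^*(x,z)w_s(z)\mA(dz)$, so the prefactor cancels in the ratio and the conditional density is precisely $p_{t_1}^s(x,y)$.

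For the stationary case the argument is parallel: apply Lemma~\ref{stationary-quasi}(ii) at $\tau=t_2+s$ to get
$$\mu^{\delta_+,\delta_-}_{\R}\bigl[\phi(\pi^{\R}_{t_2})\ind_{\Sigma^{\R}_{t_2+s}}\bigr]=\EE_{\nu^{\delta_+,\delta_-^*}_{\R}}\!\bigl[\phi(Z_{t_2})\,M^Z_{t_2+s}\bigr].$$
Under $\nu^{\delta_+,\delta_-^*}_{\R}$ the marginal of $Z_{t_2}$ is the stationary density $p_\infty^*$ (by construction of the stationary Bessel law through (\ref{invariant=})), while the inner conditional expectation of $\bigl((1+Z_{t_2+s})/2\bigr)^{\alpha_-}$ given $Z_{t_2}=y$ is again $w_s(y)$. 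Replacing $p_{t_1}^*(x,\cdot)$ by $p_\infty^*$ in the previous step and normalizing then produces $p_\infty^s$ exactly as stated.

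The one technical point I expect to verify is that the normalizing integrals in (\ref{pinfsxy}) are strictly positive and finite, so that the conditional probabilities are well-defined. Finiteness of $w_s$ and of $\int p_\infty^* w_s$ follows from $\delta_-^*=4-\delta_->2$: near $z=-1$ the densities $p_s^*(y,\cdot)$ and $p_\infty^*$ carry a factor of order $(1+z)^{\alpha_-^*}$ with $\alpha_-^*=1-\delta_-/2>0$, which absorbs the singularity of $\bigl((1+z)/2\bigr)^{\alpha_-}$ at $-1$ since $\alpha_-+\alpha_-^*=0$. Strict positivity of both integrals is immediate from the strict positivity of $p_s^*$ and $p_\infty^*$ on $(-1,1)$ visible in Proposition~\ref{Bessel-transition}.
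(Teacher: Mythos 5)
Your argument is correct and is essentially the paper's proof: both compute the sub-probability mass of $\pi_{t}^{-1}(U)\cap\Sigma_{t+s}$ by passing to the reflected law $\nu^{\delta_+,\delta_-^*}$ via Lemma \ref{Girsanov-lem} (resp.\ Lemma \ref{stationary-quasi}(ii)) at time $t+s$, integrating out the endpoint with the kernel $p_s^*$ to produce $w_s(y)$, and then normalizing by the total mass so the $e^{-\alpha_0(t+s)}$ and $(1+x)^{-\alpha_-}$ prefactors cancel. Your conditioning-on-$Z_{t_1}$ phrasing is just the paper's explicit $\pi_{t_1,t_1+s}$ joint-density computation, so no substantive difference.
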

\begin{proof}
Let $U\in{\cal B}([-1,1])$.
By Lemma \ref{Girsanov-lem} and that  $p_t^*(x,y)$ is the transition density for $\nu^{\delta_+,\delta_-^*}_\cdot$,
\begin{align*}
  &  \mu^{\delta_+,\delta_-}_x (\pi_{t_1}^{-1}(U)\cap \Sigma_{t_1+s})=\mu^{\delta_+,\delta_-}_x \circ \pi_{t_1,t_1+s}^{-1}(U\times [-1,1])\\
 =&e^{-\alpha_0 (t_1+s)}\int_{U}  \int_{-1}^1\Big(\frac{1+z}{1+x}\Big)^{\alpha_-} p_{t_1}^*(x,y)  p_{s}^*(y,z)  \mA(dz)\mA(dy)\\
= &{e^{-\alpha_0 (t_1+s)}}\Big(\frac{1+x}2\Big)^{-\alpha_-}\int_U p^*_{t_1}(x,y) w_{s}(y) \mA(dy).
\end{align*}
Replacing $U$ by $[-1,1]$, we get from the above formula
 $$\mu^{\delta_+,\delta_-}_{x}( \Sigma_{t_1+s})= {e^{-\alpha_0 (t_1+s)}}\Big(\frac{1+x}2\Big)^{-\alpha_-}\int_{-1}^1 p^*_{t_1}(x,y) w_{s}(y) \mA(dy).$$
Combining the above two displayed formulas, we conclude that $\ind_{[-1,1]}p_{t_1}^s(x,\cdot)$ is the density of $\pi_{t_1}(\mu^{\delta_+,\delta_-}_x[\cdot|\Sigma_{t_1+s}])$.

A similar argument involving Lemma \ref{stationary-quasi} gives
$$\mu^{\delta_+,\delta_-}_{\R} (\pi_{t_2}^{-1}(U)\cap \Sigma_{t_2+s})=\mu^{\delta_+,\delta_-}_{\R} \circ \pi_{t_2,t_2+s}^{-1}(U\times [-1,1])$$
$$=\int_U \int_{-1}^1e^{-\alpha_0( t_2+s)} \Big(\frac{1+z}2\Big)^{\alpha_-} p_\infty^*(y)    p_{s}^*(y,z)\mA(dz) \mA(dy)=e^{-\alpha_0 (t_2+s)}\int_U p_\infty^*(y) w_{s}(y) \mA(dy).$$
Taking $U=[-1,1]$, we get
$\mu^{\delta_+,\delta_-}_{\R}( \Sigma_{t_2+s})= e^{-\alpha_0 (t_2+s)}\int_{-1}^1 p_\infty^*(y) w_{s}(y) \mA(dy)$, which together with the above formula implies  that $\ind_{[-1,1]}p_{\infty}^s$ is the density of $\pi_{t_2}(\mu^{\delta_+,\delta_-}_{\R}[\cdot|\Sigma_{t_2+s}])$.
\end{proof}

\begin{Lemma}
There are constants $C,L_0\in (0,\infty)$ such that for any $x\in[-1,1]$, $t_1>L_0$, $t_2\in\R$, and $t_0>0$,
there is a coupling $\ha Z^j$, $j=1,2$, of the probability measures $\mu_1:= \mu^{\delta_+,\delta_-}_x[\cdot|\Sigma_{t_1+t_0}]$ and $ \mu_2:= \mu^{\delta_+,\delta_-}_{\R}[\cdot|\Sigma^{\R}_{t_2+t_0}]$ such that $\PP[(\ha Z^1_{t_1+t})_{t\ge 0}\ne (\ha Z^2_{t_2+t})_{t\ge 0}]< 8C e^{-\frac{\delta_++\delta_-^*}4 t_1}$.
\label{coupling}
\end{Lemma}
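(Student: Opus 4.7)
The plan is to couple the two laws by first applying a maximal (total-variation) coupling to the marginals at the times $t_1$ and $t_2$, then gluing on a common future via the Markov property, which is legitimate because the conditioning event $\{\ha T > t_j+t_0\}$ lives entirely in the future $\sigma$-algebra. The core quantitative input is a pointwise ratio estimate for the two densities identified by the previous lemma.

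First, by the previous lemma, $\pi_{t_1}(\mu_1)$ has density $p_{t_1}^{t_0}(x,\cdot)$ and $\pi_{t_2}^{\R}(\mu_2)$ has density $p_\infty^{t_0}(\cdot)$, both expressed in terms of $p_{t_1}^*(x,\cdot)$, $p_\infty^*$, and the weight $w_{t_0}$ through the ratio formulas in \eqref{pinfsxy}. Applying the exponential convergence estimate \eqref{transition-rate-i} to the process of dimension $(\delta_+,\delta_-^*)$ gives constants $C_0,L$ such that
\[ |p_{t_1}^*(x,y) - p_\infty^*(y)| \le C_0 p_\infty^*(y) e^{-\frac{\delta_++\delta_-^*}{4} t_1}, \qquad t_1 > L. \]
Setting $\varepsilon := C_0 e^{-\frac{\delta_++\delta_-^*}{4} t_1}$ and choosing $L_0 \ge L$ large enough that $\varepsilon \le 1/2$ whenever $t_1 > L_0$, a routine manipulation of the ratio formula \eqref{pinfsxy}—namely writing the difference of two quotients $A/N_A - B/N_B$ as $((A-B)N_B - B(N_A-N_B))/(N_A N_B)$, with $A = p_{t_1}^*(x,y)w_{t_0}(y)$ and $N_A = \int p_{t_1}^*(x,z)w_{t_0}(z)\mA(dz)$—yields the pointwise bound
\[ |p_{t_1}^{t_0}(x,y) - p_\infty^{t_0}(y)| \le \frac{2\varepsilon}{1-\varepsilon}\, p_\infty^{t_0}(y) \le 4\varepsilon\, p_\infty^{t_0}(y). \]
Integrating in $y$ shows that the total variation distance between $\pi_{t_1}(\mu_1)$ and $\pi_{t_2}^{\R}(\mu_2)$ is at most $2\varepsilon$, which we absorb into the stated constant by taking $C := 4C_0$ (or similar) so that the bound reads $8C e^{-\frac{\delta_++\delta_-^*}{4}t_1}$ with ample slack.

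Second, by the Markov property \eqref{Markov-nu} (which $\mu_y^{\delta_+,\delta_-}$ inherits, as noted in the text), and by property (i) of Lemma \ref{stationary-quasi}, the conditional law of $(Z_{t_1+t})_{t\ge 0}$ under $\mu_1$ given $Z_{t_1}=y$ equals the conditional law of $(Z_{t_2+t})_{t\ge 0}$ under $\mu_2$ given $Z_{t_2}=y$: both are $\mu_y^{\delta_+,\delta_-}$ restricted to $\Sigma_{t_0}$ and renormalized, because the survival event $\{\ha T > t_j+t_0\}$ depends only on the post-$t_j$ trajectory. In particular this common law does not depend on $t_1$, $t_2$, or $x$.

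Given these two ingredients, the construction of the coupling is standard. Sample $(\ha Z^1_{t_1}, \ha Z^2_{t_2})$ from the maximal coupling of the two marginals; on the agreement event $\{\ha Z^1_{t_1} = \ha Z^2_{t_2} = y\}$, sample a single trajectory from $\mu_y^{\delta_+,\delta_-}[\cdot\mid \Sigma_{t_0}]$ and use it to define both $(\ha Z^1_{t_1+t})_{t\ge 0}$ and $(\ha Z^2_{t_2+t})_{t\ge 0}$; on the disagreement event, sample the two futures independently from their respective conditional laws. Finally, on each side, fill in $(\ha Z^1_t)_{t<t_1}$ and $(\ha Z^2_t)_{t<t_2}$ independently from the conditional laws of the past given the respective endpoints, which does not affect the future. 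Then $\PP[(\ha Z^1_{t_1+t})_{t\ge 0}\ne (\ha Z^2_{t_2+t})_{t\ge 0}]$ is bounded by the disagreement probability of the maximal coupling, which is exactly the total variation distance controlled above. The only real work is the density-perturbation estimate in the first paragraph; the coupling step and the Markov bookkeeping are routine.
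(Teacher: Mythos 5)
Your proposal is correct and follows essentially the same route as the paper: the pointwise perturbation bound for the conditioned densities $p_{t_1}^{t_0}(x,\cdot)$ versus $p_\infty^{t_0}$ derived from (\ref{transition-rate-i}), followed by a total-variation coupling of the time-$t_1$/$t_2$ marginals and gluing of the futures via the Markov properties of $\mu_x^{\delta_+,\delta_-}$ and $\mu_{\R}^{\delta_+,\delta_-}$. The only cosmetic difference is the bookkeeping of constants (the paper tracks the factor $8C$ through logarithmic estimates, while you absorb it by redefining $C$), which is immaterial since the lemma only asserts existence of constants.
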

\begin{proof}
  By (\ref{transition-rate-i}), there are constants $C,L_0\in(0,\infty)$ such that if $t_1>L_0$, then for any $x,y\in[-1,1]$,
$|\frac{p_{t_1}^*(x,y)}{p_\infty^*(y)}-1|< C e^{-\frac{\delta_++\delta_-^*}4 t_1}<\frac 18$,
which implies that
$$\Big|\log\Big( \frac{p_{t_1}^*(x,y)w_{t_0}(y)}{p_\infty^*(y)w_{t_0}(y)}\Big)\Big|< 2C e^{-\frac{\delta_++\delta_-^*}4 t_1} .$$
From this we further have
$$\Big|\log\Big( \frac{\int_{-1}^1p_{t_1}^*(x,z)w_{t_0}(z)\mA(dz)}{\int_{-1}^1 p_\infty^*(z)w_{t_0}(z)\mA(dz)}\Big)\Big|< 2C e^{-\frac{\delta_++\delta_-^*}4 t_1} .$$
Let $p^{t_0}_{t_1}(x,y),p^{t_0}_\infty(y)$ be defined by (\ref{pinfsxy}).
By the above two displayed formulas,   if $t_1>L_0$, then for any $x,y\in [-1,1]$,
$|\log( \frac{p_{t_1}^{t_0}(x,y)}{p_\infty^{t_0}(y)})|< 4C e^{-\frac{\delta_++\delta_-^*}4 t_1}<\frac 12$,
which implies that
$$ |p_{t_1}^{t_0}(x,y)-p_\infty^{t_0}(y)|< 8C e^{-\frac{\delta_++\delta_-^*}4 t_1} p_\infty^{t_0}(y),\quad \mbox{if }t_1>L_0.$$ 
By the previous lemma,
we get a coupling $X_1$ and $X_2$ of $\pi_{t_1}(\mu_1)$ and $\pi_{t_2}^{\R}(\mu_2)$ such that $\PP[X_1\ne X_2]<8C e^{-\frac{\delta_++\delta_-^*}4 t_1}$, which together with the Markov properties of $\mu_1$ and $\mu_2$ implies the existence of the coupling   satisfying the desired inequality.
\end{proof}

\section{Rooted SLE$_\kappa(\rho)$ Bubbles} \label{Section-rooted-loop}
Let $\kappa>0$ and $\rho>-2$.
We will construct rooted SLE$_\kappa(\rho)$ bubble measures. In the case that $\rho\le \frac\kappa 2-4$ (and so $\kappa>4$), the bubble measure is a probability measure. In the case that $\rho>(-2)\vee (\frac\kappa 2-4)$, the bubble measure is a $\sigma$-finite infinite measure. 

We use $\til\mu^{D}_{(w;v)\to x}$ to denote the law of a single-force-point (MTC) SLE$_\kappa(\rho)$ at $D:(w;v)\to x$.  The DMP of SLE$_\kappa(\rho)$ could be expressed symbolically  by the following formula: for any TCI stopping time $\tau$,
\BGE \ind_{\til\Sigma_\tau} \til \mu^D_{(w;v)\to x}(d[\eta]) \oplus_\tau \til \mu^{D([\eta]|\tau^+)}_{([\eta](\tau^+); [v]^{[\eta]}_{\tau^+;D})\to x}
= \ind_{\til\Sigma_\tau} \til \mu^D_{(w;v)\to x}.\label{DMP-kappa-rho}\EDE

\subsection{The construction}

Suppose $\gamma$ is an AP Loewner curve in $D$, and $s_0\in[0,\ha T_\gamma)$.
Recall that $[\gamma(0)^+]^{\gamma}_{s_0^+;D}$ equals $\ha b^{\gamma}_{s_0^+;D}$ or $(\ha b^{\gamma}_{s_0^+;D})^+$ depending on whether $\gamma(s_0^+)\ne\ha b^{\gamma}_{s_0^+;D}$; and  $[\gamma(0)^-]^{\gamma}_{s_0^+;D}$ equals $\ha a^{\gamma}_{s_0^+;D}$ or $(\ha a^{\gamma}_{s_0^+;D})^-$ depending on whether $\gamma(s_0^+)\ne\ha a^{\gamma}_{s_0^+;D}$.
When $\gamma(s_0^+)\ne \ha a^{\gamma}_{s_0^+;D}$, and so  $[\gamma(0)^-]^{\gamma}_{s_0^+;D}=\ha a^{\gamma}_{s_0^+;D}$, we define
\BGE \til\mu^D_{\gamma|s_0^+}=\til \mu^{D(\gamma|s_0^+)}_{(\gamma(s_0^+);[\gamma(0)^+]^{\gamma}_{s_0^+;D})\to [\gamma(0)^-]^{\gamma}_{s_0^+;D}}%
. \label{til-mu-gamma} \EDE
If $\sigma$ is a TCI time on the MTC Loewner curve $[\gamma]$, which is less than $\ha T$, such that $[\gamma](\sigma^+)\ne \ha a^{[\gamma]}_{\sigma^+;D}$, we define
\BGE \til\mu^D_{[\gamma]|\sigma^+}=\til\mu^D_{\gamma|\sigma(\gamma)^+}
.\label{til-mu-gamma-[]} \EDE

\begin{Theorem}
Let $\kappa>4$ and $\rho\in (-2,\frac\kappa 2-4]$. Then there is a unique probability measure $\til\mu^{\HH}_{0\ccw} $ on the space of  Loewner bubbles in $\HH$ rooted at $0$, such that for any strictly positive TCI $\F$-stopping time $\tau$,
\BGE \ind_{\til\Sigma_\tau} \til\mu^{\HH}_{0\ccw} =\ind_{\til\Sigma_\tau} \til\mu^{\HH}_{0\ccw}(d[\gamma]) \oplus_\tau \til\mu^{\HH}_{[\gamma]|\tau^+} .\label{DMP-kappa-rho-proob}\EDE
Moreover,  $\til\mu^{\HH}_{0\ccw} $  is invariant under any conformal automorphism of $\HH$ fixing $0$,  is supported by boundary-filling curves (visiting every point on $\ha\R$), and ${\cal P}(\til\mu^{\HH}_{0\ccw})$ is the law of a CP SLE$_\kappa(\rho,\kappa-6-\rho)$ at $(0;0^+,0^-)$. Recall that $\cal P$ is defined right before (\ref{radxPeta}).
  \label{Thm-loop-probability}
\end{Theorem}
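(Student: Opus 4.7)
The plan is to construct $\til\mu^{\HH}_{0\ccw}$ directly as the law of the MTC equivalence class built from a CP SLE$_\kappa(\rho, \kappa-6-\rho)$ process $\eta^*$ at $(0; 0^+, 0^-)$, and then verify each stated property. The admissibility constraints $\rho > -2$ and $\kappa - 6 - \rho > -2$ (the latter being $\rho < \kappa - 4$) both hold under our hypothesis $\rho \in (-2, \tfrac{\kappa}{2} - 4]$ with $\kappa > 4$, so $\eta^*$ exists and is unique in law with driving $W$ and force-point processes $V^\pm$.

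The main analytic step is to show that $\eta^*$ has finite lifetime $\ha T$ with $\lim_{t \uparrow \ha T} \eta^*(t) = 0$, so that the continuous extension by $\eta^*(\ha T) = 0$ makes $[\eta^*]$ an MTC loop rooted at $0$. I would introduce $X_t = V^+_t - W_t$, $Y_t = W_t - V^-_t$, the ``gap'' $S_t = X_t + Y_t$, and the ``angular'' variable $Z_t = (X_t - Y_t)/S_t \in [-1, 1]$. A direct It\^o computation gives $dS_t = 2(X_t^{-1} + Y_t^{-1})\,dt$ with no Brownian part, so $S$ is monotone increasing; and under a time change of the form $d\tilde t \propto S_t^{-2} X_t Y_t\,dt$, the angular variable $Z$ becomes a radial Bessel process as in Section \ref{Section-Bessel} of dimensions $(\delta_+, \delta_-) = \bigl(1 + \tfrac{2(\rho+2)}{\kappa},\, 1 + \tfrac{2(\kappa-4-\rho)}{\kappa}\bigr)$. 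The hypothesis $\rho \leq \tfrac{\kappa}{2} - 4$ yields $\delta_+ < 2 \leq \delta_-$, so $Z$ a.s.\ reaches $+1$ (equivalently $Y = 0$, i.e., $W$ meets $V^-$) in finite Bessel time without ever hitting $-1$, corresponding to finite capacity time $\ha T$. The identification $V^-_t = g^{\eta^*}_t([0^-]^{\eta^*}_t)$ from Section \ref{section-SLE} then translates $W_{\ha T^-} = V^-_{\ha T^-}$ into $\eta^*(\ha T^-) = 0$, viewed as the prime end $0^-$ identified with $0 \in \R$.

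With $\eta^*$ established as a rooted loop at $0$, set $\til\mu^{\HH}_{0\ccw}$ to be the law of $[\eta^*]$. The DMP \eqref{DMP-kappa-rho-proob} is a direct translation of the DMP of CP SLE$_\kappa(\ulin\rho)$ in Section \ref{section-SLE}: at any TCI $\F$-stopping time $\tau < \ha T$, conditionally on $\til\F_\tau$, the remainder of $[\eta^*]$ is a CP SLE$_\kappa(\rho, \kappa-6-\rho)$ in $\HH([\eta^*]|\tau^+)$ started from $[\eta^*](\tau^+)$ with force points $[0^+]^{[\eta^*]}_{\tau^+;\HH}$ and $[0^-]^{[\eta^*]}_{\tau^+;\HH}$; the ccw orientation makes $[0^-]^{[\eta^*]}_{\tau^+;\HH}$ the target, which by the definitions \eqref{til-mu-gamma}--\eqref{til-mu-gamma-[]} is exactly $\til\mu^{\HH}_{[\eta^*]|\tau^+}$. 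Conformal invariance under the scalings $z \mapsto az$ (the only automorphisms of $\HH$ fixing $0$) is immediate from the scaling/translation property of CP SLE$_\kappa(\ulin\rho)$, since $0^\pm \mapsto 0^\pm$. Since $\eta^*$ closes up at $0$ rather than approaching $\infty$, $\infty \in I^{\eta^*}_t$ for every $t < \ha T$, so $\ha\sigma(\eta^*) = \ha T$ and ${\cal P}([\eta^*]) = \eta^*$, yielding the stated identification of the CP law.

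For boundary-filling, on the right side, for each $u > 0$ one stops $\eta^*$ just before $\tau_u := \inf\{t: u \in K^{\eta^*}_t\}$: before $\tau_u$, $V^-$ stays strictly less than $W$, so applying the DMP and conformally mapping out the hull reduces to a single-force-point SLE$_\kappa(\rho)$ configuration, and the Section \ref{section-SLE} boundary property with $\rho \leq \tfrac{\kappa}{2}-4$ forces $u \in \eta^*$. For $u < 0$ (and for $u = \infty$), apply the DMP at the first time such a point is separated from the target $0^-$ by the hull; the residual curve is a single-force-point SLE$_\kappa(\rho)$ in the residual domain aimed at the image of $0^-$ with force point coming from $[0^+]^{[\eta^*]}$, and the same single-force-point covering with $\rho \leq \tfrac{\kappa}{2}-4$ gives the desired inclusion. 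Uniqueness of $\til\mu^{\HH}_{0\ccw}$ follows by a standard DMP argument: for any probability measure $\nu$ satisfying \eqref{DMP-kappa-rho-proob}, the image ${\cal P}(\nu)$ has a driving process whose DMP-derived local SDE is forced to be the CP SLE$_\kappa(\rho, \kappa-6-\rho)$ SDE at $(0; 0^+, 0^-)$, uniquely determining its law; since $\ha\sigma = \ha T$ holds $\nu$-a.s., ${\cal P}$ is injective on the support and $\nu = \til\mu^{\HH}_{0\ccw}$. The hardest step is the Bessel reduction in the second paragraph: the degenerate initial condition $X_0 = Y_0 = 0$ (both force points coinciding with the driving at time $0$) requires careful regularization, and one must verify rigorously that hitting $Z = 1$ in finite Bessel time corresponds to $\eta^*$ closing up at the prime end $0^-$ in a well-defined manner rather than, say, spiraling without a limit.
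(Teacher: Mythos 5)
There is a fatal gap at the very first step: your candidate object is not a bubble. You propose to define $\til\mu^{\HH}_{0\ccw}$ as the law of $[\eta^*]$, where $\eta^*$ is the CP SLE$_\kappa(\rho,\kappa-6-\rho)$ at $(0;0^+,0^-)$ itself, and you argue it closes up at $0$ in finite time because the angular process hits the endpoint corresponding to $W=V^-$. This is backwards. In the radial Bessel correspondence (Lemma \ref{correspondance-mu}), the dimension attached to the boundary point where the driving meets the \emph{left} force point is $\delta_-=\frac4\kappa(\rho_-+2)=\frac4\kappa(\kappa-4-\rho)$, and the hypothesis $\rho\le\frac\kappa2-4$ gives exactly $\delta_-\ge 2$: that endpoint is a.s.\ never reached, $W_t-V^-_t>0$ for all $t>0$, and since $\rho_-=\kappa-6-\rho\ge\frac\kappa2-2$ the curve a.s.\ never touches $(-\infty,0)$ at all. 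What your condition $\rho\le\frac\kappa2-4$ does make degenerate is the \emph{right} interaction ($\delta_+=\frac4\kappa(\rho+2)<2$, indeed the curve swallows all of $[0,\infty)$), but hitting the right force point only means the tip revisits the positive half-line; it does not close the loop. The true behavior is that $\eta^*$ has infinite lifetime and tends to $\infty$, so $[\eta^*]$ is a transient curve, not a rooted loop, it misses $(-\infty,0)$ entirely (so it cannot be boundary-filling), and the whole verification of (\ref{DMP-kappa-rho-proob}), uniqueness and conformal invariance built on this identification collapses.

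The statement ${\cal P}(\til\mu^{\HH}_{0\ccw})=$ law of the CP SLE$_\kappa(\rho,\kappa-6-\rho)$ only identifies the portion of the bubble visible to the capacity parametrization (up to the time $\infty$ ceases to lie on $I^\eta_t$); the bubble has a genuine continuation beyond that, which your construction omits. The paper produces it by splicing: run the SLE$_\kappa(\rho,\kappa-6-\rho)$ until the hitting time $\sigma_r$ of $\{|z|=r\}$, then attach an SLE$_\kappa(\rho)$ in the remaining domain from the tip, aimed at the prime end $[0^-]^{[\gamma]}_{\sigma_r^+;\HH}$ with force point $[0^+]^{[\gamma]}_{\sigma_r^+;\HH}$ (this continuation, with $\rho\le\frac\kappa2-4$, fills the rest of $\ha\R$ including $(-\infty,0)$ and terminates at $0$), and then show the resulting measure $\til\mu_r$ is independent of $r$ via the SLE coordinate-change results of \cite{SW} — this is precisely where the special value $\rho_-=\kappa-6-\rho$ is used, as target-invariance between the curve aimed at $\infty$ and the curve aimed back at $0$. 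Any correct proof needs this two-stage structure (or an equivalent device); the single-curve construction you propose cannot be repaired by a more careful analysis of the degenerate start, because the obstruction is the actual transience of the curve, not a technicality about $X_0=Y_0=0$.
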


We call $\til\mu^{\HH}_{0\ccw} $ the ccw SLE$_\kappa(\rho)$ loop measure in $\HH$ rooted at $0$. Later in Theorem \ref{Thm-loop-sigma-finite} we will define the ccw SLE$_\kappa(\rho)$ loop measure $\til\mu^{\HH}_{0\ccw} $ for $\rho>\frac\kappa 2-4$. Let $\Phi(z)=-\lin z$. Then $\til\mu^{\HH}_{0\cw}:=\Phi(\til\mu^{\HH}_{0\ccw})$ is called a cw SLE$_\kappa(\rho)$ bubble measure in $\HH$ rooted at $0$. Let $A_w(z)=w+z$ for $w\in\R$. Then $\til\mu^{\HH}_{w\ccw}:=A_w(\til\mu^{\HH}_{0\ccw})$ and $\til\mu^{\HH}_{w\cw}:=A_w(\til\mu^{\HH}_{0\cw})$ are SLE$_\kappa(\rho)$ loop measures rooted at $w$.

\begin{Remark}
  In plain words, (\ref{DMP-kappa-rho-proob}) means that if a random MTC curve $[\eta]$ follows the law $\til\mu^{\HH}_{0\ccw} $, then for any positive TCI stopping time $\tau$, conditionally on $\til\F_\tau$ and the event $\til\Sigma_\tau$, the part of $[\eta]$ after $\tau$ is an SLE$_\kappa(\rho)$ curve in a connected component of $\HH\sem [\eta]([0,\tau])$ aimed at (the left of) $0$ with the force point being the ccw most point on $\ha \R$ visited by $[\eta]$ up to the time $\tau$. See Figure \ref{figure} for illustration.
\end{Remark}

\begin{figure}
	\includegraphics[width=1\textwidth]{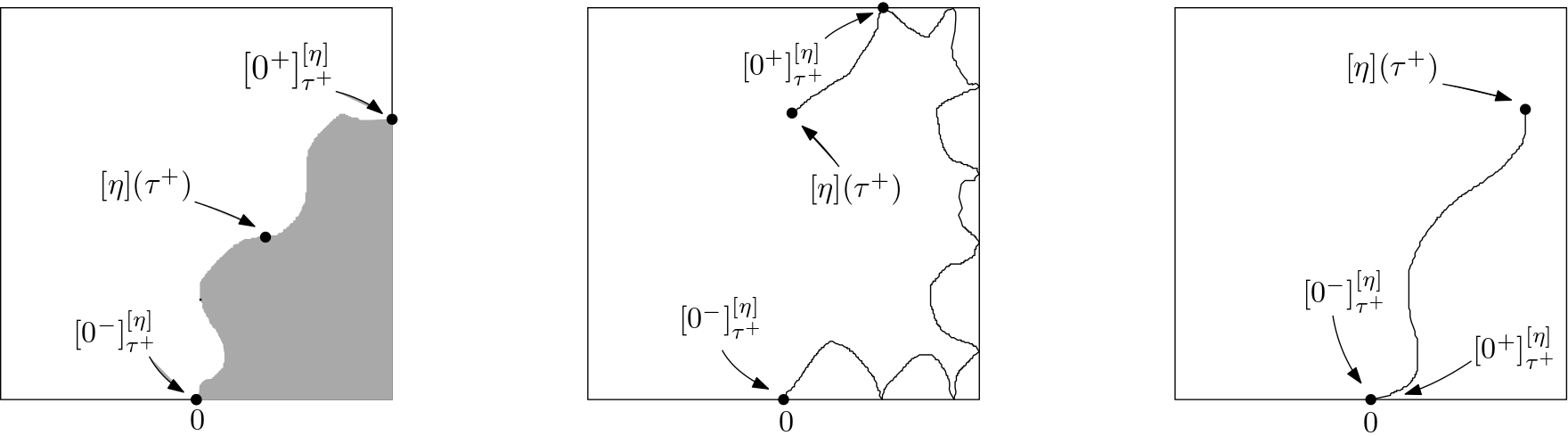}%
\caption{The three figures illustrate  ccw SLE$_\kappa(\rho)$ loops $[\eta]$ rooted at $0$  up to some TCI stopping time $\tau$. The rest of $[\eta]$ (not drawn) is an SLE$_\kappa(\rho)$ curve in the remaining domain from  $[\eta](\tau^+)$ to $[0^-]^{[\eta]}_{\tau^+}$ with the force point being $[0^+]^{[\eta]}_{\tau^+}$, where the three prime ends are labeled and marked with dots.
The $\rho$ for the $[\eta]$ on the left lies in $(-2, \frac\kappa 2-4)$ as in Theorem \ref{Thm-loop-probability}, and the $[\eta]$ is drawn as a (grey) space-filling curve ($\kappa\ge 8$). The $\rho$ for the  $[\eta]$   in the middle lies in $(\frac\kappa 2-4,\frac\kappa 2-2)$ as in Theorem \ref{Thm-loop-sigma-finite}, and $[\eta]$  intersects the domain boundary at a fractal set, which posses Minkowski content measure by Theorem \ref{Thm-decomposition-loop}. The $\rho$ for the  $[\eta]$  on the right lies in $[\frac\kappa 2-2,\infty)$ as in Theorem \ref{Thm-loop-sigma-finite}, and $[\eta]$  intersects the domain boundary only at its root $0$.
} \label{figure}
\end{figure}
\begin{proof}
Let $\rho_+=\rho$ and $\rho_-=\kappa-6-\rho$. Then $\rho_+>-2$ and $\rho_-\ge \frac\kappa 2-2$. Let $\mu$ be the law of a CP SLE$_\kappa(\rho_+,\rho_-)$  at $(0;0^+,0^-)$. Let $\til\mu=\pi(\mu)$. For any $r>0$, let $\sigma_r$ denote the hitting time at $\{|z|\ge r\}$.
Using the symbol in  (\ref{til-mu-gamma},\ref{til-mu-gamma-[]}), we define
 \BGE \til\mu_r= \mu(d\gamma) \oplus_{\sigma_r}  \til\mu^{\HH}_{\gamma|\sigma_r^+}= \til\mu(d[\gamma]) \oplus_{\sigma_r}  \til\mu^{\HH}_{[\gamma]|\sigma_r^+}.\label{DMP-kappa-rho-proob-sigma}\EDE
 Intuitively speaking, a curve with law $\til\mu_r$ is constructed by the following procedure: first run a CP SLE$_\kappa(\rho_+,\rho_-)$ curve $\gamma$ at $(0;0^+,0^-)$, then stop $\gamma$ at the first time $\sigma_r$ that it reaches $\{|z|=r\}$, and finally continue it with an SLE$_\kappa(\rho)$ curve in the remaining domain starting from the tip of $\gamma$ at $\sigma_r$, such that the target and the force point are respectively the smallest and the biggest  real number that is visited by $\gamma$ at $\sigma_r$. Since $\rho_-\ge \frac\kappa 2-2$, by the boundary behavior of a single-force-point SLE$_\kappa(\rho_-)$ curve and Girsanov theorem,   $\gamma$ does not visit $(-\infty,0)$, and so the target for the conditional SLE$_\kappa(\rho)$ curve is a prime end that determines $0$.
 By Lemma \ref{continue-Loewner}, $\til\mu_r$ is supported by Loewner bubbles in $\HH$ rooted at $0$.

 We now show that $\til\mu_r$ does not depend on $r$.
 Suppose $0<s<t$. Let $\sigma=\sigma_s$ and $\tau=\sigma_t$. Then $\sigma<\tau$.
Let $\tau_{[\gamma];\sigma}$ be as defined in Lemma \ref{tau-[g]'} with  $[g]=[\gamma]$, which is a TCI stopping time by Lemmas \ref{tau-g'} and \ref{tau-[g]'}. We will use the following facts: if $[\gamma]$ follows the law $\til\mu$ and $[\eta]$ follows the conditional law $\til\mu^{\HH}_{[\gamma]|\sigma^+}$, then
$$\HH([\gamma]|\sigma^+)([\eta]|\tau_{[\gamma];\sigma}^+)=\HH([\gamma]\oplus_\sigma[\eta]|\tau^+),\quad [\eta](\tau_{[\gamma];\sigma}^+)=([\gamma]\oplus_\sigma [\eta])(\tau^+),$$
$$[[0^+]^{[\gamma]}_{\sigma^+;\HH}]^{[\eta]}_{\tau_{[\gamma];\sigma}^+;\HH([\gamma]|\sigma^+)} =[0^+]^{[\gamma]\oplus_\sigma [\eta]}_{\tau^+;\HH},\quad [0^-]^{[\gamma]}_{\sigma^+;\HH}=[0^-]^{[\gamma]\oplus_\sigma [\eta]}_{\tau^+;\HH},$$
where the third equality follows from Corollary \ref{[[v]]-cor}, and the fourth equality means that the prime end $[0^-]^{[\gamma]}_{\sigma^+;\HH}$ of $\HH([\gamma]|\sigma^+)$ and the prime end $[0^-]^{[\gamma]\oplus_\sigma [\eta]}_{\tau^+;\HH}$ of $\HH([\gamma]\oplus_\sigma [\eta]|\tau^+)$ are identified.

By 
the DMP formula (\ref{DMP-kappa-rho}) for the SLE$_\kappa(\rho)$ law $\til\mu^{\HH}_{[\gamma]|\sigma^+}$ and the above displayed formulas, we get
\BGE \ind_{\til\Sigma_{\tau_{[\gamma];\sigma} }} \til\mu^{\HH}_{[\gamma]|\sigma^+}(d[\eta])\oplus_{\tau_{[\gamma];\sigma}} \til\mu^{\HH}_{[\gamma]\oplus_\sigma [\eta]|\tau^+}= \ind_{\til\Sigma_{\tau_{[\gamma];\sigma} }}  \til\mu^{\HH}_{[\gamma]|\sigma^+}.
\label{compos}
\EDE
Since $\rho\le \frac\kappa 2-4$, by the boundary behavior of SLE$_\kappa(\rho)$, a curve following the law $\til\mu^{\HH}_{[\gamma]|\sigma^+}$ visits every point on the prime end interval $([0^+]^{[\gamma]}_{\sigma^+;\HH},[0^-]^{[\gamma]}_{\sigma^+;\HH})_{\ha\pa \HH([\gamma]|\sigma^+)}$, which includes the point $t$. Thus, $\til\mu^{\HH}_{[\gamma]|\sigma^+}$ is supported by $\til\Sigma_{\sigma_t}$. Since $\sigma(\gamma)<\tau(\gamma)$ and $\til\mu^{\HH}_{[\gamma]|\sigma^+}$ is supported by curves started from $\gamma(\sigma)$, on which $\tau_{[\gamma];\sigma}$ agrees with $\tau=\sigma_t$, we see that  $\til\mu^{\HH}_{[\gamma]|\sigma^+}$ is supported by $\til\Sigma_{\tau_{[\gamma];\sigma^+}}$. Thus, (\ref{compos}) simplifies to
\BGE  \til\mu^{\HH}_{[\gamma]|\sigma^+}(d[\eta])\oplus_{\tau_{[\gamma];\sigma^+}} \til\mu^{\HH}_{[\gamma]\oplus_\sigma [\eta]|\tau^+}=  \til\mu^{\HH}_{[\gamma]|\sigma^+}.
\label{compos2}
\EDE

 By the DMP for SLE$_\kappa(\rho_+,\rho_-)$, if $[\gamma]$ follows the law $\til\mu$, then conditionally on $\F_{\sigma}$, the part of $[\gamma]$ after $\sigma$   is an SLE$_\kappa(\rho_+,\rho_-)$ at
$\HH([\gamma]|\sigma^+): ([\gamma](\sigma^+);[0^+]^{[\gamma]}_{\sigma^+;\HH},[0^-]^{[\gamma]}_{\sigma^+;\HH})\to \infty$.
Since $\rho_-=\kappa-6-\rho_+$, and $[\gamma]([0,\tau])$ does not separate $[0^-]^{[\gamma]}_{\sigma^+;\HH}$ from $\infty$, by   \cite{SW} (SLE coordinate changes), the part of $[\gamma]$ from $\sigma$ up to $\tau$ has the conditional law $\K_\tau(\til\mu^{\HH}_{[\gamma]|\sigma^+})$.
 Thus,
 \begin{align}
 \til\mu_t&\stackrel{\phantom{(\ref{Markov-mu-R})}}{=}(\til\mu(d[\gamma]) \oplus_{\sigma}   \til\mu^{\HH}_{[\gamma]|\sigma^+}(d[\eta]))\oplus_{\tau}  \til\mu^{\HH}_{[\gamma]\oplus_\sigma [\eta]|\tau^+}\nonumber \\
 &\stackrel{{(\ref{fgh})}}{=} \til\mu (d[\gamma]) \oplus_{\sigma}(\til\mu^{\HH}_{[\gamma]|\sigma^+}(d[\eta])\oplus_{\tau_{[\gamma];\sigma}}\til\mu^{\HH}_{[\gamma]\oplus_\sigma [\eta]|\tau^+})\nonumber\\
  &\stackrel{{(\ref{compos2})}}{=} \til\mu (d[\gamma]) \oplus_{\sigma} \til\mu^{\HH}_{[\gamma]|\sigma^+}=\til\mu_s.
 \label{muts}
 \end{align}
 Hence $\til\mu_r$ does not depend on $r$, and we may and will write $\til\mu^{\HH}_{0\ccw}$ for $\til\mu_r$. The boundary-filling property of $\til\mu^{\HH}_{0\ccw}$ follows from its DMP and the boundary behavior of SLE$_\kappa(\rho)$ for   $\rho\le \frac\kappa 2-4$. For any $r>0$, since $\til\mu^{\HH}_{0\ccw}=\til\mu_r$, by the construction of $\til\mu_r$, we have $\K_{\sigma_r}({\cal P}(\til\mu^{\HH}_{0\ccw}))=\K_{\sigma_r}(\mu)$. Since this holds for any $r>0$, we get ${\cal P}(\til\mu^{\HH}_{0\ccw})=\mu$.

Let $\tau$ be a strictly positive TCI stopping time. First suppose $\tau\ge \sigma_r$ for some $r>0$.  Applying (\ref{compos}) with $\sigma:=\sigma_r$, using  the definition (\ref{DMP-kappa-rho-proob-sigma}) of $\til\mu_r$, which equals $\til\mu^{\HH}_{0\ccw}$,  we get
\begin{align}
  &\ind_{\til\Sigma_\tau} \til\mu^{\HH}_{0\ccw}\stackrel{ {(\ref{DMP-kappa-rho-proob-sigma})}}{=}\til\mu(d[\gamma]) \oplus_\sigma\ind_{\til\Sigma_{\tau_{[\gamma];\sigma}}} \til\mu^{\HH}_{[\gamma]|\sigma^+}\nonumber\\
   &\stackrel{ {(\ref{compos})}}{=}\til\mu(d[\gamma]) \oplus_\sigma (\ind_{\til\Sigma_{\tau_{[\gamma];\sigma}}} \til\mu^{\HH}_{[\gamma]|\sigma^+} (d[\eta]) \oplus_{\tau_{[\gamma];\sigma}} \til\mu^{\HH}_{[\gamma]\oplus_\sigma[\eta]|\tau^+})\nonumber \\
   &\stackrel{{(\ref{fgh})}}{=}\ind_{[\gamma]\oplus_\sigma[\eta]\in \til\Sigma_\tau} (\til\mu(d[\gamma])  \oplus_\sigma   \til\mu^{\HH}_{[\gamma]|\sigma^+} (d[\eta]) )\oplus_\tau \til\mu^{\HH}_{[\gamma]\oplus_\sigma[\eta]|\tau^+}\nonumber \\
  &\stackrel{ {(\ref{DMP-kappa-rho-proob-sigma})}}{=}\ind_{\til\Sigma_\tau}\til\mu(d[\zeta]) \oplus_\tau \til\mu^{\HH}_{ [\zeta]|\tau^+},
   \label{tau>sigmar}
\end{align}
where in the last step we wrote $[\zeta]$ for $[\gamma]\oplus_\sigma[\eta]$. Thus, we get (\ref{DMP-kappa-rho-proob}) in the case that $\tau\ge \sigma_r$ for some $r>0$. This means that, for each $n\in\N$, (\ref{DMP-kappa-rho-proob}) holds for $\tau\vee \sigma_{1/n}$. Let $E_n=\{\sigma_{1/n}<\tau\}$.  Since  $\tau\vee \sigma_{1/n}=\tau$ on $E_n$, we see that (\ref{DMP-kappa-rho-proob}) holds with an additional factor $\ind_{E_n}$ on both sides. Since $\tau$ is strictly  positive, $\bigcup_n E_n$ supports $\til\mu^{\HH}_{0\ccw}$. Thus, we get  (\ref{DMP-kappa-rho-proob}).

Suppose $\til \mu'$ satisfies all properties of $\til\mu^{\HH}_{0\ccw}$.  For any $r\in (0,\infty)$, taking $\tau=\sigma_r$ in (\ref{DMP-kappa-rho-proob}) and using the boundary-filling properties of SLE$_\kappa(\rho)$ for $\rho\le \frac\kappa 2-4$, we see that $\til \mu' (\til\Sigma_{\sigma_s}\sem \til\Sigma_{\sigma_r})=0$ for any $s>r>0$, which together with the fact that $\til\mu'$ is   supported by $\bigcup_{n=1}^\infty \til\Sigma_{\sigma_{1/n}}$ implies that it is supported by $\til\Sigma_{\sigma_r }$ for any $r\in(0,\infty)$. Since $\til\mu'$ and $\til\mu^{\HH}_{0\ccw}$ are both supported by $\bigcup_{n=1}^\infty \til\Sigma_{\sigma_{1/n}}$, and satisfy (\ref{DMP-kappa-rho-proob}),  in order to prove that they are equal,   it suffices to show that ${\cal P}(\til\mu')=\mu$.

Let $[\eta]$ follow the law $\til\mu'$, and $\eta_0={\cal P}([\eta])$.
Fix a deterministic time $t_0\in (0,\infty)$. Applying (\ref{DMP-kappa-rho-proob}) and using the coordinate change results in \cite{SW}, we find that, conditionally on $\F_{t_0}$ and the event $\Sigma_{t_0}$, the part of $\eta_0$ after $t_0$ is the $(g^{\eta_0}_{t_0})^{-1}$-image of a CP SLE$_\kappa(\rho_+,\rho_-)$ at
 $(W^{\eta_0}_{t_0};D^{\eta_0}_{t_0},C^{\eta_0}_{t_0})$.   Since this holds for any $t_0>0$, and $W^{\eta_0},D^{\eta_0},C^{\eta_0}$ all start from $0$, $\eta_0$ is a CP SLE$_\kappa(\rho_+,\rho_-)$ at $(0;0^+,0^-)$. So we get ${\cal P}(\til \mu')=\mu$, as desired. 

Suppose $f$ is a conformal automorphism of $\HH$ fixing $0$. Then $f(\til\mu^{\HH}_{0\ccw})$ satisfies the same properties as $\til\mu^{\HH}_{0\ccw}$. By the uniqueness of $\til\mu^{\HH}_{0\ccw}$, we get $f(\til\mu^{\HH}_{0\ccw})=\til\mu^{\HH}_{0\ccw}$.
\end{proof}

 Some argument in the above proof such as those in (\ref{muts}) and (\ref{tau>sigmar}) will be used again later in the proof of Theorem \ref{Thm-loop-sigma-finite}.

To deal with the case $\rho\in(-2,\frac\kappa 2-4)$, we need the following lemmas, whose proofs are straightforward, and so will be omitted.

\begin{Lemma}
  Let $\kappa>0$, $\rho_+,\rho_->-2$. Let $x\le w\le v$ and $x<v$. Let $z=\frac{2w-v-x}{v-x}\in [-1,1]$. Let
$ \delta_\pm=\frac 4\kappa(\rho_\pm+2)>0$. Then we have the following correspondence.
  \begin{enumerate}
    \item [(i)]   Let $W$ be the driving function of a CP SLE$_\kappa(\rho_+,\rho_-)$ at $(w;v,x)$, and let $V_t$ and $X_t$ be the corresponding force point processes. When $v=w$ (resp.\ $x=w$), the force point $v$ (resp.\ $x$) is understood as $w^+$ (resp.\ $w^-$).   Let
      $$  Z_t=\frac{2W_t-V_t-X_t}{V_t-X_t},\quad  p(t)=\frac\kappa 2 \log(\frac{V_t-X_t}{v-x}), \quad \ha Z_s=Z_{p^{-1}(s)}.$$
  Then $(\ha Z_s)$ follows the law $\nu^{\delta_+,\delta_-}_z$ (defined after (\ref{Bessel-SDE})).
  \item [(ii)] Let $(\ha Z_s)$ follow the law $\nu^{\delta_+,\delta_-}_z$. Define
    \begin{align*}
    \ha V_{s}&=v+(v-x)\int_{0}^s \frac 1\kappa (1+\ha Z_r) e^{\frac 2\kappa r} dr,\quad
        \ha X_{s} =x-(v-x)\int_{0}^s \frac 1\kappa (1-\ha Z_r)  e^{\frac 2\kappa r} dr, \\
    \ha W_{s}&= \frac{1+\ha Z_s}2 \ha V_{s}+\frac{1-\ha Z_s}2 \ha X_s,\quad
    q(s)=(v-x)^2\int_{0}^s \frac 1{4\kappa} (1-(\ha Z_r)^2) e^{\frac 4\kappa r} dr.
  \end{align*}
  Then $W_t:=\ha W_{q^{-1}(t)}$ is the driving function of a CP SLE$_\kappa(\rho_+,\rho_-)$ at $(w;v,x)$, and the corresponding force point processes are  $V_t:= \ha V_{q^{-1}(t)}$ and $X_t:= \ha X_{q^{-1}(t)}$. Moreover, $\ha V_s-\ha X_s=(v-x)e^{\frac 2\kappa s}$ for all $s\in [0, \ha T_{\ha Z})$, and so the $p(t)$ in (i) is the inverse of the $q(s)$ in (ii).
  \end{enumerate}
\label{correspondance-mu}
\end{Lemma}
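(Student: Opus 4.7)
The plan is to prove both parts by direct applications of It\^o's formula combined with the time changes $p$ and $q$, which will turn out to be mutually inverse. For Part (i), I would set $A_t = V_t - W_t$, $B_t = W_t - X_t$, and $U_t = A_t + B_t = V_t - X_t$, so that $Z_t = (B_t - A_t)/U_t$ with $A_t = U_t(1-Z_t)/2$ and $B_t = U_t(1+Z_t)/2$. From the SLE$_\kappa(\rho_+,\rho_-)$ driving system, I would compute $dU_t = (2U_t/(A_tB_t))\,dt$ (so $U_t$ has finite variation, and this already confirms the identity $\ha V_s - \ha X_s = (v-x)e^{2s/\kappa}$ after applying $p$) together with
\[ d(B_t - A_t) = 2\sqrt\kappa\, d\beta_t - \frac{2(\rho_+ + 1)}{A_t}\,dt + \frac{2(\rho_- + 1)}{B_t}\,dt. \]

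Next, It\^o's formula applied to $Z_t = (B_t - A_t)/U_t$ yields a martingale coefficient $2\sqrt\kappa/U_t$ and, after reducing to the common denominator $U_t^2(1-Z_t^2)$, a drift equal to
\[ \frac{4}{U_t^2(1-Z_t^2)}\bigl[(\rho_- - \rho_+) - (\rho_+ + \rho_- + 4)Z_t\bigr]\,dt. \]
Since $dp/dt = \kappa/(A_tB_t) = 4\kappa/(U_t^2(1-Z_t^2))$, Dambis--Dubins--Schwarz produces a new Brownian motion $\til\beta$ in the $s$-variable, and $\ha Z_s := Z_{p^{-1}(s)}$ satisfies
\[ d\ha Z_s = \sqrt{1 - \ha Z_s^2}\, d\til\beta_s + \tfrac{1}{\kappa}\bigl[(\rho_- - \rho_+) - (\rho_+ + \rho_- + 4)\ha Z_s\bigr]\,ds. \]
Substituting $\delta_\pm = (4/\kappa)(\rho_\pm + 2)$, the drift simplifies to $-(\delta_+/4)(\ha Z_s + 1) - (\delta_-/4)(\ha Z_s - 1)$, which is exactly (\ref{Bessel-SDE}); together with $\ha Z_0 = z$ and uniqueness of the Bessel SDE, this gives $\ha Z \sim \nu_z^{\delta_+,\delta_-}$.

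For Part (ii), I would run the argument in reverse. Given $\ha Z$ of law $\nu_z^{\delta_+,\delta_-}$, differentiating the integral definitions of $\ha V_s, \ha X_s$ gives $d(\ha V_s - \ha X_s)/ds = (2/\kappa)(\ha V_s - \ha X_s)$, hence $\ha V_s - \ha X_s = (v-x)e^{2s/\kappa}$; then setting $\ha A_s = \ha V_s - \ha W_s = (1-\ha Z_s)(\ha V_s - \ha X_s)/2$ and $\ha B_s = \ha W_s - \ha X_s = (1+\ha Z_s)(\ha V_s - \ha X_s)/2$ yields $dq/ds = (v-x)^2(1 - \ha Z_s^2)e^{4s/\kappa}/(4\kappa) = \ha A_s \ha B_s/\kappa$, which is precisely the reciprocal of the $dp/dt$ from Part (i). Hence $q$ is the inverse of $p$, and defining $W_t := \ha W_{q^{-1}(t)}$, $V_t := \ha V_{q^{-1}(t)}$, $X_t := \ha X_{q^{-1}(t)}$ and running the It\^o computation of Part (i) backwards shows that $W_t$ solves the SLE$_\kappa(\rho_+,\rho_-)$ driving SDE at $(w;v,x)$ with the correct force point processes.

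The main obstacle is purely bookkeeping: one must carefully track the time change (in particular verifying that no anomalous $1/U_t^3$ term appears in the drift when It\^o is applied to the quotient $Z_t = (B_t - A_t)/U_t$, which requires combining the contribution of $d(B-A)/U$ with $(B-A)\,d(1/U)$ at the correct order) and match the lifetime of $\ha Z$ (which is $\infty$) to the maximal interval $[0,\ha T)$ of the SLE system, which amounts to relating the hitting time of $\pm 1$ by $\ha Z$ to the swallowing time of the force points $v$ or $x$ by the Loewner hull of $W$.
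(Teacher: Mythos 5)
Your proposal is correct and is essentially the intended argument (the paper omits this proof as straightforward): the key identities all check out, namely $dU_t=\frac{2U_t}{A_tB_t}dt$ so that $U$ has finite variation and no cross-variation term enters the It\^o computation for $Z_t=(B_t-A_t)/U_t$, the drift $\frac{4}{U_t^2(1-Z_t^2)}\bigl[(\rho_--\rho_+)-(\rho_++\rho_-+4)Z_t\bigr]$, and $p'(t)=\kappa/(A_tB_t)$, which after the time change give exactly the drift $-\frac{\delta_+}4(\ha Z_s+1)-\frac{\delta_-}4(\ha Z_s-1)$ and diffusion $\sqrt{1-\ha Z_s^2}$ of (\ref{Bessel-SDE}), with $\ha Z_0=z$ and uniqueness in law giving $\nu^{\delta_+,\delta_-}_z$. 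The reverse computation for (ii), including $\ha V_s-\ha X_s=(v-x)e^{2s/\kappa}$ and $q'(s)=\ha A_s\ha B_s/\kappa$ identifying $q$ as $p^{-1}$, is likewise correct.
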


\begin{Lemma}
Lemma \ref{correspondance-mu} still holds if we make the following  changes simultaneously:
  \begin{itemize}
    \item We still assume $\rho_+>-2$, but  assume $\rho_-<\frac\kappa 2 -2$ instead of $\rho_->-2$.
    \item Instead of $x\le w$, we require that $x<w$, which ensures that $z\in (-1,1]$.
    \item We kill the processes $W,V,X$  at the time $\tau_x$, i.e., the first $t$ such that $X_t=W_t$.
    \item The $(\ha Z_s)$ follows the law  $\mu^{\delta_+,\delta_-}_z$ (defined right before Lemma \ref{Girsanov-lem}) instead of  $\nu^{\delta_+,\delta_-}_z$.
  \end{itemize}
  \label{correspondance-nu}
\end{Lemma}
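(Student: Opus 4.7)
The plan is to reuse the computation from Lemma \ref{correspondance-mu} essentially verbatim, and only to check that the modified killing conditions on the two sides correspond under the transformation and time changes. The key observation is that the It\^o derivation showing $\ha Z$ satisfies (\ref{Bessel-SDE}) never uses the hypothesis $\rho_->-2$; it only uses that the state is in the interior, $Z_t\in(-1,1]$. Likewise, the reconstruction in part (ii) only uses $\ha V_s>\ha X_s$, which is the identity $\ha V_s-\ha X_s=(v-x)e^{2s/\kappa}>0$. So both directions remain valid as pointwise SDE identities wherever the process stays in the open region.

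For part (i), I would start with the driving triple $(W,V,X)$ of a CP SLE$_\kappa(\rho_+,\rho_-)$ at $(w;v,x)$ killed at $\tau_x=\inf\{t:W_t=X_t\}$, and verify that $Z_t=\frac{2W_t-V_t-X_t}{V_t-X_t}$ takes values in $(-1,1]$ on $[0,\tau_x)$ with $Z_t\to -1$ as $t\uparrow \tau_x$. Since $V_t-X_t$ is strictly increasing, $p$ is a homeomorphism from $[0,\tau_x)$ onto its image, and the same It\^o computation as in Lemma \ref{correspondance-mu} will show that $\ha Z_s=Z_{p^{-1}(s)}$ solves (\ref{Bessel-SDE}) on $[0,p(\tau_x^-))$. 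The hypothesis $x<w$ gives $z>-1$, and since $\ha Z$ is killed exactly when it first hits $-1$, $\ha Z$ will have the law $\mu_z^{\delta_+,\delta_-}$ by its definition.

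For part (ii), I would start with $\ha Z\sim \mu_z^{\delta_+,\delta_-}$, noting that its lifetime $\ha T_{\ha Z}=\inf\{s:\ha Z_s=-1\}$ is a.s.\ finite because $\delta_-<2$, while $\ha Z$ never reaches $+1$ because $\delta_+>0$. The given formulas define $\ha V,\ha X,\ha W$ on $[0,\ha T_{\ha Z})$; the time change $q$ is strictly increasing with finite limit $\tau_x:=q(\ha T_{\ha Z}^-)$ because its integrand is bounded on $[0,\ha T_{\ha Z})$. Setting $W_t=\ha W_{q^{-1}(t)}$, $V_t=\ha V_{q^{-1}(t)}$, $X_t=\ha X_{q^{-1}(t)}$ should then recover the killed CP SLE$_\kappa(\rho_+,\rho_-)$ driving triple, with its killing time matching $\tau_x$ since $W_t-X_t\to 0$ corresponds exactly to $\ha Z_s\to -1$.

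The main obstacle, and the only place the argument departs from Lemma \ref{correspondance-mu}, is the bookkeeping of killing: one must verify that the killing times $\tau_x$ (SLE side) and $\ha T_{\ha Z}$ (Bessel side) correspond under the time-change bijection $p\leftrightarrow q$. This follows from the continuity of $p,q$ and the equivalence $W_t=X_t\Leftrightarrow Z_t=-1$, but it is the one detail that needs to be made explicit rather than silently inherited from the previous lemma.
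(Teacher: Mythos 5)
Your proposal is correct and matches the intended argument: the paper omits the proof of this lemma as a straightforward variant of Lemma \ref{correspondance-mu}, namely the same It\^o/time-change computation (which nowhere uses $\rho_->-2$) together with the observation that the killing time $\tau_x$ on the SLE side corresponds under $p\leftrightarrow q$ to the hitting time of $-1$ on the Bessel side, which is exactly how $\mu^{\delta_+,\delta_-}_z$ is defined. One small inaccuracy that does not affect the argument: with $\delta_+\in(0,2)$ the process $\ha Z$ can touch $+1$ (it is merely kept from exceeding it), but since killing occurs only at $-1$ and the state space is $(-1,1]$, this is immaterial.
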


Using Lemma \ref{correspondance-mu} and the Markov property (\ref{Markov-mu-R}) of stationary radial Bessel processes, we obtain the following lemma.

\begin{Lemma}
  Let $\kappa>0$, $\rho_+,\rho_->-2$. Let
$ \delta_\pm=\frac 4\kappa(\rho_\pm+2)>0$. Then we have the following correspondence.
  \begin{enumerate}
    \item [(i)]   Let $W$ be the driving function of a CP SLE$_\kappa(\rho_+,\rho_-)$ at $(0;0^+,0^-)$. Let $V$ and $X$ be respectively the force point processes started from $0^+$ and $0^-$. Let
   $$Z_t=\frac{2W_t-V_t-X_t}{V_t-X_t},\quad  p(t)=\frac\kappa 2 \log( {V_t-X_t} ), \quad  \ha Z_s=Z_{p^{-1}(s)}.$$
  Then $(\ha Z_s)_{s\in\R}$   follows the law $\nu_{\R}^{\delta_+,\delta_-}$ (defined right before (\ref{sigma-R})).
  \item [(ii)] Let  $(\ha Z_s)_{s\in\R}$ follow the law $\nu_{\R}^{\delta_+,\delta_-}$. Define
    \begin{align}
    \ha V_{s}&=\int_{-\infty}^s \frac 1\kappa (1+\ha Z_r) e^{\frac 2\kappa r} dr,\quad
        \ha X_{s} =-\int_{-\infty}^s \frac 1\kappa (1-\ha Z_r) e^{\frac 2\kappa r} dr,\nonumber \\
    \ha W_{s}&= \frac{1+\ha Z_s}2 \ha V_{s}+\frac{1-\ha Z_s}2 \ha X_s,\quad
    q(t)=\int_{-\infty}^s \frac 1{4\kappa} (1-(\ha Z_r)^2) e^{\frac 4\kappa r} dr.\nonumber  
  \end{align}
  Then $W_t:=\ha W_{q^{-1}(t)}$ is the driving function of a CP SLE$_\kappa(\rho_+,\rho_-)$ at $(0;0^+,0^-)$, and the corresponding force point processes  are $V_t:=\ha V_{q^{-1}(t)}$ and $X_t=\ha X_{q^{-1}(t)}$. Moreover, $\ha V_s-\ha X_s=e^{\frac 2\kappa s}$ for all $s\in(-\infty,\ha T_{\ha Z})$, and so the $p $ in (i) is the inverse of the $q $ in (ii).
  \end{enumerate}
\label{correspondance-mu-infty}
\end{Lemma}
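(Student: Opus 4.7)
The plan is to promote the finite-initial-gap correspondence of Lemma \ref{correspondance-mu} to the full stationary $\R$-indexed setting by combining DMP with the scaling invariance of CP SLE$_\kappa(\rho_+,\rho_-)$ at $(0;0^+,0^-)$ for part (i), and then invert the construction for part (ii). For (i), fix any $t_0>0$; since the force-point gap $V_t-X_t$ is strictly positive for every $t>0$, applying DMP at $t_0$ yields that the shifted driving process $(W_{t_0+t})_{t\ge 0}$ is a CP SLE$_\kappa(\rho_+,\rho_-)$ at $(W_{t_0};V_{t_0},X_{t_0})$. Lemma \ref{correspondance-mu}(i) applied to this shifted process then gives that $(\ha Z_s)_{s\ge p(t_0)}$, conditionally on $\F_{t_0}$, is a radial Bessel process of dimension $(\delta_+,\delta_-)$ started from $\ha Z_{p(t_0)}=Z_{t_0}$. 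Since $V_t-X_t\to 0$ as $t\to 0^+$, one has $p(t_0)\to-\infty$, so $(\ha Z_s)_{s\in\R}$ is globally defined and carries the radial Bessel Markov transition kernel.

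Stationarity of $(\ha Z_s)_{s\in\R}$ will follow from scaling invariance: for any $a>0$, the rescaled process $(aW_{t/a^2})_{t\ge 0}$ is again a CP SLE$_\kappa(\rho_+,\rho_-)$ at $(0;0^+,0^-)$, and a short computation with the definitions of $Z,p,\ha Z$ yields that the associated Bessel-time transform satisfies $\ha Z^{(a)}_s=\ha Z_{s-\frac\kappa 2\log a}$. Thus $(\ha Z_s)_{s\in\R}$ is invariant under arbitrary real translations of $s$. Combined with the Bessel Markov property and the fact, recorded in Proposition \ref{Prop-invariant}, that $p_\infty\cdot\mA$ is the unique invariant probability distribution of the radial Bessel process of dimension $(\delta_+,\delta_-)$, this identifies the law of $(\ha Z_s)_{s\in\R}$ as $\nu^{\delta_+,\delta_-}_{\R}$.

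For part (ii), the integrals defining $\ha V_s,\ha X_s$, and $q(s)$ converge at $-\infty$ because $|\ha Z_r|\le 1$ and $e^{2r/\kappa},e^{4r/\kappa}$ are integrable on $(-\infty,s]$, and the identity $\ha V_s-\ha X_s=e^{2s/\kappa}$ follows by summing the two integrals. A direct calculation using the Loewner ODEs for $V_t,X_t$ shows that the maps specified by parts (i) and (ii) are mutual inverses, i.e.\ starting from $(W,V,X)$, passing through $\ha Z$ via (i) and then applying the formulas of (ii) recovers $(W,V,X)$. Combined with part (i), this yields that the map in (ii) pushes $\nu^{\delta_+,\delta_-}_{\R}$ forward to the law of the driving function and force points of a CP SLE$_\kappa(\rho_+,\rho_-)$ at $(0;0^+,0^-)$. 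The main obstacle is the continuous extension at $s=-\infty$: one must verify that $\ha W_{s_0},\ha V_{s_0},\ha X_{s_0}\to 0$ and $q(s_0)\to 0$ almost surely as $s_0\to-\infty$, so that the $W_t:=\ha W_{q^{-1}(t)}$ produced by (ii) is continuous up to $t=0$ with $W_0=V_0=X_0=0$. I expect to handle this by dominated convergence applied to each defining integral, using the deterministic bounds $|1\pm\ha Z_r|e^{2r/\kappa}\le 2e^{2r/\kappa}$ and $(1-\ha Z_r^2)e^{4r/\kappa}\le e^{4r/\kappa}$, so that the tails at $-\infty$ vanish uniformly.
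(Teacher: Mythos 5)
Your proposal is correct, and it is essentially the argument the paper has in mind: the paper gives no written proof of Lemma \ref{correspondance-mu-infty}, stating only that it follows from Lemma \ref{correspondance-mu} together with the Markov property (\ref{Markov-mu-R}) of the stationary radial Bessel process, and your write-up is a reasonable fleshing-out of exactly that route (DMP plus Lemma \ref{correspondance-mu}(i) to get the Bessel transition structure of $\ha Z$, then inversion of the time-change formulas to get (ii) as a pushforward of (i)). Two small points of comparison. First, where you pin down the one-time marginal via scaling invariance of the $(0;0^+,0^-)$ curve (giving translation invariance of the law of $(\ha Z_s)_{s\in\R}$) and uniqueness of the invariant density, one could instead fix a deterministic Bessel time $s$, condition at a small capacity time $t_0$, and let $p(t_0)\to-\infty$, using the exponential convergence (\ref{transition-rate-i}) of $p_t(x,\cdot)$ to $p_\infty$; both identifications are legitimate, and the uniqueness you invoke is indeed an immediate consequence of (\ref{transition-rate-i}) even though Proposition \ref{Prop-invariant} does not state it verbatim. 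Second, to match the definition of $\nu^{\delta_+,\delta_-}_{\R}$ you need the Markov property of $\ha Z$ at a deterministic Bessel time $s_0$ conditionally on $\sigma(\ha Z_r:r\le s_0)$, whereas your conditioning is on $\F_{t_0}$ at a deterministic capacity time, so that $p(t_0)$ is random on the Bessel time axis; the cleanest fix is to apply the DMP of SLE$_\kappa(\rho_+,\rho_-)$ at the stopping time $p^{-1}(s_0)$ (the first time $V_t-X_t=e^{2s_0/\kappa}$), which the DMP as stated in Section \ref{section-SLE} permits, rather than passing through deterministic $t_0$ and a limiting argument. With that adjustment your proof is complete at the level of rigor the paper itself adopts for this lemma.
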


Combining the above three lemmas with  Lemma \ref{stationary-quasi},
we get the following lemma easily.

\begin{Lemma}
   Let $\kappa>0$, $\rho_+>-2$, $\rho_-<\frac\kappa 2-2$, and $\rho_-^*= \kappa -4-\rho_->\frac \kappa 2-2$.  For $W\in\Sigma$   and $t\in (0,\ha T_W)$ such that $C^W_t<W_t$ we write  $\mu^W_t$ for  the law of the driving function of a CP  SLE$_\kappa(\rho_+,\rho_-)$ at $(W_t;D^W_t,C^W_t)$, which is killed at $\tau_{C^W_t}$, where in the case $D^W_t=W_t$,  the force point $D^W_t$ is understood as $W_t^+$. Let $\nu_0^*$ be the law of the driving function of a CP  SLE$_\kappa(\rho_+,\rho_-^*)$ at $(0;0^+,0^-)$.  Then there is a nonzero $\sigma$-finite measure $\mu_0$ on $\Sigma$ such that the following hold.
    \begin{itemize}
           \item [(i)] For any strictly positive stopping time $\tau$,
         $${\ind}_{\Sigma_\tau} \mu_0={\ind}_{\Sigma_\tau} \mu_0(dW) \oplus_\tau \mu^W_\tau.$$
         \item [(ii)] For any strictly  positive stopping time $\tau$,  $\mu_0\ll \nu_0^*$ on $\F_\tau\cap\Sigma_\tau$ with the RN derivative being
         \BGE Q^W_t:=|D^W_t-C^W_t|^{\frac{\rho_+(\rho_--\rho^*_-)}{2\kappa}} |W_t-C^W_t|^{\frac{2(\rho_--\rho^*_-)}{2\kappa}} .\label{QWt}\EDE
         \item [(iii)] For any  deterministic $t_0>0$, $\mu_0(\{W:\sup_{0\le t<\ha T_W} |D^W_t-C^W_t|>t_0\})<\infty$.
         \end{itemize}
  Moreover, if another measure $\mu'_0$ satisfies (iii) and (i) for $\tau$ being a deterministic time, then it equals some constant times $\mu_0$.
  \label{stationary-quasi-rho}
\end{Lemma}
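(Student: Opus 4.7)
The plan is to transport the measure $\mu^{\delta_+,\delta_-}_{\R}$ constructed in Lemma \ref{stationary-quasi} to the SLE side via the correspondence of Lemma \ref{correspondance-mu-infty}. Concretely, given $\ha Z\in\Sigma_{\R}$ taking values in $[-1,1]$, the formulas of Lemma \ref{correspondance-mu-infty}(ii) produce $\ha V_s,\ha X_s,\ha W_s,q(s)$, and I set $\Phi(\ha Z)(t)=\ha W_{q^{-1}(t)}$ on $(0,q(\ha T_{\ha Z}))$. The bounds $|\ha V_s|,|\ha X_s|=O(e^{2s/\kappa})$ as $s\to-\infty$ force $\Phi(\ha Z)(t)\to 0$ as $t\to 0^+$. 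The inverse recovers $\ha Z$ from $W$ through $Z_t=(2W_t-D^W_t-C^W_t)/(D^W_t-C^W_t)$ and the strictly increasing time change $p(t)=\frac{\kappa}{2}\log(D^W_t-C^W_t)$, with $\ha Z_s=Z_{p^{-1}(s)}$; monotonicity of $D^W_t-C^W_t$ in $t$ (which follows from the Loewner ODE for the force points) makes $p$ a homeomorphism onto its range. Define $\mu_0:=\Phi(\mu^{\delta_+,\delta_-}_{\R})$.

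Properties (i)-(iii) are then transferred through $\Phi$. For a strictly positive $\F$-stopping time $\tau$ on $\Sigma$, the Bessel time $\tilde\tau:=p(\tau)$ is an $\F^{\R}$-stopping time that is finite on $\Sigma_\tau$. The identity of Lemma \ref{stationary-quasi}(i), combined with Lemma \ref{correspondance-nu} identifying the pushforward of the killed Bessel law $\mu^{\delta_+,\delta_-}_{\ha Z_{\tilde\tau}}$ with the killed SLE driving-function law $\mu^W_\tau$ at $(W_\tau;D^W_\tau,C^W_\tau)$, gives (i). For (ii), Lemma \ref{correspondance-mu-infty} applied with $\delta_-$ replaced by $\delta_-^*$ identifies $\Phi(\nu^{\delta_+,\delta_-^*}_{\R})=\nu_0^*$, and the RN derivative $M^{\ha Z}_{\tilde\tau}=e^{-\alpha_0\tilde\tau}((1+\ha Z_{\tilde\tau})/2)^{\alpha_-}$ from Lemma \ref{stationary-quasi}(ii), after the substitutions above, becomes $|D^W_\tau-C^W_\tau|^{-\alpha_0\kappa/2-\alpha_-}|W_\tau-C^W_\tau|^{\alpha_-}$; a direct check using $\alpha_-=\frac{\delta_-}{2}-1$ and $\alpha_0=\frac{1}{8}\delta_+(2-\delta_-)$ shows both exponents agree with those in (\ref{QWt}). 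Property (iii) is immediate: since $D^W_t-C^W_t$ is increasing in $t$, $\{\sup_{t<\ha T_W}|D^W_t-C^W_t|>t_0\}$ corresponds under $\Phi$ to $\Sigma^{\R}_{s_0}$ with $s_0=\frac{\kappa}{2}\log t_0$, which has finite $\mu^{\delta_+,\delta_-}_{\R}$-mass.

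The main obstacle is the uniqueness statement, because ``deterministic times'' on the SLE capacity side correspond to random $\F^{\R}$-stopping times on the Bessel side, so the deterministic-time uniqueness in Lemma \ref{stationary-quasi} does not mechanically transfer. The approach I plan is to mimic the argument of Lemma \ref{stationary-quasi} directly on the SLE side: given $\mu_0'$ satisfying (iii) and (i) at deterministic $t$, one extracts, for each $t>0$, the marginal distribution of $(W_t,D^W_t,C^W_t)$ as a finite measure on a suitable reference space, and (i) combined with the explicit transition kernel of CP SLE$_\kappa(\rho_+,\rho_-^*)$ (which can be written in terms of the Bessel transition density $p^*_{t_2-t_1}$ via the correspondence) yields a Chapman-Kolmogorov equation analogous to (\ref{hap=}). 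Passing to the Bessel variables through $p$ then reduces the problem to the integral equation already solved in the proof of Lemma \ref{stationary-quasi}, where the exponential mixing bound (\ref{transition-rate-i}) forces the density to be proportional to the stationary one; hence $\Phi^{-1}(\mu_0')=c\,\mu^{\delta_+,\delta_-}_{\R}$ and consequently $\mu_0'=c\,\mu_0$.
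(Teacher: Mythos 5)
Your proposal is correct and takes essentially the same route as the paper: the paper's (sketched) proof defines $\mu_0$ as the pushforward of $\mu^{\delta_+,\delta_-}_{\R}$ under the map $\ha Z\mapsto W$ of Lemma \ref{correspondance-mu-infty} (ii), noting $V=D^W$, $X=C^W$ and $Q^W_t=M^{\ha Z}_{q^{-1}(t)}$, which is exactly your map $\Phi$ and your exponent check. The paper does not spell out the transfer of (i)--(iii) or the uniqueness (it implicitly leans on Lemma \ref{stationary-quasi}), so your additional care there --- in particular flagging that deterministic capacity times correspond to random $\F^{\R}$-stopping times and outlining how to rerun the Chapman--Kolmogorov/mixing argument --- only fills in details the sketch omits.
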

\begin{proof}[Sketch of Proof]
Let $\delta_\pm=\frac 4\kappa(\rho_\pm+2)$. Let $\mu^{\delta_+,\delta_-}_{\R}$ be as in Lemma \ref{stationary-quasi}. Note that, if we define $q,W,V,X$ from $\ha Z$ using formulas in Lemma \ref{correspondance-mu-infty} (ii), then $V=D^W$, $X=C^W$, and so it is straightforward to check that $ Q^W_t=M^{\ha Z}_{q^{-1}(t)}$, where  $M^{\ha Z}$ was defined in (\ref{MZt}). Thus, the pushforward of $\mu^{\delta_+,\delta_-}_{\R}$ under the map $\ha Z\mapsto W$ is the $\mu_0$ that we need.
\end{proof}

For $\kappa>0$ and $\rho>(\frac\kappa 2-4)\vee (-2)$, let $\mu_0$ and $\nu_0^*$ be as in Lemma \ref{stationary-quasi-rho} for $\rho_+=\rho$, $\rho_-=\kappa-6-\rho$ and $\rho_-^*=\rho+2$. Let $\mu=\Lo(\mu_0)$ and $\nu^*=\Lo(\nu_0)$.  Note that $\nu^*$ is the law of a CP SLE$_\kappa(\rho,\rho+2)$  at $(0;0^+,0^-)$. By Lemma \ref{stationary-quasi-rho}, for any positive $\F$-stopping time $\tau$, $\mu\ll\nu^*$ on $\F_\tau\cap\Sigma_\tau$ with the RN derivative being
        \BGE Q^\eta_{t}:=|D^\eta_{t}-C^\eta_{t}|^{\frac{\rho(\kappa-8-2\rho)}{2\kappa}} |W^\eta_{t}-C^\eta_{t}|^{\frac{2(\kappa-8-2\rho)}{2\kappa}}.\label{Q-eta-t}\EDE

\begin{Theorem}
Let $\kappa>0$ and $\rho>(\frac\kappa 2-4)\vee (-2)$.
Then there is a non-zero $\sigma$-finite measure $\til\mu^{\HH}_{0\ccw} $ on the space of  Loewner bubbles in $\HH$ rooted at $0$, such that the following holds.
\begin{enumerate}
  \item [(i)] For any strictly positive TCI $\F$-stopping time $\tau$, (\ref{DMP-kappa-rho-proob}) holds.
  \item [(ii)]    ${\cal P}(\til\mu^{\HH}_{0\ccw}) $ is the law of a CP SLE$_\kappa(\rho,\kappa-6-\rho)$ at $(0;0^+,0^-)$.
  \item [(iii)] For any $r>0$, the restriction of $ \til\mu^{\HH}_{0\ccw}$ to  $\{[\eta]:\rad_0([\eta])>r\}$  is a finite measure.
\end{enumerate}
Moreover, $\til\mu^{\HH}_{0\ccw}$ satisfies conformal covariance with exponent \BGE \alpha=\alpha(\kappa,\rho):=\frac{(\rho+2)(2\rho+8-\kappa)}{2\kappa}>0,\label{alpha}\EDE i.e.,  if $f$ is a conformal automorphism of $\HH$ that fixes $0$, then $f(\til\mu^{\HH}_{0\ccw})=f'(0)^\alpha \til\mu^{\HH}_{0\ccw}$.
Any measure on Loewner bubbles in $\HH$ rooted at $0$ satisfying (i) and (iii) equals  some constant times $\til\mu^{\HH}_{0\ccw}$. If $\rho\ge \frac\kappa 2-2$, $\til\mu^{\HH}_{0\ccw}$ is supported by loops that intersect $\ha\R$ only at $0$; and if $\rho<\frac\kappa 2-2$, $\til\mu^{\HH}_{0\ccw}$ is supported by loops whose intersection with $\ha\R$ is a compact subset of $\R$, of which $0$ is an accumulation point.
  \label{Thm-loop-sigma-finite}
\end{Theorem}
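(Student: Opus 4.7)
The plan is to mimic the proof of Theorem \ref{Thm-loop-probability}, replacing the probability law of CP SLE$_\kappa(\rho,\kappa-6-\rho)$ used there with the $\sigma$-finite measure $\mu:=\Lo(\mu_0)$ from Lemma \ref{stationary-quasi-rho} (taking $\rho_+=\rho$, $\rho_-=\kappa-6-\rho$, $\rho_-^*=\rho+2$). Although $\mu$ is $\sigma$-finite, Lemma \ref{stationary-quasi-rho}(i) equips it with the Markov property under the CP SLE$_\kappa(\rho,\kappa-6-\rho)$ kernel at any strictly positive stopping time, and (iii) together with (\ref{Koebe0}) yields $\mu|_{\Sigma_{\sigma_r}}<\infty$ for every $r>0$, where $\sigma_r$ is the hitting time of $\{|z|\ge r\}$.

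For each $r>0$, define
\begin{equation*}
\til\mu_r \;:=\; \mu(d[\gamma])\oplus_{\sigma_r} \til\mu^\HH_{[\gamma]|\sigma_r^+}
\end{equation*}
via (\ref{til-mu-gamma-[]}); this is a finite measure on Loewner bubbles rooted at $0$. The key step is the consistency $\til\mu_s|_{\{\sigma_r<\ha T\}}=\til\mu_r|_{\{\sigma_r<\ha T\}}$ for $s<r$, obtained by transcribing (\ref{muts}): apply Lemma \ref{stationary-quasi-rho}(i) at $\sigma_s$, then invoke the \cite{SW} coordinate change to identify the CP SLE$_\kappa(\rho,\kappa-6-\rho)$ increment on $[\sigma_s,\sigma_r]$ with the initial segment of $\til\mu^\HH_{[\gamma]|\sigma_s^+}$, and combine via (\ref{compos2}) and (\ref{fgh}). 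The family $\{\til\mu_r\}_r$ consistently defines a unique $\sigma$-finite measure $\til\mu^\HH_{0\ccw}$ satisfying $\til\mu^\HH_{0\ccw}|_{\{\rad_0>r\}}=\til\mu_r$, yielding (iii). The DMP (\ref{DMP-kappa-rho-proob}) at a strictly positive TCI stopping time $\tau$ follows exactly as in Theorem \ref{Thm-loop-probability}: first for $\tau\ge \sigma_r$ via (\ref{tau>sigmar}), then for general positive $\tau$ by truncating at $\tau\vee\sigma_{1/n}$ and using that $\bigcup_n\{\sigma_{1/n}<\tau\}$ supports $\til\mu^\HH_{0\ccw}$. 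Property (ii) follows from the same \cite{SW} coordinate identification: the CP parametrization of the SLE$_\kappa(\rho)$ continuation is CP SLE$_\kappa(\rho,\kappa-6-\rho)$ on the image domain, so Lemma \ref{stationary-quasi-rho}(i) makes ${\cal P}(\til\mu_r)$ agree with $\mu$ up to the appropriate capacity cutoff for every $r$, whence ${\cal P}(\til\mu^\HH_{0\ccw})=\mu$ upon letting $r\downarrow 0$.

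For conformal covariance, since $\HH$-automorphisms fixing $0$ are the dilations $f(z)=\lambda z$, it suffices to check these. Writing $S_\lambda W:=\lambda W(\cdot/\lambda^2)$, a direct scaling computation from (\ref{QWt}) yields $Q^{S_\lambda W}_t=\lambda^{-\alpha}Q^W_{t/\lambda^2}$ with $\alpha$ as in (\ref{alpha}); combined with the scale invariance of the reference law $\nu^*$, this gives $S_\lambda(\mu)=\lambda^\alpha\mu$, and since the SLE$_\kappa(\rho)$ continuation is a conformally invariant probability kernel, the covariance propagates through the gluing to yield $f(\til\mu^\HH_{0\ccw})=\lambda^\alpha\til\mu^\HH_{0\ccw}=f'(0)^\alpha\til\mu^\HH_{0\ccw}$. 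For uniqueness, any $\til\mu'$ satisfying (i) and (iii) pushes forward under $\cal P$ to a measure on driving functions satisfying the hypotheses of Lemma \ref{stationary-quasi-rho} (finiteness of the restriction to $\{\sup_t|D^W_t-C^W_t|>t_0\}$ coming from (iii) via (\ref{Koebe0})), so ${\cal P}(\til\mu')=c\,\mu$ for some $c\in(0,\infty)$, and then the DMP at each $\sigma_r$ forces $\til\mu'=c\,\til\mu^\HH_{0\ccw}$. The boundary-intersection statement is read off from the single-force-point boundary behavior of SLE$_\kappa(\rho)$ recalled in Section \ref{section-SLE} applied to the continuation, combined with the corresponding behavior of the $\mu$ initial part visible through the radial Bessel description of Lemmas \ref{correspondance-mu-infty} and \ref{correspondance-nu}.

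The main technical obstacle is the consistency step $\til\mu_s|_{\{\sigma_r<\ha T\}}=\til\mu_r|_{\{\sigma_r<\ha T\}}$: although formally a copy of (\ref{muts}), one must verify that the SLE$_\kappa(\rho)$ continuation from $\gamma(\sigma_s^+)$ a.s.\ reaches $\{|z|\ge r\}$ before being absorbed at its target (so that Lemma \ref{tau-[g]'} transfers $\sigma_r$ correctly as a TCI time on the continuation), and that the \cite{SW} coordinate change cleanly handles the transfer of the target prime end through the conformal map — neither issue arose in the boundary-filling regime of Theorem \ref{Thm-loop-probability}, where the continuation was guaranteed to visit every prime end before termination.
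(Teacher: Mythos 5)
Your overall strategy is the paper's: build $\til\mu_r$ by gluing the SLE$_\kappa(\rho)$ continuation onto the $\sigma$-finite driving-function measure of Lemma \ref{stationary-quasi-rho} at $\sigma_r$, prove restricted consistency, glue over $r$, and get uniqueness and covariance from that lemma. But the mechanism you propose for the key consistency step has a genuine flaw. The ``obstacle'' you flag --- verifying that the SLE$_\kappa(\rho)$ continuation from $\gamma(\sigma_s^+)$ a.s.\ reaches $\{|z|\ge r\}$ before absorption --- is not something to be verified: it is false in the regime $\rho>(-2)\vee(\frac\kappa2-4)$. The continuation aimed at $[0^-]^{[\gamma]}_{\sigma_s^+;\HH}$ typically returns to $0$ without ever leaving $\{|z|<r\}$; this is precisely why $\til\mu^{\HH}_{0\ccw}$ is an infinite measure whose restrictions to $\{\rad_0>r\}$ are finite. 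Consequently (\ref{compos2}), which you invoke, is unavailable here: it is the simplification of (\ref{compos}) valid only in the boundary-filling case $\rho\le\frac\kappa2-4$ of Theorem \ref{Thm-loop-probability}, where $\til\mu^{\HH}_{[\gamma]|\sigma^+}$ is supported by $\til\Sigma_{\tau_{[\gamma];\sigma}}$. The correct route keeps the indicator and uses (\ref{compos}) itself, and moreover states the Markov property of $\til\mu=\pi(\mu)$ with the \emph{killed} kernel $\K_{\sigma_{\R_-}}(\til\mu^{\HH}_{[\gamma]|\sigma^+})$ (formula (\ref{hamu-DMP})): the \cite{SW} coordinate change only identifies the conditional continuation of $\mu$ with the SLE$_\kappa(\rho)$ continuation up to its first hit of $\R_-$, since $\mu_0$ is killed when $W=C^W$. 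The restricted consistency $\til\mu_s|_{\til\Sigma_{\sigma_t}}=\til\mu_t$ then follows because a continuation that hits $\R_-$ before exiting $\{|z|<t\}$ is trapped in a bounded component and can never reach radius $t$, so on $\til\Sigma_{\sigma_t}$ the killed and unkilled continuations agree up to $\sigma_t$. Your restricted consistency statement is the right target, but the argument you sketch for it would fail as written; the repair is exactly the indicator-carrying computation of the paper.

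There is a second gap in the covariance claim: the conformal automorphisms of $\HH$ fixing $0$ are not only the dilations; $z\mapsto \lambda z/(1+cz)$ fixes $0$ for every $c\in\R$ and has derivative $\lambda$ at $0$. Your scaling computation (which does match the paper's, via $Q^{S_\lambda W}_t=\lambda^{-\alpha}Q^W_{t/\lambda^2}$ and scale invariance of $\nu^*$) only covers $c=0$. To handle maps with $f'(0)=1$ one needs an extra argument: by your uniqueness step $f(\til\mu^{\HH}_{0\ccw})=C\til\mu^{\HH}_{0\ccw}$ for some constant, and the paper pins $C=1$ by conjugating through $J(z)=-1/z$ and using $f_{+2c}=f_{+c}\circ f_{+c}=M_2\circ f_{+c}\circ M_2^{-1}$ together with the dilation covariance to force $C^2=C$. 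Without some such step your covariance statement is proved only for dilations. (Minor: in your definition of $\til\mu_r$ the factor $\ind_{\til\Sigma_{\sigma_r}}$ is needed, since $\mu$ is not supported on curves that reach radius $r$.)
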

\begin{proof}
  By (\ref{Koebe0}) and Lemma \ref{stationary-quasi-rho} (iii), $\mu$ satisfies (iii). Suppose $\eta$ follows the ``law'' $\mu$. By Lemma \ref{stationary-quasi-rho} (i), for any stopping time $\tau$, conditionally on $\F_\tau$ and the event $\Sigma_\tau$, the part of $\eta$ after $\tau$ modulo a time-change is an SLE$_\kappa(\rho,\kappa-6-\rho)$ at
$\HH(\eta|\tau^+):(\eta(\tau^+); [0^+]^{\eta}_{\tau^+;\HH},[0^-]^\eta_{\tau^+;\HH})\to \infty$.  Thus, by the coordinate change in \cite{SW}, the part of the MTC curve $[\eta]$ after $\tau$ follows the law $\K_{\sigma_{\R_-}}(\til\mu^{\HH}_{\eta|\tau^+})$, where $\sigma_{\R_-}$ is the first hitting time at $\R_-:=(-\infty,0)$.
Let $\til\mu=\pi(\mu)$, where $\pi$ is the projection $\eta\mapsto [\eta]$. Then the above property could be described by the formula
\BGE \ind_{\til\Sigma_\tau} \til\mu= \ind_{\til\Sigma_\tau} \til\mu(d[\eta]) \oplus_\tau \K_{\sigma_{\R_-}}(\til\mu^{\HH}_{[\eta]|\tau^+}).\label{hamu-DMP}\EDE

If $\rho\ge \frac\kappa 2-2$, then by the boundary behavior of SLE$_\kappa(\rho)$, a curve following the conditional law $\til\mu^{\HH}_{[\eta]|\tau^+}$ a.s.\ does not hit $([0^+]^{[\eta]}_{\tau^+;\HH},[0^-]^{[\eta]}_{\tau^+;\HH})_{\HH([\eta]|\tau^+)}= (b^{[\eta]}_{\tau^+;\HH},\infty)\cup \{\infty\}\cup \R_-$. Thus,  $\K_{\sigma_{\R_-}}(\til\mu^{\HH}_{[\eta]|\tau^+})=\til\mu^{\HH}_{[\eta]|\tau^+}$, which means that $\til\mu $ satisfies  (\ref{DMP-kappa-rho-proob}). Since $\mu$ satisfies (iii), so does $\til\mu$. Thus, $\til\mu^{\HH}_{0\ccw}:=\til\mu$ satisfies (i,ii.iii). Letting $\tau=\sigma_r$, where $\sigma_r$ is the hitting time at $\{|z|=r\}$ and sending $r\to 0^+$, we see that $\til\mu^{\HH}_{0\ccw} $ is supported by MTC Loewner curves that intersect $\ha\R$ only at $0$.

Suppose now $\rho<\frac\kappa 2-2$. Then a curve following the conditional law $\til\mu^{\HH}_{[\eta]|\tau^+}$ a.s.\ hits $\R_-$, and so $\til\mu$ does not satisfy (\ref{DMP-kappa-rho-proob}). For every $r\in(0,\infty)$,  define
\BGE \til\mu_r=\ind_{\til\Sigma_{\sigma_r }}\mu (d\gamma) \oplus_{\sigma_r}  \til\mu^{\HH}_{\gamma|\sigma_r^+}=\ind_{\til\Sigma_{\sigma_r }}\til\mu (d[\gamma]) \oplus_{\sigma_r}  \til\mu^{\HH}_{[\gamma]|\sigma_r^+}.\label{DMP-kappa-rho-proob-sigma'}\EDE
This is similar to (\ref{DMP-kappa-rho-proob-sigma}) except that   the $\til\mu$ in (\ref{DMP-kappa-rho-proob-sigma}) is  supported by $ \til\Sigma_{\sigma_r }$, and so $\ind_{\til\Sigma_{\sigma_r}}$ is not needed in (\ref{DMP-kappa-rho-proob-sigma}).   By Lemma \ref{continue-Loewner}, $\til\mu_r$ is supported by the space of  Loewner bubbles in $\HH$ rooted at $0$.

 Suppose $0<s<t$. Let $\sigma=\sigma_s$ and $\tau=\sigma_t$.
Let $\tau_{[\gamma];\sigma}$ be as defined in Lemma \ref{tau-[g]'} with  $[g]=[\gamma]$, which is a TCI stopping time by Lemmas \ref{tau-g'} and \ref{tau-[g]'}. If $[\gamma]$ follows the ``law'' $\ind_{\til\Sigma_\sigma}\til\mu$ and $[\eta]$ follows the conditional law $\til\mu^{\HH}_{[\gamma]|\sigma^+}$, then (\ref{compos}) holds for the setting here. Thus, the argument in (\ref{muts}) works here with slight modification. Now we have
 \begin{align}
 \til\mu_t &\stackrel{\phantom{(\ref{Markov-mu-R})}}{=}  \ind_{\til\Sigma_\tau}\til\mu (d[\gamma]) \oplus_{\tau}  \til\mu^{\HH}_{[\gamma]|\tau^+}\nonumber\\
 &\stackrel{ {(\ref{hamu-DMP})}}{=} \ind_{\til\Sigma_{\sigma }}(\til\mu(d[\gamma]) \oplus_{\sigma}  \ind_{\til\Sigma_{\tau_{[\gamma];\sigma}}} \til\mu^{\HH}_{[\gamma]|\sigma^+}(d[\eta]))\oplus_{\tau}  \til\mu^{\HH}_{[\gamma]\oplus_\sigma [\eta]|\tau^+}\nonumber \\
 &\stackrel{ {(\ref{fgh})}}{=} \ind_{\til\Sigma_{\sigma}} \til\mu (d[\gamma]) \oplus_{\sigma}( \ind_{\til\Sigma_{\tau_{[\gamma];\sigma}}} \til\mu^{\HH}_{[\gamma]|\sigma^+}(d[\eta])\oplus_{\tau_{[\gamma];\sigma}} \til\mu^{\HH}_{[\gamma]\oplus_\sigma [\eta]|\tau^+})\nonumber\\
  &\stackrel{ {(\ref{compos})}}{=}\ind_{\til\Sigma_{\sigma}} \til\mu (d[\gamma]) \oplus_{\sigma}  \ind_{\til\Sigma_{\tau_{[\gamma];\sigma}}} \til\mu^{\HH}_{[\gamma]|\sigma^+}= \ind_{\til\Sigma_{\tau}}\til\mu_s,
 \nonumber
 \end{align}
where in the last ``$=$'' we used the fact that $\tau_{[\gamma];\sigma}(\eta)<\ha T_\eta$ iff $\tau(\gamma\oplus_{\sigma(\gamma)}\eta)<\ha T_{\gamma\oplus_{\sigma(\gamma)}\eta}$. We now get $\til\mu_s|_{\til\Sigma_{\sigma_t}}=\til\mu_t$ when $0<s<t$. Thus, there is a $\sigma$-finite measure $\til\mu^{\HH}_{0\ccw}$ on the space of ccw Loewner bubbles in $\HH$ rooted at $0$ such that  $\til\mu^{\HH}_{0\ccw}|_{\til\Sigma_{\sigma_t}}=\til\mu_t$ for every $t\in(0,\infty)$.

Let $\tau$ be a positive TCI stopping time. First suppose $\tau\ge \sigma_s$ for some $s>0$. Let $\sigma=\sigma_s$. The argument in (\ref{tau>sigmar}) works here as follows. Since $\til\Sigma_\tau\subset \til\Sigma_\sigma$, we have
\begin{align}
  &\ind_{\til\Sigma_\tau} \til\mu^{\HH}_{0\ccw}=\ind_{\til\Sigma_\tau} \til\mu_s= \til\mu(d[\gamma]) \oplus_\sigma\ind_{\til\Sigma_{\tau_{[\gamma];\sigma}}} \til\mu^{\HH}_{[\gamma]|\sigma^+}\nonumber\\
  &=\til\mu(d[\gamma]) \oplus_\sigma (\ind_{\til\Sigma_{\tau_{[\gamma];\sigma}}} \til\mu^{\HH}_{[\gamma]|\sigma^+} (d[\eta]) \oplus_{\tau_{[\gamma];\sigma}} \til\mu^{\HH}_{[\gamma]\oplus_\sigma[\eta]|\tau^+})\nonumber \\
  &=\ind_{[\gamma]\oplus_\sigma[\eta]\in \til\Sigma_{\tau}} ( \til\mu(d[\gamma])  \oplus_\sigma   \til\mu^{\HH}_{[\gamma]|\sigma^+} (d[\eta]) )\oplus_\tau \til\mu^{\HH}_{[\gamma]\oplus_\sigma[\eta]|\tau^+}\nonumber \\
  &= \ind_{ \til\Sigma_{\tau}}\til\mu (d[\zeta]) \oplus_\tau \til\mu^{\HH}_{ [\zeta]|\tau^+}.
   \nonumber
\end{align}
Thus, we get (\ref{DMP-kappa-rho-proob}) in the case that $\tau\ge \sigma_s$ for some $s>0$.

For a general positive TCI stopping time $\tau$, we first apply the above result to $\tau\wedge \sigma_{1/n}$ to conclude that  (\ref{DMP-kappa-rho-proob}) holds with an additional factor $\ind_{\{\sigma_{1/n}<\tau\}}$ on both sides, and then take a union over $n\in\N$ to get (\ref{DMP-kappa-rho-proob}). Thus, $\til\mu^{\HH}_{0\ccw}$ satisfies (i). For any $r>0$, from the construction of $\til\mu_r$ and that $\ind_{\til\Sigma_{\sigma_r}}\til\mu^{\HH}_{0\ccw}=\til\mu_r$, we know that $\ind_{\til\Sigma_{\sigma_r}}{\cal P}(\til\mu^{\HH}_{0\ccw})=\ind_{\til\Sigma_{\sigma_r}} (\mu)$, which implies that ${\cal P}(\til\mu^{\HH}_{0\ccw})=\mu$, i.e., (ii) holds. Since $\mu$ satisfies (iii), by (\ref{radxPeta}), $\til\mu^{\HH}_{0\ccw}$ satisfies (iii) as well.

 By the boundary behavior of SLE$_\kappa(\rho)$ for $\rho\in((-2)\vee(\frac\kappa 2-4),\frac\kappa 2-2)$, for any $r>0$, if $\gamma$ follows the law $\ind_{\til\Sigma_{\sigma_r}}\mu$, and $[\eta]$ follows the conditional law $\til\mu^{\HH}_{\gamma|\sigma_r^+}$, then $[\eta]$ ends at $0$, $0$ is an accumulation point of $[\eta]\cap \R_-$, and $\infty\not\in [\eta]$. By (\ref{DMP-kappa-rho-proob-sigma'}), $\til\mu_r$ is supported by curves in $\lin\HH$, whose intersection with $\R$ is a compact set, and $0$ is an accumulation point of the intersection. Since this holds for any $r>0$, $\til\mu^{\HH}_{0\ccw}$ also satisfies these properties.

 Suppose $\til\mu'$ also satisfies (i) and (iii). Let  $\mu'={\cal P}(\til\mu')$. Then $\til\mu':=\pi(\mu')$ satisfies (\ref{hamu-DMP}), and there is a measure $\mu_0'$ supported by $\Sigma^{\Lo}$ such that $\mu'=\Lo(\mu_0')$. Since  $\til\mu'$ satisfies (iii), so does $\mu'$.  By (\ref{Koebe0}), $\mu_0'$ satisfies Lemma \ref{stationary-quasi-rho} (iii). Let $\eta$ follow the ``law'' $\mu'$. Since $\til\mu'$ satisfies (\ref{hamu-DMP}), by SLE coordinate changes, for any stopping time $\tau$, conditionally on $\F_\tau$ and $\til\Sigma_{\tau}$, the part of $\eta$ after $\tau$ is the $(g^\eta_\tau)^{-1}$-image of a CP SLE$_\kappa(\rho,\kappa-6-\rho)$ at $(W^\eta_\tau;D^\eta_\tau,C^\eta_\tau)$. Thus, $\mu_0'$ satisfies Lemma \ref{stationary-quasi-rho} (i) for $\rho_+=\rho$ and $\rho_-=\kappa-6-\rho$. By the uniqueness part of that lemma, $\mu_0'=C\mu_0$ for some $C\ge 0$, which implies that $\mu' =C\mu$. Since both $\til\mu^{\HH}_{0\ccw}$ and $\til\mu'$ satisfy (\ref{DMP-kappa-rho-proob}), from ${\cal P}(\til\mu')=C  {\cal P}(\til\mu^{\HH}_{0\ccw})$ we then  get $\til\mu'=C\til\mu^{\HH}_{0\ccw}$.

 Let $f$ be a conformal automorphism of $\HH$ that fixes $0$. Then $f(\til\mu^{\HH}_{0\ccw})$ also satisfies (i) and (iii), and so equals some constant  times $\til\mu^{\HH}_{0\ccw}$.
 First, suppose $f(z)=cz$ for some $c>0$. Then $f'(0)=c$.
Let $S_c$ denote the Brownian scaling map: $X\mapsto c X(\cdot/c^2)$.
We are going to use the following facts: (a) for any MTC Loewner curve $[\eta]$ in $\HH$ started from a real number, ${\cal P}([f\circ \eta])=S_c({\cal P}([\eta]))$; (b) for any $W\in\Sigma^\Lo$,   $S_c(\Lo(W))=\Lo(S_c(W))$, $C^{S_c(W)}=S_c( C^W)$, and $D^{S_c(W)}=S_c( D^W)$, which together imply that $Q^{S_c(W)}_t=c^{-\alpha} Q^W_{t/c^2}$; and (c) $S_c(\nu_0^*)=\nu_0^*$. Since ${\cal P}(\til\mu^{\HH}_{0\ccw})=\mu=\Lo(\mu_0)$, we get ${\cal P}(f(\til\mu^{\HH}_{0\ccw}))=S_c(\mu)=\Lo( S_c(\mu_0))$. By the relation between $\mu_0$ and $\nu_0^*$ described in Lemma \ref{stationary-quasi-rho} (ii),  for any $t_0\in(0,\infty)$, $S_c(\mu_0)\ll S_c(\nu_0^*)=\nu_0^*$ on $\F_{t_0 }\cap \til\Sigma_{t_0}$ with the RN derivative being $Q^{S_c^{-1}(W)}_{t_0/c^2}= c^\alpha Q^W_{t_0}$. This implies that $S_c(\mu_0)=c^\alpha \mu_0$. Thus, ${\cal P}(f(\til\mu^{\HH}_{0\ccw}))=c^\alpha {\cal P}(\til\mu^{\HH}_{0\ccw})$, which further implies that $f(\til\mu^{\HH}_{0\ccw})=c^\alpha \til\mu^{\HH}_{0\ccw}$ by the DMP.

 It remains to show that if $f'(0)=1$, then $f(\til\mu^{\HH}_{0\ccw})=\til\mu^{\HH}_{0\ccw}$. Let $J(z)=-1/z$ and $\til\mu _\infty=J(\til\mu^{\HH}_{0\ccw})$. The statement is equivalent to that, for any $c\in\R$, $f_{+c}(\til\mu_\infty)=\til\mu_\infty$, where $f_{+c}(z):=z+c$. First, we know that $f_{+c}(\til\mu_\infty)=C\til\mu_\infty$ for some $C>0$. Let $M_2(z)=2z$. By the scaling property of $\til\mu^{\HH}_{0\ccw}$, $M_2(\til\mu_\infty)=2^{-\alpha} \til\mu_\infty$.  Since $f_{+2c}=f_{+c}\circ f_{+c}$, we have $f_{+2c}(\til\mu_\infty)=C^2\til\mu_\infty$. On the other hand, from $f_{+2c}=M_2\circ f_c\circ M_2^{-1}$, we get
 $f_{+2c}(\til\mu_\infty)=2^{-\alpha}\times C \times 2^\alpha\times \til\mu_\infty=C\til\mu_\infty$. Thus, $C^2=C$, which implies that $C=1$. So we get $f_{+c}(\til\mu_\infty)=\til\mu_\infty$, as desired.
\end{proof}

By the conformal invariance and covariance of the $\til\mu^{\HH}_{0\ccw}$ in Theorems \ref{Thm-loop-probability} and \ref{Thm-loop-sigma-finite}, we get the following theorem.

\begin{Theorem} Let $\kappa>0$ and $\rho>-2$.
   Let $J(z)=-\frac 1z$ and $\til\mu^{\HH}_{\infty\ccw}=J(\til\mu^{\HH}_{0\ccw})$. Let $f(z)=cz+d$, where $c,d\in\R$ and $c>0$. Then  $f(\til\mu^{\HH}_{\infty\ccw})= \til\mu^{\HH}_{\infty\ccw}$ if $\rho\le \frac\kappa 2-4$; and $=c^{-\alpha} \til\mu^{\HH}_{\infty\ccw}$ if $\rho>\frac\kappa 2-4$, where $\alpha$ is as in (\ref{alpha}).
   \label{scaling-invariance}
\end{Theorem}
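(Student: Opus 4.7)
The plan is to reduce this theorem to the conformal invariance statement in Theorem \ref{Thm-loop-probability} (for $\rho\le\frac\kappa 2-4$) and the conformal covariance statement in Theorem \ref{Thm-loop-sigma-finite} (for $\rho>\frac\kappa 2-4$) by conjugating the given automorphism $f$ of $\HH$ (which fixes $\infty$) through the involution $J(z)=-1/z$ to obtain an automorphism $g:=J\circ f\circ J$ of $\HH$ fixing $0$. Since $J\circ J=\id$ and $\til\mu^{\HH}_{\infty\ccw}=J(\til\mu^{\HH}_{0\ccw})$, we have
\begin{equation*}
f(\til\mu^{\HH}_{\infty\ccw})=(f\circ J)(\til\mu^{\HH}_{0\ccw})=J(g(\til\mu^{\HH}_{0\ccw})),
\end{equation*}
so everything comes down to identifying $g(\til\mu^{\HH}_{0\ccw})$ in terms of $\til\mu^{\HH}_{0\ccw}$.

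First I would verify the explicit formula for $g$. Writing $f(z)=cz+d$ with $c>0$ and $d\in\R$, a direct computation gives $g(z)=J(f(-1/z))=z/(c-dz)$. In particular $g(0)=0$, and differentiating yields $g'(z)=c/(c-dz)^2$, so $g'(0)=1/c=c^{-1}$. Thus $g$ is a conformal automorphism of $\HH$ fixing $0$ with derivative $c^{-1}$ at the origin.

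Now the two cases split along the lines already identified in Theorems \ref{Thm-loop-probability} and \ref{Thm-loop-sigma-finite}. If $\rho\le\frac\kappa 2-4$, Theorem \ref{Thm-loop-probability} asserts that $\til\mu^{\HH}_{0\ccw}$ is invariant under every conformal automorphism of $\HH$ fixing $0$, so $g(\til\mu^{\HH}_{0\ccw})=\til\mu^{\HH}_{0\ccw}$, and applying $J$ gives $f(\til\mu^{\HH}_{\infty\ccw})=\til\mu^{\HH}_{\infty\ccw}$. If $\rho>\frac\kappa 2-4$, Theorem \ref{Thm-loop-sigma-finite} gives the covariance $g(\til\mu^{\HH}_{0\ccw})=g'(0)^\alpha\til\mu^{\HH}_{0\ccw}=c^{-\alpha}\til\mu^{\HH}_{0\ccw}$, and applying $J$ yields $f(\til\mu^{\HH}_{\infty\ccw})=c^{-\alpha}\til\mu^{\HH}_{\infty\ccw}$.

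There is no real obstacle here; the only point that requires a moment of care is checking that as $f$ ranges over $\{z\mapsto cz+d:c>0,d\in\R\}$, the conjugate $g=J\circ f\circ J$ actually lands in the automorphism group of $\HH$ fixing $0$ (so that Theorems \ref{Thm-loop-probability} and \ref{Thm-loop-sigma-finite} apply). This is immediate from the explicit formula $g(z)=z/(c-dz)$, which is a Möbius map sending $0$ to $0$ and preserving $\HH$ since its restriction to $\R\cup\{\infty\}$ is a homeomorphism with positive derivative at $0$. Hence the argument is complete in both cases.
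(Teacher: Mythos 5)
Your proposal is correct and follows essentially the same route as the paper, which deduces the theorem directly from the conformal invariance of $\til\mu^{\HH}_{0\ccw}$ in Theorem \ref{Thm-loop-probability} and the covariance $f(\til\mu^{\HH}_{0\ccw})=f'(0)^\alpha\til\mu^{\HH}_{0\ccw}$ in Theorem \ref{Thm-loop-sigma-finite} by conjugating through $J$. Your explicit computation $g=J\circ f\circ J$, $g(z)=z/(c-dz)$, $g'(0)=c^{-1}$, simply spells out the details the paper leaves implicit, and it is accurate.
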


\begin{Remark}
The above construction is more straightforward than the construction of SLE$_\kappa$ loop measures in \cite{loop} because it does not rely on the Minkowski content measure. We may use the method here to improve the construction of rooted SLE$_\kappa$ loop in $\ha\C$. We first make the following observation. Let $r>0$ and $\D:=\{z\in\C:|z|>1\}\cup\{\infty\}$. Let $v<w<v+2\pi$ and $[\eta]$ be a chordal SLE$_\kappa$ curve in $r\D^*$ from $re^{iw}$ to $re^{iv}$. We stop $[\eta]$ at the time $\sigma$ when it reaches $\infty$ or disconnects $r e^{iv}$ from $\infty$, and parametrize the truncated $[\eta]$ such that, if $g_t$ maps $\D^*_t$, the connected component of $\D^*\sem \eta[0,t]$ that contains $\infty$, conformally onto $\D^*$, fixes $\infty$, and satisfies $g_t'(\infty):=\lim_{z\to\infty} z/g_t(z)>0$, then $g_t'(\infty)/g_0'(\infty)=e^t$. Let $W\in \Sigma$ be such that $W_0=w$ and $e^{iW_t}=g_t(\eta(t^+))$. Then $g_t$ satisfies the radial Loewner equation driven by $W$. Let $V\in \Sigma$ be such that $V_0=v$ and $e^{iV_t}=g_t(re^{iv})$. By SLE coordinate changes, $W,V$ satisfy the SDE/ODE:
\begin{align*}
  dW_t&=\sqrt\kappa dB_t+\frac{\kappa-6}2\cot_2(W_t-V_t)dt,\\
  dV_t&=\cot_2(V_t-W_t)dt,
\end{align*}
where $B$ is a standard Brownian motion and $\cot_2 :=\cot(\cdot/2)$. Let $X_s=\cos_2(W_{\frac 4\kappa s}-V_{\frac 4\kappa s}) $, where $\cos_2:=\cos(\cdot/2)$. Then $X$ satisfies the radial Bessel SDE of dimension $(\delta,\delta)$ up to the time it hits $\{1,-1\}$, where $\delta:=3-\frac8\kappa$. On the other hand, if $X$ is a radial Bessel process of dimension $(\delta,\delta)$, which is started from $w-v$ and stopped when it hits $\{1,-1\}$, then we can recover  $W,V$   from $X$ by
\BGE V_t=v-\int_0^t \cot_2\circ \cos_2^{-1}(X_{\frac\kappa 4 s})ds,\quad W_t=V_t+\cos_2^{-1}(X_{\frac\kappa 4 t}). \label{WVX}\EDE

If $\kappa\ge 8$, then $\delta\ge 2$ and $X$ does not hit $1$ or $-1$ at a finite time, and so has lifetime $\infty$. Let $\nu^\delta$ denote the law of $X$. Recall the existence of  the stationary radial Bessel process of dimension $(\delta,\delta)$, whose law we denote by $\nu^\delta_{\R}$. Suppose $X^{\R}$ follows the law $\nu^\delta_{\R}$. Define random processes $W^{\R},V^{\R}\in\Sigma_{\R}$ such that (i) $V^{\R}_0$ and $X^{\R}$ are independent; (ii) $V^{\R}_0$ is uniform on $[0,2\pi)$; and (iii)  (\ref{WVX}) holds  for $t\in\R$ with $W,V,X,v$ replaced by $W^{\R},V^{\R},X^{\R},V^{\R}_0$. Let $\eta(t)$, $t\in\R$, be the whole-plane Loewner curve driven by $W$. Then for any finite TCI stopping time $\tau$, conditionally on $[\eta]|_{(-\infty,\tau]}$, the part of $[\eta]$ after $\tau$ is a chordal SLE$_\kappa$ curve in $\ha\C\sem \eta(-\infty,t_0]$ from $\eta(t_0)$ to $0$ stopped when it hits $\infty$. We continue $[\eta]$ with a chordal SLE$_\kappa$ curve in $\ha\C\sem \eta$ from $\infty$ to $0$. Then the law of the resulted random MTC curve is an SLE$_\kappa$ loop in $\ha\C$ rooted at $0$.

Now suppose $\kappa<8$. Then $\delta<2$, and so $X$ stops at some finite time when it hits $\{1,-1\}$. Let $\mu^\delta$ denote the law of $X$. Let $\delta^*=4-\delta>2$ and $\nu^{\delta^*}$ denote the law of a radial Bessel process of dimension $(\delta^*,\delta^*)$, which has lifetime $\infty$. Then $\mu^\delta\tl\nu^{\delta^*}$, and the RN process is $M^Z_t/M^Z_0$, where $M^Z_t:=e^{\alpha_* t} (1-Z_t^2)^{\alpha_*}$ and $\alpha_*:=\frac\delta 2-1<0$. Let $\nu^{\delta^*}_{\R}$ denote the law of the stationary radial Bessel process of dimension $(\delta^*,\delta^*)$. Mimicking the proof of Lemma \ref{stationary-quasi}, we can prove that there is a $\sigma$-finite measure $\mu^\delta_\R$ on $\Sigma^{\R}$ such that for any $\F^{\R}$-stopping time $\tau>-\infty$, we have the Markov formula $\ind_{\Sigma^{\R}_\tau} \mu^\delta_\R= \ind_{\Sigma^{\R}_\tau} \mu^\delta_\R(dZ)\oplus_\tau \mu^\delta_{Z_\tau}$, and $\mu^\delta_\R\ll \nu^{\delta^*}_\R$ on $\F^{\R}_\tau\cap \Sigma^{\R}_\tau$ with $M^Z_\tau$ being the RN derivative. Suppose $Z$ follows the ``law'' $\mu^\delta_\R$. Let $v$ be a random variable, which is uniform on $[0,2\pi)$ and is independent of $Z$. We use (\ref{WVX}) to define a $\Sigma_\R$-valued random element $W$. Let $\eta$ be the whole-plane Loewner curve driven by $W$. Then $\eta$ ends at a point other than $0$ and $\infty$. Finally, we continue $\eta$ with a chordal SLE$_\kappa$ curve from the endpoint of $\eta$ to $0$ in the (unique) bounded connected component of $\ha\C\sem \eta$ whose boundary contains $0$. Then the ``law'' of the resulted curve modulo time-change is an SLE$_\kappa$ loop in $\ha\C$ rooted at $0$.
%
\end{Remark}

\subsection{Weak convergence}\label{section-weak}
 We now prove that the rooted SLE$_\kappa(\rho)$ bubble measure $ \til\mu^{\HH}_{0\ccw}$ is the weak limit of some rescaled SLE$_\kappa(\rho)$ measures  on the metric space $\til\Sigma^{\ha\C}$ with the distance defined by (\ref{dist-E}). We will use the facts (F1,F2,F3) in Section \ref{Section-MK}.


\begin{Theorem}
\begin{enumerate}
  \item [(i)]  Suppose $\rho\in(-2,\frac\kappa 2-4]$. Then
$\til\mu^{\HH}_{(r;r^+)\to -r}\wto \til\mu^{\HH}_{0\ccw}$ as $r\to 0^+$ in  $\til\Sigma^{\ha\C}$.
\item [(ii)] Suppose $\rho>\frac\kappa 2-4$. Let $\alpha>0$ be as in (\ref{alpha}).
 For $R>0$,  let $E_R= \{[\eta]:\rad_0([\eta])>R\}$. Then
\BGE (2r)^{-\alpha}\ind_{E_R} \til\mu^{\HH}_{(r;r^+)\to -r} \wto \ind_{E_R}\til\mu^{\HH}_{0\ccw},\quad \mbox{as }r\to 0^+.\label{weak-S}\EDE
\end{enumerate}
\label{weak-1}
\end{Theorem}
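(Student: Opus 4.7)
The plan is to transfer the weak convergence to a statement about radial Bessel processes via the correspondences in Lemmas \ref{correspondance-mu}, \ref{correspondance-nu} and \ref{correspondance-mu-infty}, and then invoke Lemma \ref{coupling}. By the SW coordinate change (\cite{SW}), $\til\mu^{\HH}_{(r;r^+)\to -r}$ equals, as a measure on MTC curves in $\til\Sigma^{\ha\C}$, the law of the MTC Loewner curve generated by a CP SLE$_\kappa(\rho,\kappa-6-\rho)$ at $(r;r^+,-r)$; meanwhile, by Theorem \ref{Thm-loop-probability} (case (i)) and Theorem \ref{Thm-loop-sigma-finite}(ii) (case (ii)), ${\cal P}(\til\mu^{\HH}_{0\ccw})$ is the law of a CP SLE$_\kappa(\rho,\kappa-6-\rho)$ at $(0;0^+,0^-)$. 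Setting $w=r$, $v=r$, $x=-r$ in Lemma \ref{correspondance-mu} (case (i)) or Lemma \ref{correspondance-nu} (case (ii)) represents the driving function of the former as the transform of a radial Bessel process $\ha Z^{(r)}$ of dimension $(\delta_+,\delta_-):=(\frac{4(\rho+2)}{\kappa},\frac{4(\kappa-4-\rho)}{\kappa})$ starting from the value $z=\frac{2r-r-(-r)}{r-(-r)}=1$ at time $0$; by Lemma \ref{correspondance-mu-infty}, the driving function of the latter comes from the stationary radial Bessel process $\ha Z^{(0)}$ of the same dimension on $\R$.

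To place both Bessel processes on a common time axis I would set $L_r:=\frac\kappa 2\log(2r)\to-\infty$ and consider the shifted process $\til Z^{(r)}_s:=\ha Z^{(r)}_{s-L_r}$, defined on $[L_r,\infty)$ with $\til Z^{(r)}_{L_r}=1$. In this renormalized chart the SLE-time formulas in Lemmas \ref{correspondance-mu}(ii) and \ref{correspondance-mu-infty}(ii) coincide on the overlap: a short computation shows that the factor $(2r)^2$ in the $q_r$-integral is cancelled by $e^{-4L_r/\kappa}$ coming from the change of variable, so an SLE-time $t$ corresponds to the same Bessel-time $s\in\R$ in both pictures and the relation $V_t-X_t=e^{2s/\kappa}$ holds identically. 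Since $L_r\to-\infty$, Lemma \ref{coupling} produces in case (ii) a coupling of $\til Z^{(r)}$ and $\ha Z^{(0)}$ on any fixed Bessel-time window $[-T,T]$, conditioned on being alive at the right endpoint, under which they agree with probability at least $1-8Ce^{-\frac{\delta_++\delta_-^*}{4}(|L_r|-T)}\to 1$; in case (i), where $\delta_-\ge 2$ and the non-truncated radial Bessel process is ergodic with invariant density $p_\infty$, Proposition \ref{Prop-invariant} produces an analogous coupling for $\nu^{\delta_+,\delta_-}$.

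Inverting Lemmas \ref{correspondance-mu}(ii)/\ref{correspondance-mu-infty}(ii) converts the Bessel coupling into convergence of driving functions $W^{(r)}\to W^{(0)}$ on any compact SLE-time interval, and Loewner continuity (Lemma \ref{continue-Loewner} together with standard stability estimates) promotes this to convergence of initial segments of the CP Loewner curves in $\til\Sigma^{\ha\C}$. To extend this to the full MTC bubble: in case (i), each $\til\mu^{\HH}_{(r;r^+)\to -r}$ is already a probability measure on Loewner bubbles rooted at $0$ and inherits the DMP (\ref{DMP-kappa-rho-proob}) from the SLE$_\kappa(\rho)$ DMP; combined with ${\cal P}(\til\mu^{\HH}_{(r;r^+)\to -r})\wto{\cal P}(\til\mu^{\HH}_{0\ccw})$ established above, the uniqueness part of Theorem \ref{Thm-loop-probability} identifies the weak limit as $\til\mu^{\HH}_{0\ccw}$. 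In case (ii), restricting to $E_R$ with $\til\mu^{\HH}_{0\ccw}(\pa E_R)=0$ (generic $R>0$) makes both sides finite measures by Theorem \ref{Thm-loop-sigma-finite}(iii), and facts (F1)--(F3) from Section \ref{Section-MK} combine total-mass convergence with directional convergence coming from the Bessel coupling to yield (\ref{weak-S}).

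Finally, the factor $(2r)^{-\alpha}$ in part (ii) arises from the Radon-Nikodym density in Lemma \ref{stationary-quasi}(ii)/Lemma \ref{stationary-quasi-rho}(ii): in Bessel coordinates this density is $M^{\ha Z}_s=e^{-\alpha_0 s}\bigl(\frac{1+\ha Z_s}{2}\bigr)^{\alpha_-}$, which equals $1$ for $\ha Z^{(r)}$ at its initial time $s=0$ but carries the extra factor $e^{-\alpha_0 L_r}=(2r)^{-\kappa\alpha_0/2}$ for $\ha Z^{(0)}$ at the matched time $s=L_r$; unpacking $\alpha_0=\frac 18\delta_+(2-\delta_-)$ gives $\frac{\kappa\alpha_0}{2}=\frac{(\rho+2)(2\rho+8-\kappa)}{2\kappa}=\alpha$, reproducing (\ref{alpha}). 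The main obstacle will be promoting the Bessel coupling on compact windows to MTC-curve convergence in the sup-metric (\ref{dist-E}) uniformly up to the closing of the bubble at the root, which requires a tightness/escape control of the tail of the curve near extinction; the conformal covariance (Theorem \ref{scaling-invariance}) and the finiteness $\til\mu^{\HH}_{0\ccw}(E_R)<\infty$ (Theorem \ref{Thm-loop-sigma-finite}(iii)) provide the necessary scaling inputs, to be combined with a DMP argument near the root to rule out macroscopic excursions just before extinction.
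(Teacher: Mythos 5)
Your setup---representing both measures through radial Bessel processes via Lemmas \ref{correspondance-mu}, \ref{correspondance-nu}, \ref{correspondance-mu-infty}, coupling them through the transition-density estimates (Proposition \ref{Prop-invariant}, Lemma \ref{coupling}), and extracting the normalization from the Radon--Nikodym density $M^{\ha Z}$, with the correct computation $\frac{\kappa\alpha_0}{2}=\alpha$---is indeed the paper's starting point. But there is a genuine gap exactly where you defer the ``main obstacle'': you never pass from closeness of driving functions on compact Bessel/SLE-time windows to closeness of the \emph{entire} MTC curves in the sup-metric (\ref{dist-E}) (note also that convergence of driving functions does not by itself give uniform convergence of the curves), and the shortcut you propose for part (i) does not work. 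The measure $\til\mu^{\HH}_{(r;r^+)\to -r}$ is not a measure on Loewner bubbles rooted at $0$ and does not satisfy (\ref{DMP-kappa-rho-proob}): its curves start at $r$ and end at $-r$, and by the SLE$_\kappa(\rho)$ DMP the conditional continuations are aimed at $-r$, not at the prime end $[0^-]$ with force point $[0^+]$ as (\ref{DMP-kappa-rho-proob}) requires. So the uniqueness clause of Theorem \ref{Thm-loop-probability} cannot be invoked to identify the weak limit; and even if each approximating measure did satisfy the DMP, a weak limit does not automatically inherit it without a separate argument, so ``uniqueness of the measure satisfying the DMP'' is not a substitute for controlling the curves themselves.

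What closes the gap in the paper is a different use of the Markov property. Both curves are stopped at the time $\tau^T_s$ at which the force-point spread $V-X$ first reaches $2T$ (Bessel times $t^T_r=\frac\kappa2\log(T/r)$ and $t^T_0=\frac\kappa2\log(2T)$, matching your time shift). At that time the conditional law of the whole continuation depends on the past only through the single variable $Z_{\tau^T}\in[-1,1]$: it is the image under the explicit map $f^\gamma_s$ of (\ref{fj}) of $\til\mu^{\HH}_{(Z_{\tau^T};1)\to -1}$ (suitably conditioned in case (ii)). Hence once the Bessel estimate couples $Z^{r}_{\tau^T_r}=Z^{0}_{\tau^T_0}$ with high probability, the two continuations can be coupled to be \emph{identical}, not merely close, and the sup-distance of the full curves is then bounded \emph{deterministically} by $22T$: the initial segments have diameter at most $4T$ by (\ref{Koebe0},\ref{Koebe0'}), and the two maps $f^{\gamma_r}_r$, $f^{\gamma_0}_0$ differ by at most $14T$ on $\lin\HH$ by (\ref{g-z}). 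Taking $T=\eps/22$ and then $r$ small gives (F1) directly, with no tightness or near-extinction tail control needed. In case (ii) the same estimate is applied to the conditioned measures $\til\mu_s[\cdot|\til F^R_s]$; the total-mass computation (\ref{weak-F-1}) produces the $(2r)^{-\alpha}$ factor as you predicted, and the passage from the events $\til F^R_s$ to $E_R$ uses (\ref{Koebe0'}), the scaling fact $\til\mu^{\HH}_{0\ccw}(\pa E_R)=0$, and (F3). To complete your argument you would need to supply this exact-coupling-plus-deterministic-geometry step (or an equivalent control of the curve up to its closing at the root); as written, that is precisely the part that is missing.
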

\begin{proof}  We will use the following symbols.  For a CP Loewner curve $\gamma$  and $s\in [|\gamma(0)|,\infty)$, let $V^{s;\gamma}_t=[s]^\gamma_t$ and $X^{s;\gamma}_t=[-s]^\gamma_t$. Here we allow $\gamma(0)=s$ (resp.\ $\gamma(0)=-s$), in which case $[s]^\gamma_t$ (resp.\ $[-s]^\gamma_t$) is understood as $[\gamma(0)^+]^\gamma_t$ (resp.\ $[\gamma(0)^-]^\gamma_t$).
When $s=r\in(0,\infty)$, define $Z^{r;\gamma} $, $p_r^{\gamma} $ and $\ha Z^{r;\gamma} $ as in Lemma \ref{correspondance-mu} (i) for $v=r$, $x=-r$, $W=W^{\gamma}$, $V=V^{r;\gamma}$, and $X=X^{r;\gamma}$. When $\gamma(0)=s=0$, define  $Z^{0;\gamma} $, $p_{0}^{\gamma} $ and $\ha Z^{0;\gamma} $ as in Lemma \ref{correspondance-mu-infty} (i) for $W=W^{\gamma}$, $V=V^{0;\gamma}$, and $X=X^{0;\gamma}$. For $T>r>0$, let  $t^T_r=\frac\kappa 2\log(T/r)$  and $t^T_{0 }= \frac \kappa 2 \log(2T)$, and for $s\in\{r,0\}$, let $\tau_s^T(\gamma)$ be the biggest $t_0\in (0,\ha T_\gamma]$ such that $V^{s;\gamma }_t-X^{s ;\gamma}_t<  2T$ for $0\le t<t_0$, which is a stopping time.

Let $\til\mu_{0}=\til\mu^{\HH}_{0\ccw}$ and $\mu_{0}={\cal P}(\til\mu_{0})$. For $r\in(0,\infty)$, let $\til\mu_r=\til\mu^{\HH}_{(r;r^+)\to -r}$, $\mu_r={\cal P}(\til\mu_r)$, and $\gamma_r$ follow the law $\mu_r$. Let $\rho_+=\rho$, $\rho_-=\kappa-6-\rho$, and $\delta_\pm=\frac 4\kappa(\rho_\pm+2)$. Then $\delta_+>0$. By SLE coordinate changes, $\gamma_r$ is a CP SLE$_\kappa(\rho_+,\rho_-)$ curve started from $r$ with force points $r^+,-r$. We write $V^r$ for $V^{r;\gamma_r}$, and similarly $X^r $, $Z^r$, $p_r$ and $\ha Z^r$.   By Lemma \ref{correspondance-mu}, $ \ha Z^{r} $ is a radial Bessel process of dimension $(\delta_+,\delta_-)$ started from $1$ and stopped when it hits $-1$.

(i) Suppose $\rho\in(-2,\frac\kappa 2-4]$. Then $\rho_-\ge \frac\kappa 2-2$, $\delta_-\ge 2$, and $\til\mu_{0},\mu_{0}$ are probability measures. Let $\gamma_{0}$ follow the law $\mu_{0}$. 
Then $\gamma_{0}$ is a CP SLE$_\kappa(\rho_+,\rho_-)$ curve started from $0$ with force points $0^+,0^-$. We write $V^{0}$ for $V^{{0};\gamma_{0}}$, and similarly $X^{0}$, $Z^{0}$, $p_{0}$ and $\ha Z^{0}$. By Lemma \ref{correspondance-mu-infty},   $ \ha Z^{{0} } $ is a stationary radial Bessel process of dimension $(\delta_+,\delta_-)$.

Fix $T>r>0$.  We will omit the $T$ in $t_s^T$ and $\tau_s^T$ for $s\in \{r,0\}$. Since $\rho_+>-2$ and $\rho_-\ge \frac\kappa 2-2$, $V^{s}_t-X^{s}_t$ increases continuously to $\infty$ with the initial value $2s$, we conclude that $\tau _s$ is  finite, and $p_j (\tau _s)=t_s $. So $Z^{r}_{\tau_r }=\ha Z^{r}_{t_r }$ has density $p_{t _r}(1,\cdot)$, and $Z^{0}_{\tau_{0} }=\ha Z^{0}_{t _{0}}$ has density $p_\infty$, where $p_t(\cdot,\cdot)$ and $p_\infty(\cdot)$ are the transition density and stationary density, respectively, for the radial Bessel processes of dimension $(\delta_+,\delta_-)$. By (\ref{transition-rate-i}), there are $C,L\in(0,\infty)$ depending only on $\kappa,\rho$, such that if $t_r >L$, i.e., $\frac r T< e^{-\frac 2\kappa L}$,  then
$|p_{t_r }(1,y)-p_\infty(y)|\le C p_\infty(y) (\frac r T )^{\frac{\kappa -2}2}$.
Assume now $\frac r T< e^{-\frac 2\kappa L}$. Then there is a coupling $\gamma_r$ and $\gamma_{0}$ of $\mu_r$ and $\mu_{0}$ such that $\PP[Z^{r}_{\tau_r }\ne Z^{0}_{\tau_r }]\le C (\frac r T )^{\frac{\kappa -2}2}$.
Let
$\til \mu_s^\otimes= \mu_s(d\gamma)\rotimes  \til \mu^{\HH}_{(Z^{s;\gamma}_{\tau_s };1)\to -1}$, $s\in\{r,0\}$.
Then we get a coupling $(\gamma_r,[\eta_r])$ and $(\gamma_{0},[\eta_{0}])$ of $\til\mu_r^\otimes$ and $\til\mu_{0}^\otimes$ such that $\PP[[\eta_r]\ne [\eta_{0}]]\le C (\frac r T )^{\frac{\kappa -2}2}$.
Let $[\zeta_s]=\gamma_s\oplus_{\tau_s }[f_{s}^{\gamma_s}(\eta_s)]$, where
\BGE f_s^\gamma(z) := (g^\gamma_{\tau_s^T})^{-1}(T z+ ({V^{s;\gamma}_{\tau_s^T}+X^{s;\gamma}_{\tau_s^T}})/2),\quad s\in\{r,0\}. \label{fj}\EDE By the DMP of SLE$_\kappa(\rho)$ curve and rooted SLE$_\kappa(\rho)$ bubble,  $[\zeta_r]$ and $[\zeta_{0}]$ form a coupling of $\til\mu_r$ and $\til\mu_{0}$.
By (\ref{Koebe0},\ref{Koebe0'}), $\rad_0(K^{\gamma_s}_{\tau_s})\le 2 T$, which together with (\ref{g-z}) implies that $|(g^{\gamma_s}_{\tau_s})^{-1}(z)-z |\le 6 T$ for any $z\in \HH $. Since $V^{s}_t\ge 0\ge X^{s }_t$ and $V^{s }_{\tau_s}-X^{s }_{\tau_s}=2T$, we have $|V^{s }_{\tau_s}+X^{s}_{\tau_s}|\le 2 T$. Thus, for any $z\in\lin \HH$, $|f_r^{\gamma_r}(z)-f_{0}^{\gamma_{0}}(z)|\le 14 T$.  So, on the event that $[\eta_r]=[\eta_{0}]$, we have $\dist_{\til \Sigma^{\ha\C}}([f_r^{\gamma_r}(\eta_r)],[f_{0}^{\gamma_{0}}(\eta_{0})])\le 14 T$. From  $\rad_0(K^{\gamma_s}_{\tau_s})\le2 T$ we have $\diam(\gamma_s[0,\tau_s])\le 4T$. Since $[\zeta_s]=\gamma_s\oplus_{\tau_s}[f_s^{\gamma_s}(\eta_s)]$, we have $\dist_{\til \Sigma^{\ha\C}}([\zeta_r],[\zeta_{0}])\le 22 T$ when $[\eta_r]=[\eta_{0}]$. Thus,  $\PP[\dist_{\til \Sigma^{\ha\C}}([\zeta_r],[\zeta_{0}])> 22 T]< C (\frac r T )^{\frac{\kappa -2}2}$. For any $\eps>0$, by  choosing $T=\eps/22$, we see that  $\PP[\dist_{\til \Sigma^{\ha\C}}([\zeta_r],[\zeta_0])> \eps]< \eps$ when $r$ is small enough. By  (F1) we get the weak convergence.

 (ii) Suppose $\rho>\frac\kappa 2-4$. Then $\delta_-<2$, and $\til\mu_{0},\mu_{0}$ are infinite measures, and for $r>0$, $\gamma_r$ has a finite lifetime.  Let $\rho_-^*=\rho+2$  and $\delta_-^*=\frac 4\kappa(\rho_-^*+2)$. Then $\rho_->\frac\kappa 2-2$ and $\delta_-^*=4-\delta_->2$.
Let $p_t^*(x,y)$ and $p_\infty^*(y)$ be the transition density and stationary density, respectively, for radial Bessel processes of dimension $(\delta_+,\delta_-^*)$.

Fix $R>0$. For $s\in[0,R)$, let $F_s^R$ denote the set of CP Loewner curves $\gamma$ with $|\gamma(0)|\le s$ such that $\tau_s^R(\gamma) <\ha T(\gamma)$, and let $\til F_s^R={\cal P}^{-1}(F_s^R)$.
We  will prove that
 \BGE (2r)^{-\alpha}\ind_{\til F_r^R} \til\mu_r \wto \ind_{\til F_{0}^R }\til\mu_{0},\quad\mbox{as }r\to 0^+.\label{weak-F}\EDE
By (F2) it suffices to show that, as $r\to 0^+$,
 \BGE (2r)^{-\alpha} \mu_r( F_r^R)\to \mu_{0}( F_{0}^R);\label{weak-F-1}\EDE
 \BGE   \til\mu_r[\cdot|\til F_r^R] \wto  \til\mu_{0}[\cdot|\til F_{0}^R].\label{weak-F-2}\EDE

Let $r>0$.
By Lemma \ref{correspondance-nu}, $ \ha Z^{r} $ follows the law $\mu^{\delta_+,\delta_-}_1$.   By Proposition \ref{Bessel-transition} and Lemma \ref{Girsanov-lem}, $$\PP[\ha T_{Z^r}>\tau_r^R]=\PP[\ha T_{\ha Z^{r}}>t _r^R]=\int_{-1}^1 e^{-\frac{\delta_+}8(2-\delta_-) t_r^R} \Big(\frac{1+y}2 \Big)^{\frac{\delta_-}2-1} p_{t_r^R}^*(1,y) dy$$
$$= \Big(\frac { r}{ R}\Big)^{\alpha}  \int_{-1}^1 \Big( \frac{1+y}2 \Big )^{{\frac{\delta_-}2-1}} p_{t_r^R}^*(1,y) dy.$$
 Thus,
\BGE (2r)^{-\alpha}\mu_r(F^r_R) = (2R)^{-\alpha} \int_{-1}^1\Big ( \frac {1+y}2\Big )^{{\frac{\delta_-}2-1}} p_{t_r^R}^*(1,y) dy.\label{2rtimes}\EDE

By Theorem \ref{Thm-loop-sigma-finite} (ii), $\mu_{0}$ agrees with the $\mu_0$ in Lemma \ref{stationary-quasi-rho}.
 By the proof of Lemma  \ref{stationary-quasi-rho},  the pushforward of $\mu_0$ under the map $\gamma\mapsto (\ha Z^{0;\gamma_{0}}_s)$ is the $\mu^{\delta_+,\delta_-}_{\R}$ in Lemma \ref{stationary-quasi}.
Thus, by (\ref{MZt}),
\BGE \mu_{0}(F_{0}^R)=
\int_{-1}^1 e^{-\frac{\delta_+}8(2-\delta_-)t_{0}^R} \Big( \frac{1+y}2 \Big)^{\frac{\delta_-}2-1}p_\infty^*(y) dy  =(2R)^{-\alpha} \int_{-1}^1  \Big( \frac{1+y}2 \Big)^{{\frac{\delta_-}2-1}} p_{\infty}^*(y) dy,\label{starionary}\EDE
 Then  (\ref{weak-F-1}) follows from  (\ref{2rtimes},\ref{starionary},\ref{transition-rate-i}) and the fact that $\int_{-1}^1 (1+y)^{{\frac{\delta_-}2-1}} p_\infty^*(y) dy <\infty$.

Now we turn to (\ref{weak-F-2}). By (F1) it suffices to construct a coupling $([\zeta_r],[\zeta_{0}])$ of   $\til\mu_r^R:=\til\mu_r[\cdot|\til F_r^R]$ and $\til\mu_{0}^R:=\til\mu_{0}[\cdot|\til F_{0}^R]$  such that when $r$ is small, with high probability, $[\zeta_r]$ and $[\zeta_{0}]$ are close. Let $\mu_s^R={\cal P}(\til\mu_s^R)$, $s\in\{r,0\}$. Note that if $\gamma_s$ follows the law $\mu_s^R$, then $\ha Z^{s;\gamma_s}$ follows the law $\mu^{\delta_+,\delta_-}_s[\cdot|\Sigma^{\sigma(s)}_{t_s^R}] $, where $\sigma(s)$ is empty when $s=r$ and is $\R$ when $s=0$.

Let $T\in (r,R)$.  For $s\in\{r,0\}$, by the DMP of SLE$_\kappa(\rho)$ and rooted SLE$_\kappa(\rho)$ bubble, we have
\BGE \til\mu_s^R=\til\mu_s^{R}(d\gamma)\oplus_{\tau_s^T}f^\gamma_s(\til\mu^{\HH}_{(Z^{s;\gamma}_{\tau_s^T};1)\to -1}[\cdot|\til F^{R/T}_1])  ,\label{DMP-mu-j}\EDE
where $f^\gamma_s$ is as in (\ref{fj}). Applying Lemma \ref{coupling} with $t_1=t_r^T$, $t_2=t_0^T$ and $t_0=\log(R/T)$ and using $Z^{s;\gamma_s}_{\tau_s^T}=\ha Z^{s;\gamma_s}_{t_s^T}$, we see that there are constants $C,L_0\in(0,\infty)$ such that if $t_r^T>L_0$, i.e., $\frac r{T}<e^{-\frac 2\kappa L_0}$, then there is a coupling  $ \gamma_r$ and $\gamma_{0}$ of $\mu_r^R$ and $\mu_{0}^R$ such that $\PP[ Z^{r;\gamma_r}_{\tau_r^T}\ne Z^{0;\gamma_{0}}_{\tau_{0}^T}]<8C (\frac r T )^{\frac{\rho+3}2}$.
 Assume $\frac r{T}<e^{-\frac 2\kappa L_0}$. Then there exists  a coupling $(\gamma_r,[\eta_r])$ and $(\gamma_{0},[\eta_{0}])$ of the measures
$\til\mu_s^\otimes:=\mu^T_s(d\gamma)\rotimes \til\mu^{\HH}_{(Z^{s;\gamma}_{\tau_r^T};1)\to -1}[\cdot|\til F^{R/T}_1](d[\eta])$, $s\in\{r,0\}$,
such that $\PP[[\eta_r]\ne [\eta_{0}]]< 8C (\frac r T )^{\frac{\rho+3}2}$. Let  $[\zeta_s]=\gamma_s\oplus_{\tau_s^T} f^{\gamma_s}_{s}([\eta_s])$, $s\in\{r,0\}$. From (\ref{DMP-mu-j}) we know that $[\zeta_r]$ and $[\zeta_{0}]$ form a coupling of $\til\mu_r^R$ and $\til\mu_{0}^R$. The argument in the last paragraphs of the proof of (i) shows that  $\PP[\dist([\zeta_r],[\zeta_{0}])>22 T]<8C (\frac r T )^{\frac{\rho+3}2}$.
For $\eps>0$, by taking $T<\eps/22$ we see that $\PP[\dist ([\zeta_1],[\zeta_2])> \eps]< \eps$ when $r$ is small enough. By (F1) we get (\ref{weak-F-2}), which together with (\ref{weak-F-1}) implies (\ref{weak-F}).

Now we turn to (\ref{weak-S}). It is clear that $E_R$ is open in $\til\Sigma^{\ha\C}$, and    $\pa E_R\subset  \{[\eta]:\rad_0([\eta])= R\}$. By the scaling property of $\til\mu_0$, $\til\mu_0(\pa E_R)=0$. Let $E^R_s$ be the set of $[\eta]\in E_R$ with $[\eta](0)=s$. Suppose $R>2r$.
By (\ref{Koebe0'}),  for $s\in\{r,0\}$, $E^R_s\subset \til F_s^{R/2} $, which implies that
$\ind_{E_R} \til\mu_s=\ind_{E_R}\ind_{\til F_r^{R/2}} \til\mu_s$ because $\til\mu_s$ is supported by curves started from $s$. Then (\ref{weak-S}) follows from (\ref{weak-F}) and (F3).
\end{proof}

\begin{Corollary}
  Whenever SLE$_\kappa(\rho)$ satisfies reversibility, i.e., ${\cal R}(\til\mu^{\HH}_{(r;r^+)\to -r})= \til\mu^{\HH}_{(-r;(-r)^-)\to r}$ for $r>0$, the SLE$_\kappa(\rho)$ bubble measure satisfies the reversibility: ${\cal R}(\til\mu^{\HH}_{0\ccw})=\til\mu^{\HH}_{0\cw}$. \label{Reversibility-cor}
\end{Corollary}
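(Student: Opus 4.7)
The plan is to deduce the reversibility of the bubble measure by passing to the limit in Theorem~\ref{weak-1}, combining the reversibility hypothesis for $\mathrm{SLE}_\kappa(\rho)$ curves with the anticonformal reflection $\Phi(z)=-\lin z$ already used to define $\til\mu^{\HH}_{0\cw}$.

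The first observation is that on $\til\Sigma_{\ha T}^{\ha\C}$ the time-reversal map $\cal R$ is an isometry for the distance in (\ref{dist-E}): that distance is an infimum over representatives $(f',g')$ with common lifetime, and simultaneously reversing time preserves both the common lifetime and the pointwise distances. Hence both $\cal R$ and $\Phi$ are continuous on the relevant space, which supports all measures involved. Since $\Phi$ is the anticonformal involution of $\HH$ carrying the prime-end configuration $(r;r^+)\to -r$ to $(-r;(-r)^-)\to r$, we have $\Phi(\til\mu^{\HH}_{(r;r^+)\to -r})=\til\mu^{\HH}_{(-r;(-r)^-)\to r}$ for every $r>0$, which combined with the reversibility hypothesis yields the prelimit identity
$$ {\cal R}(\til\mu^{\HH}_{(r;r^+)\to -r}) = \Phi(\til\mu^{\HH}_{(r;r^+)\to -r}), \quad r>0. \qquad (\ast) $$

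In the probability-measure case $\rho\in(-2,\tfrac\kappa 2-4]$, Theorem~\ref{weak-1}(i) gives $\til\mu^{\HH}_{(r;r^+)\to -r}\wto \til\mu^{\HH}_{0\ccw}$. Applying the continuous maps $\cal R$ and $\Phi$ separately to this convergence and invoking $(\ast)$ at each $r$, the two prelimit sequences coincide, so their limits must agree, giving ${\cal R}(\til\mu^{\HH}_{0\ccw}) = \Phi(\til\mu^{\HH}_{0\ccw}) = \til\mu^{\HH}_{0\cw}$ by the definition of $\til\mu^{\HH}_{0\cw}$. In the $\sigma$-finite case $\rho>\tfrac\kappa 2-4$, fix $R>0$ and note that $E_R=\{[\eta]:\rad_0([\eta])>R\}$ is invariant under both $\cal R$ and $\Phi$, since time-reversal and Euclidean reflection through $0$ both preserve $\rad_0$. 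Applying $\cal R$ and $\Phi$ to the scaled weak convergence (\ref{weak-S}), using $(\ast)$, and matching limits gives $\ind_{E_R}\,{\cal R}(\til\mu^{\HH}_{0\ccw}) = \ind_{E_R}\,\til\mu^{\HH}_{0\cw}$ for every $R>0$. Since both $\til\mu^{\HH}_{0\ccw}$ and $\til\mu^{\HH}_{0\cw}$ are supported on nondegenerate bubbles, $\bigcup_{n\in\N}E_{1/n}$ carries both, and passing to the union yields the desired equality ${\cal R}(\til\mu^{\HH}_{0\ccw})=\til\mu^{\HH}_{0\cw}$.

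The only delicate point is the continuity of $\cal R$, but this is essentially built into the definition of $\dist_{\til\Sigma^{\ha\C}}$, so no additional coupling or estimate is required. A minor bookkeeping check is that $\cal R$ sends a rooted MTC bubble to a rooted MTC bubble with the same root, which is immediate from the definition of time-reversal on $\til\Sigma_{\ha T}$ together with the fact that for a rooted loop the two endpoint limits coincide.
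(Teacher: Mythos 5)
Your argument is correct and is essentially the route the paper intends: Corollary \ref{Reversibility-cor} is stated as a direct consequence of Theorem \ref{weak-1}, obtained by pushing the weak convergence through the reflection $\Phi(z)=-\lin z$ and the time-reversal map ${\cal R}$ and matching the limits, using ${\cal R}(\til\mu^{\HH}_{(r;r^+)\to -r})=\Phi(\til\mu^{\HH}_{(r;r^+)\to -r})$ at each $r>0$. The details you supply (that ${\cal R}$ is an isometry of $\til\Sigma^{\ha\C}_{\ha T}$, that $E_R$ is invariant under both maps, and the exhaustion by $E_{1/n}$ in the $\sigma$-finite case) are precisely the points the paper leaves implicit.
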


We have known that, for $\kappa\in(0,8]$ and $\rho\in (-2,\infty)\cap[ \frac\kappa2-4,\infty)$,  SLE$_\kappa(\rho)$ satisfies reversibility  (\cite{SS, intermideate,MS2,MS3}), and so the corresponding  rooted SLE$_\kappa(\rho)$ bubble also satisfies reversibility.

\begin{Remark}
  We may use the argument in this subsection to show that the chordal SLE$_\kappa$ measure in $\ha\C\sem \{|z|\le r\}$ from $r$ to $-r$, suitably rescaled, converges weakly to the SLE$_\kappa$ loop measure in $\ha\C$ rooted at $0$, as $r\to 0^+$, in the sense of Theorem \ref{weak-1} (i) for $\kappa\ge 8$ and of Theorem \ref{weak-1} (ii) for $\kappa<8$ with $\alpha=1-\frac\kappa 8$. The role of (\ref{g-z}) in the argument will be played by a similar inequality: if $g$ maps $\ha\C\sem K$, where $0\in K$, conformally onto  $\ha\C\sem \{|z|\le r\}$, and satisfies $\lim_{z\to\infty} g(z)/z=1$, then $|g(z)-z|\le 5r$ for any $z\in\C\sem K$.
\end{Remark}

\section{Decomposition of SLE$_\kappa(\rho)$ Bubbles} \label{section-decomposition}
In this section we will  derive  decomposition theorems for multi-force-point SLE$_\kappa(\ulin\rho)$ curves in Section \ref{section-decomp} and for rooted SLE$_\kappa(\rho)$ bubbles in Section \ref{section-proof}, which help us to construct unrooted SLE$_\kappa(\rho)$ bubble measures in Section \ref{section-unrooted}. We will also derive a  decomposition theorem for unrooted SLE$_\kappa(\rho)$ bubble measures.

\subsection{Decomposition of SLE$_\kappa(\ulin\rho)$}\label{section-decomp}
Recall that $\PP^{w;\ulin v}_{\kappa;\ulin\rho}$ denotes the law of the driving function of a CP SLE$_\kappa(\ulin\rho)$ curve at $(w;\ulin v)$.

\begin{Lemma}
  Let $\kappa>0$, $\rho_1,\dots,\rho_m,\sigma,\sigma'\in\R$, $m\in\N\cup\{0\}$, and $\ulin\rho=(\rho_1,\dots,\rho_m)$.  Let $w\in\R$, $v_1,\dots,v_m\in \{w^+,w^-\}\cup (\R\sem \{w\})$, $\ulin v=(v_1,\dots,v_m)$, and $u\in \R\sem \{w,v_1,\dots,v_m\}$. Let $\tau$ be a stopping time.
   Then
$$  \frac{ \PP_{\kappa;\ulin\rho,\sigma'}^{w;\ulin v,u}(dW)|\F_{\tau}\cap \{\tau<\ha T\wedge \tau^W_u\}}{ \PP_{\kappa;\ulin\rho,\sigma}^{w;\ulin v,u}(dW)|\F_{\tau}\cap \{\tau<\ha T\wedge \tau^W_u\}}=\frac{M^W_\tau}{M^W_0}, $$ 
  where
\BGE M^W_t :=(g^W_t)'(u)^{\frac{ (\sigma'-\sigma)(\sigma'+\sigma+4-\kappa)}{4\kappa}}  |W_t-\la u\ra^W_t |^{\frac{2(\sigma'-\sigma)}{2\kappa}} \prod_{j=1}^m |\la u\ra^W_t-\la v_j\ra^W_t|^{\frac{\rho_j(\sigma'-\sigma)}{2\kappa}}.\label{MWt} \EDE
  Recall the $\la u\ra^W_t$ and $\la v_j\ra^W_t$ defined in Section \ref{section-SLE}, which are the force point processes started from $u$ and $v_j$.  In particular,
  $\K_{\tau_u}(\PP^{w;\ulin v,u}_{\kappa;\ulin\rho,\sigma'})\tl \K_{\tau_u}(\PP^{w;\ulin v,u}_{\kappa;\ulin\rho,\sigma})$ with the RN process being $ M^\cdot_t/M^\cdot_0$.
  \label{P/P-kappa-rho}
\end{Lemma}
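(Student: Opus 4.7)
The plan is a standard Girsanov calculation with the explicit weight $M^W_t$. I will show $M^W_t$ is a local martingale under $\PP^{w;\ulin v,u}_{\kappa;\ulin\rho,\sigma}$ up to $\ha T\wedge \tau^W_u$, and that the Girsanov drift shift it induces changes the $\sigma$-drift of $W$ to a $\sigma'$-drift while leaving the $\rho_j$-drifts intact.

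Set $U_t = \la u\ra^W_t$, $O_t = U_t - W_t$, $V^j_t = \la v_j\ra^W_t$, $Q_j(t) = V^j_t - W_t$, and $P^j(t) = U_t - V^j_t$, all well-defined on $\{t < \ha T\wedge \tau^W_u\}$. Under $\PP^{w;\ulin v,u}_{\kappa;\ulin\rho,\sigma}$ the driving function satisfies $dW_t = \sqrt\kappa\, dB_t - \sum_j \frac{\rho_j}{Q_j(t)}\,dt - \frac{\sigma}{O_t}\,dt$, while Loewner's equation gives $dU_t = \frac{2}{O_t}\,dt$, $d(g^W_t)'(u) = -\frac{2(g^W_t)'(u)}{O_t^2}\,dt$, and $dP^j(t) = \bigl(\frac{2}{O_t} - \frac{2}{Q_j(t)}\bigr)dt$. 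Writing $A = \sigma'-\sigma$, the three exponents in (\ref{MWt}) become $a = \frac{A(\sigma'+\sigma+4-\kappa)}{4\kappa}$, $b = \frac{A}{\kappa}$, $c_j = \frac{A\rho_j}{2\kappa}$, and It\^o's formula expresses $d\log M^W_t$ as an explicit drift plus the martingale part $-\frac{b\sqrt\kappa}{O_t}\,dB_t$.

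The main step is to verify the local martingale property by showing the drift of $d\log M^W_t + \frac{1}{2}(d\log M^W_t)^2$ vanishes. This drift naturally splits into an $\frac{1}{O_t^2}$-part and cross terms in $\frac{1}{O_tQ_j}, \frac{1}{O_t P^j}, \frac{1}{Q_j P^j}$. The coefficient of $\frac{1}{O_t^2}$ reduces to $\frac{A}{2\kappa}(-\sigma'+\sigma+A)=0$, which is precisely the design principle behind the exponent $a$. For the cross terms, the identity $O_t - Q_j = -P^j$ gives $\frac{1}{O_tQ_j} = -\frac{1}{O_tP^j}+\frac{1}{Q_jP^j}$, after which everything collects into a multiple of $2c_j - b\rho_j = 0$ by the definitions of $b, c_j$. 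I expect this algebraic verification to be the main (but purely routine) source of friction.

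Granted the local martingale property, Girsanov's theorem with a localizing sequence $\tau_n$ (the first time $M^W$ leaves $[1/n,n]$) makes $\til B_t := B_t + \int_0^t \frac{b\sqrt\kappa}{O_s}\,ds$ a Brownian motion under the $M^W_{\tau\wedge\tau_n}/M^W_0$-weighted law, so
\begin{equation*}
dW_t = \sqrt\kappa\, d\til B_t - \sum_j \frac{\rho_j}{Q_j(t)}\,dt - \frac{\sigma+b\kappa}{O_t}\,dt = \sqrt\kappa\, d\til B_t - \sum_j \frac{\rho_j}{Q_j(t)}\,dt - \frac{\sigma'}{O_t}\,dt,
\end{equation*}
which is exactly the SDE of $\PP^{w;\ulin v,u}_{\kappa;\ulin\rho,\sigma'}$. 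Taking $n\to\infty$ using $\F_\tau\cap\{\tau<\ha T\wedge\tau^W_u\}\subset \bigcup_n \F_{\tau\wedge\tau_n}$ and the fact that $M^W_t$ remains bounded away from $0$ and $\infty$ on each $\{\tau\le\tau_n\}$ yields the stated identity of Radon-Nikodym derivatives on $\F_\tau\cap\{\tau<\ha T\wedge\tau^W_u\}$; the final assertion about $\K_{\tau^W_u}(\PP^{w;\ulin v,u}_{\kappa;\ulin\rho,\sigma'})\tl \K_{\tau^W_u}(\PP^{w;\ulin v,u}_{\kappa;\ulin\rho,\sigma})$ then follows by applying this to $\tau = \tau^W_u\wedge n$ together with the definition of the killing map from Section \ref{section-lifetime}.
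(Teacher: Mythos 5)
Your proposal is correct and follows essentially the same route as the paper, which simply invokes It\^o's formula to check that $M^W_t$ is a positive local martingale under $\PP^{w;\ulin v,u}_{\kappa;\ulin\rho,\sigma}$ and then applies Girsanov with a localization to identify the weighted law as $\PP^{w;\ulin v,u}_{\kappa;\ulin\rho,\sigma'}$; your exponent bookkeeping ($-2a+b(2+\sigma-\kappa/2)+\tfrac{b^2\kappa}{2}=0$ and $2c_j=b\rho_j$) is exactly the verification the paper leaves to the reader. Only a trivial slip: the identity should read $O_t-Q_j=P^j$ (not $-P^j$), though the partial-fraction decomposition you actually use, $\frac{1}{O_tQ_j}=\frac{1}{Q_jP^j}-\frac{1}{O_tP^j}$, is the correct one, so nothing downstream is affected.
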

\begin{proof}
This is a standard application of It\^o's formula and Girsanov Theorem to the Loewner equations. One easily check that $M^W_t$ is a positive local martingale under $\PP_{\kappa;\ulin\rho,\sigma}^{w;\ulin v,u}$, and weighting $\PP_{\kappa;\ulin\rho,\sigma}^{w;\ulin v,u}$ by $M^W_\tau/M^W_0$ for any $\F$-stopping time $\tau<\ha T\wedge \tau^W_u$ such that $M^W_{\cdot\wedge \tau}$ is uniformly bounded results in the measure $\PP_{\kappa;\ulin\rho,\sigma'}^{w;\ulin v,u}$ restricted to $\F_\tau$.
\end{proof}

Let $\kappa>0$, $m\in\N$, $\ulin\rho=(\rho_1,\dots,\rho_m)\in\R^m$.  Let $\eta$ be a CP SLE$_\kappa(\ulin\rho)$  at $(w;\ulin v)$, where $w\in\R$, and $\ulin v=(v_1,\dots,v_m)\in ((\R\sem \{w\})\cup \{w^+,w^-\})^m$. Let $W$ be the driving function and $V^j$, $1\le j\le m$, be the force point processes. We focus on the case that all force points lie on the same side of $w$. By symmetry we assume that $w^+\le v_1\le\cdots\le v_m $. Then $W\le V^1\le\cdots\le V^m$. Assume that $\sum_{j=1}^k \rho_j>-2$, $1\le k\le m$, and $\rho_\Sigma:=\sum_{j=1}^m\rho_j\in (\frac\kappa 2-4,\frac\kappa 2-2)$.  That $\sum_{j=1}^k \rho_j>-2$, $1\le k\le m$,  implies that  $\ha T_\eta=\infty$, and $\lim_{t\to \infty}\eta(t)=\infty$. Let $I_\infty=(v_m,\infty)$. The assumption on $\rho_\Sigma $ implies that, for any $u\in I_\infty$,  a.s.\ $u$ is not visited by $\eta$ and $\tau^W_u<\infty$, which further implies that  $\eta\cap I_\infty$ has zero Lebesgue measure and  is unbounded.

Fix $u\in I_\infty$.
At the time $\tau^W_u$, we have $V^j_{\tau^W_u}=W_{\tau^W_u}$, $1\le j\le m$. By DMP of SLE$_\kappa(\ulin\rho)$, conditionally on $\F_{\tau^W_u}$, $(W_{\tau^W_u+t}-W_{\tau^W_u})$ has the law $\PP^{0;0^+}_{\kappa;\rho_\Sigma}$. Thus,
\BGE \PP_{\kappa;\ulin\rho}^{w;\ulin v}=\K_{\tau_u}(\PP_{\kappa;\ulin\rho}^{w;\ulin v} )\oplus_{+} \PP_{\kappa;\rho_\Sigma}^{0;0^+}.\label{Pkapparho+} \EDE

Let  $\sigma'=\kappa-8-2\rho_\Sigma$.
Now we consider a random CP Loewner curve $\eta'$ following the law $\Lo(\PP^{w;\ulin v,u}_{\kappa;\ulin\rho,\sigma'})$. Let $V^j$ and $U$ be the force point processes. The assumption that $\rho_\Sigma\in (\frac\kappa 2-4,\frac\kappa 2-2)$ implies that $\sum_{j=1}^m \rho_j +\sigma'=\kappa-8-\rho_\Sigma<\frac\kappa 2-4$, which further implies that a.s.\ $\eta'$ reaches $u$ at the finite time $\tau^W_u$, at which $W_t=U_t=V^j_t$ for  all $j\in\{1,\dots,m\}$. We are not interested in the part of $\eta'$ after $\tau^W_u$ if it is not empty.
Instead, we are interested in a new probability measure defined by
\BGE \PP_{\kappa;\ulin\rho,\oplus}^{w;\ulin v,u}:=\K_{\tau _u}(\PP^{w;\ulin v,u}_{\kappa;\ulin\rho,\sigma'} )\oplus_{+} \PP_{\kappa;\rho_\Sigma}^{0;0^+}= \K_{\tau _u}(\PP^{w;\ulin v,u}_{\kappa;\ulin\rho,\kappa-8-2\rho_\Sigma} )\oplus_{+} \PP_{\kappa;\rho_\Sigma}^{0;0^+}. \label{dP-kappa-rho'-U}\EDE
This measure is supported by $\Sigma^{\Lo}$. A curve $\eta_\oplus$ following the law $\Lo(\PP_{\kappa;\ulin\rho,\oplus}^{w;\ulin v,u})$ is called a CP  SLE$_\kappa(\ulin\rho)$ curve in $\HH$ started from $w$, with force points $\ulin v$, and conditioned to pass through $u$ or simply a conditional CP SLE$_\kappa(\ulin\rho)$ at $(w;\ulin v)\to u\to\infty$.  The part of $\eta_\oplus$ before $\tau_u$ is a CP SLE$_\kappa(\ulin\rho,\sigma')$ at   $(w;\ulin v,u)$, which ends at $u$; and  the part of $\eta_\oplus$ after $\tau^W_u$  is an SLE$_\kappa(\rho_\Sigma)$  in $\HH(\eta_\oplus|(\tau^W_u)^+)$, started from $\eta_\oplus((\tau^W_u)^+)$, aimed at $\infty$, and with the force point $\eta_\oplus((\tau^W_u)^+)^+$.
If $f$ maps $\HH$ conformally onto a simply connected domain $D$, then  $[f\circ \eta_\oplus]$ is called a conditional (MTC) SLE$_\kappa(\ulin\rho)$ at $D:(f(w);{f(v_1),\dots,f(v_m)})\to f(u)\to f(\infty)$.

Let $\alpha=\alpha(\kappa,\rho_\Sigma)$ be as in (\ref{alpha}). From $\rho_\Sigma \in (\frac\kappa 2-4,\frac\kappa 2-2)$ we know that $\alpha\in(0,1)$.
By \cite[Theorem 4.1]{Green-kappa-rho}, for any $u\in I_\infty$,
  \BGE \lim_{r\to 0^+} r^{-\alpha} \PP_{w;\ulin v}^{\kappa;\ulin\rho}[\eta\cap [u-r,u+r]\ne \emptyset]=G_C(w,\ulin v;\cdot)=C_{\kappa;\rho_\Sigma} G(w,\ulin v;u),\label{C-kappa-rho}\EDE
  where $G_C(w,\ulin v;\cdot)$ is called  the Green's function (with exponent $\alpha$) for SLE$_\kappa(\ulin\rho)$ at $\HH:(w;\ulin v)\to \infty$,   $C_{\kappa;\rho_\Sigma}\in (0,\infty)$ is a constant depending only on $\kappa$ and $\rho_\Sigma$, and
   \BGE G(w,\ulin v;u):=|w-u|^{\frac{2(\kappa-8-2\rho_\Sigma)}{2\kappa}}  \prod_{j=1}^m |v_j-u|^{\frac{\rho_j(\kappa-8-2\rho_\Sigma)}{2\kappa}} . \label{Gwvu}\EDE
We write $M^{W;u}_t$ for the  $M^{W}_t$ in (\ref{MWt}) for   $\sigma=0$ and $\sigma'=\kappa-8-2\rho_\Sigma$. Then it can be expressed by
\BGE  M^{W;u}_t= (g^W_t)'(u)^\alpha G(W_t,\ulin V^W_t;g^W_t(u)),\label{Mu-express}\EDE
where $\ulin V^W_t:=(\la v_1\ra^W_t,\dots,\la v_m\ra^W_t)$.

Let $d=1-\alpha\in(0,1)$.
By  \cite[Theorem 6.17]{Green-kappa-rho}, when $W$ follows the law $ \PP_{w;\ulin v}^{\kappa;\ulin\rho}$, almost surely the Minkowski content measure  ${\cal M}_{\eta^W\cap \lin I_\infty}$  exists and is atomless, and the minimal closed support of ${\cal M}_{\eta^W\cap \lin I_\infty}$ is ${\eta^W\cap \lin I_\infty}$. The fact that   ${\cal M}_{\eta^W\cap \lin I_\infty}$ is atomless together with the definition of Minkowski content measure implies that, for any compact interval $I\subset \lin I_\infty$, $\Cont_d(\eta^W\cap I)$ exists and equals  ${\cal M}_{\eta^W\cap \lin I_\infty}(\eta^W\cap I)$. 

By \cite[Theorem 6.17 (i)]{Green-kappa-rho}, for any measurable set $S\subset I_\infty$,
\BGE  \EE_{\kappa;\rho}^{w;\ulin v}[{\cal M}_{\eta\cap \lin I_\infty}(S)]=\int_S G_C(w,\ulin v; u)\mA(du) .\label{Minkowski-Green}\EDE
 We are going to prove the following theorem.

\begin{Theorem}
We have the equality
 \BGE \Lo( \PP^{w;\ulin v}_{\kappa;\ulin\rho})(d\eta) \rotimes {\cal M}_{\eta\cap \lin I_\infty}(du)= \Lo(\PP_{\kappa;\rho,\oplus}^{w;\ulin v,u})(d\eta)\lotimes  {\ind}_{I_\infty}(u) G_C(w,\ulin v; u)\cdot \mA (du).\label{decomposition-formula}\EDE
  \label{Thm-decomposition}
\end{Theorem}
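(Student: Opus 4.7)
\medskip

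\noindent\textbf{Proof proposal.} The plan is to test both sides of (\ref{decomposition-formula}) against products $f(\eta)\cdot h(u)$ where $f$ is $\F_\tau$-measurable for some stopping time $\tau$ at which $\eta$ has not yet reached the support of $h$, and then match the two integrals using the Girsanov identity from Lemma \ref{P/P-kappa-rho} together with the conditional expectation identity (\ref{Minkowski-Green}). Concretely, for a compact interval $A\subset I_\infty$, let $\tau_A^W$ be the first $t$ such that $\eta^W(t)$ enters any neighborhood that forces $\eta$ to have separated from some point of $A$, and consider stopping times $\tau<\tau_A^W$ (so $\eta[0,\tau]\cap A=\emptyset$ and $(g^W_\tau)'(u)$ is well-defined and positive for all $u\in A$). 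Using Proposition \ref{Prop-fubini} with $\Up=A$, $\xi=\ind_A G_C(w,\ulin v;u)\,\mA(du)$, and the kernel $\nu(u,\cdot)=\Lo(\PP^{w;\ulin v,u}_{\kappa;\ulin\rho,\oplus})$, one sees that the RHS of (\ref{decomposition-formula}) makes sense as a $\sigma$-finite measure locally absolutely continuous with respect to $\Lo(\PP^{w;\ulin v}_{\kappa;\ulin\rho})(d\eta)\otimes \mA(du)$ on the set $\{\tau<\tau^W_A\}$; the goal is then to identify its RN derivative.

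For the RHS, Lemma \ref{P/P-kappa-rho} with $\sigma=0$ and $\sigma'=\kappa-8-2\rho_\Sigma$ gives that on $\F_\tau\cap\{\tau<\tau^W_u\}$,
\[
\frac{d\PP^{w;\ulin v,u}_{\kappa;\ulin\rho,\sigma'}}{d\PP^{w;\ulin v}_{\kappa;\ulin\rho}}=\frac{M^{W;u}_\tau}{M^{W;u}_0}=\frac{M^{W;u}_\tau}{G(w,\ulin v;u)}.
\]
Since before $\tau^W_u$ the law $\PP^{w;\ulin v,u}_{\kappa;\ulin\rho,\oplus}$ agrees with $\PP^{w;\ulin v,u}_{\kappa;\ulin\rho,\sigma'}$ by the definition (\ref{dP-kappa-rho'-U}), and since $G_C=C_{\kappa;\rho_\Sigma}G$ by (\ref{C-kappa-rho}), integrating against $\ind_A G_C(w,\ulin v;u)\mA(du)$ shows that the RHS of (\ref{decomposition-formula}) restricted to $\F_\tau\cap\{\tau<\tau^W_A\}\times A$ equals
\[
\Lo(\PP^{w;\ulin v}_{\kappa;\ulin\rho})(d\eta)\rotimes C_{\kappa;\rho_\Sigma}\ind_A(u)M^{W;u}_\tau\cdot\mA(du).
\]

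For the LHS, apply the DMP of CP SLE$_\kappa(\ulin\rho)$ at the stopping time $\tau$: conditionally on $\F_\tau$ and $\{\tau<\ha T\}$, the image $g^W_\tau(\eta(\tau+\cdot))$ has the law of a CP SLE$_\kappa(\ulin\rho)$ at $(W_\tau;\ulin V^W_\tau)$. Since $\tau<\tau^W_A$, we have $\eta\cap A=\eta(\tau+\cdot)\cap A=(g^W_\tau)^{-1}(g^W_\tau(\eta(\tau+\cdot))\cap g^W_\tau(A))$, so Proposition \ref{Prop-Minkowski}(iv) (conformal covariance of Minkowski content measure with exponent $d=1-\alpha$) together with the expectation formula (\ref{Minkowski-Green}) applied in the image coordinate yields
\[
\EE\bigl[{\cal M}_{\eta\cap\lin I_\infty}(A)\bigm|\F_\tau\bigr]\ind_{\{\tau<\tau^W_A\}}=\ind_{\{\tau<\tau^W_A\}}\int_A (g^W_\tau)'(u)^\alpha G_C\bigl(W_\tau,\ulin V^W_\tau;g^W_\tau(u)\bigr)\mA(du).
\]
By the identity (\ref{Mu-express}), the integrand equals $C_{\kappa;\rho_\Sigma}M^{W;u}_\tau$, so the LHS of (\ref{decomposition-formula}) restricted to $\F_\tau\cap\{\tau<\tau^W_A\}\times A$ also equals $\Lo(\PP^{w;\ulin v}_{\kappa;\ulin\rho})(d\eta)\rotimes C_{\kappa;\rho_\Sigma}\ind_A(u)M^{W;u}_\tau\cdot\mA(du)$, matching the RHS.

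To conclude, I will let $\tau=\tau^W_{A,\eps}$ be the first time $\eta$ enters an $\eps$-neighborhood of $A$ (with the convention $\tau^W_{A,\eps}=\infty$ if $\eta$ never does), and the above equality holds on $\{\tau<\infty\}$ up to $\tau$; since on $\{\tau^W_{A,\eps}<\ha T\}$ the post-$\tau$ piece determines ${\cal M}_{\eta\cap A}$ only through the part of $\eta$ that is $\eps$-close to $A$, standard monotone convergence as $\eps\to 0$ upgrades equality on $\{\tau^W_{A,\eps}<\infty\}$ of $\F_{\tau^W_{A,\eps}}$-measurable marginals to equality of the full joint measures on $A$. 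A monotone class argument in $A$ (using that $I_\infty$ is a countable union of compact intervals and that both sides are $\sigma$-finite by (\ref{Minkowski-Green}) and by finiteness of $\Lo(\PP^{w;\ulin v,u}_{\kappa;\ulin\rho,\oplus})$) extends the identity to all Borel $A\subset I_\infty$, giving (\ref{decomposition-formula}). The main obstacle is the step where one must verify that the continuation part in the definition (\ref{dP-kappa-rho'-U}) — namely, continuing past $\tau^W_u$ by an independent SLE$_\kappa(\rho_\Sigma)$ at $(0;0^+)$ — correctly reproduces the actual post-$\tau^W_u$ behavior of $\eta$ under the LHS; this is secured by applying the DMP of SLE$_\kappa(\ulin\rho)$ at a sequence of stopping times approaching $\tau^W_u$, at which all force points coalesce to $W$, and invoking (\ref{Pkapparho+}).
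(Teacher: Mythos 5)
Your Girsanov/Green's-function matching is essentially correct and parallels the computation in the paper: the identity $d\PP^{w;\ulin v,u}_{\kappa;\ulin\rho,\sigma'}/d\PP^{w;\ulin v}_{\kappa;\ulin\rho}=M^{W;u}_\tau/G(w,\ulin v;u)$ on $\F_\tau\cap\{\tau<\tau^W_u\}$, the conditional-expectation formula for $\EE[{\cal M}_{\eta\cap\lin I_\infty}(A)\mid \F_\tau]$, and (\ref{Mu-express}) do show that the two sides of (\ref{decomposition-formula}) agree after killing the curve at any stopping time $\tau$ prior to reaching $A$. But this only identifies the joint law of $(\K_\tau(\eta),u)$; it says nothing about the curve after it reaches a neighborhood of $A$, and in particular nothing about the curve after the time $\tau^W_u$. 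Your concluding step --- equality of the $\F_{\tau^W_{A,\eps}}$-marginals for every $\eps$ plus ``monotone convergence as $\eps\to 0$'' --- cannot upgrade this to equality of the full joint measures: the $\sigma$-algebra generated by all the $\F_{\tau^W_{A,\eps}}$ does not capture the post-entry behavior of the curve, and two measures on curve--point pairs can agree on all these truncations while prescribing different continuations after the curve approaches $A$.

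The missing ingredient is exactly the paper's Proposition \ref{KPOK-Prop}: cutting the curve at a time sampled from $d\Theta^W_S$ and concatenating an independent SLE$_\kappa(\rho_\Sigma)$ at $(0;0^+)$ reproduces $\PP^{w;\ulin v}_{\kappa;\ulin\rho}(dW)\rotimes d\Theta^W_S$. Your proposed fix --- applying the DMP at a sequence of stopping times approaching $\tau^W_u$ and invoking (\ref{Pkapparho+}) --- does not work as stated, because on the left-hand side of (\ref{decomposition-formula}) the point $u$ is sampled from ${\cal M}_{\eta\cap\lin I_\infty}$, a measure depending on the entire curve; hence the cut time is not a stopping time for the curve--point pair and the DMP cannot be invoked at it. For a fixed deterministic $u$ the time $\tau^W_u$ is a stopping time and (\ref{Pkapparho+}) applies directly, but that is not the situation one faces on the left-hand side. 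The paper resolves this by discretization: round the sampled time down to the last hitting time $\tau^W_{x_{n;k-1}}$ of a dyadic point (a genuine stopping time where (\ref{Pkapparho+}) applies), show the rounded times converge $d\Theta^W_S$-a.e.\ using that $d\Theta^W_S$ is atomless and its support structure, and pass to the limit by weak convergence. Without an argument of this type, identifying the conditional law of the curve after $\tau^W_u$ given the pre-$\tau_u$ part and $u$, the proof is incomplete; once such a statement is in hand, your RN-process matching together with Proposition \ref{Prop-fubini} and the uniqueness of a measure with a prescribed RN process (Section \ref{section-lifetime}) recovers the paper's route via (\ref{KPOK=}).
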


\begin{Remark}
In plain words, the theorem says that we have two ways to sample a random pair $(\eta,u)$ with the same (infinite) measure on $C([0,\infty),\lin\HH)\times I_\infty$:
  \begin{itemize}
    \item [(i)] First sample a CP SLE$_\kappa(\ulin\rho)$  curve $\eta$ at $(w;\ulin v)$, and then sample a point $u\in \eta\cap I_\infty$ according to the Minkowski content measure ${\cal M}_{\eta\cap \lin I_\infty}$.
    \item [(ii)] First sample a random point $u\in I_\infty$ with density being the Green's function for a CP SLE$_\kappa(\ulin\rho)$ at $(w;\ulin v)$, and then sample a conditional CP SLE$_\kappa(\ulin\rho)$ curve $\eta$ at $(w;\ulin v)\to u\to\infty$.
  \end{itemize}
  We call the theorem a decomposition of the CP SLE$_\kappa(\ulin\rho)$ curve $\eta$ at $(w;\ulin v)$ according to the $d$-dimensional Minkowski content measure on  $\eta\cap \lin I_\infty$.
\end{Remark}

Fix $L>0$. Let  $S=[v_m,v_{m}+L]$.
Define $\Theta^W_S(t)=\Cont_d(\eta^W[0,t]\cap S)$. From the existence of  ${\cal M}_{\eta\cap\lin I_\infty}$ under the law $\PP^{w;\ulin v}_{\kappa;\ulin\rho}$, we know that $\Theta^W_S$ is $ \PP^{w;\ulin v}_{\kappa;\ulin\rho}$-a.s.\ well defined, continuous, increasing, and adapted to  the $ \PP^{w;\ulin v}_{\kappa;\ulin\rho}$-completion of the natural filtration.
Thus,  the measure $d\Theta^W_S$ on $[0,\infty)$ determined by $\Theta^W_S$ is an $ \PP^{w;\ulin v}_{\kappa;\ulin\rho}$-kernel, and is atomless.  By (\ref{Minkowski-Green}) and (\ref{C-kappa-rho}),
$$ \EE_{\kappa;\rho}^{w;\ulin v} [\Theta^W_S(\infty)]=\EE_{\kappa;\rho}^{w;\ulin v}[{\cal M}_{\eta^W\cap \lin I_\infty}(S)]=C_{\kappa;\rho_\Sigma} \int_S G(w,\ulin v;u)\mA(du)<\infty,$$
which together with Proposition \ref{trancation-remark} implies that $\K_{d\Theta^W_S}(\PP_{\kappa;\ulin\rho}^{w;\ulin v}(dW))\tl \PP_{\kappa;\ulin\rho}^{w;\ulin v}(dW)$, and the RN process is
\BGE M^{W;S}_t:= \EE_{\kappa;\rho}^{w;\ulin v}[{\cal M}_{\eta^W\cap \lin I_\infty}(S)|\F_t]-\Theta^W_S(t).\label{MWS1}\EDE
By \cite[Theorem 6.21]{Green-kappa-rho} and (\ref{Mu-express}), we have
\begin{align}
M^{W;S}_t&=\int_S {\ind}_{[0,\tau^W_u)}(t) (g^W_t)'(u)^\alpha   G_C(W(t),\ulin V^W(t);g^W_t(u)) \mA(du) \nonumber\\
&=C_{\kappa;\rho_\Sigma}\int_S {\ind}_{[0,\tau^W_u)}(t)  M^{W;u}_t \mA(du).\label{MWS2}
\end{align}
By Lemma \ref{P/P-kappa-rho}, for each $u\in I_\infty$, $\K_{\tau^W_u}(\PP^{w;\ulin v,u}_{\kappa;\ulin\rho,\sigma'}(dW))\tl \PP^{w;\ulin v}_{\kappa;\ulin\rho}(dW)$, and the RN process is $( {\ind}_{[0,\tau^W_u)}(t)  \frac{M^{W;u}_t}{M^{W;u}_0})$. By Proposition \ref{Prop-fubini},
 $\int_S \K_{\tau^W_u}(\PP^{w;\ulin v,u}_{\kappa;\ulin\rho,\sigma'}(dW))  G_C(w,\ulin v;u)\mA(du)\tl  \PP^{w;\ulin v}_{\kappa;\ulin\rho}$,
and the RN process is
$$  \int_S {\ind}_{[0,\tau^W_u)}(t) \frac{M^{W;u}_t}{M^{W;u}_0} G_C(w,\ulin v;u)\mA(du) =C_{\kappa;\rho_\Sigma}\int_S  {\ind}_{[0,\tau^W_u)}(t)   M^{W;u}_t \mA(du)=M^{W;S}_t,\quad t\ge 0.$$
Since  $\K_{d\Theta^\cdot_S}(\PP_{\kappa;\ulin\rho}^{w;\ulin v} )$ and $\int_S \K_{\tau_u}( \PP^{w;\ulin v,u}_{\kappa;\ulin\rho,\sigma'})  G_C(w,\ulin v;u)\mA(du)$ are both locally absolutely continuous w.r.t.\ $ \PP^{w;\ulin v}_{\kappa;\ulin\rho}(dW)$, and the RN processes are the same: $(M^{\cdot;S}_t)_{t\ge 0}$, we get
$$\K_{d\Theta^\cdot_S}(\PP_{\kappa;\ulin\rho}^{w;\ulin v}  )=\int_S  \K_{\tau _u}(\PP^{w;\ulin v,u}_{\kappa;\ulin\rho,\sigma'} )  G_C(w,\ulin v;u)\mA(du).$$
Acting $\ha\oplus_{+} \PP_{\kappa;\rho_\Sigma}^{0;0^+}$ on both sides of the above equality from their right and using (\ref{dP-kappa-rho'-U}), we get
\BGE \K_{d\Theta^\cdot_S}(\PP_{\kappa;\ulin\rho}^{w;\ulin v})  \ha\oplus_{+} \PP_{\kappa;\rho_\Sigma}^{0;0^+} =  \int_S  \PP^{w;\ulin
v,u}_{\kappa;\ulin\rho,\oplus}    G_C(w,\ulin v;u)\mA(du).\label{KPOK=}\EDE
The following proposition describes the LHS of (\ref{KPOK=}).

\begin{Proposition}
  \BGE \K_{d\Theta^\cdot_S}(\PP_{\kappa;\ulin\rho}^{w;\ulin v} )\ha\oplus_{+} \PP_{\kappa;\rho_\Sigma}^{0;0^+}= \PP_{\kappa;\ulin\rho}^{w;\ulin v} (dW) \rotimes d\Theta^W_S.\label{KPOP=PO}\EDE
  \label{KPOK-Prop}
\end{Proposition}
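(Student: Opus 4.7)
The plan is to reduce the identity to a direct application of the strong Markov property of SLE$_\kappa(\ulin\rho)$ along the time-change generated by $\Theta^W_S$. Since $\Theta^W_S$ is $\PP_{\kappa;\ulin\rho}^{w;\ulin v}$-a.s.\ continuous, nondecreasing, adapted to the $\PP$-completion of $\F$, with $\Theta^W_S(0)=0$ and $\Theta^W_S(\infty)<\infty$, its right-continuous inverse
$$\sigma^W_s:=\inf\{t\ge 0:\Theta^W_S(t)\ge s\},\quad s\ge 0,$$
is a stopping time in the completed filtration for each fixed $s$, and $d\Theta^W_S$ is the pushforward of $\ind_{[0,\Theta^W_S(\infty))}\,\mA(ds)$ under $s\mapsto\sigma^W_s$.

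By a monotone class argument, the equality of the two measures on $\Sigma\times[0,\infty)$ reduces to showing
\begin{align*}
&\EE_{\kappa;\ulin\rho}^{w;\ulin v}\Bigl[\int_0^\infty F(\K_t(W),t)\,\Phi(W_{t+\cdot}-W_t)\,d\Theta^W_S(t)\Bigr]\\
&\quad =\EE_{\kappa;\ulin\rho}^{w;\ulin v}\Bigl[\int_0^\infty F(\K_t(W),t)\,d\Theta^W_S(t)\Bigr]\cdot\EE_{\kappa;\rho_\Sigma}^{0;0^+}[\Phi]
\end{align*}
for all bounded measurable $F:\Sigma\times[0,\infty)\to\R$ and $\Phi:\Sigma_{(0)}\to\R$. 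Changing variables $t=\sigma^W_s$ and applying Fubini on the left side gives
$$\int_0^\infty\EE_{\kappa;\ulin\rho}^{w;\ulin v}\Bigl[\ind_{\{\sigma^W_s<\infty\}}\,F(\K_{\sigma^W_s}(W),\sigma^W_s)\,\Phi(W_{\sigma^W_s+\cdot}-W_{\sigma^W_s})\Bigr]\,\mA(ds).$$

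The key observation is that for $\mA$-a.e.\ $s\in[0,\Theta^W_S(\infty))$, the time $\sigma^W_s$ lies in the support of $d\Theta^W_S$, which equals $\{t:\eta^W(t)\in S\}$ since $\Theta^W_S$ grows precisely when the curve is on the closed set $S$. Therefore $\eta^W(\sigma^W_s)\in S\subset[v_m,\infty)$ on $\{\sigma^W_s<\infty\}$, so all force points have already been swallowed at that instant and $V^j_{\sigma^W_s}=W_{\sigma^W_s}$ for every $j$. The DMP of SLE$_\kappa(\ulin\rho)$ recalled in Section \ref{section-SLE}, applied at the stopping time $\sigma^W_s$ exactly as in the derivation of (\ref{Pkapparho+}), then gives that conditionally on $\F_{\sigma^W_s}\cap\{\sigma^W_s<\infty\}$, the shifted continuation $W_{\sigma^W_s+\cdot}-W_{\sigma^W_s}$ is an SLE$_\kappa$ driving function with $m$ force points all of type $0^+$, whose law collapses to $\PP_{\kappa;\rho_\Sigma}^{0;0^+}$. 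Substituting this conditional expectation into the integrand and undoing the change-of-variables produces the desired factorization.

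The main nontrivial ingredient is the identification $\mathrm{supp}(d\Theta^W_S)=\{t:\eta^W(t)\in S\}$ up to an $s$-null set, which rests on the existence and atomlessness of the Minkowski content measure ${\cal M}_{\eta^W\cap\lin I_\infty}$ quoted from \cite[Theorem 6.17]{Green-kappa-rho}, together with the basic fact that a continuous nondecreasing function transfers Lebesgue measure along its right-continuous inverse onto its distributional measure. Once this is in place the rest is a clean application of strong Markov and Fubini, and no Girsanov or Radon–Nikodym estimate is required for this particular step.
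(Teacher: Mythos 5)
Your proposal is correct, but it takes a genuinely different route from the paper. The paper never applies the Markov property at the random times carried by $d\Theta^W_S$: instead it discretizes, replacing the sampled time $t$ by $T_n(W,t)$, the last dyadic hitting time $\tau^W_{x_{n;k-1}}$ with $x_{n;k}=v_m+k2^{-n}L$, applies the collapse of the force points (i.e.\ the identity (\ref{Pkapparho+})) only at these deterministic-point hitting times, and then passes to the limit by showing $T_n(W,t)\to t$ for $d\Theta^W_S$-a.e.\ $t$ (using that $d\Theta^W_S$ is atomless) and invoking weak convergence of the pushforwards. You instead work directly with the right-continuous inverse $\sigma^W_s$, use the time-change formula to convert $d\Theta^W_S$ into Lebesgue measure in $s$, and apply the strong Markov property at each $\sigma^W_s$; this eliminates the discretization and the weak-convergence step altogether, at the cost of two points that need care. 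First, $\Theta^W_S$ (hence $\sigma^W_s$) is only adapted to the $\PP^{w;\ulin v}_{\kappa;\ulin\rho}$-completion of $\F$, so you must note that the DMP of SLE$_\kappa(\ulin\rho)$ extends to stopping times of the augmented filtration (routine, but worth saying, and it is exactly what the paper avoids by using the $\tau_{x_{n;k-1}}$). Second, your inference ``the tip lies in $S$, so all force points are swallowed and $V^j_{\sigma^W_s}=W_{\sigma^W_s}$'' is slightly too quick: swallowing only gives $V^j_{\sigma^W_s}=D^W_{\sigma^W_s}$, and you still need $D^W_{\sigma^W_s}=W_{\sigma^W_s}$. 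This does hold, because after the hull's real trace reaches $b^W_t$ the curve can only touch fresh real points to its right, so $\eta^W(\sigma^W_s)=b^W_{\sigma^W_s}$ is being swallowed at exactly that instant and the same squeeze $W\le D\le \la \eta^W(\sigma^W_s)\ra^W\to W$ used to justify (\ref{Pkapparho+}) applies; adding that one observation closes the argument. With these two points made explicit, your proof is a clean and arguably more direct alternative to the paper's approximation scheme, which buys simplicity in the limit procedure at the price of a slightly more delicate pathwise/stopping-time analysis.
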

\begin{proof}
For $x\in\R$, let $C_x([0,\infty))=\{f\in C([0,\infty)):f(0)=x\}$. Then $\PP_{\kappa;\ulin\rho}^{w;\ulin v}$ is supported by $C_w([0,\infty))$. Define two probability spaces $(\Omega_2,\PP_2)$ and $(\Omega_3,\PP_3)$ and a map $h_3:\Omega_3\to\Omega_2$ by
$$\Omega_2=C_w([0,\infty))\times [0,\infty),\quad \PP_2= \PP_{\kappa;\ulin\rho}^{w;\ulin v}(dW)\rotimes d\Theta^W_S;$$
$$\Omega_3=\Omega_2 \times C_0([0,\infty)),\quad \PP_3=\PP_2\otimes \PP^{0;0^+}_{\kappa;\rho_\Sigma};$$
$$h_3(W,t,Z)=\K_t(W)\ha \oplus_{+} Z=(\K_t(W) \oplus_{+} Z, t).$$ Then   (\ref{KPOP=PO}) becomes $h_3(\PP_3)=\PP_2$ because  $h_3(\PP_3)= \K_{d\Theta^\cdot_S}(\PP_{\kappa;\ulin\rho}^{w;\ulin v} )\ha\oplus_{+} \PP_{\kappa;\rho_\Sigma}^{0;0^+}$.

For $n\in\N$, let $x_{n;k}= v_m+\frac k{2^n} L$, $0\le k\le 2^n$. Note that $x_{n;0}=v_m$ and $x_{n;2^n}=v_m+L$.
Define $T_n:C_w([0,\infty))\times [0,\infty)\to [0,\infty)$, $h^n_2:\Omega_2\to \Omega_2$ and $h^n_3:\Omega_3\to \Omega_2$ by
$$T_n(W,t)=\left\{\begin{array}{lll} t, & \mbox{if } t<\tau^W_{x_{n;0}}\mbox{ or } t\ge  \tau^W_{x_{n;2^n}}, &\\
\tau^W_{x_{n;k-1}}, & \mbox{if } \tau^W_{x_{n;k-1}}\le t<\tau^W_{x_{n;k}}, & 1\le k\le 2^n,
\end{array}
\right.$$
$$h_2^n(W,t)=(W,T_n(W,t)),\quad h_3^n(W,t,Z)=h_3(W,T_n(W,t),Z)
. $$
Let $W\in C_w([0,\infty))$ and $t_0\in[0,\infty)$, Then the limit $t_0':=\lim T_n(W,t_0)$ exists in $[0,t_0]$. We claim that $s\mapsto b^W_s$ is constant on $[t_0',t_0)$ if $ t_0'<t_0$. From $t_0'<t_0$ we get $\tau^W_{v_m}\le t_0'<t_0<\tau^W_{v_m+L}$, and so $v_m\le b^W_{t_0'}\le b^W_{t_0}< v_m+L$.  If the claim is not true, then there is $s_0\in(t_0',t_0)$ such that $b^W_{s_0}>b^W_{t_0'}$. There is $n_0\in\N$ such that for some $1\le k\le 2^{n_0}$, $b^W_{t_0'}<x_{n_0;k-1}\le b^W_{s_0}<x_{n_0;k}$. Then $t_0'<\tau^W_{x_{n_0;k-1}}\le s_0<\tau^W_{x_{n_0;k}}$. So we have $T_{n_0}(W,t_0)\ge T_{n_0}(W,s_0)=\tau^W_{x_{n_0;k-1}}>t_0'$, which contradicts that $t_0'$ is the increasing limit of $T_n(W,t_0)$. So the claim is proved. If $W$ generates a CP Loewner curve $\eta$, then no matter whether $t_0'=t_0$ or $t_0'<t_0$, the arc $\eta(t_0',t_0)$ does not intersect $(b^W_{t_0'},\infty)$, and so $\Theta^W_S$ is constant on $[t_0',t_0]$.

Suppose $W$ follows the law $\PP_{\kappa;\ulin\rho}^{w;\ulin v}$ and let $\eta=\Lo(W)$.
Let $F_W$ be the minimal closed support of $d\Theta^W_S$, and let $\Xi_W$ be the collection of connected components of $(w,\infty)\sem F_W$. If $t_0'<t_0$, then $(t_0',t_0)\cap F_W=\emptyset$ because $d\Theta^W_S$ has no mass on $(t_0',t_0)$, which implies that $t_0\in \bigcup_{I\in \Xi_W} \lin I$. Since $d\Theta^W_S$ has no point mass and $\Xi_W$ is countable, we conclude that the set  of $t_0$ such that $t_0'=\lim T_n(W,t_0)<t_0$ has zero $d\Theta^W_S$ mass. Hence for $\PP_2=\PP_{\kappa;\ulin\rho}^{w;\ulin v}\rotimes d\Theta^W_S$-a.s.\ every $(W,t)$, $T_n(W,t)\to t$. This implies that $h^n_2(\PP_2)\wto \PP_2$. Moreover for $\PP_2$-a.s.\ every $(W,t)$ and every $Z\in C_0([0,\infty))$, $h_3^n(W,t,Z) =h_3(h_2^n(W,t),Z)  \to  h_3(W,t,Z)$.
Thus, $\PP_3$-a.s.\ $h_3^n\to h_3$, which implies that  $h^n_3(\PP_3)\wto h_3(\PP_3)$.

Since $h^n_2(\PP_2)\wto \PP_2$ and $h^n_3(\PP_3)\wto h_3(\PP_3)$, in order to prove that  $h_3(\PP_3)=\PP_2$, it suffices to show that  $h_3^n(\PP_3)=h_2^n(\PP_2)$ for every $n\in\N$.
Fix $n\in\N$. From (\ref{Pkapparho+}), for any $1\le k\le 2^n$,
$$ \PP_{\kappa;\ulin\rho}^{w;\ulin v}(dW) \rotimes  \delta_{\tau^W_{x_{n;k-1}}}=  \K_{ \delta_ {\tau _{x_{n;k-1}}}}(\PP_{\kappa;\ulin\rho}^{w;\ulin v} )\ha\oplus_{+} \PP_{\kappa;\rho_\Sigma}^{0;0^+}.$$ 
Since $\Theta^\cdot_S(\tau_{x_{n;k}})-\Theta^\cdot_S(\tau_{x_{n;k-1}})=\Cont_d(\eta\cap [x_{n;k-1},x_{n;k}])$ and $\Theta^\cdot_S$ is constant on $[0,\tau_{x_{n;0}}]$ and $[\tau_{x_{n;2^n}},\infty)$, we have
\begin{align*}
h_2^n(\PP_2)&= \sum_{k=1}^{2^n} \PP_{\kappa;\ulin\rho}^{w;\ulin v}(dW) \rotimes \Cont_d(\eta^W\cap [x_{n;k-1},x_{n;k}])\delta_{\tau^W_{x_{n;k-1}}};\\
h_3^n(\PP_3)&=\sum_{k=1}^{2^n} \Cont_d(\eta^W\cap [x_{n;k-1},x_{n;k}])  \K_{  \delta_ {\tau^W_{x_{n;k-1}}}}(\PP_{\kappa;\ulin\rho}^{w;\ulin v}(dW))\ha\oplus_{+} \PP_{\kappa;\rho_\Sigma}^{0;0^+} .
\end{align*}
The above three displayed formulas together imply that  $h_3^n(\PP_3)=h_2^n(\PP_2)$.
\end{proof}

\begin{proof} [Proof of Theorem \ref{Thm-decomposition}]
To prove (\ref{decomposition-formula}), it suffices to show that for any $S=[v_m,v_m+L]$, $L>0$,
\BGE \Lo(\PP_{\kappa;\ulin\rho}^{w;\ulin v})(d\eta)\rotimes \ind_S{\cal M}_{\eta\cap \lin I_\infty} (du) = \Lo(\PP_{\kappa;\ulin\rho,\oplus}^{w;\ulin v,u})(d\eta)\lotimes {\ind}_S G_C(w,\ulin v;u) \mA(du). \label{decomposition-formula-S}\EDE
By (\ref{KPOK=},\ref{KPOP=PO}), we have
\BGE \PP_{\kappa;\ulin\rho}^{w;\ulin v}(dW) \rotimes d\Theta^W_S= \int_S   \PP^{w;\ulin
v,u}_{\kappa;\ulin\rho,\oplus}   G_C(w,\ulin v;u)\mA(du).\label{==}\EDE
Act   $\ha \Lo:(W,t)\mapsto (\Lo(W),\Lo(W)(t))$ on both sides of (\ref{==}) from their left. The lefthand side becomes
$$\ha\Lo(\PP_{\kappa;\ulin\rho}^{w;\ulin v}(dW) \rotimes d\Theta^W_S)=\Lo(\PP_{\kappa;\ulin\rho}^{w;\ulin v})(d\eta)\rotimes{\cal M}_{\eta\cap S}(du) ,$$
where we used the fact that,  $\eta^W(d\Theta^W_S)={\cal M}_{\eta^W\cap S}$.
The righthand side of (\ref{==}) becomes
\begin{align*}
  \int_S \ha\Lo(\PP_{\kappa;\ulin\rho,\oplus}^{w;\ulin v,u})  G_C(w,\ulin v;u)\mA(du)
=&\int_S (\Lo(\PP_{\kappa;\ulin\rho,\oplus}^{w;\ulin v,u})(d\eta)\otimes \delta_u ) G_C(w,\ulin v;u)\mA(du) \\
=&\Lo(\PP_{\kappa;\ulin\rho,\oplus}^{w;\ulin v,u})(d\eta)\lotimes {\ind}_S G_C(w,\ulin v;u) \mA(du),
\end{align*}
where we used the  fact that under $\Lo(\PP_{\kappa;\ulin\rho,\oplus}^{w;\ulin v,u})$, $\eta$ hits $u$ at $\tau_u$.
Since $  {\cal M}_{\eta\cap S}=\ind_S{\cal M}_{\eta\cap \lin I_\infty} $, from (\ref{==}) and the above two formulas,  we get the desired (\ref{decomposition-formula-S}). 
\end{proof}

\subsection{Two-marked-point SLE$_\kappa(\rho)$ loops} \label{section-two-marked}
In this subsection, let $\kappa\in(0,8)$ and $\rho> (-2)\vee (\frac\kappa 2-4)$.
 We are going to use the following symbols.
Let $\til\mu^D_{(w_0;v)\to w_\infty}$ denote the law of an  SLE$_\kappa(\rho)$ at $D:(w_0;v)\to \infty$.   Let $\til\nu^D_{(w_0;v)\to u\to w_\infty}$ denote the law of a conditional  SLE$_\kappa(\rho)$ at  $D:(w_0;v)\to u\to w_\infty$.
Let $\til\nu^D_{(w_0;v_1,v_2)\to w_\infty}$ denote  the law of an SLE$_\kappa(\rho,\rho+2)$ at $D:(w_0;v_1,v_2)\to w_\infty$. We write  $\til\mu^D_{w_0\to w_\infty}$ for $\til\mu^D_{(w_0;w_0^+)\to w_\infty}$, $\til\nu^D_{w_0\to u\to w_\infty}$ for $\til\nu^D_{(w_0;w_0^+)\to u\to w_\infty}$, and $\til\nu^D_{w_0\to w_\infty}$ for $\til\nu^D_{(w_0;w_0^+,w_0^-)\to w_\infty}$.

Recall that if $\eta$ is a conditional CP SLE$_\kappa(\rho)$ at $(w;v)\to u\to\infty $, then $\eta|_{[0,\tau_u)}$ is a CP SLE$_\kappa(\rho,\kappa-8-2\rho)$ at $(w;v,u)$ stopped at the time $\tau_u$, at which $\eta$ hits $u$, and conditionally on $\eta|_{[0,\tau_u)}$, the part of $\eta$ after $\tau_u$, modulo time-change, has the law of $\til \mu^{\HH(\eta[0,\tau_u);\infty)}_{u\to \infty}$, where the symbol $u$ in $\til \mu^{\HH(\eta[0,\tau_u);\infty)}_{u\to \infty}$ denotes the prime end of $\HH(\eta[0,\tau_u);\infty)$ that is identified with the point $u\in\R$. By SLE coordinate changes, $[\eta|_{[0,\tau_u)}]$ has the law of $\til \nu^{\HH}_{(w;v,\infty)\to u}$. So we have
$$\til\nu^{\HH}_{(w;v)\to u\to \infty} =\til \nu^{\HH}_{(w;v,\infty)\to u}(d[\eta]) \oplus \til \mu^{\HH([\eta];\infty)}_{u \to \infty}.$$
Thus, if $D$ is a simply connected domain in $\ha\C$, and $w_0,w_\infty$ are distinct prime ends of $D$, $u\in (w_0,w_\infty)_{\ha\pa D}$, and   $v\in [w_0^+,u)_{\ha\pa D}$, then
\BGE \til\nu^D_{(w_0;v)\to u\to w_\infty} =\til \nu^D_{(w_0;v,w_\infty)\to u}(d[\eta]) \oplus \til \mu^{D([\eta] ;w_\infty)}_{u\to w_\infty}.
\label{DMP-conditional}\EDE

By the results of \cite{MS1} and \cite{GFF-level},  there is a coupling of two random CP Loewner curves $\eta_+,\eta_-$  in $\lin{\HH}^\#$ from $0$ to $\infty$ such that
\begin{itemize}
  \item [(I)] $\eta_\pm$ is a CP SLE$_\kappa(\rho,\rho+2)$ at $(0;0^\pm,0^\mp)$, which does not intersect $\R_\mp$;
\item[(II)] given $\eta_\pm$, $[\eta_\mp]$ is an (MTC) SLE$_\kappa (\rho)$ at $\HH(\eta_\pm;\R_\mp):(0;0^\mp)\to \infty$, where $0$ and $\infty$ are identified with two prime ends of the Jordan domain $\HH(\eta_\pm;\R_\mp)$.
\end{itemize}
Here when $\kappa\ne 4$, the $\eta_+$ and $\eta_-$ could be constructed as flow lines or counter-flow lines of a Gaussian free field (GFF) with some boundary values (\cite{MS1}); and when $\kappa=4$,   $\eta_+$ and $\eta_-$ could be constructed as level lines of a GFF with some boundary values (\cite{GFF-level}).

Moreover, when $\rho\ge \frac\kappa 2$, a.s.\ $(\eta_+\cup\eta_-)\cap\R=\{0\}$, and when $\rho\in ((-2)\vee (\frac\kappa 2-4),\frac\kappa 2-2)$, a.s.\ $\eta_\pm\cap\R_\pm$ is nonempty and unbounded. In both cases, for any deterministic point $x_0\in\R\sem\{0\}$, a.s.\ $x_0\not\in \eta_+\cup\eta_-$. Both $\eta_+$ and $\eta_-$ satisfy the scaling property: for any $a>0$, $[a\eta_\pm]$ has the same law as $[\eta]$,
and so the set $a\cdot(\eta_\pm\cap \R_\pm)$ has the same law as $\eta_\pm\cap\R_\pm$.

Recall the time-reversal map $\cal R$ defined in Section \ref{section-lifetime}. Let $[\eta_\pm^R]={\cal R}([\eta_\pm])$.
 By (I,II) and the reversibility of SLE$_\kappa(\rho)$ and SLE$_\kappa(\rho_+,\rho_-)$   (\cite[Theorem 1.1]{MS2}, \cite[Theorem 1.2]{MS3}, \cite[Theorem 1.1.6]{GFF-level}),  we know that
 \begin{itemize}
   \item [(III)] $[\eta_-^R]$ is an SLE$_\kappa(\rho,\rho+2)$ at $\HH:(\infty ; \infty^+,\infty^-)\to 0$;
  \item [(IV)]  given $[\eta_+]$, $ [\eta_-^R]$ is an SLE$_\kappa(\rho)$ at $\HH([\eta_+];\R_{-}):(\infty;\infty^+)\to 0$.
\end{itemize}

 We call the loop $[\eta_+]\oplus   [\eta_-^R]$ (resp.\ $[\eta_-]\oplus  [\eta_+^R]$)   a ccw (resp.\ cw)   conditional rooted SLE$_\kappa(\rho)$   loop at $\HH:0\lr\infty$, and denote its law by $\til\nu^{\HH}_{0\lr \infty}$ (resp.\ $\til\nu^{\HH}_{0\rl\infty}$).  
 Using conformal maps, we then define ccw (resp.\ cw) conditional rooted SLE$_\kappa(\rho)$   loop measure $\til\nu^{D}_{w_0\lr w_\infty}$ (resp.\ $\til\nu^{D}_{w_0\lr w_\infty}$) for any simply connected domain $D$ and its two distinct prime ends $w_0,w_\infty$. 
By (I,IV) we have
 \BGE \til\nu^{\HH}_{0\lr \infty} = \til\nu^{\HH}_{0\to \infty}(d[\eta_+])\oplus \til\mu^{\HH(\eta_+;\R_-)}_{\infty\to 0} .\label{DMP-two-loop-0}\EDE

 By (III,II) , $[\eta_-^R]\oplus[\eta_+]$ is  a ccw  conditional rooted SLE$_\kappa(\rho)$   loop at $\HH:\infty\lr 0$. Note that $[\eta_+]\oplus [\eta_-^R]$ and $[\eta_-^R]\oplus[\eta_+]$ are translation equivalent. So we have $\til\pi( \til\nu^{\HH}_{0\lr\infty})= \til\pi (\til\nu^{\HH}_{\infty\lr 0})$. Recall that $\til\pi:[\eta]\mapsto [[\eta]]$ is the projection from rooted MTC loops to unrooted MTC loops. Thus, for any  simply connected domain $D$ with  $w_0\ne w_\infty\in\ha \pa D$, we have
 \BGE \til\pi(\til\nu^D_{w_0\lr w_\infty})=\til\pi(\til\nu^D_{w_\infty\lr w_0}).\label{unrooted=}\EDE

For $u\in I^{\gamma}_{s_0^+;D}$ (resp.\ $u\in I^{[\gamma]}_{\sigma^+;D}$), where $s_0\ge 0$ and $\sigma$ is a TCI time, we define
 \BGE \til\nu^{D;u}_{\gamma|s_0^+}=\til\nu^{D(\gamma|s_0^+)}_{(\gamma(s_0^+);[\gamma(0)^+]^{\gamma}_{s_0^+;D})\to u\to  [\gamma(0)^-]^{\gamma}_{s_0^+;D}},\quad \til\nu^{D;u}_{[\gamma]|\sigma^+}=\til\nu^{D;u}_{\gamma|\sigma(\gamma)^+}
 .\label{til-mu-gamma-u}\EDE

\begin{Lemma}
Let $\sigma$ be a positive TCI stopping time. Let $\sigma_\infty$ be the first hitting time at $\infty$. Then
  \BGE \ind_{\{\sigma< \sigma_\infty\}}  \til\nu^{\HH}_{0\lr \infty}= \ind_{ \til\Sigma_{\sigma}}  \til\nu^{\HH}_{0\to \infty}(d[\gamma])\oplus_\sigma  \til \nu^{\HH;\infty}_{[\gamma]|\sigma^+}.\label{DMP5'}\EDE
  \label{lemma-4.5}
\end{Lemma}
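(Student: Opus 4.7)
First I would express the LHS via (\ref{DMP-two-loop-0}):
$$\ind_{\{\sigma<\sigma_\infty\}}\til\nu^{\HH}_{0\lr\infty}=\ind_{\{\sigma<\sigma_\infty\}}\big(\til\nu^{\HH}_{0\to\infty}(d[\eta_+])\oplus\til\mu^{\HH(\eta_+;\R_-)}_{\infty\to 0}\big).$$
Since $\eta_+$ ends at $\infty$ and $\eta_-^R$ ends at $0$, the composed loop $[\eta_+]\oplus[\eta_-^R]$ visits $\infty$ precisely at the junction between the two factors, so $\sigma_\infty$ evaluated on the loop equals $\ha T_{\eta_+}$. Consequently the indicator $\ind_{\{\sigma<\sigma_\infty\}}$ restricts to $\ind_{\til\Sigma_\sigma}$ on the $[\eta_+]$ factor, and the standard property of $\F$-stopping times gives that $\sigma$ evaluated on the loop agrees with $\sigma$ evaluated on $[\eta_+]$ throughout this event.

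Next I would apply the DMP of SLE$_\kappa(\rho,\rho+2)$ to $[\eta_+]$ at the TCI stopping time $\sigma$: writing $[\gamma]=\K_\sigma([\eta_+])$,
$$\ind_{\til\Sigma_\sigma}\til\nu^{\HH}_{0\to\infty}(d[\eta_+])=\ind_{\til\Sigma_\sigma}\til\nu^{\HH}_{0\to\infty}(d[\gamma])\oplus_\sigma\til\nu^{\HH([\gamma]|\sigma^+)}_{([\gamma](\sigma^+);[0^+]^{[\gamma]}_{\sigma^+;\HH},[0^-]^{[\gamma]}_{\sigma^+;\HH})\to\infty}(d[\eta_+']).$$
Inserting this into the previous display and using (\ref{fgh}) (with $\tau=\ha T$) to pull the $\oplus_\sigma$ outside the trailing $\oplus\til\mu^{\cdots}_{\infty\to 0}$ yields
$$\ind_{\{\sigma<\sigma_\infty\}}\til\nu^{\HH}_{0\lr\infty}=\ind_{\til\Sigma_\sigma}\til\nu^{\HH}_{0\to\infty}(d[\gamma])\oplus_\sigma\Big(\til\nu^{\HH([\gamma]|\sigma^+)}_{([\gamma](\sigma^+);[0^+]^{[\gamma]}_{\sigma^+;\HH},[0^-]^{[\gamma]}_{\sigma^+;\HH})\to\infty}(d[\eta_+'])\oplus\til\mu^{\HH([\gamma]\oplus_\sigma[\eta_+'];\R_-)}_{\infty\to 0}\Big).$$

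To finish I would recognize the inner bracket as $\til\nu^{\HH;\infty}_{[\gamma]|\sigma^+}$. By (\ref{til-mu-gamma-u}) and (\ref{DMP-conditional}), this reduces to two identifications: the domain equality $\HH([\gamma]\oplus_\sigma[\eta_+'];\R_-)=\HH([\gamma]|\sigma^+)([\eta_+'];[0^-]^{[\gamma]}_{\sigma^+;\HH})$, and the prime-end equality between $0$ in the former domain and $[[0^-]^{[\gamma]}_{\sigma^+;\HH}]^{[\eta_+']}_{+;\HH([\gamma]|\sigma^+)}$ in the latter. The domain equality follows from Lemma \ref{continue-Loewner} (which identifies $\gamma\oplus_\sigma\eta_+'$ as an AP Loewner curve in $\HH$) together with the observation that $\R_-$ meets the closure of $\HH([\gamma]|\sigma^+)$ exactly at $[0^-]^{[\gamma]}_{\sigma^+;\HH}$ since $\gamma$ does not touch $\R_-$. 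The prime-end equality is Corollary \ref{[[v]]-cor} applied with $v=0^-$. The main care required is this final prime-end bookkeeping; the rest is a mechanical assembly of (\ref{DMP-two-loop-0}), the DMP of SLE$_\kappa(\rho,\rho+2)$, and the algebraic identity (\ref{fgh}).
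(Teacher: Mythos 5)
Your overall route is the paper's own argument run in the opposite direction: expand the loop measure by (\ref{DMP-two-loop-0}), note that $\sigma_\infty$ on the loop is the junction time so that $\ind_{\{\sigma<\sigma_\infty\}}$ becomes $\ind_{\til\Sigma_\sigma}$ on the first factor, apply the DMP of SLE$_\kappa(\rho,\rho+2)$ at $\sigma$, regroup, and recognize the inner bracket via (\ref{DMP-conditional}); all of that is sound (citing (\ref{fgh}) ``with $\tau=\ha T$'' is an abuse, since that identity requires $\tau<\ha T$, but the associativity you actually need is elementary). The gap is exactly in the step you flag as the main point of care. Corollary \ref{[[v]]-cor} cannot be ``applied with $v=0^-$'' here: it requires a TCI time $\tau<\ha T$ on the composed curve and identifies the prime end $[v]^{[\gamma]\oplus_\sigma[\eta_+']}_{\tau^+;\HH}$ of the intermediate domain $\HH([\gamma]\oplus_\sigma[\eta_+']|\tau^+)$ (the component toward the target $\infty$), whereas what you need is an identification of a prime end of the complementary component $\HH([\gamma]\oplus_\sigma[\eta_+'];\R_-)$ after the entire curve has been drawn; the object $[\,\cdot\,]^{[\eta_+']}_{+;\HH([\gamma]|\sigma^+)}$ ``at the terminal time'' is not defined in the paper's framework. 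Your supporting ``observation'' is also false as stated: since $\gamma$ avoids $\R_-$ and $a^{[\gamma]}_{\sigma^+;\HH}=0$, the whole ray $\R_-$ is a boundary arc of $\HH([\gamma]|\sigma^+)$ (a subinterval of the shared prime-end interval $I^{[\gamma]}_{\sigma^+;\HH}$), and $[0^-]^{[\gamma]}_{\sigma^+;\HH}=\ha a^{[\gamma]}_{\sigma^+;\HH}$ is merely the endpoint of that arc which determines the point $0$; Lemma \ref{continue-Loewner} by itself only says the concatenation is an AP Loewner curve and does not give the component identification.

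The identification you need is true, but it requires the argument the paper gives at this point. Because $[\gamma]\oplus_\sigma[\eta_+']$ has law $\til\nu^{\HH}_{0\to\infty}$ (this is where the DMP is used once more), it avoids $\R_-$, and $0$ is identified with a prime end of $\HH([\gamma]\oplus_\sigma[\eta_+'];\R_-)$. Since $\sigma>0$ and $\rho+2\ge\frac\kappa 2-2$, one has $[\gamma](\sigma^+)\ne[0^-]^{[\gamma]}_{\sigma^+;\HH}$, $[0^-]^{[\gamma]}_{\sigma^+;\HH}=\ha a^{[\gamma]}_{\sigma^+;\HH}$ and $a^{[\gamma]}_{\sigma^+;\HH}=0$, so this prime end determines $0$ and is shared by $\HH([\gamma]|\sigma^+)$ and the component $\HH([\gamma]|\sigma^+)([\eta_+'];[0^-]^{[\gamma]}_{\sigma^+;\HH})$; consequently $\R_-$ is shared as well, which yields the domain equality $\HH([\gamma]|\sigma^+)([\eta_+'];[0^-]^{[\gamma]}_{\sigma^+;\HH})=\HH([\gamma]\oplus_\sigma[\eta_+'];\R_-)$, and then the prime end of the latter domain identified with $0$ must coincide with $[0^-]^{[\gamma]}_{\sigma^+;\HH}$ (a prime end determining $0$ and a prime end identified with $0$ must agree, by the Hausdorff property of the prime-end topology). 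With this replacement for your final paragraph, the assembly goes through and coincides with the paper's proof.
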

\begin{proof}  By the DMP of the SLE$_\kappa(\rho,\rho+2)$, we have
\BGE \ind_{\til\Sigma_{\sigma}} \til \nu^{\HH}_{0\to \infty} =  \ind_{\til\Sigma_{\sigma}}  \til \nu^{\HH}_{0\to \infty}(d[\gamma])\oplus_\sigma \til\nu_{[\gamma];\sigma}.\label{DMP1'}\EDE
where $\til\nu_{[\gamma];\sigma}:=\til \nu^{\HH([\gamma]|\sigma^+)}_{([\gamma](\sigma^+);[0^+]^{[\gamma]}_{\sigma^+;\HH},[0^-]^{[\gamma]}_{\sigma^+;\HH})\to \infty}$.
Fix an MTC curve $[\gamma]$ following the law  $\til \nu^{\HH}_{0\to \infty}$.
 Since $\sigma>0$ and $\rho+2\ge \frac\kappa 2-2$, we have $[\gamma](\sigma^+) \ne [0^-]^{[\gamma]}_{\sigma^+;\HH}$, $[0^-]^{[\gamma]}_{\sigma^+;\HH}=\ha a^{[\gamma]}_{\sigma^+;\HH}$, and $a^{[\gamma]}_{\sigma^+;\HH}=\gamma(0)=0$.
By  (\ref{DMP-conditional}),
\BGE \til\nu_{[\gamma];\sigma} (d[\eta]) \oplus \til\mu^{\HH([\gamma]|\sigma^+)([\eta ];[0^-]^{[\eta]}_{\sigma^+;\HH})}_{\infty\to [0^-]^{[\gamma]}_{\sigma^+;\HH}}  = \til \nu^{\HH;\infty}_{[\gamma]|\sigma^+} .\label{DMP3'}\EDE
Suppose $[\eta]$ follows the conditional law $\til\nu_{[\gamma];\sigma}$. Then $[\gamma]\oplus_\sigma[\eta]$ follows the law $\til\nu^{\HH}_{0\to \infty}$, and so is disjoint from $\R_-$. Moreover, $0$ is identified with a prime end of $\HH([\gamma]\oplus_\sigma[\eta];\R_-)$.
Recall that  the prime end $[0^-]^{[\gamma]}_{\sigma^+;\HH}=\ha a^{[\gamma]}_{\sigma^+;\HH}$   of $\HH([\gamma]|\sigma^+)$   determines the point  $ a^{[\gamma]}_{\sigma^+;\HH}=0$. Since the prime end $[0^-]^{[\gamma]}_{\sigma^+;\HH}$ is shared by $\HH([\gamma]|\sigma^+)$ and $\HH([\gamma]|\sigma^+)([\eta]; [0^-]^{[\gamma]}_{\sigma^+;\HH})$, $\R_-$ is also shared by the two domains.
Thus, $\HH([\gamma]|\sigma^+)([\eta];[0^-]^{[\gamma]}_{\sigma^+;\HH}) 
=\HH([\gamma]\oplus_\sigma[\eta];\R_-)$.

Since $\HH([\gamma]|\sigma^+)$ and $ \HH([\gamma]\oplus_\sigma[\eta];\R_-)$ share the prime end $[0^-]^{[\gamma]}_{\sigma^+;\HH}$, this prime end  also determines $0$. We already know that $0$ is determined by a prime end of  $\HH([\gamma]\oplus_\sigma[\eta];\R_-)$. Thus, such prime end must be $[0^-]^{[\gamma]}_{\sigma^+;\HH}$. Hence, we may rewrite (\ref{DMP3'}) as
 \BGE \til\nu_{[\gamma];\sigma} (d[\eta]) \oplus \til\mu^{\HH([\gamma]\oplus_\sigma [\eta];\R_-)}_{\infty\to 0} = \til \nu^{\HH;\infty}_{[\gamma]|\sigma^+} .\label{DMP4}\EDE
Acting $\til\nu^{\HH}_{0\to\infty}(d[\gamma])\oplus_\sigma$ on both sides of (\ref{DMP4}) and using  (\ref{DMP-two-loop-0},\ref{DMP1'}),   we get (\ref{DMP5'}).
 \end{proof}

\begin{Corollary}
$ \til\nu^{\HH}_{0\lr \infty}$   is supported by the space of Loewner bubbles in $\HH$ rooted at $0$.
\end{Corollary}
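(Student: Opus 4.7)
The plan is to verify the two defining properties of a Loewner bubble in $\HH$ rooted at $0$: (a) it is a rooted MTC loop at $0$, and (b) it is an MTC Loewner curve in $\HH$. Property (a) is immediate from the construction: a $\til\nu^{\HH}_{0\lr\infty}$-sample is $[\eta_+]\oplus[\eta_-^R]$, where $\eta_+$ runs from $0$ to $\infty$ and $\eta_-^R$ runs from $\infty$ back to $0$, so the concatenation is a rooted loop at $0$.

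For (b), I will exploit Lemma \ref{lemma-4.5}. Let $\sigma_n$ be the TCI hitting time of $\{|z|\ge n\}$, a positive TCI $\F$-stopping time. Since $\eta_+$ is continuous and ends at $\infty$, the events $\{\sigma_n<\sigma_\infty\}$ exhaust the full $\til\nu^{\HH}_{0\lr\infty}$-measure as $n\to\infty$, so it suffices to work on each such event. By Lemma \ref{lemma-4.5},
$$\ind_{\{\sigma_n<\sigma_\infty\}}\til\nu^{\HH}_{0\lr\infty}\;=\;\ind_{\til\Sigma_{\sigma_n}}\til\nu^{\HH}_{0\to\infty}(d[\gamma])\oplus_{\sigma_n}\til\nu^{\HH;\infty}_{[\gamma]|\sigma_n^+}.$$
Under $\til\nu^{\HH}_{0\to\infty}$, the curve $[\gamma]$ is an SLE$_\kappa(\rho,\rho+2)$ at $\HH:(0;0^+,0^-)\to\infty$, hence an MTC Loewner curve in $\HH$ by definition. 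It remains to confirm that the concatenation on the right is an MTC Loewner curve in $\HH$.

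By \eqref{til-mu-gamma-u} and \eqref{DMP-conditional}, a sample $[\delta]$ from $\til\nu^{\HH;\infty}_{[\gamma]|\sigma_n^+}$ splits, at its first hitting time of $\infty$, into two chordal SLE-type pieces in $D:=\HH([\gamma]|\sigma_n^+)$: first a chordal SLE$_\kappa(\rho,\kappa-8-2\rho)$ started from $\gamma(\sigma_n^+)$ with force points $[0^+]^{[\gamma]}_{\sigma_n^+;\HH}$ and $[0^-]^{[\gamma]}_{\sigma_n^+;\HH}$ aimed at $\infty$, stopped on hitting $\infty$; then an SLE$_\kappa(\rho)$ from $\infty$ to $[0^-]^{[\gamma]}_{\sigma_n^+;\HH}=\ha a^{[\gamma]}_{\sigma_n^+;\HH}$ in the residual sub-domain (the identification of the prime end uses that $[\gamma]$ does not meet $\R_-$, so $[\gamma](\sigma_n^+)\ne \ha a^{[\gamma]}_{\sigma_n^+;\HH}$). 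Each piece is separately an MTC Loewner curve in its own domain. Lemma \ref{continue-Loewner} glues them into an AP Loewner curve in $D$, since the target $\ha a^{[\gamma]}_{\sigma_n^+;\HH}$ of the second piece lies in the admissible set $I^{\bullet}_{\cdot;D}\cup\{\ha a^{\bullet}_{\cdot;D},\ha b^{\bullet}_{\cdot;D}\}$ for that lemma. A second invocation of Lemma \ref{continue-Loewner}, now with $[\gamma]$ as the outer curve and $[\delta]$ attached at $\sigma_n$, uses that the initial target $\infty$ of $[\delta]$ lies in $I^{[\gamma]}_{\sigma_n^+;\HH}$ (because $\sigma_n<\sigma_\infty$) to produce an AP Loewner curve in $\HH$. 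Hence $[\gamma]\oplus_{\sigma_n}[\delta]$ is an MTC Loewner curve in $\HH$, verifying (b).

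The subtle point, which requires careful bookkeeping, is the instant at which $[\delta]$ passes through $\infty$ inside $D$: Lemma \ref{continue-Loewner} permits gluing at an interior time $s_0<\ha T_\gamma$ of the first curve, not at $s_0=\ha T_\gamma$ itself. To handle this I plan to appeal to the ``without specified target'' formulation from the Definition following Proposition \ref{indep-target-2}, verifying for every $t<\ha T_{[\delta]}$ that $\K_t([\delta])$ is an AP Loewner curve in $D$ aimed at the instantaneous target (either $\infty$ or $\ha a^{[\gamma]}_{\sigma_n^+;\HH}$ depending on whether $t$ precedes or follows the hitting time of $\infty$). Granting this, the two-step Lemma \ref{continue-Loewner} argument then closes the proof.
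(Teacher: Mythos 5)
Your route is the paper's route in outline (Lemma \ref{lemma-4.5} followed by Lemma \ref{continue-Loewner}), but the step you explicitly defer is exactly the nontrivial one, so as written there is a gap. You split the conditional curve $[\delta]$ with law $\til\nu^{\HH;\infty}_{[\gamma]|\sigma_n^+}$ at its passage through $\infty$ via (\ref{DMP-conditional}) and propose to glue the two pieces with Lemma \ref{continue-Loewner}; but that gluing would occur at the \emph{terminal} time of the first piece, which the lemma (requiring $s_0<\ha T_\gamma$) does not cover, as you yourself note. Your proposed repair --- ``verify for every $t<\ha T_{[\delta]}$ that $\K_t([\delta])$ is an AP Loewner curve in $D$ aimed at the instantaneous target'' --- is, for $t$ past the hitting time of $\infty$, precisely the assertion that needs proof; you grant it rather than prove it. The gap is avoidable and, in fact, the inner gluing is unnecessary: by the construction in Section \ref{section-decomp}, the measure $\PP^{w;\ulin v,u}_{\kappa;\ulin\rho,\oplus}$ of (\ref{dP-kappa-rho'-U}) is supported by $\Sigma^{\Lo}$, and a conditional SLE$_\kappa(\rho)$ at $D:(w_0;v)\to u\to w_\infty$ is by definition $[f\circ\eta_\oplus]$ with $f(\infty)=w_\infty$; hence $[\delta]$ is \emph{already} an MTC Loewner curve in $D=\HH([\gamma]|\sigma^+)$ aimed at $[0^-]^{[\gamma]}_{\sigma^+;\HH}=\ha a^{[\gamma]}_{\sigma^+;\HH}$ (the identification using that $[\gamma]$ avoids $\R_-$, as you observed). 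Since $\ha a^{[\gamma]}_{\sigma^+;\HH}$ lies in the admissible set of Lemma \ref{continue-Loewner}, a single application of that lemma at $s_0=\sigma(\gamma)$ shows $[\gamma]\oplus_\sigma[\delta]$ is an MTC Loewner curve in $\HH$, and since $[\delta]$ ends at the prime end determining $0$, it is a Loewner bubble rooted at $0$; this is the paper's one-line proof with $\sigma=\sigma_1$. (Your exhaustion over $\sigma_n$ is harmless but unneeded: each event $\{\sigma_n<\sigma_\infty\}$ already has full measure.)

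A second, minor point: your identification of the law of the first piece of $[\delta]$ conflates two coordinate-change descriptions. Before hitting $\infty$ it is an SLE$_\kappa(\rho,\rho+2)$ at $D:(\gamma(\sigma^+);[0^+]^{[\gamma]}_{\sigma^+;\HH},[0^-]^{[\gamma]}_{\sigma^+;\HH})\to\infty$, or equivalently an SLE$_\kappa(\rho,\kappa-8-2\rho)$ with force points $[0^+]^{[\gamma]}_{\sigma^+;\HH}$ and $\infty$ aimed at $[0^-]^{[\gamma]}_{\sigma^+;\HH}$, stopped on hitting $\infty$ --- not, as you write, an SLE$_\kappa(\rho,\kappa-8-2\rho)$ with force points $[0^+]^{[\gamma]}_{\sigma^+;\HH}$ and $[0^-]^{[\gamma]}_{\sigma^+;\HH}$ aimed at $\infty$. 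This slip does not affect the purely qualitative conclusion you need (each piece is a Loewner curve in its domain), but it should be corrected.
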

\begin{proof}
  This follows immediately from  Lemma \ref{lemma-4.5} applied to $\sigma=\sigma_1$, the hitting time at $\{|z|=1\}$, and    Lemma \ref{continue-Loewner} applied to $s_0=\sigma_1(\gamma)$.
\end{proof}

Let $u\in\R\sem \{0\}$ and $\sigma_u$ be the first hitting time at $u$. By the above results and conformal invariance,  $\til\nu^{\HH}_{0\lr u}$ is supported by the space of Loewner bubbles in $\HH$ rooted at $0$ and passing through $u$, and for any positive TCI stopping time $\sigma$,
  \BGE \ind_{\{\sigma< \sigma_u\}}  \til\nu^{\HH}_{0\lr u}= \ind_{\til\Sigma_{\sigma}}  \til\nu^{\HH}_{0\to u}(d[\gamma])\oplus_\sigma \til \nu^{\HH;u}_{[\gamma]|\tau^+}.\label{DMP-u}\EDE

\subsection{Decomposition theorems} \label{section-proof}
From now on, let $\kappa\in (0,8)$ and $\rho\in((-2)\vee(\frac\kappa 2-4),\frac\kappa 2-2)$.  Let $\alpha $ be as in (\ref{alpha}) and $d=1-\alpha$. Then $\alpha,d\in(0,1)$.
Let   $C_{\kappa;\rho}>0$ be the constant in (\ref{C-kappa-rho}) with $\rho$ in place of $\rho_\Sigma$. 

\begin{Theorem} Let $\til\mu^\HH_{0\ccw}$ be the rooted ccw SLE$_\kappa(\rho)$ bubble measure  in Theorem \ref{Thm-loop-sigma-finite}. Then
\begin{itemize}
  \item [(i)] $\til\mu^\HH_{0\ccw}$ is supported by the set of Loewner bubbles $[\eta]$ in $\HH$ rooted at $0$ such that
  \begin{itemize}
    \item [(a)] the $d$-dimensional Minkowski content measure  ${\cal M}_{[\eta]\cap\R}$ exists;
    \item [(b)] the minimal closed support of ${\cal M}_{[\eta]\cap\R}$ is $[\eta]\cap\R$, which is compact; and
    \item [(c)]  $\Cont_d([\eta]\cap\R) $ exists and lies in $(0,\infty)$.
  \end{itemize}
   \item [(ii)] The following equalities hold:
\BGE \til\mu^{\HH}_{0\ccw }(d[\eta]) \rotimes {\cal M}_{[\eta] \cap \R}(d u) = \til \nu^{\HH}_{0\lr u} (d[\eta]) \lotimes {\ind}_{\R\sem\{0\}}  C_{\kappa;\rho} |u|^{-2\alpha}\cdot \mA(d  u) ,\label{mu-decomp-R}\EDE
\BGE \til\mu ^{\HH}_{0\ccw} =\Cont_d(\cdot\cap\R)^{-1}\cdot \int_{\R\sem\{0\}} \til\nu^{\HH}_{0\lr u}  C_{\kappa;\rho} |u|^{-2\alpha} \mA(du).\label{express-rooted-loop}\EDE
\end{itemize}
\label{Thm-decomposition-loop}
\end{Theorem}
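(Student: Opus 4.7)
The plan is to reduce this decomposition theorem to the SLE$_\kappa(\rho)$ decomposition theorem (Theorem \ref{Thm-decomposition}) via the DMP of the rooted bubble. First I would stop the bubble at $\tau=\sigma_r$, the first hitting time at $\{|z|=r\}$. By Theorem \ref{Thm-loop-sigma-finite}(i), conditionally on $\F_{\sigma_r}$ and on $\til\Sigma_{\sigma_r}$, the continuation is $\til\mu^{\HH}_{[\gamma]|\sigma_r^+}$, which after conformally mapping $D([\gamma]|\sigma_r^+)$ onto $\HH$ sending the target $[0^-]^{[\gamma]}_{\sigma_r^+;\HH}$ to $\infty$ becomes an SLE$_\kappa(\rho)$ curve with a single force point on the right. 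Theorem \ref{Thm-decomposition} (with $m=1$, $\rho_1=\rho$, so $\rho_\Sigma=\rho$ and the exponent is $\alpha$) decomposes this curve against the Minkowski content measure of its intersection with the half-infinite interval on the right of the force point; the conditional curve given a marked point is a conditional SLE$_\kappa(\rho)$, which by (\ref{DMP-u}) (the SLE$_\kappa(\rho)$ version of the DMP for two-marked-point conditional loops) is precisely the post-$\sigma_r$ part of $\til\nu^{\HH}_{0\lr u}$ for $u$ in the corresponding prime-end interval.

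Next I would translate this back to the original domain. Conformal covariance of Minkowski content measures (Proposition \ref{Prop-Minkowski}(iv)) and of Green's functions (evident from (\ref{Gwvu}) and (\ref{Mu-express})) converts the SLE$_\kappa(\rho)$ Green's density $G_C$ on the image interval into a density on $\R\setminus\{0\}$; a brief computation using the scaling of $\til\mu^{\HH}_{0\ccw}$ under $z\mapsto cz$ (Theorem \ref{Thm-loop-sigma-finite}, exponent $\alpha$) and of $\til\nu^{\HH}_{0\lr u}$ (invariance of the two-point object plus the $|u|$-scaling of the force-point data) forces this density to be a multiple of $|u|^{-2\alpha}$, and matching the normalization against the single-marked-point case pins the constant as $C_{\kappa;\rho}$. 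This yields the restriction of (\ref{mu-decomp-R}) to the event $\til\Sigma_{\sigma_r}\times ([\eta]\cap\R\setminus \lin B(0,r))$. Letting $r\to 0^+$ and using that $\til\mu^{\HH}_{0\ccw}$ is supported on $\bigcup_n \til\Sigma_{\sigma_{1/n}}$ gives the full identity (\ref{mu-decomp-R}).

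For part (i), the Minkowski content statement  of Theorem \ref{Thm-decomposition} already implies that, for $\til\mu^{\HH}_{0\ccw}$-a.e.\ $[\eta]$, the measure ${\cal M}_{[\eta]\cap J}$ exists whenever $J\subset\R\setminus\{0\}$ is a compact interval (take $\tau=\sigma_r$ for small enough $r$ so that $J\subset I^{[\gamma]}_{\sigma_r^+;\HH}$ after conformal transport). Covering $[\eta]\cap(\R\setminus\{0\})$ by countably many such pieces and applying Lemma \ref{Mink-exist-S12} produces ${\cal M}_{[\eta]\cap (\R\setminus\{0\})}$; since $[\eta]$ visits the root $0$ only as a single point and $d<1$, this measure extends to the required ${\cal M}_{[\eta]\cap\R}$ with $\{0\}$ a null set, and its minimal closed support equals $[\eta]\cap\R$ by the corresponding SLE$_\kappa(\rho)$ support statement. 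The compactness of $[\eta]\cap\R$ was already recorded in Theorem \ref{Thm-loop-sigma-finite}. For (c) in part (i), the finiteness and positivity of $\Cont_d([\eta]\cap\R)$ for almost every $[\eta]$ will follow from the identity (\ref{mu-decomp-R}) itself: integrating both sides over $u\in\R\setminus\{0\}$ gives a $\sigma$-finite marginal on $[\eta]$, and positivity is equivalent to $\til\mu^{\HH}_{0\ccw}$-a.s.\ $[\eta]\cap\R\neq\{0\}$, which follows from $\rho<\frac{\kappa}{2}-2$ via Theorem \ref{Thm-loop-sigma-finite}.

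Finally, formula (\ref{express-rooted-loop}) is obtained by disintegrating (\ref{mu-decomp-R}). The $[\eta]$-marginal of the left side is $\Cont_d([\eta]\cap\R)\cdot\til\mu^{\HH}_{0\ccw}(d[\eta])$ and of the right side is $\int_{\R\setminus\{0\}} \til\nu^{\HH}_{0\lr u}\, C_{\kappa;\rho}|u|^{-2\alpha}\,\mA(du)$; equating them and dividing by $\Cont_d(\cdot\cap\R)$, which is $\til\mu^{\HH}_{0\ccw}$-a.s.\ in $(0,\infty)$, yields the claim. The principal technical obstacle I anticipate is the prime-end bookkeeping when translating Theorem \ref{Thm-decomposition} (aimed at $\infty$ with force points on the right) into the bubble setting (target and root coincide at $0$, with a left/right split around the root); Corollary \ref{[[v]]-cor} and the framework of Section \ref{section-Loewner} are designed precisely to handle this, but verifying that the Minkowski content measures and Green's densities transform consistently under the relevant conformal maps — and in particular pinning down the correct overall constant — is the step that will require most care.
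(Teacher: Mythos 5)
Your overall skeleton for (\ref{mu-decomp-R}) matches the paper's: stop the bubble at a positive stopping time, use Theorem \ref{Thm-loop-sigma-finite} (the DMP together with the RN derivative $Q$ against the SLE$_\kappa(\rho,\rho+2)$ law), apply Theorem \ref{Thm-decomposition} to the conditional continuation after conformal transport, identify the resulting two-marked-point object with $\til\nu^{\HH}_{0\lr u}$ via (\ref{DMP-u}) and Lemma \ref{P/P-kappa-rho}, and let the stopping radius shrink. The paper pins the density $C_{\kappa;\rho}|u|^{-2\alpha}$ by an explicit algebraic identity combining $Q^\gamma_\tau$, the Green's function of the image curve, and the RN factor $R^u_\tau$ relating $\nu^{\HH}_{0\to u}$ to $\nu^{\HH}_{0\to\infty}$ (formula (\ref{QGGR})), rather than by your scaling-plus-normalization argument; that part of your sketch could be completed, though it is left vague.

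The genuine gap is in part (i), at the step where you pass from ${\cal M}_{[\eta]\cap(\R\sem\{0\})}$ to ${\cal M}_{[\eta]\cap\R}$ by saying that ``$[\eta]$ visits the root $0$ only as a single point and $d<1$, so the measure extends with $\{0\}$ a null set.'' This does not work: under $\til\mu^{\HH}_{0\ccw}$ the set $[\eta]\cap\R$ accumulates at the root $0$, and the existence of the Minkowski content measure on $\R\sem\{0\}$ only controls compact subsets staying away from $0$. For a compact $K\subset[\eta]\cap\R$ containing $0$ together with points accumulating at $0$, neither the finiteness of the mass near $0$ nor the existence of the limit defining $\Cont_d(K)$ follows from what you have; adding a single null point to the measure does not address either issue. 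Since (c), the upgrade of your intermediate identity to (\ref{mu-decomp-R}), and the division by $\Cont_d(\cdot\cap\R)$ in (\ref{express-rooted-loop}) all hinge on the existence of ${\cal M}_{[\eta]\cap\R}$ on the full compact set, this gap propagates through the whole statement. The paper closes it by a root-swapping argument: it introduces the classes $\Gamma$ and $\Gamma_x$, shows via Lemma \ref{Mink-exist-S12} that $\Gamma_0\cap\Gamma_u\subset\Gamma$, and then uses the intermediate decomposition to see that a.e.\ $\til\nu^{\HH}_{0\lr u}$ is supported by $\Gamma_0$, together with scaling, translation, and crucially the identity $\til\pi(\til\nu^{\HH}_{0\lr u})=\til\pi(\til\nu^{\HH}_{u\lr 0})$ from (\ref{unrooted=}) (which rests on the imaginary-geometry coupling and reversibility inputs of Section \ref{section-two-marked}) to conclude that $\til\nu^{\HH}_{0\lr u}$, and hence $\til\mu^{\HH}_{0\ccw}$, is supported by $\Gamma$. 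Some argument of this kind, giving control of the Minkowski content at the root itself, is needed and is absent from your proposal.
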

\begin{proof}
Let $\nu^{\HH}_{0\to \infty}$ be  the law of a CP SLE$_\kappa(\rho,\rho+2)$ at $(0;0^+,0^-)$. Then $\pi(\nu^{\HH}_{0\to \infty})=\til \nu^{\HH}_{0\to \infty}$. For $u\in\R\sem\{0\}$, let $\nu^{\HH}_{0\to u}$ denote the law of a CP SLE$_\kappa(\rho,\rho+2,\kappa-8-2\rho)$ at $(0;0^+,0^-,u)$   killed at the time $\tau_u$. By SLE coordinate changes,  $\pi(\nu^{\HH}_{0\to u})=\K_{\tau_u}(\til \nu^{\HH}_{0\to u})$.

Fix a CP Loewner curve $\gamma$ sampled from $\nu^{\HH}_{0\to \infty}$. Then $\ha T_\gamma=\infty$, $\gamma\cap \R_-=\emptyset$,  $a^\gamma_t =0$ for all $t\ge 0$, and $C^\gamma_t<W^\gamma_t$ for all $t>0$. Let $\iota=\frac{2\rho+8-\kappa}{2\kappa}>0$. Then $\alpha=(\rho+2)\iota$.
For  $u\in\R\sem\{0\}$ and $0<t< \tau_u$, we define
\begin{align*}
   R^u_{t}(\gamma)= & |u|^{2\alpha}  ( g^\gamma_{t})'(u) ^{\alpha}|g^\gamma_{t}(u)  -C^\gamma_t |^{-\alpha}  |g^\gamma_{t}(u)  -D^\gamma_t |^{-\rho\iota} |g^\gamma_{t}(u) -W^\gamma_t |^{{-2\iota}},
   \\
    G^\gamma_{t}(u)
 =& C_{\kappa;\rho}\Big(\frac{(g^\gamma_{t})'(u)}{|g^\gamma_{t}(u)-C^\gamma_t|}\Big)^\alpha  \Big(\frac{|D^\gamma_t-C^\gamma_t|}{|g^\gamma_{t}(u)-D^\gamma_t|}\Big)^{ \rho\iota} \Big( \frac{|W^\gamma_t-C^\gamma_t|} {|g^\gamma_{t}(u)-W^\gamma_t|}\Big)^{2\iota}
\end{align*}
Let $Q^\gamma_t$ be as defined by (\ref{Q-eta-t}) for $t>0$.
Then we have
\BGE Q^\gamma_t G^\gamma_{t}(u) =  C_{\kappa;\rho} |u|^{-2\alpha} R^u_{t}( \gamma ).\label{QGGR}\EDE

Let $\tau$ be a positive finite random time.
Let $F_{\gamma;\tau}(z)=\frac{z-W^\gamma_\tau}{z-C^\gamma_\tau}$ be a M\"obius automorphism of $\HH$, which sends $W^\gamma_\tau$, $C^\gamma_\tau$ and $D^\gamma_\tau$ respectively to $0$, $\infty$ and $X^\gamma_\tau:=\frac{D^\gamma_\tau-W^\gamma_\tau}{D^\gamma_\tau-C^\gamma_\tau}\in[0,1)$.
Let $G_C(w,v;u)$ be as defined in (\ref{C-kappa-rho}) for $m=1$, $v_1=v$ and $\rho_1=\rho$, and $\check G^\gamma_{\tau}(\check u)=G_C(0, X^\gamma_\tau;\check u)$. Let $\check X^\gamma_\tau=0^+\vee X^\gamma_\tau$.
By  \cite[Theorem 6.17]{Green-kappa-rho}, if $[\check\eta]$ follows the conditional law $\til \mu^{\HH}_{(0;\check X^\gamma_\tau)\to \infty}$, then a.s.\  ${\cal M}_{[\check\eta]\cap [X^\gamma_\tau,\infty)}$  exists and is atomless, and its minimal closed support is $[\check\eta]\cap [X^\gamma_\tau,\infty)$.
Moreover, by Theorem \ref{Thm-decomposition}, we have the decomposition formula
\BGE \til \mu^{\HH}_{(0;\check X^\gamma_\tau)\to \infty}(d[\check\eta]) \rotimes {\cal M}_{[\check\eta]\cap [X^\gamma_\tau,\infty)}(d\check u)= \til \nu^{\HH}_{(0;\check X^\gamma_\tau)\to \check u\to \infty}(d[\check\eta]) \lotimes {\ind}_{(X^\gamma_\tau,\infty)} \check G^\gamma_{\tau}(\check u)\cdot \mA(d\check u). \label{decomposition-formula-eta1}\EDE

Let $h^\gamma_{\tau}=(g^\gamma_\tau)^{-1}\circ F_{\gamma;\tau}^{-1}$.
Since $h^\gamma_{\tau}$ maps $\HH$ conformally onto $H^\gamma_\tau$, and sends $0,\check X^\gamma_\tau,1,\infty$ respectively to the prime ends $\gamma(\tau^+),[0^+]^\gamma_\tau,\infty,[0^-]^\gamma_\tau$ of $H^\gamma_\tau$, if $[\check\eta]$ follows the law $\til \mu^{\HH}_{(0;\check X^\gamma_\tau)\to \infty}$, then $ [h^\gamma_{\tau}\circ \check\eta]$  follows the law  $\til \mu^{\HH}_{\gamma|\tau^+}$ (defined in (\ref{til-mu-gamma},\ref{til-mu-gamma-[]})),  and by  the  existence  of ${\cal M}_{[\check\eta]\cap [X^\gamma_\tau,\infty)}$ and Proposition \ref{Prop-Minkowski} (iii,iv), a.s.\ ${\cal M}_{[h^\gamma_{\tau}\circ \check\eta]\cap (\R\sem [0,b^\gamma_\tau])}$ exists and satisfies
$$h^\gamma_{\tau}((h^\gamma_{\tau})'(\check u)^d \cdot {\cal M}_{[\check \eta] \cap [X^\gamma_\tau,\infty) \sem\{1\}}(d\check u))={\cal M}_{[h^\gamma_{\tau}\circ \check\eta]\cap (\R\sem [0,b^\gamma_t])}(du) .$$
Thus, if $[\eta]$ follows the law  $\til \mu^{\HH}_{\gamma|\tau^+}$, then ${\cal M}_{[\eta]\cap (\R\sem [0,b^\gamma_\tau])}$ a.s.\ exists and is atomless, and its minimal closed support is $\lin{[\eta]\cap (\R\sem [0,b^\gamma_\tau])}$.
It is straightforward to calculate
$ (h^\gamma_{\tau})'(\check u)^\alpha G^\gamma_{\tau}(h^\gamma_{\tau}(\check u))=\check G^\gamma_{\tau}(\check u)$,
which together with $1-\alpha=d$ implies that
$$h^{\gamma}_{\tau}((h^\gamma_{\tau})'(\check u)^d {\ind}_{(X^\gamma_\tau,\infty)\sem\{1\}}    \check G^\gamma_{\tau}(\check u) \cdot \mA(d\check u)) ={\ind}_{\R\sem [0,b^\gamma_\tau]}  G^\gamma_{\tau}(u)\cdot \mA(du),$$
where we used a change of variable: $u=h^\gamma_{\tau}(\check u)$.
Weighting the measures on both sides of (\ref{decomposition-formula-eta1}) by $(h^\gamma_{\tau})'(\check u)^d$,  then applying the map $([\check\eta],\check u)\mapsto ([h^\gamma_{\tau}\circ \check\eta],h^\gamma_{\tau}(\check u))=([\eta],u)$, and using the above two displayed formulas and symbols defined in (\ref{til-mu-gamma},\ref{til-mu-gamma-[]},\ref{til-mu-gamma-u}), we get
$$ \til \mu^{\HH}_{\gamma|\tau^+} (d[\eta]) \rotimes {\cal M}_{[\eta]\cap (\R\sem [0,b^\gamma_\tau])}(d u)   =  \til \nu^{\HH;u}_{\gamma|\tau^+}(d[\eta]) \lotimes {\ind}_{\R\sem [0,b^\gamma_\tau]} G^\gamma_{\tau}( u)\cdot \mA(d  u). $$
Multiplying both sides of the above  formula by $Q^\gamma_\tau$ and using   (\ref{QGGR}), we get
 \BGE Q^\gamma_\tau \til \mu^{\HH}_{\gamma|\tau^+} (d[\eta]) \rotimes {\cal M}_{[\eta]\cap (\R\sem [0,b^\gamma_\tau])}(d u)  =  R^u_{\tau}(\gamma) \til \nu^{\HH;u}_{\gamma|\tau^+}(d[\eta]) \lotimes {\ind}_{\R\sem [0,b^\gamma_\tau]} C_{\kappa;\rho} |u|^{-2\alpha}\cdot \mA(d  u). \label{decomposition-formula-eta3}\EDE

Let $u\in \R\sem\{0\}$. Let $\sigma_u$ be the first hitting time at  $u$.
Since $\pi(\nu^{\HH}_{0\to u})=\K_{\sigma _u}(\til \nu^{\HH}_{0\to u})$,  by (\ref{DMP-u}), for any positive stopping time $\sigma$,
 \BGE \ind_{\{\sigma< \sigma_u\}}  \til\nu^{\HH}_{0\lr u}= \ind_{ \{\sigma<\tau_u\}}  \nu^{\HH}_{0\to u}(d\gamma)\oplus_\sigma  \til \nu^{\HH;u}_{\gamma|\sigma^+}.\label{DMP-u-CP}\EDE
Let $K$ be any compact subset of $\R\sem \{0\}$.
 Let $J$ be a crosscut of $\HH$, which encloses a neighborhood of $0$ in $\HH$, and disconnects $K$ from $0$.  Let $\sigma_J$ be the hitting time at $J$.
 Then for any $u\in K$, $ \til\nu^{\HH}_{0\lr u}$ is supported by $\{\sigma_J<\sigma_u\}$, and $ \nu^{\HH}_{0\to u}$ is supported by $\{\sigma_J<\tau_u\}$. From (\ref{DMP-u-CP}) we get
  \BGE   \til\nu^{\HH}_{0\lr u}=  \nu^{\HH}_{0\to u}(d\gamma)\oplus_{\sigma_J}  \til \nu^{\HH;u}_{\gamma|\sigma_J^+},\quad u\in K.\label{DMP-u-CP-KJ}\EDE

 By Lemma \ref{P/P-kappa-rho}, for any $u\in K$, $\nu^{\HH}_{0\to u}(dW) = R^u_{\sigma_J}(W)\cdot\nu^{\HH}_{0\to \infty}(dW)$ on $\F_{\sigma_J} $, where we used the fact that both measures are supported by $\{\sigma_J<\ha T\wedge \tau_u\}$. Restricting (\ref{decomposition-formula-eta3}) to $u\in K$ and then acting $\nu^{\HH}_{0\to \infty}(d\gamma)\oplus_{\sigma_J}$ on both sides, we get
\begin{align*}
&(Q^\gamma_{\sigma_J}\cdot \nu^{\HH}_{0\to \infty}(d\gamma)\oplus_{\sigma_J}  \til \mu^{\HH}_{\gamma|\sigma_J^+}  (d[\eta]) )\rotimes {\cal M}_{(\gamma\oplus_{\sigma_J} [\eta])\cap K}(d u)\\
 \stackrel{\phantom{(\ref{Markov-mu-R})}}{=}& ( \nu^{\HH}_{0\to u}(d\gamma)\oplus_{\sigma_J} \til \nu^{\HH;u}_{\gamma|\sigma_J^+} (d[\eta])) \lotimes {\ind}_{K}(u) C_{\kappa;\rho} |u|^{-2\alpha}\cdot \mA(d  u)\\
 \stackrel{{(\ref{DMP-u-CP-KJ})}}{=}& \til \nu^{\HH}_{0\lr u} (d(\gamma\oplus_{\sigma_J}[\eta])) \lotimes {\ind}_{K}(u) C_{\kappa;\rho} |u|^{-2\alpha}\cdot \mA(d  u),
\end{align*}
where in the first equality we used the fact  $\K_{\sigma_J}(\gamma)\cap K=\emptyset$ when $\gamma$ follows the law $\nu^{\HH}_{0\to\infty}$. 

Note that the $ \nu^{\HH}_{0\to \infty}$ here agrees with the $\nu^*$ defined before Theorem \ref{Thm-loop-sigma-finite}.
By  Theorem \ref{Thm-loop-sigma-finite} (i,ii),
$ \til\mu^{\HH}_{0\ccw} =Q^\gamma_{\sigma_J}\cdot \nu^{\HH}_{0\to \infty}(d\gamma)\oplus_{\sigma_J}  \til \mu^{\HH}_{\gamma|\sigma_J^+}$.
Thus,
we may rewrite the last displayed formula as
\BGE \til\mu^{\HH}_{0\ccw} (d[\eta]) \rotimes {\cal M}_{[\eta]\cap K}(d u) =  \til \nu^{\HH}_{0\lr u} (d[\eta]) \lotimes {\ind}_{K} C_{\kappa;\rho} |u|^{-2\alpha}\cdot \mA(d  u).\label{decomposition-loop-K}\EDE

Since $\til\mu^{\HH}_{0\ccw}$-a.s.\  ${\cal M}_{[\eta]\cap K}$ exists for any compact set $K\subset\R\sem\{0\}$, by Lemma \ref{Mink-exist-S12} we have the existence of  ${\cal M}_{[\eta]\cap(\R\sem\{0\})}$, whose restriction to $K$ is ${\cal M}_{[\eta]\cap K}$ by Proposition \ref{Prop-Minkowski} (iii). Since (\ref{decomposition-loop-K}) holds for any compact set $K\subset\R\sem\{0\}$, we get
\BGE \til\mu^{\HH}_{0\ccw }(d[\eta]) \rotimes {\cal M}_{\eta \cap (\R\sem\{0\})}(d u) = \til \nu^{\HH}_{0\lr u} (d[\eta]) \lotimes {\ind}_{\R\sem\{0\}} C_{\kappa;\rho} |u|^{-2\alpha}\cdot \mA(d  u). \label{mu-decomp-R+}\EDE
This is slightly different from  (\ref{mu-decomp-R}) because we have not proved the existence of  ${\cal M}_{[\eta]\cap\R}$.

Assume $[\eta]$ follows the ``law'' $\til\mu^{\HH}_{0\ccw}$. Then $[\eta]\cap\R$ is a compact set, and $0$ is its accumulation point by Theorem \ref{Thm-loop-sigma-finite}. Since for each compact $K\subset\R\sem\{0\}$,   ${\cal M}_{[\eta]\cap K}$ is atomless with  minimal closed support ${[\eta]\cap K}$,   ${\cal M}_{[\eta]\cap(\R\sem\{0\})}$ is also atomless, and its minimal closed support is $\lin{[\eta]\cap(\R\sem\{0\})}$, which equals the compact set $[\eta]\cap\R$.

Let $\Gamma$ denote the space of MTC curves $[\eta]$ such $[\eta]\cap\R$ is a nonempty compact set, and ${\cal M}_{[\eta]\cap\R}$ exists, is atomless, and its minimal closed support is $[\eta]\cap\R$. If $[\eta]\in\Gamma$, then by the definition of Minkowski content measure and the fact that $[\eta]\cap\R$ is compact, $\Cont_d([\eta]\cap\R)$ exists and equals the total mass of ${\cal M}_{[\eta]\cap\R}$, which is finite. Since the minimal closed support of  ${\cal M}_{[\eta]\cap\R}$ is $[\eta]\cap\R$, which is nonempty, ${\cal M}_{[\eta]\cap\R}$ is strictly positive. Thus, $\Cont_d([\eta]\cap\R)\in (0,\infty)$.

For $x\in\R$, let $\Gamma_x$ denote the space of MTC curves $[\eta]$ such that ${\cal M}_{[\eta]\cap(\R\sem\{x\})}$ exists and is atomless, and the minimal closed support of ${\cal M}_{[\eta]\cap(\R\sem\{x\})}$ is the compact set $[\eta]\cap\R$. By Proposition \ref{Prop-Minkowski} (iii),  $\Gamma\subset\Gamma_x$ for every $x\in\R$.
By Lemma \ref{Mink-exist-S12}, for any $x\ne y\in\R$, $\Gamma_x\cap\Gamma_y\subset\Gamma$.

We have known that $\til\mu^{\HH}_{0\ccw}$ is supported by $\Gamma_0$. By (\ref{mu-decomp-R+}), for $\mA$-a.s.\ every $u\in\R\sem\{0\}$, $ \til \nu^{\HH}_{0\lr u}$ is supported by $\Gamma_0$. By scaling, this holds for every $u\in\R\sem\{0\}$. By translation, for every $u\in\R\sem\{0\}$, $ \til \nu^{\HH}_{u\lr 0}$ is supported by $\Gamma_u$. By (\ref{unrooted=}), every $ \til \nu^{\HH}_{0\lr u}$ is supported by both $\Gamma_0$ and $\Gamma_u$, and so is supported by $\Gamma$ since $\Gamma_0\cap\Gamma_u\subset\Gamma$.  Then we use (\ref{mu-decomp-R+}) again to conclude that $\til\mu^{\HH}_{0\ccw}$ is also supported by $\Gamma$. Moreover, we have $\til\mu^{\HH}_{0\ccw}$-a.s.\ ${\cal M}_{[\eta]\cap\R}={\cal M}_{[\eta]\cap(\R\sem\{0\})}$. Thus, we get  (\ref{mu-decomp-R}) from (\ref{mu-decomp-R+}). By observing the marginal measures on curves in (\ref{mu-decomp-R}) we get
$$\Cont_d([\eta]\cap\R)\cdot \til\mu^{\HH}_{0\ccw}(d[\eta])= \int_{\R\sem\{0\}} \til\nu^{\HH}_{0\lr u}  C_{\kappa;\rho} |u|^{-2\alpha} \mA(du),$$
which   implies (\ref{express-rooted-loop}) because $\Cont_d([\eta]\cap\R)\in (0,\infty)$ for $ \til\mu^{\HH}_{0\ccw}$-a.s.\ every $[\eta]$.
\end{proof}

\begin{Theorem} Let  $\til\mu^{\HH}_{\infty\ccw} $ be as in Theorem \ref{scaling-invariance}. Then
  \begin{itemize}
  \item [(i)] $\til\mu^\HH_{\infty\ccw}$ is supported by the set of Loewner bubbles $[\eta]$ in $\HH$ rooted at $\infty$ such that
  \begin{itemize}
    \item [(a)]  the $d$-dimensional Minkowski content measure  ${\cal M}_{[\eta]\cap\R}$ exists;
    \item [(b)] the minimal closed support of ${\cal M}_{[\eta]\cap\R}$ is $[\eta]\cap\R$;
  \end{itemize}
   \item [(ii)] The following equality holds:
\BGE \til\mu^{\HH}_{\infty\ccw }(d[\eta]) \rotimes {\cal M}_{[\eta] \cap \R}(d u) = \til \nu^{\HH}_{\infty\lr u} (d[\eta]) \lotimes   C_{\kappa;\rho}  \mA(d  u).\label{mu-decomp-R-infty}\EDE
\end{itemize}
\label{Thm-decomposition-loop-infty}
\end{Theorem}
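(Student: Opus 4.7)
The plan is to derive Theorem \ref{Thm-decomposition-loop-infty} from Theorem \ref{Thm-decomposition-loop} by pushing forward under the M\"obius involution $J(z)=-1/z$, using the defining relation $\til\mu^{\HH}_{\infty\ccw}=J(\til\mu^{\HH}_{0\ccw})$ from Theorem \ref{scaling-invariance} together with the conformal invariance of the two-marked-point loop measure $\til\nu^{\HH}_{0\lr u}$ constructed in Section \ref{section-two-marked}.

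I would fix a compact set $K\subset\R\sem\{0\}$, restrict (\ref{mu-decomp-R}) to $u\in K$, and apply the product map $([\eta],u)\mapsto(J([\eta]),J(u))$ to both sides. On the right: the conditional measure $\til\nu^{\HH}_{0\lr u}$ pushes forward to $\til\nu^{\HH}_{\infty\lr J(u)}$ by its conformally invariant construction, and the substitution $u'=-1/u$ yields $|u|^{-2\alpha}\mA(du)=|u'|^{2\alpha-2}\mA(du')$ on $J(K)$. On the left: by Proposition \ref{Prop-Minkowski}(iv) applied to $J$ on a neighborhood of $K$ in $\HH\cup(\R\sem\{0\})$, one obtains $J({\cal M}^d_{[\eta]\cap K})(du')=|u'|^{-2d}\cdot{\cal M}^d_{J([\eta])\cap J(K)}(du')$, using $|J'(J^{-1}(u'))|=u'^2$. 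Since $d=1-\alpha$, we have $-2d=2\alpha-2$, so the two Jacobian factors match and cancel, yielding the restriction of (\ref{mu-decomp-R-infty}) to $u\in J(K)$. Exhausting $\R\sem\{0\}$ by compact $K$ gives the global identity.

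Part (i) of the theorem --- existence of ${\cal M}_{[\eta']\cap\R}$ and identification of its minimal closed support --- follows from the same chain of reasoning: for each compact $K\subset\R\sem\{0\}$, Theorem \ref{Thm-decomposition-loop}(i) combined with Proposition \ref{Prop-Minkowski}(iii,iv) gives $\til\mu^{\HH}_{\infty\ccw}$-a.s.\ existence of ${\cal M}^d_{J([\eta])\cap J(K)}$ with minimal closed support $J([\eta])\cap J(K)$, and Lemma \ref{Mink-exist-S12} glues these into a $\sigma$-finite Borel measure on $\R$ whose minimal closed support is $J([\eta])\cap\R$. This measure is typically of infinite total mass, with $\infty$ an accumulation point of the support, which is why the finiteness clause (c) of Theorem \ref{Thm-decomposition-loop}(i) is absent here. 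The one point requiring care is the behavior at $u=0$: $J$ is not conformal there, so Proposition \ref{Prop-Minkowski}(iv) cannot be applied globally, but this is harmless because ${\cal M}^d_{[\eta]\cap\R}$ is atomless (so $\{0\}$ carries no content) and the Lebesgue factor $|u|^{-2\alpha}\mA(du)$ on the right already excludes $u=0$.
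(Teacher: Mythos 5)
Your proposal is correct and follows essentially the same route as the paper: push (\ref{mu-decomp-R}) forward under $J(z)=-1/z$, transform the Minkowski content measure via Proposition \ref{Prop-Minkowski}(iv), and match the Jacobian factors using $\alpha+d=1$, with part (i) obtained from Theorem \ref{Thm-decomposition-loop}(i) and the same conformal covariance. The extra care you take with the compact exhaustion and the non-conformality of $J$ at $0$ is sound but not a departure from the paper's argument.
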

\begin{proof}
We have $\til\mu^{\HH}_{\infty\ccw}=J(\til\mu^{\HH}_{\infty\ccw}) $, where $J(z):=-1/z$. Part (i) follows Theorem \ref{Thm-decomposition-loop} (i) and Proposition \ref{Prop-Minkowski} (iv).
Weighting both sides of (\ref{mu-decomp-R}) by $|J'(u)|^d=|u|^{-2d}$ and then applying the map $([\eta],u)\mapsto ([J\circ \eta],J(u))$, we get (\ref{mu-decomp-R-infty}) using  $\alpha+d=1$.
 \end{proof}

\begin{Remark}
  We could use (\ref{express-rooted-loop}) to define the rooted SLE$_\kappa(\rho)$ bubble measure for $\kappa\in (0,8)$ and $\rho\in((-2)\vee(\frac\kappa 2-4),\frac\kappa 2-2)$. This is similar to the definition of  SLE$_\kappa$ loops in \cite{loop}.
\end{Remark}

\begin{Remark}
In the case $\kappa\ge 8$ and $\rho\in( \frac\kappa 2-4,\frac\kappa 2-2)$, we could define $\til\nu^{\HH}_{0\lr \infty}$ using (\ref{DMP-two-loop-0}), which does not satisfy reversibility. 
Theorem \ref{Thm-decomposition-loop} holds in this case  with slight changes: we have the existence of ${\cal M}_{[\eta]\cap(\R\sem \{0\})}$ but may not have the existence of ${\cal M}_{[\eta]\cap\R}$. We also may not have (\ref{express-rooted-loop}).
 Theorem \ref{Thm-decomposition-loop-infty} holds in the case $\kappa\ge 8$ without changes.
\end{Remark}

\subsection{Unrooted SLE$_\kappa(\rho)$ bubbles} \label{section-unrooted}
We still assume that $\kappa\in (0,8)$ and $\rho\in((-2)\vee(\frac\kappa 2-4),\frac\kappa 2-2)$.
Recall the family $\Gamma$ in the proof of Theorem \ref{Thm-decomposition-loop} and the facts that $\til\mu^{\HH}_{0\ccw}$ and $\til\nu^{\HH}_{0\lr u}$ are supported by $\Gamma$ for any $u\in\R\sem\{0\}$. 
By translation, for any $w\ne u\in\R$, $\til\mu^{\HH}_w$ and $\til\nu^{\HH}_{w\lr u}$ are supported by $\Gamma$, and so are supported by MTC curves $[\eta]$ such that $\Cont_d([\eta]\cap\R)$ exists and lies in $(0,\infty)$.

Recall that $\til\pi$ is the projection from rooted MTC loops to unrooted MTC loops.  Let
$$\ha\mu^{\HH}_{w\ccw}=\til\pi(\til \mu^{\HH}_{w\ccw }),\quad\ha  \nu^{\HH}_{w\lr u}= \til\pi(\til \nu^{\HH}_{w\lr u} ),\quad
\ha\mu^{\HH}_{\ccw}=\Cont_d(\cdot \cap\R)^{-1}\cdot \int_{\R}\ha\mu^{\HH}_{w\ccw} \mA(dw).$$
where the definition of $\ha\mu^{\HH}_{\ccw}$ makes sense because for every $w\in\R$, $\ha\mu^{\HH}_{w\ccw}$ is supported by unrooted MTC loops $[[\eta]]$ such that  $\Cont_d([[\eta]]\cap\R)$ exists and lies in $(0,\infty)$.
By (\ref{express-rooted-loop}) we have
\BGE \ha\mu^{\HH}_{\ccw}=\Cont_d(\cdot \cap\R)^{-2}\cdot \int_{\R}\int_{\R\sem \{w\}}  \ha\nu^{\HH}_{w\lr v} C_{\kappa;\rho} |u-w|^{-2\alpha}  \mA(d  u) \mA(dw).\label{double}\EDE
We call $\ha\mu^{\HH}_{\ccw}$ the unrooted SLE$_\kappa(\rho)$ bubble measure in $\HH$. Then (i) $\ha\mu^{\HH}_{\ccw}$ is supported by $\til\pi(\Gamma)$; and (ii) for any  $x_0\in\R$, $\ha\mu^{\HH}_{\ccw}$ is supported by those $[[\eta]]$ such that $x_0\not\in [[\eta]]$ since for every $w\in\R\sem\{x_0\}$, $\ha\mu^{\HH}_{w\ccw}$ satisfies this property, which follows from the DMP formula  (\ref{DMP-kappa-rho-proob})  and the boundary behavior of SLE$_\kappa(\rho)$ for $\rho>\frac\kappa 2-4$.

\begin{Theorem}
  We have the following two equalities:
   \BGE \ha\mu^{\HH}_{\ccw} (d[[\eta]]) \rotimes {\cal M}_{[[\eta]] \cap \R}(d u)= \ha\mu^{\HH}_{u\ccw}(d[[\eta]]) \lotimes  \mA(du); \label{unrooted-decomp1}\EDE
 \BGE \ha\mu^{\HH}_{\ccw} (d[[\eta]]) \rotimes {\cal M}_{[[\eta]] \cap \R}^2(dw\otimes du)= \ha\nu^{\HH}_{w\lr u}(d[[\eta]])\lotimes C_{\kappa;\rho}\ind_{\{w\ne u\}}  |w-u|^{-2\alpha}\cdot \mA^2(dw\otimes du).\label{unrooted-decomp2}\EDE
Moreover, if $f$ is a conformal automorphism of $\HH$, then $f(\ha\mu^{\HH}_{\ccw} )=\ha\mu^{\HH}_{\ccw}$.
\label{Thm-unrooted}
\end{Theorem}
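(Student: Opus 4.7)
The plan is to prove (\ref{unrooted-decomp1}) first, then derive (\ref{unrooted-decomp2}) from it, and finally deduce the conformal invariance from (\ref{unrooted-decomp2}). Throughout, the crucial ingredient beyond Theorem \ref{Thm-decomposition-loop} is the reversal identity (\ref{unrooted=}), namely $\ha\nu^{\HH}_{w\lr u}=\ha\nu^{\HH}_{u\lr w}$.

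For (\ref{unrooted-decomp1}), I would first translate (\ref{mu-decomp-R}) to an arbitrary root $w\in\R$ and push forward by $\til\pi$ to obtain the projected identity
\[
\ha\mu^{\HH}_{w\ccw}(d[[\eta]])\rotimes {\cal M}_{[[\eta]]\cap\R}(du)=\ha\nu^{\HH}_{w\lr u}(d[[\eta]])\lotimes \ind_{u\ne w}(u)\,C_{\kappa;\rho}|u-w|^{-2\alpha}\,\mA(du).
\]
Then integrating both sides against $\mA(dw)$ and weighting by $\Cont_d(\cdot\cap\R)^{-1}$: on the LHS this produces $\ha\mu^{\HH}_{\ccw}(d[[\eta]])\rotimes{\cal M}_{[[\eta]]\cap\R}(du)$ by the definition of $\ha\mu^{\HH}_{\ccw}$, while on the RHS I would apply Fubini, use the reversal symmetry $\ha\nu^{\HH}_{w\lr u}=\ha\nu^{\HH}_{u\lr w}$ from (\ref{unrooted=}), and finally recognize the inner $\mA(dw)$-integral as $\Cont_d([[\eta]]\cap\R)\cdot\ha\mu^{\HH}_{u\ccw}(d[[\eta]])$ via the translated and projected form of (\ref{express-rooted-loop}). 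The two $\Cont_d$ factors cancel, yielding (\ref{unrooted-decomp1}).

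Identity (\ref{unrooted-decomp2}) follows by applying the kernel ${\cal M}_{[[\eta]]\cap\R}(du)$ to both sides of (\ref{unrooted-decomp1}). The LHS becomes $\ha\mu^{\HH}_{\ccw}(d[[\eta]])\rotimes {\cal M}^2_{[[\eta]]\cap\R}(dw\otimes du)$, while on the RHS the new kernel combines with $\ha\mu^{\HH}_{w\ccw}(d[[\eta]])$ via the projected identity displayed above to give $\ha\nu^{\HH}_{w\lr u}(d[[\eta]])\lotimes \ind_{w\ne u}C_{\kappa;\rho}|w-u|^{-2\alpha}\mA^2(dw\otimes du)$.

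For the conformal invariance I would push both sides of (\ref{unrooted-decomp2}) forward under $([[\eta]],w,u)\mapsto([[f\circ\eta]],f(w),f(u))$. Three transformation laws control the calculation: the conformal invariance $f(\ha\nu^{\HH}_{w\lr u})=\ha\nu^{\HH}_{f(w)\lr f(u)}$, inherited from that of SLE$_\kappa(\rho,\rho+2)$; the conformal rule for Minkowski content measures (Proposition \ref{Prop-Minkowski}(iv)), which inserts factors $|(f^{-1})'(\til w)|^d|(f^{-1})'(\til u)|^d$ on the LHS; and the M\"obius identity $|f(w)-f(u)|^2=|f'(w)||f'(u)||w-u|^2$, which combined with the Jacobian $|(f^{-1})'(\til w)||(f^{-1})'(\til u)|$ from the change of variables on the RHS produces a factor $|(f^{-1})'(\til w)|^{1-\alpha}|(f^{-1})'(\til u)|^{1-\alpha}|\til w-\til u|^{-2\alpha}$. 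Since $1-\alpha=d$, these $|(f^{-1})'|^d$ factors appear symmetrically on both sides and cancel, showing that $f(\ha\mu^{\HH}_{\ccw})$ satisfies the same equation (\ref{unrooted-decomp2}) as $\ha\mu^{\HH}_{\ccw}$. Since ${\cal M}^2_{[[\eta']]\cap\R}(\R^2)=\Cont_d([[\eta']]\cap\R)^2\in(0,\infty)$ almost surely (by Theorem \ref{Thm-decomposition-loop}(i), inherited by $\ha\mu^{\HH}_{\ccw}$), this finite positive weight can be divided out to conclude $f(\ha\mu^{\HH}_{\ccw})=\ha\mu^{\HH}_{\ccw}$. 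The main bookkeeping obstacle is precisely this cancellation, which hinges on the identity $1-\alpha=d$.
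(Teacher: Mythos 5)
Your proposal is correct and follows essentially the same route as the paper: project the rooted decomposition (\ref{mu-decomp-R}) via $\til\pi$ and translation, integrate over the root against $\mA(dw)$ using (\ref{unrooted=}) and (\ref{express-rooted-loop}) to get (\ref{unrooted-decomp1}), apply the Minkowski content kernel once more (with the same projected identity and the reversal symmetry) to get (\ref{unrooted-decomp2}), and then push (\ref{unrooted-decomp2}) forward under $f$, using Proposition \ref{Prop-Minkowski}(iv), the M\"obius distortion identity and $\alpha+d=1$, and finally divide out the a.s.\ finite positive weight $\Cont_d(\cdot\cap\R)^2$. The only differences are cosmetic (variable relabeling in the second step, and the paper's explicit handling of the exceptional point $f^{-1}(\infty)$ via atomlessness of the content measure), not substantive.
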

\begin{proof}
Acting $\til\pi$ to (\ref{mu-decomp-R}) and using translation, we find that for any $w\in\R$,
\BGE \ha\mu^{\HH}_{w\ccw }(d[[\eta]]) \rotimes {\cal M}_{[[\eta]] \cap \R}(d u) = \ha \nu^{\HH}_{w\lr u} (d[[\eta]]) \lotimes {\ind}_{\R\sem\{w\}}  C_{\kappa;\rho} |u-w|^{-2\alpha}\cdot \mA(d  u) .\label{mu-decomp-R-w}\EDE
Integrate both sides of the above formula against $\mA(dw)$. The integral of the LHS is
$$\Big(\int_{\R} \ha\mu^{\HH}_{w\ccw }(d[[\eta]]) \mA(dw)\Big) \rotimes {\cal M}_{[[\eta]] \cap \R}(d u) =\Cont_d([[\eta]])\cdot \ha\mu^{\HH}_{\ccw} (d[[\eta]]) \rotimes {\cal M}_{[[\eta]] \cap \R}(d u).$$
By (\ref{unrooted=},\ref{express-rooted-loop}) and translation, the integral of the RHS is
$$\Big(\int_{\R\sem\{u\}} \ha \nu^{\HH}_{u\lr w}  C_{\kappa;\rho} |w-u|^{-2\alpha}\mA(dw)\Big)\lotimes  \mA(du)=\Cont_d([[\eta]])\cdot \ha\mu^{\HH}_{u\ccw}(d[[\eta]]) \lotimes  \mA(du). $$
Comparing the two displayed formulas and using the facts that $\ha\mu^{\HH}_{\ccw}$ and $\ha\mu^{\HH}_{w\ccw}$ are supported by unrooted loops $[[\eta]]$ such that $\Cont_d([[\eta]]\cap\R)\in(0,\infty)$, we get (\ref{unrooted-decomp1}).

Act $\rotimes {\cal M}_{[[\eta]]\cap\R}(dw)$ on both sides of (\ref{unrooted-decomp1}) from their right and then switch the order of the second and third components, i.e., $du$ and $dw$. On the LHS, we get
$$\ha\mu^{\HH}_{\ccw} (d[[\eta]]) \rotimes {\cal M}_{[[\eta]] \cap \R}^2(dw\otimes du).$$
On the RHS, by (\ref{mu-decomp-R-w}) we get
\begin{align*}
&(\ha\mu^{\HH}_{u\ccw}(d[[\eta]]) \rotimes {\cal M}_{[[\eta]] \cap \R}(dw))  \lotimes  \mA(du)\\
= &( \ha \nu^{\HH}_{u\lr w} (d[[\eta]]) \lotimes {\ind}_{\R\sem\{u\}}  C_{\kappa;\rho} |w-u|^{-2\alpha}\cdot \mA(d w))   \lotimes  \mA(du)\\
=& \ha\nu^{\HH}_{w\lr u}(d[[\eta]])\lotimes C_{\kappa;\rho} \ind_{\{w\ne u\}}   |w-u|^{-2\alpha}\cdot \mA^2(dw\otimes du).
\end{align*}
Comparing the above two displayed formulas, we get (\ref{unrooted-decomp2}).

Let $f$ be a conformal automorphism of $\HH$. From (\ref{unrooted-decomp2}) and the formulas $|f(w)-f(u)|=|f'(w)|^{1/2}|f'(u)|^{1/2}|w-u|$ and $\alpha+d=1$ we get
$$ \ha\mu^{\HH}_{\ccw} (d[[\eta]]) \rotimes \ind_{\{w,u\ne f^{-1}(\infty)\}}  |f'(w)|^d |f'(u)|^d\cdot {\cal M}_{[[\eta]] \cap \R}^2(dw\otimes du) $$
$$= \ha\nu^{\HH}_{w\lr u}(d[[\eta]])\lotimes C_{\kappa;\rho}  \ind_{\{w,u\ne f^{-1}(\infty)\}}  |f(w)-f(u)|^{-2\alpha} |f'(w)| |f'(u)| \cdot  \mA^2(dw\otimes du)$$
Let $\check \mu^{\HH}_{\ccw}=f(\ha\mu^{\HH}_{\ccw})$. Since $\ha\mu^{\HH}_{\ccw}$-a.s.\ $[[\eta]]\cap\{ \infty, f^{-1}(\infty)\}= \emptyset$,  $\check \mu^{\HH}_{\ccw}$ is supported by $\til\pi(\Gamma)$.
Applying the map $([[\eta]],w,u)\mapsto ([[\check \eta]],\check w,\check u)=([[f\circ \eta]],f(w),f(u))$ to the above displayed formula and using Proposition \ref{Prop-Minkowski} and the conformal invariance of $\til\nu^{\HH}_{\cdot \lr\cdot}$, we get the equality
$$\check\mu^{\HH}_{\ccw}(d[[\check\eta]])\rotimes {\cal M}_{[[\check \eta]]\cap(\R\sem \{f(\infty)\})}^2 (d\check w\otimes d\check u)$$ $$= \ha\nu^{\HH}_{\check w\lr \check u}(d[[\check \eta]])\lotimes C_{\kappa;\rho} \ind_{\{\check w,\check u\ne f(\infty)\}}  |\check w-\check u|^{-2\alpha}\cdot \mA^2(d\check w\otimes d\check u). $$
Since $\check\mu^{\HH}_{\ccw}$-a.s.\ ${\cal M}_{[[\check \eta]]\cap\R}$ exists and is atomless, the equality holds with $\R$ in place of $\R\sem \{f(\infty)\}$. Computing the marginal measures of both sides on unrooted loops and using (\ref{double}), we then get $\check\mu^{\HH}_{\ccw}=\ha\mu^{\HH}_{\ccw}$, i.e.,   $f(\ha\mu^{\HH}_{\ccw})=\ha\mu^{\HH}_{\ccw}$.
\end{proof}

\begin{Remark}
  We may similarly define clockwise unrooted SLE$_\kappa(\rho)$ bubble measure $\ha\mu^{\HH}_{\cw}$. By Corollary \ref{Reversibility-cor} we get the reversibility equality: ${\cal R}(\ha\mu^{\HH}_{\ccw})=\ha\mu^{\HH}_{\cw}$. \label{Remark-R-unrooted}
\end{Remark}

\begin{Remark}
   Since $\ha\mu^{\HH}_{\ccw}$ and $\ha\mu^{\HH}_{\cw}$ are invariant under M\"obius automorphism of $\HH$, for any simply connected domain $D$ we may define the ccw (resp.\ cw) unrooted SLE$_\kappa(\rho)$ bubble measure in $D$ by $\ha\mu^{D}_{\ccw}=f(\ha\mu^{\HH}_{\ccw})$ (resp.\ $\ha\mu^{D}_{\cw}=f(\ha\mu^{\HH}_{\cw})$)  using any conformal map $f$ from $\HH$ onto $D$. \label{Remark-unrooted-general}
\end{Remark}

\end{document}